\theoremstyle{plain}
\newtheorem{thm}{Theorem}[section]
\newtheorem{prop}[thm]{Proposition}
\newtheorem{cor}[thm]{Corollary}
\newtheorem{lem}[thm]{Lemma}
\newtheorem{example}[thm]{Example}
\newtheorem{definition}[thm]{Definition}
\newtheorem{remark}[thm]{Remark}
\newcommand{\lm}{{\bm{\lambda}/\bm{\mu}}}
\newcommand{\lmp}{{\bm{\lambda}'/\bm{\mu}'}}
\newcommand{\0}{\mathbf{0}}
\newcommand{\1}{\mathbf{1}}
\newcommand{\I}{\bm{I}}
\newcommand{\J}{\bm{J}}
\newcommand{\K}{\bm{K}}
\renewcommand{\L}{\bm{L}}
\newcommand{\g}{g}
\newcommand{\mdy}{\text{deg}_Y}
\newcommand{\latW}{W}
\newcommand{\latP}{P}
\newcommand{\latS}{S}
\newcommand{\A}{\mathbf{A}}
\newcommand{\B}{\mathbf{B}}
\newcommand{\C}{\mathbf{C}}
\newcommand{\D}{\mathbf{D}}
\newcommand{\Z}{\mathbb{Z}}
\newcommand{\CC}{\mathbb{C}}
\newcommand{\mc}[1]{{\mathcal #1}}
\newcommand{\alb}{{\textcolor{blue}{a}} < {\textcolor{red}{b}}}
\DeclareMathOperator{\spp}{sp}
\DeclareMathOperator{\cosp}{cosp}
\DeclareMathOperator{\spin}{spin}
\DeclareMathOperator{\inv}{inv}
\DeclareMathOperator{\coinv}{coinv}
\DeclareMathOperator{\SSYT}{SSYT}
\DeclareMathOperator{\SSSYT}{SSSYT}
\DeclareMathOperator{\SRT}{SRT}
\DeclareMathOperator{\SSRT}{SSRT}
\DeclareMathOperator{\HRS}{HRS}
\DeclareMathOperator{\VRS}{VRS}
\DeclareMathOperator{\weight}{weight}
\DeclareMathOperator{\Inv}{Inv}
\def\diag{ \begin{tikzpicture} \draw[dashed] (-.12,-.12) -- (.42, .42); \end{tikzpicture} }
\newcommand{\nullification}[1]{}
\title{A Vertex Model for Supersymmetric LLT Polynomials}
\author{Andrew Gitlin}
\affil{Department of Mathematics, UC Berkeley USA\\
{\small\ttfamily andrew\_gitlin@berkeley.edu}}
\author{David Keating}
\affil{Department of Mathematics, UW Madison USA\\
{\small\ttfamily dkeating3@wisc.edu}}
\date{}
\begin{document}

\maketitle

\begin{center} \it
The polynomials LLT \\
This time with supersymmetry. \\
The purple and white \\
Are such a delight \\
`Til one must prove the YBE!
\end{center}

\abstract{We describe a Yang-Baxter integrable vertex model, which can be realized as a degeneration of a vertex model introduced by Aggarwal, Borodin, and Wheeler.  From this vertex model, we construct a certain class of partition functions that we show are essentially equal to the super ribbon functions of Lam.  Using the vertex model formalism, we give proofs of many properties of these polynomials, namely a Cauchy identity and generalizations of known identities for supersymmetric Schur polynomials.}

\section{Introduction} 

\indent LLT polynomials were originally introduced by Lascoux, Leclerc, and Thibon (for whom the polynomials are eponymously named) in \cite{LLT}.  They are a class of symmetric polynomials, and can be seen as a $t$-deformation of products of (skew) Schur functions.  The original motivation for LLT polynomials was to study certain plethysm coefficients, and they were defined via a relationship with the Fock space representation of a quantum affine Lie algebra.  The original definition expresses the LLT polynomials as a sum over semistandard $k$-ribbon tableaux, weighted with a spin statistic which arises naturally in this representation \cite{LLT,iij13}.  Bylund and Haiman discovered an alternative way to model LLT polynomials. They constructed a family of symmetric polynomials indexed by $k$-tuples of skew Young diagrams, weighted with an inversion statistic (Remark \ref{rmk:hhl-llt}). The Bylund–Haiman model is described in more detail in \cite{HHLRU}.  The relationship between the two definitions uses the Littlewood quotient map (sometimes called the Stanton-White correspondence) \cite{stanton1985schensted}, which sends semistandard $k$-ribbon tableaux to $k$-tuples of semistandard Young tableaux. 

\indent Supersymmetric LLT polynomials $\mc{G}_{\lambda/\mu}^{(k)}(X_n;Y_m;t)$ were introduced in \cite{Lam} (in which they are called super ribbon functions).  As the name suggests, these polynomials are supersymmetric in the $X$ and $Y$ variables (see Definition \ref{def:supersymmetric}) and specialize to LLT polynomials when $m=0$ (see Remark \ref{rmk:lam-llt}).  The supersymmetric LLT polynomials have several other interesting specializations in addition to the LLT polynomials, including the metaplectic symmetric functions introduced in \cite{BBBG} and the supersymmetric Schur polynomials, which can be realized as characters of certain simple modules of the Lie superalgebra $\mathfrak{gl}(n|m)$ \cite{BereleRegev}.

\indent The goal of the present work is to study supersymmetric LLT polynomials, from the perspective of integrable vertex models.  While the study of integrable systems is a classical subject (see \cite{baxter2016exactly, reshetikhin2010lectures} for example), they have recently enjoyed an advent into the world of (non)symmetric polynomials \cite{brubaker2011schur, WHEELER2016543, BorodinWheeler, tsilevich}. Also known as vertex models, ice models, or multiline queues, these models have been generalized to colored vertex models \cite{borodin2018coloured,brubaker2019colored,brubaker2019colored2, buciumas2020colored, corteel2018multiline, garbali2020modified} and polyqueue tableaux \cite{corteel2018multiline, ayyer2020stationary}.  

\indent It has recently been shown \cite{ABW,CGKM, CYZZ} that the LLT polynomials can be realized as a certain class of partition functions constructed from an integrable vertex model.  In \cite{BBBG}, the authors constructed a vertex model whose partition functions are the metaplectic symmetric functions, a certain specialization of the supersymmetric LLT polynomials.  In this paper, we generalize these results by showing the that there is an integrable vertex model whose partition functions are the supersymmetric LLT polynomials.  Our vertex model is adapted from the work of \cite{CGKM}, and can also be realized as a degeneration of a colored vertex model introduced in \cite{ABW}. Recently, in \cite{CFYZZ21} the authors independently constructed an integrable vertex model whose partition functions are the supersymmetric spin LLT polynomials generalizing the vertex model of \cite{CYZZ}. They prove a Cauchy identity analogous to Thm. \ref{thm:main2} here. See remark \ref{rem:CYZZ} for a discussion of the relationship between the different vertex models.

The layout of our paper is as follows. In Section \ref{sec:ribbon}, we discuss relevant background. We define coinversion LLT polynomials and spin LLT polynomials. We describe how to relate the tuples of semistandard Young tableaux to semistandard ribbon tableaux through the Littlewood quotient map, and we extend this map to the case of super tableaux. 

In Section, \ref{sec:VertexModel}, we introduce the vertex models we will use through the paper. We show that they are integrable in the sense that they satisfy a Yang-Baxter equation, and we define the relevant partition functions that give rise to the supersymmetric LLT polynomials $\mathcal{L}_{\lm}^S$. 

In Section \ref{sec:identities}, we prove a variety of properties of the $\mathcal{L}_{\lm}^S$. The main result is 
\begin{thm} \label{thm:main3}
The polynomials $\mc{L}^S_\lm(X_n;Y_m;t)$ satisfy the following properties.
\begin{enumerate} 
    \item (Symmetry, Lemma \ref{lem:permuterows}) The polynomials $\mc{L}^S_\lm(X_n;Y_m;t)$ are symmetric in the $X$ and $Y$ variables.
    \item (Cancellation, Lemma \ref{lem:cancellation}) $\mc{L}_\lm^S(X_{n-1},r;Y_{m-1},-r;t) = \mc{L}_\lm^S(X_{n-1};Y_{m-1};t)$
    \item (Homogeneity, Lem. \ref{lem:homogeneous}) The polynomial $\mc{L}^S_\lm(X_n;Y_m;t)$ is homogeneous in the $X$ and $Y$ variables of degree $|\lm| = |\bm{\lambda}| - |\bm{\mu}|$.
    \item (Restriction, Lemma \ref{lem:restriction})
    \[ \begin{aligned} 
    &\mc{L}_\lm^S(X_{n-1},0;Y_m;t) = \mc{L}_\lm^S(X_{n-1};Y_m;t), \\
    &\mc{L}_\lm^S(X_n;Y_{m-1},0;t) = \mc{L}_\lm^S(X_n;Y_{m-1};t)
    \end{aligned} \]
    \item (Factorization, Lemma \ref{lem:factorization}) If there exist $\bm{\tau}$ and $\bm{\eta}$ such that
    \[ \lambda^{(i)} = (m+\tau^{(i)}_1,\ldots,m+\tau^{(i)}_n,\eta^{(i)}_1,\ldots,\eta^{(i)}_s) \]
    for all $i$, then 
    \[ \begin{aligned}
    \mc{L}_{\bm{\lambda}}^S(X_n;Y_m;t) = \mc{L}_{\bm{\tau}}(X_n;t) 
    \cdot t^{g(\bm{\eta})} \mc{L}_{\bm{\eta}'}(Y_m;t^{-1}) 
    \cdot \prod_{l=0}^{k-1} \prod_{i=1}^n \prod_{j=1}^m (t^l x_i+y_j).
    \end{aligned} \]
\end{enumerate}
\end{thm}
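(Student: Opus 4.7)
The unifying strategy is to prove each item directly from the vertex model definition of $\mc{L}^S_\lm$ given in Section \ref{sec:VertexModel}. I picture the partition function as living on a rectangular lattice whose rows split into $n$ ``$X$-rows'' (one per $x_i$) and $m$ ``$Y$-rows'' (one per $y_j$), carrying distinct sets of vertex weights, with boundary edges encoding $\bm{\lambda}$ and $\bm{\mu}$. Each of the five properties then corresponds to a specific transformation of this picture.

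Symmetry (Lemma \ref{lem:permuterows}) is the standard train argument built on the Yang-Baxter equations of Section \ref{sec:VertexModel}: attach an $R$-matrix between two adjacent rows, push it through the lattice, and remove it on the far side, thereby swapping the two spectral parameters. This must be run in all three regimes ($X$-$X$, $Y$-$Y$, and mixed $X$-$Y$), so the availability of the mixed YBE is essential. Restriction (Lemma \ref{lem:restriction}) should then follow by checking that with $x_i=0$ (respectively $y_j=0$) the only configuration of the corresponding row with nonzero weight is the empty one, after which that row can be deleted. Homogeneity (Lemma \ref{lem:homogeneous}) is a degree count: each vertex weight is either constant in the $x_i,y_j$ or carries a single such factor, and the total number of path-occupied horizontal edges equals $|\lm|$.

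Cancellation (Lemma \ref{lem:cancellation}) is slightly more involved. The plan is to first use symmetry to move the rows corresponding to $x_n$ and $y_m$ to the bottom of the lattice, so that the two specialized rows form an isolated two-row slab. I would then exhibit a weight-preserving sign-reversing involution on configurations of this slab at $(x_n,y_m)=(r,-r)$, pairing any configuration in which a particle enters and later exits the slab with a partner obtained by toggling where the particle switches between the two rows. Only configurations in which no particle traverses the slab survive; these match one-for-one with configurations of the smaller lattice.

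The main obstacle is factorization (Lemma \ref{lem:factorization}). Under the hypothesis that every $\lambda^{(i)}$ begins with $n$ parts shifted by $m$, the boundary data forces a frozen region inside the lattice: the paths tracking the $(m+\tau^{(i)})$-parts must traverse every $Y$-row before entering the $X$-rows, while the $\eta$-paths never leave the $Y$-rows. My plan is to (i) argue that this frozen region decouples the partition function into an $X$-only factor, a $Y$-only factor, and a ``crossing'' factor collecting the weights at the forced traversals; (ii) identify the $X$-only factor with $\mc{L}_{\bm{\tau}}(X_n;t)$ directly, and the $Y$-only factor with $t^{g(\bm{\eta})}\mc{L}_{\bm{\eta}'}(Y_m;t^{-1})$ by a conjugation/transposition bijection on the $Y$-block that swaps $t\leftrightarrow t^{-1}$ and $\bm{\eta}\leftrightarrow\bm{\eta}'$; and (iii) compute the local crossing weight at each $X$-$Y$ intersection, which should sum over the $k$ available color choices to $t^l x_i + y_j$, yielding the advertised triple product. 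Step (ii) is the most delicate point: pinning down the exact exponent $g(\bm{\eta})$ and justifying the inversion of $t$ requires careful bookkeeping of spin contributions in the $Y$-block, and I expect this to be where most of the technical work lies.
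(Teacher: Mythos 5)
Your treatments of symmetry, restriction, homogeneity, and cancellation coincide with the paper's proofs of Lemmas \ref{lem:permuterows}, \ref{lem:restriction}, \ref{lem:homogeneous}, and \ref{lem:cancellation}: the train argument run with all three Yang--Baxter equations, the observation that a row with spectral parameter $0$ forces the identity configuration, the count of right steps, and a sign-reversing involution on the isolated two-row slab that toggles the corner of a single path. (For the cancellation involution, note that the surviving configuration is the one in which the slab's top boundary equals its bottom boundary --- every path still ``traverses'' the slab vertically --- and that one must flip a canonically chosen corner, e.g.\ the largest color in the rightmost available column, to guarantee the weight ratio is exactly $-1$; but these are details, not gaps.)

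The factorization argument, however, has a genuine gap in steps (i) and (iii). There is no ``$X$--$Y$ intersection'' in the lattice $\latS_{n,m}$: the white and purple rows are stacked, so no vertex sees both an $x_i$ and a $y_j$, and the product $\prod_{l}\prod_i\prod_j(t^lx_i+y_j)$ cannot be assembled from local crossing weights. Relatedly, the frozen region you describe is only half right: the $\bm{\eta}$-paths are indeed pinned vertically through the white block (since $\bm{\mu}=\0$ forces a staircase of horizontal strips there), but a path carrying a part $m+\tau^{(i)}_j$ is \emph{not} forced to take a right step in every purple row --- it may take all $m+\tau^{(i)}_j$ of its right steps in the white rows --- so the configuration does not decompose into $X$-only, $Y$-only, and crossing pieces. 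The paper instead proves the linear factors globally: it first shows the twisted cancellation $\mc{L}^S_{\bm{\lambda}}(X_{n-1},r;Y_{m-1},-t^lr;t)=0$ for each $l$ (Lemma \ref{lem:generalcancel}), via a fixed-point-free signed involution whose fixed points are killed precisely by the hypothesis $\lambda^{(i)}_n\ge m$; symmetry then gives divisibility by $\prod_{l,i,j}(t^lx_i+y_j)$. A degree bound $\deg_Y\mc{L}^S_{m+\bm{\tau}}\le mnk$ shows the cofactor is independent of $Y$, and extracting the coefficient of $y_1^{nk}\cdots y_m^{nk}$ --- which is the \emph{only} place where every path is forced to cross every purple row --- identifies it as $\mc{L}_{\bm{\tau}}(X_n;t)$. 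Your step (ii), identifying the $Y$-block with $t^{g(\bm{\eta})}\mc{L}_{\bm{\eta}'}(Y_m;t^{-1})$ via conjugation, is the content of Lemma \ref{lem:dream} and is sound, but without the divisibility-plus-degree argument the middle factor of the product formula cannot be obtained.
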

\noindent Note the first two properties together imply that the polynomials $\mc{L}_{\lm}^S(X_n;Y_m;t)$ are supersymmetric.  In the case where $\bm{\lambda} = (\lambda)$ is a 1-tuple of partitions, one can show that the supersymmetric LLT polynomial $\mc{L}^S_{\bm{\lambda}}(X_n;Y_m;t)$ is exactly the supersymmetric Schur polynomial $s_\lambda(X_n;Y_m)$.  In fact, taking $\bm{\mu} = \0$ and $k=1$ in Theorem \ref{thm:main3}, these properties uniquely characterize the supersymmetric Schur polynomials (see \cite[Section 2.1.2]{Moens} and \cite[Example I.3.23]{macdonald1998symmetric}).  However, we suspect (but do not prove) that the properties in Theorem \ref{thm:main3} do not uniquely characterize the supersymmetric LLT polynomials, even in the case $\bm{\mu} = \0$.

In Section \ref{sec:relate-ls-to-g}, we relate the coinversion supersymmetric LLT polynomials to the ribbon supersymmetric LLT polynomials of Lam. The main result is
\begin{thm}[Proposition \ref{prop:YBEpurplewhite} + Prop. \ref{prop-L-equals-G}] \label{thm:main1}
Suppose the $k$-tuple of skew shapes $\lm$ is the $k$-quotient of the skew shape $\lambda/\mu$.  There is a Yang-Baxter integrable vertex model whose partition function $\mc{L}^S_\lm(X_n;Y_m;t)$ is equal to 
\[ \mc{L}^S_\lm(X_n;Y_m;t) = t^{\square} \mathcal{G}_{\lambda/\mu}^{(k)}(X_n;Y_m;t^{1/2}) \]
for some half-integer $\square \in \frac{1}{2}\mathbb{Z}$, where $\mathcal{G}_{\lambda/\mu}^{(k)}(X_n;Y_m;t)$ is the super ribbon LLT polynomial.
\end{thm}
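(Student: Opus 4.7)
The theorem bundles two essentially independent statements: Yang--Baxter integrability of the ``purple and white'' vertex model (Proposition~\ref{prop:YBEpurplewhite}) and the identification of its partition function with Lam's super ribbon function (Proposition~\ref{prop-L-equals-G}). My plan is to treat them in turn.

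For Yang--Baxter integrability, the white--white R-matrix should already have been established in Section~\ref{sec:VertexModel}, being essentially the R-matrix of the coinversion LLT vertex model of \cite{CGKM}. The new content is the purple--white and purple--purple cases. I would proceed by writing down the R-matrices explicitly and, since the present model is described in the introduction as a degeneration of the Aggarwal--Borodin--Wheeler colored vertex model of \cite{ABW}, pulling the ABW R- and L-matrices through the same degeneration and checking that all weights remain finite in the limit; the identity $RLL = LLR$ for the purple/white model then descends from the ABW Yang--Baxter equation. A direct verification is also feasible: by the ice rule, the intermediate states for each sextuple of boundary edges are finite in number, so the check reduces to a bounded case analysis.

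For the polynomial identity (Proposition~\ref{prop-L-equals-G}), the strategy is bijective. First, I would expand $\mc{L}^S_\lm(X_n; Y_m; t)$ as a weighted sum over admissible lattice configurations, with the $n$ white rows contributing monomials in the $X$ variables and the $m$ purple rows contributing monomials in the $Y$ variables. Second, I would interpret each configuration as a pair consisting of a $k$-tuple of semistandard Young tableaux in the $X$ alphabet (recovered from the white rows via the standard vertex-to-tableau dictionary of \cite{CGKM}) together with a $k$-tuple of conjugate/column-strict tableaux in the $Y$ alphabet (recovered from the purple rows). Third, I would invoke the extended Littlewood quotient map developed in Section~\ref{sec:ribbon}, which packages this pair of $k$-tuples into a single semistandard super $k$-ribbon tableau $T$ of shape $\lambda/\mu$ with entries in $X_n \cup Y_m$. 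The monomial weights $x_i^{\#\{\text{$i$-cells}\}} y_j^{\#\{\text{$j$-cells}\}}$ then match cell-by-cell, with no further effort.

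The main obstacle is the matching of the $t$-weight. I need to show that the product of local vertex weights equals $t^{\square} \cdot t^{\frac{1}{2}\spin(T)}$ for a constant $\square \in \tfrac{1}{2}\mathbb{Z}$ depending only on $\lm$. My plan is to induct on the number of purple rows: the base case (no purple rows) is the identification between the vertex-model coinversion statistic and the ribbon spin already carried out in \cite{CGKM, CYZZ}, while the inductive step requires computing the change in the vertex-model $t$-weight when a single purple row is adjoined, and comparing it to the change in $\spin(T)$ under adjoining a $Y$-labeled ribbon strip. The form of $\square$ can then be constrained (and its dependence on $\lm$ pinned down) using the restriction and homogeneity properties from Theorem~\ref{thm:main3}. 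This last step, quantifying how a purple row contributes to the spin of a super $k$-ribbon tableau, is where I expect the bulk of the combinatorial work.
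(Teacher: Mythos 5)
Your treatment of the Yang--Baxter half of the theorem matches the paper's: the YBEs are obtained by realizing all five vertex types as degenerations of the Aggarwal--Borodin--Wheeler weights (Lemma \ref{lem:degen-abw}) and pushing \cite[Prop.~5.1.4]{ABW} through the limit (this is carried out in Appendix \ref{sec:abw-ybe-limits}). Your bijective setup for the second half is also the paper's: the map $\theta$ is exactly the composite of the configuration/SSSYT dictionary with the extended Littlewood quotient map, and the matching of the $x$- and $y$-monomials is immediate.

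The gap is in the $t$-weight, at precisely the point you flag as the bulk of the work. What must be proved is a \emph{termwise} identity: for every $T$ with $C=\theta(T)$ one has $\weight(C)=t^{\square}\,t^{\spin(T)/2}\,x^{\weight(T)}y^{\weight'(T)}$ with one and the same $\square$. Your proposal to pin down $\square$ using the restriction and homogeneity properties of Theorem \ref{thm:main3} cannot accomplish this: those are statements about the summed polynomials, and since distinct configurations contribute identical monomials, no identity of generating functions can certify that the discrepancy between the lattice exponent $\coinv(C)+\coinv'(C)$ and $\tfrac12\spin(T)$ is constant over all $C$. Moreover, in your induction on purple rows the per-row discrepancy is a function of the intermediate shapes in the chain $\bm{\mu}=\bm{\alpha}^0\subset\cdots\subset\bm{\alpha}^{n+m}=\bm{\lambda}$ cut out by $C$, so what you actually need is that the sum of these corrections is independent of the chain --- which is the whole point at issue, not something the induction supplies. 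The paper closes this by (i) proving slice-by-slice lemmas (Lemma \ref{lem:white-spin} and its vertical analogue) that express the spin of each horizontal or vertical ribbon strip as an explicit sum of local vertex counts, so that $\square$ becomes an explicit signed count of local configurations over the whole lattice, and then (ii) showing that this count is invariant under the corner flips connecting any two configurations with the same boundary (a two-color check, verified by computer). Without step (ii), or some substitute for it, your argument does not close.
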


Finally, in Section \ref{sec:Cauchy}, we show the supersymmetric LLT polynomials satisfy a Cauchy identity. The main result is
\begin{thm} \label{thm:main2}
Let $\bm{\mu}$ and $\bm{\nu}$ be tuples of partitions each with infinitely many parts only finitely many of which are nonzero. Fix positive integers $n,m,p,q$. Then
\begin{equation}
\begin{aligned}
 & \sum_{\bm{\lambda}} t^{d(\bm{\mu},\bm{\lambda})} \mathcal{L}^S_{\bm{\nu}/\bm{\lambda}}(X_n,Y_m;t) \mathcal{L}^S_{\bm{\mu}/\bm{\lambda}}(W_p,Z_q;t) =\\&
  \Omega(X_n,Y_m,W_p,Z_q;t) \sum_{\bm{\lambda}} t^{d(\bm{\lambda},\bm{\nu})} \mathcal{L}^S_{\bm{\lambda}/\bm{\mu}}(X_n,Y_m;t)\mathcal{L}^S_{\bm{\lambda}/\bm{\nu}}(W_p,Z_q;t)
 \end{aligned}
\end{equation}
where
\[
\Omega(X_n,Y_m,W_p,Z_q;t) = \prod_{l=0}^{k-1} \prod_{i,i'=1}^n\prod_{j,j'=1}^m\prod_{\alpha,\alpha'=1}^p\prod_{\beta,\beta'=1}^q \frac{(1-x_iw_\alpha t^l)(1-y_{j'}z_{\beta'}t^l)}{(1+y_{j}w_{\alpha'}t^l)(1+x_{i'}z_{\beta}t^l)}.
\]
\end{thm}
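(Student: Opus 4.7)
The plan is to follow the standard ``train'' argument for Cauchy identities in integrable vertex models (cf.\ \cite{BorodinWheeler, ABW, CGKM}): combine both partition functions into a single lattice and use the Yang-Baxter equation to swap the top and bottom halves. Since each side of the identity is already built from objects defined as partition functions of the vertex model of Section \ref{sec:VertexModel}, this reduces the problem to constructing the correct dual model and intertwining R-matrices.

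First I would reinterpret the left-hand side as a single partition function. Stack $n+m$ primal rows carrying spectral parameters $X_n$ and $Y_m$ above $p+q$ primal rows carrying $W_p$ and $Z_q$, with left/right boundary data encoding $\bm{\nu}$ on top and $\bm{\mu}$ on the bottom. Summing over the state on the horizontal interface and absorbing the $t^{d(\bm{\mu},\bm{\lambda})}$ factor as a weight of that interface state produces the LHS by the definition of $\mathcal{L}^S$. For the RHS I would want the analogous partition function but with the two blocks interchanged and the outer/inner roles of the skew shapes swapped; to express this within the vertex model framework I would introduce a \emph{dual} version of the model, whose rows are reflections of the primal ones via an arrow-reversal/crossing symmetry on the $L$-weights, and whose partition functions compute $\mathcal{L}^S_{\bm{\lambda}/\bm{\mu}}$ and $\mathcal{L}^S_{\bm{\lambda}/\bm{\nu}}$.

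The key structural input is a family of intertwining R-matrices, one for each of the four pairings of a primal row type with a dual row type (primal-purple/dual-purple, primal-white/dual-white, and the two mixed pairings), each satisfying an RLL relation with the relevant $L$-weights. With these in hand, the train argument is standard: attach an R-matrix at the far left of the lattice between a primal and a dual row, use RLL repeatedly to drag it through all $n+m+p+q$ rows to the far right, and observe that the rearranged lattice has the dual rows in one block and the primal rows in the other in the opposite vertical order. After summing over the new interface state and collecting the residual $t^{d(\bm{\lambda},\bm{\nu})}$ boundary weight, this is the RHS. Once the R-matrix has been pushed off the lattice it acts on two semi-infinite empty rows, and its partition function factorizes into a product of empty-column weights. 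Matching these with $\Omega$ identifies the four factors: the $-$ signs for the same-color pairings and $+$ signs for the mixed-color pairings reflect the supersymmetry (fermionic nature of the white rows carrying $Y$ and $Z$), and the product $\prod_{l=0}^{k-1}$ arises from the $k$ colors of the underlying vertex model.

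The main obstacle will be Step 2: constructing the dual model and verifying the four intertwining RLL relations, together with computing the correct empty-column R-matrix weight in each case. The same-color relations should be accessible by specialization of the Yang-Baxter equations already proved in Section \ref{sec:VertexModel} (themselves inherited from the colored vertex model of \cite{ABW}), but the mixed-color cases will require either a separate YBE check or a derivation via fusion/crossing from the known ones, and the careful tracking of $t$-powers needed to reproduce the exact $\prod_{l=0}^{k-1}$ prefactor will be the most delicate bookkeeping. Once these ingredients are established, the train argument itself is a mechanical application of YBE and is formally identical to the non-super case in \cite{CGKM}.
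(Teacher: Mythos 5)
Your proposal is essentially the paper's proof: dual (rescaled, spectral-parameter-inverted) row types, YBE-driven train arguments pairing each primal block with each dual block, the prefactor $\Omega$ read off from the cross weights, and the statistic $t^{d(\bm{\mu},\bm{\lambda})}$ arising as the discrepancy between the dual partition functions and $\mathcal{L}$, $\mathcal{L}^P$. The only organizational difference is that the paper runs four separate two-block train arguments (Propositions \ref{cauchy1}--\ref{cauchy4}) and then composes them algebraically via $\mathcal{L}^S = \sum \mathcal{L}^P\cdot\mathcal{L}$ and additivity of $d$, rather than one monolithic train on the full $(n+m)+(p+q)$-row lattice; the ``delicate bookkeeping'' you flag (relating $\mathcal{L}^*$, $(\mathcal{L}^P)^*$ to $\mathcal{L}$, $\mathcal{L}^P$ and cancelling the $l_1$-, $l_2$-dependent prefactors) is indeed where the bulk of the paper's Section \ref{sec:Cauchy} is spent.
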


\subsection*{Acknowledgements.} 
The authors would like to thank Sylvie Corteel for many helpful discussions during the course of this work. The authors were partially supported by NSF DMS-1600447 and NSF DMS-2054482.

\section{LLT polynomials, super ribbon functions, and the Littlewood quotient map}\label{sec:ribbon}

\indent This section provides necessary background information and establishes some notation for the rest of this paper.  First we venture into the world of tableaux on tuples of skew shapes, and we define coinversion LLT polynomials.  Then we venture into the world of ribbon tableaux, and we define super ribbon function.  Finally, we connect these two worlds via the Littlewood quotient map.

\subsection{Tuples of skew shapes and coinversion LLT polynomials}

\indent We begin by discussing semistandard Young tableaux on tuples of skew shapes, which we use to give one formulation of LLT polynomials.

\indent Fix a nonnegative integer $p$.  A \textbf{partition} with $p$ parts is a weakly decreasing sequence $\lambda = (\lambda_1 \geq \cdots \geq \lambda_p \geq 0)$ of $p$ nonnegative integers.  Note that here we consider our partitions to have a fixed number of parts but allow for the possibility of parts equalling zero; later we will also consider partitions with an infinite number of parts, only finitely many of which are nonzero.  We let the length $\ell(\lambda)$ be the number of nonzero parts of $\lambda$, and we let the size $|\lambda|$ be the sum $\lambda_1 + \ldots + \lambda_p$ of its parts.  We associate to $\lambda$ its Young (or Ferrers) diagram $D(\lambda) \subseteq \Z \times \Z$, given as
\[ D(\lambda) = \{(i,j) \mid 1 \leq i \leq \ell(\lambda), \; 1 \leq j \leq \lambda_i \}. \]
We refer to the elements of $D(\lambda)$ as cells.  We draw our diagrams in French notation in the first quadrant, so that the first row is on the bottom and the first column is on the left, such as below.
\[ \lambda = (4,2,1), \qquad D(\lambda) = 
\ytableausetup{aligntableaux=center}
\begin{ytableau} \\ & \\ & & \bullet & \end{ytableau} \]
The cell labelled above has coordinates (1,3).  The \textbf{content} of a cell $u = (i,j)$ in row $i$ and column $j$ of any Young diagram is $c(u) = j-i$.  In what follows, we will use $\lambda$ and $D(\lambda)$ interchangeably, when it will not cause confusion. 

If $\lambda$ and $\mu$ are partitions such that $D(\lambda) \supseteq D(\mu)$, then the \textbf{skew shape} $\lambda / \mu$ is the set of cells in $D(\lambda)$ that are not in $D(\mu)$.  We draw the diagram of $\lambda / \mu$ by coloring the cells in $D(\mu)$ gray, such as below.
\[ \lambda = (4,2,1), \mu = (2,1), \qquad D(\lambda / \mu) = 
\ytableausetup{aligntableaux=center}
\begin{ytableau} \\ *(lightgray) & \\ *(lightgray) & *(lightgray) & & \end{ytableau} \]
The size $|\lambda/\mu|$ of $\lambda / \mu$ is $|\lambda| - |\mu|$.

A \textbf{semistandard Young tableau} of shape $\lambda/\mu$ is a filling of each cell of $D(\lambda/\mu)$ with a positive integer such that the rows are weakly increasing and the columns are strictly increasing. We call the set of all such fillings $\SSYT(\lambda/\mu)$.  Let $\lm = (\lambda^{(1)}/\mu^{(1)}, \ldots, \lambda^{(k)}/\mu^{(k)})$ be a tuple of skew partitions.  We define its size to be
\[ |\lm| = |\lambda^{(1)}/\mu^{(1)}| + \ldots + |\lambda^{(k)}/\mu^{(k)}| \]
and we define a semistandard Young tableau of shape $\lm$ to be a semistandard Young tableau on each $\lambda^{(j)}/\mu^{(j)}$, that is,
\[ \SSYT(\lm) = \SSYT(\lambda^{(1)}/\mu^{(1)}) \times \cdots \times \SSYT(\lambda^{(k)}/\mu^{(k)}). \]
We can picture this as placing the Young diagrams diagonally ``on content lines" with the first shape in the South-West direction and the last shape in the North-East direction. 

\begin{example}
	Let $\lm = ((3,1), (2,2,2)/(1,1,1), (1), (2,1)/(2))$. The top row labels the contents of each line.
	\ytableausetup{nosmalltableaux}
	\ytableausetup{nobaseline}
	\begin{center}
	\resizebox{.4\textwidth}{!}{
	\begin{ytableau}
		\none & \none & \none & \none & \none & \none & \none & \none & \none & \none[-3] & \none[-2] & \none[-1] & \none[0] & \none[1] & \none[2] \\			
		\none & \none & \none & \none & \none & \none & \none & \none &\none[\diag] &\none[\diag] &\none[\diag] &\none[\diag] & \none[\diag] & \none[\diag] \\
		\none & \none & \none & \none & \none & \none & \none &\none[\diag] &\none[\diag] & 3 &\none[\diag] &\none[\diag] & \none[\diag] & \none[\diag] \\
		\none & \none & \none & \none & \none &\none &\none[\diag] &\none[\diag] & \none[\diag] & *(lightgray) & *(lightgray) &\none[\diag] & \none[\diag] \\
		\none & \none & \none & \none &\none &\none[\diag] &\none[\diag] &\none[\diag] &\none[\diag] &\none[\diag] &\none[\diag] &\none[\diag] & \none \\
		\none & \none & \none &\none &\none[\diag] &\none[\diag] &\none[\diag] & 7 &\none[\diag] &\none[\diag] &\none[\diag] &\none \\
		\none & \none &\none &\none[\diag] &\none[\diag] &\none[\diag] &\none[\diag] &\none[\diag] &\none[\diag] &\none[\diag] &\none &\none \\
		\none & \none &\none[\diag] & *(lightgray) & 6 &\none[\diag] &\none[\diag] &\none[\diag] &\none[\diag] &\none &\none & \none \\
		\none & \none[\diag] & \none[\diag] & *(lightgray) & 4 &\none[\diag] &\none[\diag] &\none[\diag] &\none &\none & \none & \none \\
		\none[\diag] & \none[\diag] & \none[\diag] & *(lightgray) & 1 &\none[\diag] &\none[\diag] &\none &\none & \none & \none & \none \\
		\none[\diag] & \none[\diag] & \none[\diag] &\none[\diag] &\none[\diag] &\none[\diag] &\none &\none & \none & \none & \none & \none \\
		8 &\none[\diag] &\none[\diag] &\none[\diag] &\none[\diag] &\none &\none & \none & \none & \none & \none & \none \\
		2 & 5 & 9 &\none[\diag] &\none &\none & \none & \none & \none & \none & \none & \none \\
	\end{ytableau}	
	}
	\end{center}
	\label{exampleLLT}
\end{example}

We will use tuples of skew partitions to index our formulation of LLT polynomials.  Given a tuple $\lm$ of skew partitions, we say three cells $u, v, w \in \Z \times \Z$ form a \textbf{triple} if
\begin{enumerate}
\item $v \in \lm$;
\item they are situated as below
\begin{equation*} \label{triple}
\ytableausetup{nobaseline}
\begin{ytableau}
\none & \none & \none &\none & u & w  \\
\none & \none & \none & \none & \none[\diag]  \\
\none & \none & \none  & \none[\diag] \\
\none & \none & v \\
\end{ytableau}
\end{equation*}
namely with $v$ and $w$ on the same content line and $w$ in a later shape, and $u$ on a content line one smaller, in the same row as $w$; and 
\item if $u$ and $w$ are in row $r$ of $\lambda^{(j)}/\mu^{(j)}$, then $u$ and $w$ must be between the cells $(r, \mu^{(j)}_r)$ and $(r, \lambda^{(j)}_r+1)$, inclusive.
\end{enumerate}
It is important to note that while $v$ must be a cell in $\lm$, this is not true of $u$ and $w$.  If not in $\lm$, $u$ must be at the end of some row in $\bm{\mu}$, and $w$ must be the cell directly to the right of the end of some row in $\bm{\lambda}$.

\begin{definition}
Let $\lm$ be a tuple of skew partitions and let $T \in \SSYT(\lm)$. Let $a, b, c$ be the entries in the cells of a triple $u, v, w$, where we set $a = 0$ and $c=\infty$ if the respective cell is not in $\lm$. Given the triple of entries
\ytableausetup{nobaseline}
\[
\begin{ytableau}
\none & \none & \none &\none & a & c  \\
\none & \none & \none & \none & \none[\diag]  \\
\none & \none & \none  & \none[\diag] \\
\none & \none & b \\
\end{ytableau}
\]
we say this is a \textbf{coinversion (inversion) triple} if $a \leq b \leq c$ ($b < a \leq c$ or $a \leq c < b$).
\label{inv-coinv-triple}
\end{definition}

\noindent There are 7 coinversion triples in Example \ref{exampleLLT} above: (0, 2, 4), (0, 2, 7), (3,4,$\infty$), (0,4,7), (4,5,$\infty$), (1,9,$\infty$), and (0,9,$\infty$).  

With these definitions in place, we are finally able to define the coinversion LLT polynomials. 
\begin{definition} \label{def:coinv-LLT}
The \textbf{coinversion LLT polynomial} associated to a tuple $\lm$ of skew partitions is the generating function
\[ \mc{L}_{\lm}(X; t) = \sum_{T \in \SSYT(\lm)} t^{\coinv(T)} x^T \]
where $\coinv(T)$ is the number of coinversion triples in $T$.
\end{definition}

\noindent See Appendix \ref{sec:other-LLT} for other formulations of LLT polynomials that appear in the literature.

\subsection{Ribbon tableaux and the Littlewood quotient map}

Next we discuss semistandard ribbon tableaux. We define a bijection, called the Littlewood quotient map, relating them and tuples of semistandard Young tableaux.

Fix a positive integer $k$.  A \textbf{$k$-ribbon} is a skew shape of size $k$ that is connected and does not contain any $2 \times 2$ square.  The head (tail) of a $k$-ribbon is the SE-most (NW-most) cell in its Young diagram.  A \textbf{horizontal (vertical) $k$-ribbon strip} of shape $\lambda/\mu$ is a tiling of $\lambda/\mu$ by $k$-ribbons such that the head (tail) of each ribbon is adjacent to the southern (western) boundary of the shape.  We let $\HRS_k(\lambda/\mu)$ ($\VRS_k(\lambda/\mu)$) denote the set of horizontal (vertical) $k$-ribbon strips of shape $\lambda/\mu$.   

\indent Throughout this paper, we will omit $k$ when it is clear from context.  For example, we will use ``ribbon" and ``$k$-ribbon" interchangeably.

\begin{definition} \label{def:ssrt}
A \textbf{semistandard $k$-ribbon tableau} of shape $\lambda/\mu$ is a tiling of $\lambda/\mu$ by $k$-ribbons and a labelling of the $k$-ribbons by positive integers such that, for all $i$,
\begin{enumerate}
\item removing all ribbons labelled $j$ for $j > i$ gives a valid skew shape $\lambda_{\leq i} / \mu$, and
\item the subtableau of ribbons labelled $i$ form a horizontal $k$-ribbon strip of shape $\lambda_{\leq i} / \lambda_{\leq i-1}$.
\end{enumerate}
We let $\SSRT_k(\lambda/\mu)$ denote the set of semistandard $k$-ribbon tableau of shape $\lambda/\mu$.
\end{definition}  

\indent Following the exposition of \cite[Section 3]{Pfannerer}, we now define the Littlewood $k$-quotient map.  This map was introduced in \cite{littlewood}; another (perhaps clearer) formulation, as well as a proof that the map is a bijection, is given in \cite{stanton1985schensted}.

We first define the \textbf{$k$-quotient map}, which is a function
\[ \begin{aligned}
\{ \text{skew partitions $\lambda/\mu$} \} \rightarrow \{ \text{$k$-tuples $\lm = (\lambda^{(0)}/\mu^{(0)},\ldots,\lambda^{(k-1)}/\mu^{(k-1)})$ of skew partitions} \}. 
\end{aligned} \]
This function can be defined graphically as follows.  Given a partition $\lambda$, we associate a finite sequence $(a_0,\ldots,a_{r-1})$ of East and South steps, called the \textbf{Maya diagram} of $\lambda$, by following the North-East boundary of $\lambda$ from North-West to South-East.

\begin{example}
The Maya diagram of (4,3,2,2,1) is ESESSESES.
\[ \resizebox{2cm}{!}{
\begin{tikzpicture}[baseline=(current bounding box.center)]
\draw (0,0) grid (2,4); \draw (0,4) grid (1,5); \draw (2,0) grid (3,2); \draw (3,0) grid (4,1);
\node[scale=2] at (0.5,5) {E}; \node[scale=2] at (1,4.5) {S}; \node[scale=2] at (1.5,4) {E}; \node[scale=2] at (2,3.5) {S}; \node[scale=2] at (2,2.5) {S}; \node[scale=2] at (2.5,2) {E}; \node[scale=2] at (3,1.5) {S}; \node[scale=2] at (3.5,1) {E}; \node[scale=2] at (4,0.5) {S}; 
\end{tikzpicture} } \]
\end{example}

\begin{remark} \label{rmk:maya-extend}
 
Observe that postpending finitely many E's to a Maya diagram does not change the corresponding partition.  Thus we can take the length $r$ of the Maya diagram $(a_0,\ldots,a_{r-1})$ to be a multiple of $k$.
\end{remark}

\noindent Let $\lambda$ be a partition with Maya diagram $(a_0,\ldots,a_{r-1})$.  By the preceding remark, we may assume $s = r/k$ is an integer.  We define the $k$-quotient of $\lambda$ to be 
\[ \bm{\lambda} = (\lambda^{(0)},\ldots,\lambda^{(k-1)}) \]
where, for each $i$, $\lambda^{(i)}$ is the partition corresponding to the Maya diagram
\[ (a_i,a_{k+i},\ldots,a_{(s-1)k+i}). \]
We define the $k$-quotient of a skew partition $\lambda/\mu$ to be the $k$-tuple $\lm$ of skew partitions, where $\bm{\lambda}$ and $\bm{\mu}$ are the $k$-quotients of $\lambda$ and $\mu$ respectively.  Here we require $\lambda$ and $\mu$ to have the same number of parts, postpending parts equalling 0 to $\mu$ if necessary.

\begin{example} \label{ex:quotient}
The 3-quotient of (4,3,2,2,1) is ((1,1),(0,0),(2)).
\[ \resizebox{2cm}{!}{
\begin{tikzpicture}[baseline=(current bounding box.center)]
\draw (0,0) grid (2,4); \draw (0,4) grid (1,5); \draw (2,0) grid (3,2); \draw (3,0) grid (4,1);
\node[scale=2,blue] at (0.5,5) {E}; \node[scale=2,green] at (1,4.5) {S}; \node[scale=2,red] at (1.5,4) {E}; \node[scale=2,blue] at (2,3.5) {S}; \node[scale=2,green] at (2,2.5) {S}; \node[scale=2,red] at (2.5,2) {E}; \node[scale=2,blue] at (3,1.5) {S}; \node[scale=2,green] at (3.5,1) {E}; \node[scale=2,red] at (4,0.5) {S}; 
\end{tikzpicture} }
\hspace{2cm}
\resizebox{5cm}{!}{
\begin{tikzpicture}[baseline=(current bounding box.center)]
\draw (0,0) grid (1,2);
\node[blue,scale=2] at (0.5,2) {E}; \node[blue,scale=2] at (1,1.5) {S}; \node[blue,scale=2] at (1,0.5) {S};
\draw (3+0,0) grid (3+0,2); \draw (3+0,0) grid (3+1,0);
\node[green,scale=2] at (3+0,1.5) {S}; \node[green,scale=2] at (3+0,0.5) {S}; \node[green,scale=2] at (3+0.5,0) {E};
\draw (6+0,0) grid (6+2,1);
\node[red,scale=2] at (6+0.5,1) {E}; \node[red,scale=2] at (6+1.5,1) {E}; \node[red,scale=2] at (6+2,0.5) {S};
\end{tikzpicture} }
\]
\end{example}

We are now ready to define the Littlewood $k$-quotient map.

\begin{definition} \label{def:littlewood-quotient-map}
Let $\lm$ be the $k$-quotient of $\lambda/\mu$.  The \textbf{Littlewood $k$-quotient map} is a bijection
\[ \SSRT_k(\lambda/\mu) \rightarrow  \SSYT(\lm) \]
defined as follows.  Fix $T \in \SSRT_k(\lambda/\mu)$.  For each $i$, we put an $i$ into each cell of the $k$-quotient of $\lambda_{\leq i}/\lambda_{\leq i-1}$ (which lies inside $\lm$).  In this fashion, we place positive integers into the cells of $\lm$, resulting in $\bm{T} = (T^{(0)},\ldots,T^{(k-1)}) \in \SSYT(\lm)$.
\end{definition}

\begin{example} In Example \ref{ex:quotient}, we found that the 3-quotient of $\lambda = (4,3,2,2,1)$ was $\bm{\lambda} = ((1,1),(0,0),(2))$.  One can compute that
\[ 
\resizebox{2cm}{!}{
\begin{tikzpicture}[baseline=(current bounding box.center)]
\draw (0,0) -- (2,0) -- (2,1) -- (1,1) -- (1,2) -- (0,2) -- (0,0);
\draw (2,0) -- (4,0) -- (4,1) -- (3,1) -- (3,2) -- (2,2) -- (2,0);
\draw (0,2) rectangle (1,5);
\draw (1,1) rectangle (2,4);
\node[scale=2] at (0.5,1.5) {$1$}; \node[scale=2] at (0.5,4.5) {$2$};
\node[scale=2] at (1.5,3.5) {$3$}; \node[scale=2] at (2.5,1.5) {$4$};
\end{tikzpicture} }
\hspace{1cm} \leftrightarrow \hspace{1cm}
\resizebox{5cm}{!}{
\begin{tikzpicture}[baseline=(current bounding box.center)]
\draw (0,0) grid (1,2);
\draw (3+0,0) grid (3+0,2); \draw (3+0,0) grid (3+1,0);
\draw (6+0,0) grid (6+2,1);
\node[scale=2] at (0.5,0.5) {$1$}; \node[scale=2] at (0.5,1.5) {$2$};
\node[scale=2] at (6+0.5,0.5) {$3$}; \node[scale=2] at (6+1.5,0.5) {$4$};
\end{tikzpicture} }
\]
via the Littlewood 3-quotient map.  For example, when $i=3$, one can compute the $k$-quotient of $\lambda_{\leq i}/\lambda_{\leq i-1}$ as follows.
\[ 
\resizebox{2cm}{!}{
\begin{tikzpicture}[baseline=(current bounding box.center)]
\draw (0,0) -- (2,0) -- (2,1) -- (1,1) -- (1,2) -- (0,2) -- (0,0);
\draw (2,0) -- (4,0) -- (4,1) -- (3,1) -- (3,2) -- (2,2) -- (2,0);
\draw (0,2) rectangle (1,5);
\draw (1,1) rectangle (2,4);
\node[scale=2] at (0.5,1.5) {$1$}; \node[scale=2] at (0.5,4.5) {$2$};
\node[scale=2] at (1.5,3.5) {$3$}; \node[scale=2] at (2.5,1.5) {$4$};
 
\draw[blue, line width = 0.5mm] (0,5) -- (1,5); \draw[green, line width = 0.5mm] (1,5) -- (1,4); \draw[red, line width = 0.5mm] (1,4)--(2,4); \draw[blue, line width = 0.5mm] (2,4)--(2,3); \draw[green, line width = 0.5mm] (2,3)--(2,2); \draw[red, line width = 0.5mm] (2,2)--(2,1); \draw[blue, line width = 0.5mm] (2,1)--(2,0); \draw[green, line width = 0.5mm] (2,0)--(3,0); \draw[red, line width = 0.5mm] (3,0)--(4,0); \draw[red, line width = 0.5mm] (1,4)--(1,3); \draw[blue, line width = 0.5mm] (1,3)--(1,2); \draw[green, line width = 0.5mm] (1,2)--(1,1); \draw[red, line width = 0.5mm] (1,1)--(2,1);
\end{tikzpicture} }
\hspace{1cm} \leftrightarrow \hspace{1cm}
\resizebox{5cm}{!}{
\begin{tikzpicture}[baseline=(current bounding box.center)]
\draw (0,0) grid (1,2);
\draw[blue, line width = 0.5mm] (0,2)--(1,2)--(1,0);
\draw (3+0,0) grid (3+0,2); \draw (3+0,0) grid (3+1,0);
\draw[green, line width = 0.5mm] (3+0,2)--(3+0,0)--(3+1,0);
\draw (6+0,0) grid (6+2,1);
\draw[red, line width = 0.5mm] (6+0,0)--(6+1,0)--(6+1,1)--(6+0,1)--(6+0,0); \draw[red, line width = 0.5mm] (6+1,0)--(6+2,0);
\node[scale=2] at (0.5,0.5) {$1$}; \node[scale=2] at (0.5,1.5) {$2$};
\node[scale=2] at (6+0.5,0.5) {$3$}; \node[scale=2] at (6+1.5,0.5) {$4$};
\end{tikzpicture} }
\]
Here we have drawn the Maya diagrams of both $\lambda_{\leq 3}$ and $\lambda_{\leq 2}$.  From this, we see that that a box at coordinates $(1,1)$ is added in $T^{(2)}$, and we fill it with a 3.
\end{example}

\subsection{Extending the Littlewood quotient map to super tableaux}

\indent Finally, we extend the Littlewood quotient map to a bijection between super ribbon tableaux (Definition \ref{def:srt}) and semistandard super Young tableaux (Definition \ref{def:sssyt}).  This bijection plays a pivotal role in the rest of the paper, because it allows us to relate our partition functions (Definition \ref{def:latS}) to the super ribbon functions (Definition \ref{def:super-ribbon-function}) in Proposition \ref{prop-L-equals-G}.

\indent Throughout this subsection, let $\mc{A} = \{1 < 2 < \cdots \}$ and $\mc{A'} = \{ 1' < 2' < \cdots \}$.  Also fix a total order on $\mc{A} \cup \mc{A'}$ that is compatible with the natural orders on $\mc{A}$ and $\mc{A'}$.  

\begin{definition} \label{def:srt}
A \textbf{super $k$-ribbon tableau} of shape $\lambda/\mu$ is a tiling of $\lambda/\mu$ by $k$-ribbons and a labelling of the $k$-ribbons by the alphabet $\mc{A} \cup \mc{A'}$ such that 
\begin{enumerate}
\item for $i \in \mc{A} \cup \mc{A'}$, removing all ribbons labelled $j$ for $j > i$ gives a valid skew shape $\lambda_{\leq i} / \mu$;
\item for $i \in \mc{A}$, the subtableau of ribbons labelled $i$ form a horizontal $k$-ribbon strip; and
\item for $i' \in \mc{A'}$, the subtableau of ribbons labelled $i'$ form a vertical $k$-ribbon strip.
\end{enumerate}
We let $\SRT_k(\lambda/\mu)$ denote the set of super $k$-ribbon tableau of shape $\lambda/\mu$.
\end{definition}

\noindent Note that a SRT in the alphabet $\mc{A}$ of shape $\lambda/\mu$ is the same as a SSRT of shape $\lambda/\mu$.  Moreover, there is a bijection between SRT in the alphabet $\mc{A'}$ of shape $\lambda/\mu$ and SSRT of shape $\lambda'/\mu'$, given by conjugation (and unpriming the labels).

The height $h(R)$ of a ribbon $R$ is the number of rows it contains.  The \textbf{spin} of a super ribbon tableau $T$ is
\[ \spin(T) = \sum_R (h(R)-1) \]
where the sum is taken over all ribbons $R$ in $T$.

\begin{definition} \cite[Definition 44]{Lam} \label{def:super-ribbon-function}
The \textbf{super $k$-ribbon function} associated to a skew partition $\lambda/\mu$ is the generating function
\[ \mc{G}^{(k)}_{\lambda/\mu}(X;Y;t) = \sum_{T \in \SRT_k(\lambda/\mu)} t^{\spin(T)} x^{\weight(T)} y^{\weight'(T)}. \]
\end{definition}

\begin{example}
We use the ordering $1 < 2 < \ldots < 1' < 2' < \ldots$ on $\mc{A} \cup \mc{A'}$.  Let $k = 3$ and $\lambda/\mu = (8,7,6,6,6,4,1)/(2)$.  The super ribbon tableau
\[ \resizebox{4cm}{!}{
\begin{tikzpicture}[baseline=(current bounding box.center)]
\fill[gray] (0,0) rectangle (2,1);
\draw[thin] (8,0) -- (0,0) -- (0,7) -- (1,7) -- (1,6) -- (4,6) -- (4,5) -- (6,5) -- (6,2) -- (7,2) -- (7,1) -- (8,1) -- (8,0);
\draw[thin] (2,0) -- (2,1) -- (0,1);
\draw[thin] (1,1) -- (1,7);
\draw[thin] (0,4) -- (5,4) -- (5,0);
\draw[thin] (1,2) -- (3,2) -- (3,0);
\draw[thin] (1,3) -- (4,3) -- (4,1) -- (5,1) -- (5,0);
\draw[thin] (3,2) -- (4,2);
\draw[thin] (2,6) -- (2,5) -- (3,5) -- (3,4) -- (4,4) -- (4,3);
\draw[thin] (5,3) -- (6,3) -- (6,0);
\draw[thin] (4,5) -- (4,4);
\node[scale=2] at (0.5,3.5) {$1$}; \node[scale=2] at (0.5,6.5) {$2$};
\node[scale=2] at (1.5,1.5) {$1$}; \node[scale=2] at (1.5,2.5) {$2$}; \node[scale=2] at (1.5,3.5) {$3$}; \node[scale=2] at (1.5,5.5) {$1'$};
\node[scale=2] at (2.5,5.5) {$2'$}; \node[scale=2] at (3.5,1.5) {$1$};
\node[scale = 2] at (4.5,3.5) {$1'$}; \node[scale = 2] at (4.5,4.5) {$3'$};
\node[scale = 2] at (5.5,2.5) {$3'$}; \node[scale = 2] at (6.5,1.5) {$4'$};
\end{tikzpicture}
} \]
has spin 14 and contributes 
\[ t^{14} x_1^3 x_2^2 x_3^1 y_1^2 y_2^1 y_3^2 y_4^1 \]
to $\mc{G}^{(k)}_{\lambda/\mu}(X;Y;t)$.
\end{example}

\begin{definition} \label{def:sssyt}
A \textbf{semistandard super Young tableau} of shape $\lambda/\mu$ is a filling of each cell of $D(\lambda)$ with an element of $\mc{A} \cup \mc{A'}$ such that
\begin{enumerate}
    \item the rows and the columns are weakly increasing, 
    \item the entries in $\mc{A}$ are strictly increasing along columns, and
    \item the entries in $\mc{A'}$ are strictly increasing along rows.
\end{enumerate}
We let $\SSSYT(\lambda/\mu)$ denote the set of semistandard super Young tableaux of shape $\lambda/\mu$.  Given a tuple $\lm = (\lambda^{(1)}/\mu^{(1)}, \ldots, \lambda^{(k)}/\mu^{(k)})$ of skew partitions, a semistandard super Young tableau of shape $\lm$ is a semistandard super Young tableau on each $\lambda^{(j)}/\mu^{(j)}$, that is,
\[ \SSSYT(\lm) = \SSSYT(\lambda^{(1)}/\mu^{(1)}) \times \cdots \times \SSSYT(\lambda^{(k)}/\mu^{(k)}). \]
\end{definition}

\noindent Note that a SSSYT in the alphabet $\mc{A}$ of shape $\lambda/\mu$ is the same as a SSYT of shape $\lambda/\mu$.  Moreover, there is a bijection between SSSYT in the alphabet $\mc{A'}$ of shape $\lambda/\mu$ and SSYT of shape $\lambda'/\mu'$, given by conjugation (and unpriming the labels). 

\indent We are now ready to extend the Littlewood $k$-quotient map.

\begin{definition}
The \textbf{(extended) Littlewood $k$-quotient map} is a bijection
\[ \SRT_k(\lambda/\mu) \rightarrow \SSSYT(\lm) \]
where $\lm$ is the $k$-quotient of $\lambda/\mu$.  We simply take Definition \ref{def:littlewood-quotient-map} and extend the set of labels: for each $i \in \mc{A} \cup \mc{A'}$, we put an $i$ into each cell of the $k$-quotient of $\lambda_{\leq i}/ \lambda_{\leq i-1}$.
\end{definition}

\begin{example} \label{ex:extended-lqm}
\[ \resizebox{4cm}{!}{
\begin{tikzpicture}[baseline=(current bounding box.center)]
\fill[gray] (0,0) rectangle (2,1);
\draw[thin] (8,0) -- (0,0) -- (0,7) -- (1,7) -- (1,6) -- (4,6) -- (4,5) -- (6,5) -- (6,2) -- (7,2) -- (7,1) -- (8,1) -- (8,0);
\draw[thin] (2,0) -- (2,1) -- (0,1);
\draw[thin] (1,1) -- (1,7);
\draw[thin] (0,4) -- (5,4) -- (5,0);
\draw[thin] (1,2) -- (3,2) -- (3,0);
\draw[thin] (1,3) -- (4,3) -- (4,1) -- (5,1) -- (5,0);
\draw[thin] (3,2) -- (4,2);
\draw[thin] (2,6) -- (2,5) -- (3,5) -- (3,4) -- (4,4) -- (4,3);
\draw[thin] (5,3) -- (6,3) -- (6,0);
\draw[thin] (4,5) -- (4,4);
\node[scale=2] at (0.5,3.5) {$1$}; \node[scale=2] at (0.5,6.5) {$2$};
\node[scale=2] at (1.5,1.5) {$1$}; \node[scale=2] at (1.5,2.5) {$2$}; \node[scale=2] at (1.5,3.5) {$3$}; \node[scale=2] at (1.5,5.5) {$1'$};
\node[scale=2] at (2.5,5.5) {$2'$}; \node[scale=2] at (3.5,1.5) {$1$};
\node[scale = 2] at (4.5,3.5) {$1'$}; \node[scale = 2] at (4.5,4.5) {$3'$};
\node[scale = 2] at (5.5,2.5) {$3'$}; \node[scale = 2] at (6.5,1.5) {$4'$};
\end{tikzpicture}
}
\hspace{1cm} \leftrightarrow \hspace{1cm}
\resizebox{5cm}{!}{
\begin{tikzpicture}[baseline=(current bounding box.center)]
\draw (0,0) grid (3,2);
\draw (4+0,0) grid (4+2,1);
\draw (7+0,0) grid (7+1,3); \draw (7+1,0) grid (7+2,1);
\node[scale=2] at (0.5,0.5) {$1$}; \node[scale=2] at (0.5,1.5) {$2$}; \node[scale=2] at (1.5,0.5) {$1$}; \node[scale=2] at (1.5,1.5) {$2'$}; \node[scale=2] at (2.5,0.5) {$3'$}; \node[scale=2] at (2.5,1.5) {$3'$};
\node[scale=2] at (4+0.5,0.5) {$3$}; \node[scale=2] at (4+1.5,0.5) {$1'$};
\node[scale=2] at (7+0.5,0.5) {$1$}; \node[scale=2] at (7+1.5,0.5) {$4'$}; \node[scale=2] at (7+0.5,1.5) {$2$}; \node[scale=2] at (7+0.5,2.5) {$1'$};
\end{tikzpicture} }
\]
\end{example}

The following facts can be useful in computing the Littlewood $k$-quotient map in examples.
\begin{lem} \label{lem:lqm-properties}
Suppose $T \leftrightarrow \bm{T}$ via the Littlewood $k$-quotient map.
\begin{enumerate}
    \item A ribbon in $T$ labelled $i$ corresponds to a cell labelled $i$ in $\bm{T}$, so the number of ribbons in $T$ labelled $i$ equals the number of cells labelled $i$ in $\bm{T}$.
    \item Two ribbons $R,R'$ in $T$ whose tails $u,u'$ have the same content modulo $k$ correspond to two cells $v,v'$ in the same shape in $\bm{T}$.  Moreover, in this case, 
    \[ \frac{c(u)-c(u')}{k} = c(v)-c(v'). \]
\end{enumerate}
\end{lem}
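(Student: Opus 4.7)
The plan is to prove both parts by unpacking the Maya-diagram (or abacus) description of the Littlewood $k$-quotient map. For part (1), the key observation is that the transition from $\lambda_{\leq i-1}$ to $\lambda_{\leq i}$ adds a horizontal (if $i \in \mc{A}$) or vertical (if $i \in \mc{A'}$) $k$-ribbon strip, and each individual $k$-ribbon in this strip corresponds to a single elementary move in the Maya diagram of $\lambda_{\leq i-1}$: swapping an S-step at some position $p$ with the E-step at position $p+k$. Because these two positions lie in the same residue class modulo $k$, the swap occurs entirely inside the $j$-th sub-Maya diagram, where $j \equiv p \pmod{k}$, and there it becomes a nearest-neighbor S-E swap at sub-position $m = (p-j)/k$. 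This sub-swap adds exactly one cell to $\lambda^{(j)}_{\leq i}/\lambda^{(j)}_{\leq i-1}$, which the extended Littlewood map labels by $i$. Thus ribbons labelled $i$ in $T$ correspond bijectively to cells labelled $i$ in $\bm{T}$.

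For part (2), I would first establish the content-to-Maya-position formula: if $\lambda$ is padded so that its Maya diagram has exactly $n$ S-steps, then swapping the S at position $p = \beta_r$ with the E at position $p+k$ adds a $k$-ribbon whose tail $u$ satisfies
\[ c(u) = p + 1 - n. \]
This follows from a direct bead-picture computation: if the original bead at position $p$ represents row $r$, with $\beta_r = \lambda_r + n - r = p$, then after the move the NW-most cell of the resulting ribbon is $(r, \lambda_r + 1)$, whose content is $(\lambda_r + 1) - r = \beta_r + 1 - n = p + 1 - n$ (this works regardless of whether the ribbon is horizontal, vertical, or winding). Applying the same formula to the $j$-th sub-partition $\lambda^{(j)}$, padded so that its sub-Maya diagram has $n_j$ S-steps, the nearest-neighbor swap at sub-position $m = (p - j)/k$ adds a cell $v$ with $c(v) = m + 1 - n_j$. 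Two ribbons $R, R'$ with tails satisfying $c(u) \equiv c(u') \pmod{k}$ correspond to Maya positions $p, p'$ in the same residue class $j$, so by the analysis in part (1) the corresponding cells $v, v'$ both lie in $\lambda^{(j)}/\mu^{(j)}$; this proves the first assertion of part (2). Subtracting the two content formulas yields
\[ c(u) - c(u') = p - p' = k(m - m') = k\bigl(c(v) - c(v')\bigr), \]
which is the content ratio.

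The main obstacle is tracking the padding constants $n$ and $n_j$ cleanly; because the final statement involves only differences of contents, these constants cancel, so the cleanest approach is to fix a single sufficiently large padding up front, derive the two content formulas in that padding, and then subtract. Once the elementary fact that moving a bead at Maya position $p$ one step to the right adds a cell of content $p + 1 - n$ is verified from the definitions of bead position and content, the rest reduces to arithmetic in the sub-Maya diagram.
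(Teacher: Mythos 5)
Your proposal is correct and follows essentially the same route as the paper's proof in Appendix A: both reduce to the abacus/Maya-diagram description, identify a single $k$-ribbon addition with a bead move from position $p$ to $p+k$ (hence a nearest-neighbor move in the sub-diagram of residue $p \bmod k$), and use the position-to-content dictionary (your $c(u)=p+1-n$ is the paper's ``adjusted content'' $ac(u)=p$ in disguise) so that residues determine the component and differences of positions give the factor of $k$. The padding constants cancel exactly as you anticipate, so there is no gap.
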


Both the non-extended and the extended Littlewood $k$-quotient maps have these properties.  The proof for the non-extended map follows from the proof for the extended map, which is given in Appendix \ref{sec:lqm-properties-proof}.

\pagebreak

\section{Vertex models, Yang-Baxter equations, and partition functions} \label{sec:VertexModel}

\indent In this section, we introduce several types of vertex models. We show that these models are integrable in the sense that they satisfy several Yang-Baxter equations.  Then we use these vertex models to construct certain families of partition functions.  Studying these families of partition functions, in particular the family in Definition \ref{def:latS}, will be the main focus of the rest of the paper.

\indent We begin with some notation.  For a vector $\I = (I_1,\ldots,I_k) \in \mathbb{R}^k$, we define \[ |\I| = \sum_{m=1}^k I_i. \]
For vectors $\I = (I_1,\ldots,I_k), \J = (J_1,\ldots,J_k) \in \mathbb{R}^k$, we define
\[ \varphi(\I,\J) = \sum_{1 \leq i < j \leq k} I_iJ_j. \] 
For variables $x$ and $t$ and an integer $n \geq 0$, we define the $t$-Pochhammer symbol
\[ (x;t)_n = \prod_{m=0}^{n-1} (1-xt^m). \]
\noindent We are now ready to define our vertices algebraically.  We will define five vertices - the $L$-matrix, the $L'$-matrix, the $R$-matrix, the $R'$-matrix, and the $R''$-matrix.  The $L$ and $L'$ matrices are families of functions $(\{0,1\}^k)^4 \rightarrow \CC[x,t]$, one for each integer $k \geq 0$; the $R$, $R'$, and $R''$ matrices are families of functions $(\{0,1\}^k)^4 \rightarrow \CC(x,y,t)$, one for each integer $k \geq 0$.  In other words, each vertex associates a \textbf{weight} (either a polynomial in $x,t$ or a rational function in $x,y,t$) to every 4-tuple of vectors in $\{0,1\}^k$ for each integer $k \geq 0$.
\begin{center}
\resizebox{.85\textwidth}{!}{
\begin{tabular}{||c|c||}
\hline
Type of vertex & Algebraic definition \\ \hline \hline
$L$ & $\displaystyle L_x^{(k)}(\I,\J,\K,\L) = \textbf{1}_{\I+\J=\K+\L} \prod_{i=1}^k \textbf{1}_{I_i+J_i \neq 2} \cdot x^{|\L|} t^{\varphi(\L,\I+\J)}$ \\ \hline
$L'$ & $\displaystyle L_x^{'(k)}(\I,\J,\K,\L) = \textbf{1}_{\I+\J=\K+\L} \prod_{i=1}^k \textbf{1}_{K_i \geq J_i} \cdot x^{|\L|} t^{\varphi(\L,\K-\J)}$ \\ \hline
$R$ & $\displaystyle R_{y/x}^{(k)}(\I,\J,\K,\L) = \textbf{1}_{\I+\J=\K+\L} \prod_{i=1}^k \textbf{1}_{J_i \geq K_i} \cdot (-1)^{|\J|-|\K|} (y/x)^{|\J|} (x/y;t)_{|\J|-|\K|} t^{\varphi(\J,\K-\J)}$ \\ \hline
$R'$ &  $\displaystyle R_{y/x}^{'(k)}(\I,\J,\K,\L) = \textbf{1}_{\I+\J=\K+\L} \prod_{i=1}^k \textbf{1}_{I_i + J_i \neq 2} \cdot (x/y)^{|\L|} (-x/y;t)_{|\K|+|\L|}^{-1} t^{\varphi(\L,\K+\L)}$ \\ \hline
$R''$ & $\displaystyle R_{x/y}^{''(k)}(\I,\J,\K,\L) = \textbf{1}_{\I+\J=\K+\L} \prod_{i=1}^k \textbf{1}_{K_i \geq J_i} \cdot (x/y)^{|\L|} (x/y;t)_{|\K|-|\J|} t^{\varphi(\L,\K-\J)}$ \\
\hline
\end{tabular}
}
\end{center}

\indent However, it is often useful to think of a vertex graphically.  We can draw a vertex as a face with four incident edges, each labelled by an element of $\{0,1\}^k$.  A face takes one of two forms, 
\[ \begin{tikzpicture}[baseline=(current bounding box.center)]\draw[thin] (0,0) rectangle (1,1); \node[left] at (0,0.5) {$\J$}; \node[right] at (1,0.5) {$\L$};\node[above] at (0.5,1) {$\K$};\node[below] at (0.5,0) {$\I$}; \end{tikzpicture} \text{(a box) or } \begin{tikzpicture}[baseline=(current bounding box.center)] \draw[thin] (0,1)--(1,0);\draw[thin] (0,0)--(1,1); \node at (-0.1,-0.1) {$\I$}; \node at (-0.1,1.1) {$\J$}; \node at (1.1,1.1) {$\K$}; \node at (1.1,-0.1) {$\L$};\end{tikzpicture} \text{(a cross).} \]
The edge labels describe colored paths moving through the face SW-to-NE (for a box) or left-to-right (for a cross).  If an edge has the label $\I = (I_1,\ldots,I_k) \in \{0,1\}^k$, then for each $i \in [k]$, a path of color $i$ is incident at the edge if and only if $I_i = 1$.  For example, with $k=2$ (letting blue be color 1 and red be color 2), the path configuration associated to the edge labels
\[ \I = (0,1), \J = (1,0), \K = (0,1), \L = (1,0) \]
is 
\[
\begin{tikzpicture}[baseline=(current bounding box.center)] \draw[thin] (0,0) rectangle (1,1); \draw[blue,thick] (0,0.5)--(1,0.5); \draw[red, thick] (0.5,0)--(0.5,1); \end{tikzpicture} 
\text{ (for a box) or }
\begin{tikzpicture}[baseline=(current bounding box.center)] \draw[blue, ultra thick] (0,1)--(1,0); \draw[red, ultra thick] (0,0)--(1,1); \end{tikzpicture}
\text{ (for a cross).}
\]
The factor of $\textbf{1}_{\I+\J=\K+\L}$ that appears in the algebraic definitions of all five vertices imposes a \textbf{path conservation} restriction: in order for a vertex to have a nonzero weight, the paths entering the vertex and the paths exiting the vertex must be the same.  To define the vertex weights graphically, we start by defining the weights in the case $k=1$.
\begin{center}
\resizebox{0.85\textwidth}{!}{
\begin{tabular}{||c|c||}
\hline
Type of vertex & One-color definition \\ \hline\hline
$L$ & \resizebox{0.8\textwidth}{!}{
\begin{tabular}{cccccc}
   \begin{tikzpicture}[baseline=(current bounding box.center)]\draw[thin] (0,0) rectangle (1,1); \node[left] at (0,0.5) {j}; \node[right] at (1,0.5) {l};\node[above] at (0.5,1) {k};\node[below] at (0.5,0) {i};\node at (0.5,0.5) {$x$}; \end{tikzpicture}: & \begin{tikzpicture}[baseline=(current bounding box.center)]\draw[thin] (0,0) rectangle (1,1);\end{tikzpicture} & \begin{tikzpicture}[baseline=(current bounding box.center)]\draw[thin] (0,0) rectangle (1,1);\draw[red, thick] (0.5,0)--(0.5,0.5)--(1,0.5);\end{tikzpicture} & \begin{tikzpicture}[baseline=(current bounding box.center)] \draw[thin] (0,0) rectangle (1,1); \draw[red, thick] (0,0.5)--(1,0.5); \end{tikzpicture} & \begin{tikzpicture}[baseline=(current bounding box.center)] \draw[thin] (0,0) rectangle (1,1); \draw[red, thick] (0.5,0)--(0.5,1); \end{tikzpicture} & \begin{tikzpicture}[baseline=(current bounding box.center)] \draw[thin] (0,0) rectangle (1,1); \draw[red, thick] (0,0.5)--(0.5,0.5)--(0.5,1); \end{tikzpicture} \\
$L_x^{(1)}(i,j,k,l)$: &  $1$ & $x$ & $x$ & $1$ & $1$ \\
\end{tabular}
} \\ \hline
$L'$ & \resizebox{0.8\textwidth}{!}{
\begin{tabular}{cccccc}
   \begin{tikzpicture}[baseline=(current bounding box.center)]\draw[thin,fill=violet] (0,0) rectangle (1,1); \node[left] at (0,0.5) {j}; \node[right] at (1,0.5) {l};\node[above] at (0.5,1) {k};\node[below] at (0.5,0) {i};\node at (0.5,0.5) {$x$};\end{tikzpicture}: & \begin{tikzpicture}[baseline=(current bounding box.center)] \draw[thin,fill=violet] (0,0) rectangle (1,1); \draw[red, thick] (0.5,0)--(0.5,1); \end{tikzpicture} & \begin{tikzpicture}[baseline=(current bounding box.center)] \draw[thin,fill=violet] (0,0) rectangle (1,1); \draw[red, thick] (0,0.5)--(0.5,0.5)--(0.5,1); \end{tikzpicture} & \begin{tikzpicture}[baseline=(current bounding box.center)] \draw[thin,fill=violet] (0,0) rectangle (1,1); \draw[red, thick] (0,0.5)--(1,0.5); \draw[red, thick] (0.5,0)--(0.5,1); \end{tikzpicture} & \begin{tikzpicture}[baseline=(current bounding box.center)]\draw[thin,fill=violet] (0,0) rectangle (1,1); \end{tikzpicture} & \begin{tikzpicture}[baseline=(current bounding box.center)]\draw[thin,fill=violet] (0,0) rectangle (1,1);\draw[red, thick] (0.5,0)--(0.5,0.5)--(1,0.5);\end{tikzpicture} \\
$L_x^{'(1)}(i,j,k,l)$: &  $1$ & $1$ & $x$ & $1$ & $x$ \\
\end{tabular}
} \\ \hline
$R$ & \resizebox{0.8\textwidth}{!}{
\begin{tabular}{cccccc}
\begin{tikzpicture}[baseline=(current bounding box.center)] \draw[thin] (0,1)--(1,0);\draw[thin] (0,0)--(1,1); \node at (-0.1,-0.1) {$i$}; \node at (-0.5,-0.1) {$y$}; \node at (-0.1,1.1) {$j$}; \node at (-0.5,1.1) {$x$}; \node at (1.1,1.1) {$k$}; \node at (1.1,-0.1) {$l$};\end{tikzpicture}: & \begin{tikzpicture}[baseline=(current bounding box.center)] \draw[red, ultra thick] (0,1)--(1,0);\draw[thin] (0,0)--(1,1); \end{tikzpicture} & \begin{tikzpicture}[baseline=(current bounding box.center)] \draw[red, ultra thick] (0,1)--(0.5,0.5)--(1,1);\draw[thin] (0,0)--(0.5,0.5)--(1,0); \end{tikzpicture} & \begin{tikzpicture}[baseline=(current bounding box.center)] \draw[thin] (0,1)--(0.5,0.5)--(1,1);\draw[red, ultra thick] (0,0)--(0.5,0.5)--(1,0); \end{tikzpicture} & \begin{tikzpicture}[baseline=(current bounding box.center)] \draw[red, ultra thick] (0,1)--(1,0);\draw[red, ultra thick] (0,0)--(1,1); \end{tikzpicture} & \begin{tikzpicture}[baseline=(current bounding box.center)] \draw[thin] (0,1)--(1,0);\draw[thin] (0,0)--(1,1); \end{tikzpicture} \\
$R_{y/x}^{(1)}(i,j,k,l)$:&  $1-y/x$ & $y/x$ & $1$ & $y/x$ & $1$ 
\end{tabular}
} \\ \hline
$R'$ & \resizebox{0.8\textwidth}{!}{
\begin{tabular}{cccccc}
\begin{tikzpicture}[baseline=(current bounding box.center)] \fill[yellow] (0,0) rectangle (1,1); \draw[thin] (0,1)--(1,0);\draw[thin] (0,0)--(1,1); \node at (-0.1,-0.1) {$i$}; \node at (-0.5,-0.1) {$y$}; \node at (-0.1,1.1) {$j$}; \node at (-0.5,1.1) {$x$}; \node at (1.1,1.1) {$k$}; \node at (1.1,-0.1) {$l$};\end{tikzpicture}: & \begin{tikzpicture}[baseline=(current bounding box.center)] \fill[yellow] (0,0) rectangle (1,1); \draw[red, ultra thick] (0,1)--(1,0);\draw[thin] (0,0)--(1,1); \end{tikzpicture} & \begin{tikzpicture}[baseline=(current bounding box.center)] \fill[yellow] (0,0) rectangle (1,1); \draw[red, ultra thick] (0,1)--(0.5,0.5)--(1,1);\draw[thin] (0,0)--(0.5,0.5)--(1,0); \end{tikzpicture} & \begin{tikzpicture}[baseline=(current bounding box.center)] \fill[yellow] (0,0) rectangle (1,1); \draw[thin] (0,1)--(0.5,0.5)--(1,1);\draw[red, ultra thick] (0,0)--(0.5,0.5)--(1,0); \end{tikzpicture} & \begin{tikzpicture}[baseline=(current bounding box.center)] \fill[yellow] (0,0) rectangle (1,1); \draw[thin] (0,1)--(1,0);\draw[red, ultra thick] (0,0)--(1,1); \end{tikzpicture} & \begin{tikzpicture}[baseline=(current bounding box.center)] \fill[yellow] (0,0) rectangle (1,1); \draw[thin] (0,1)--(1,0);\draw[thin] (0,0)--(1,1); \end{tikzpicture} \\
$R_{y/x}^{'(1)}(i,j,k,l)$:&  $\frac{1}{1+y/x}$ & $\frac{y/x}{1+y/x}$ & $\frac{1}{1+y/x}$ & $\frac{y/x}{1+y/x}$ & $1$ 
\end{tabular}
} \\ \hline
$R''$ & \resizebox{0.8\textwidth}{!}{
\begin{tabular}{cccccc}
\begin{tikzpicture}[baseline=(current bounding box.center)] \fill[orange] (0,0) rectangle (1,1); \draw[thin] (0,1)--(1,0);\draw[thin] (0,0)--(1,1); \node at (-0.1,-0.1) {$i$}; \node at (-0.5,-0.1) {$y$}; \node at (-0.1,1.1) {$j$}; \node at (-0.5,1.1) {$x$}; \node at (1.1,1.1) {$k$}; \node at (1.1,-0.1) {$l$};\end{tikzpicture}: & \begin{tikzpicture}[baseline=(current bounding box.center)] \fill[orange] (0,0) rectangle (1,1); \draw[red, ultra thick] (0,0)--(1,1);\draw[thin] (0,1)--(1,0); \end{tikzpicture} & \begin{tikzpicture}[baseline=(current bounding box.center)] \fill[orange] (0,0) rectangle (1,1); \draw[red, ultra thick] (0,1)--(0.5,0.5)--(1,1);\draw[thin] (0,0)--(0.5,0.5)--(1,0); \end{tikzpicture} & \begin{tikzpicture}[baseline=(current bounding box.center)] \fill[orange] (0,0) rectangle (1,1); \draw[thin] (0,1)--(0.5,0.5)--(1,1);\draw[red, ultra thick] (0,0)--(0.5,0.5)--(1,0); \end{tikzpicture} & \begin{tikzpicture}[baseline=(current bounding box.center)] \fill[orange] (0,0) rectangle (1,1); \draw[red, ultra thick] (0,1)--(1,0);\draw[red, ultra thick] (0,0)--(1,1); \end{tikzpicture} & \begin{tikzpicture}[baseline=(current bounding box.center)] \fill[orange] (0,0) rectangle (1,1); \draw[thin] (0,1)--(1,0);\draw[thin] (0,0)--(1,1); \end{tikzpicture} \\
$R_{x/y}^{''(1)}(i,j,k,l)$:&  $1-x/y$ & $1$ & $x/y$ & $x/y$ & $1$ 
\end{tabular}
}
\\ \hline
\end{tabular}
}
\end{center}

The $k$-color weights are then defined in terms of the one-color weights.  
\begin{center}
\resizebox{0.85\textwidth}{!}{
\begin{tabular}{||c|c||}
\hline
Type of vertex & $k$-color definition \\ \hline \hline
$L$ & $\displaystyle L_x^{(k)}(\I,\J,\K,\L) =  \prod_{i=1}^k L_{xt^{\delta_i}}^{(1)}(I_i,J_i,K_i,L_i)$ where $\delta_i = \text{\# colors greater than $i$ that are present}$ \\ \hline
$L'$ & $L_x^{'(k)}(\I,\J,\K,\L) = \prod_{i=1}^k L_{xt^{\delta'_i}}^{'(1)}(I_i,J_i,K_i,L_i)$ where $\delta'_i = \text{\# colors greater than $i$ of the form}$ \resizebox{0.04\textwidth}{!}{\begin{tikzpicture}[baseline=(current bounding box.center)] \draw[thin,fill=violet] (0,0) rectangle (1,1); \draw[red, thick] (0.5,0)--(0.5,1); \end{tikzpicture}} \\ \hline
$R$ & $\displaystyle R_{y/x}^{(k)}(\I,\J,\K,\L) =  \prod_{i=1}^k R_{y/(xt^{\epsilon_i})}^{(1)}(I_i,J_i,K_i,L_i)$ where $\epsilon_i = \text{\# colors greater than $i$ of the form}$ \resizebox{0.04\textwidth}{!}{\begin{tikzpicture}[baseline=(current bounding box.center)] \draw[red, ultra thick] (0,1)--(1,0);\draw[thin] (0,0)--(1,1); \end{tikzpicture}} \\ \hline
$R'$ &  $\displaystyle R_{y/x}^{'(k)}(\I,\J,\K,\L) = \prod_{i=1}^k R_{y/(xt^{\epsilon'_i})}^{'(1)}(I_i,J_i,K_i,L_i)$ where $\epsilon'_i = \text{\# colors greater than $i$ that are present}$
\\ \hline
$R''$ & $\displaystyle R_{x/y}^{''(k)}(\I,\J,\K,\L) = \prod_{i=1}^k R_{t^{\epsilon''_i}x/y}^{''(1)}(I_i,J_i,K_i,L_i)$ where $\epsilon''_i = \text{\# colors greater than $i$ of the form}$ \resizebox{0.04\textwidth}{!}{\begin{tikzpicture}[baseline=(current bounding box.center)] \fill[orange] (0,0) rectangle (1,1); \draw[red, ultra thick] (0,0)--(1,1);\draw[thin] (0,1)--(1,0); \end{tikzpicture}} \\ \hline
\end{tabular}
}
\end{center}

We leave it as an exercise for the reader to check that the algebraic and graphical definitions are equivalent.

We remark that the $L$-matrix and the $R$-matrix appeared in \cite{CGKM}.  Moreover, by the following lemma, the weights of all five vertices can be realized as degenerations of vertex weights that appear in \cite{ABW}.

\begin{lem} \label{lem:degen-abw}
We adopt the notation of \cite{ABW}, except we use $t$ in place of $q$.  In particular, we let $W_z(\A,\B;\C,\D | r,s)$ be the vertex weights from \cite[Definition 5.1.1]{ABW} with $t$ in place of $q$.  Then
\[ \begin{aligned}
\resizebox{1.8cm}{!}{ \begin{tikzpicture}[baseline=(current bounding box.center)] 
\draw[step=1.0,black,thin] (0,0) grid (1,1); 
\node at (0.5,0.5) {$x$}; \node[below] at (0.5,0) {$\A$}; \node[left] at (0,0.5) {$\B$}; \node[above] at (0.5,1) {$\C$}; \node[right] at (1,0.5) {$\D$};
\end{tikzpicture} } &= \lim_{\alpha \rightarrow 0} (-\alpha)^d \lim_{\beta \rightarrow 0} \beta^{-2d} W_{x/\alpha}(\A,\B;\C,\D | (x/\alpha)^{1/2},\beta), \\
\resizebox{1.8cm}{!}{ \begin{tikzpicture}[baseline=(current bounding box.center)] 
\draw[step=1.0,black,thin,fill=violet] (0,0) rectangle (1,1); 
\node at (0.5,0.5) {$y$}; \node[below] at (0.5,0) {$\A$}; \node[left] at (0,0.5) {$\B$}; \node[above] at (0.5,1) {$\C$}; \node[right] at (1,0.5) {$\D$};
\end{tikzpicture} } &= \lim_{Y \rightarrow 0} Y^{-d} \lim_{S \rightarrow 0} W_1(\A,\B;\C,\D | Sy^{-1/2},SY^{1/2}), \\
 \resizebox{1.8cm}{!}{
\begin{tikzpicture}[baseline=(current bounding box.center)] 
\draw[] (-1,0.5) -- (0,1.5); \draw[] (-1,1.5) -- (0,0.5);
\node at (-1.5,0.5) {$y$}; \node at (-1.5,1.5) {$x$};
\node[left] at (-1,0.5) {$\A$}; \node[left] at (-1,1.5) {$\B$}; \node[right] at (0,1.5) {$\C$}; \node[right] at (0,0.5) {$\D$}; 
\end{tikzpicture} } &= \lim_{\alpha \rightarrow 0} W_{x/y}(\A,\B;\C,\D | (x/\alpha)^{1/2},(y/\alpha)^{1/2}), \\
\resizebox{1.8cm}{!}{
\begin{tikzpicture}[baseline=(current bounding box.center)] 
\fill[yellow] (-1,0.5) rectangle (0,1.5); \draw[] (-1,0.5) -- (0,1.5); \draw[] (-1,1.5) -- (0,0.5);
\node at (-1.5,0.5) {$y$}; \node at (-1.5,1.5) {$x$};
\node[left] at (-1,0.5) {$\A$}; \node[left] at (-1,1.5) {$\B$}; \node[right] at (0,1.5) {$\C$}; \node[right] at (0,0.5) {$\D$}; 
\end{tikzpicture} } &= \lim_{\alpha \rightarrow 0} W_{x/\alpha}(\A,\B;\C,\D | (x/\alpha)^{1/2},(-y/\alpha)^{-1/2}), \\
\resizebox{1.8cm}{!}{
\begin{tikzpicture}[baseline=(current bounding box.center)] 
\fill[orange] (-1,0.5) rectangle (0,1.5); \draw[] (-1,0.5) -- (0,1.5); \draw[] (-1,1.5) -- (0,0.5);
\node at (-1.5,0.5) {$y$}; \node at (-1.5,1.5) {$x$};
\node[left] at (-1,0.5) {$\A$}; \node[left] at (-1,1.5) {$\B$}; \node[right] at (0,1.5) {$\C$}; \node[right] at (0,0.5) {$\D$}; 
\end{tikzpicture} } &= \lim_{S \rightarrow 0} W_1(\A,\B;\C,\D | Sx^{-1/2},Sy^{-1/2}).
\end{aligned} \]
\end{lem}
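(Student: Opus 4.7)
The plan is to verify the five identities one at a time by direct computation with the ABW weight formula, taking advantage of the fact that both the five vertex weights defined here and the ABW weights $W_z$ admit a color-fusion description. Concretely, each of the $k$-color weights $L,L',R,R',R''$ is by definition a product of one-color weights with spectral parameters shifted by powers of $t$, and the analogous fusion holds for $W_z^{(k)}$. The shifts $\delta_i, \delta_i', \epsilon_i, \epsilon_i', \epsilon_i''$ in the right-hand sides are tailored to match the $t$-shifts produced by ABW's fusion under the prescribed limits. So it suffices to (a) verify each identity in the one-color case $k=1$, and (b) check that the $k$-color shifts on the two sides agree.

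For part (a), each one-color vertex has only five configurations with nonzero weight, listed in the tables of Section \ref{sec:VertexModel}. For each of the five identities, I would substitute the prescribed specialization of the rapidities $(r,s)$ into the ABW weight, extract the leading behavior in the auxiliary parameter ($\alpha$, $\beta$, $Y$, or $S$), and compare with the tabulated one-color weight. The normalizing prefactors such as $(-\alpha)^{d}$, $\beta^{-2d}$, $Y^{-d}$ are chosen so that the limit is finite and nonzero — the exponent $d$ depends on the configuration (essentially the total number of paths being detected), and the purpose of the prefactor is to isolate the leading term of a Laurent expansion. The Pochhammer factors in $W_z$ of the form $(rs;t)_n$ and $(r^{-1}s;t)_n^{-1}$ collapse, under the specialization, to the explicit rational functions $(x/y;t)_n$ and $(-x/y;t)_n^{-1}$ appearing in $R$, $R'$, $R''$.

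The main obstacle will be bookkeeping the limits correctly — making sure the sub-leading terms of the Laurent expansion in $\alpha$ (or $\beta$, $Y$, $S$) all vanish and that the leading coefficient is exactly what is claimed. The cases of the violet vertex $L'$ and the yellow vertex $R'$ are the most delicate, because their weights have an asymmetric look (they carry the exclusion constraint on $I_i + J_i$ on the same side as the exponent $t^{\varphi(\L,\K+\L)}$, opposite from the orange vertex $R''$) and because $R'$ in particular carries the nontrivial normalization $(-x/y;t)^{-1}$ that must emerge from an inverted Pochhammer in $W_z$. Verifying that this factor comes out with the correct sign and the correct argument is where I expect to spend most of the effort.

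For part (b), once the one-color identities are in hand, the $k$-color statement follows from the color-fusion property of $W_z$: the $W_z^{(k)}$ weight is a product of $W$-weights with rapidities shifted by $t^{\#\{\text{higher colors of a given type}\}}$, and one just has to match the combinatorial definition of "a given type" on the ABW side with the $\delta_i, \delta_i', \epsilon_i, \epsilon_i', \epsilon_i''$ on ours. A short case check — one case per vertex type — confirms that the higher-color species tracked by ABW's fusion coincides, under each limit, with the species tracked by our definitions (paths present, up-going paths, NW-to-SE paths, etc.). The full lemma then follows by taking products over colors on both sides.
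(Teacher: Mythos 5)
Your route is genuinely different from the paper's: the paper performs no computation here, but simply observes that each of the five limits is precisely the content of one of \cite[Cor.~8.3.1, 8.3.3, 8.3.4, 8.3.6, 8.3.8]{ABW} (after, in two cases, a harmless substitution for the rapidity $s$). What you propose is in effect a from-scratch re-derivation of those corollaries. That is a legitimate thing to attempt, but it is a much longer road, and as outlined it has a gap.

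The gap is in your reduction to the one-color case. You assert that ``the analogous fusion holds for $W_z^{(k)}$,'' i.e.\ that the un-degenerated ABW weight $W_z(\A,\B;\C,\D \mid r,s)$ factors as a product over colors of one-color weights with $t$-shifted spectral parameters, mirroring the $k$-color definitions of $L,L',R,R',R''$. That factorization is a special feature of the degenerate limits, not of $W_z$ itself: the weight of \cite[Def.~5.1.1]{ABW} is a single closed formula whose Pochhammer factors depend on the totals $|\A|,\dots,|\D|$ and whose exponent of $t$ is a bilinear form coupling distinct colors, and no color-by-color factorization of it is stated or used in \cite{ABW}. Consequently ``verify $k=1$ and match the shifts'' does not suffice; one must take the limit of the full multicolor formula and only then observe that the resulting expression factors over colors --- which is exactly what ABW's corollaries establish. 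If you want a self-contained argument, either carry out the multicolor limit directly, or first prove the factorization of $W_z$ that you are implicitly relying on; otherwise the shortest correct proof is the paper's, namely citing the five corollaries.
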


\begin{proof}
For the $L$-matrix, this follows from \cite[Cor. 8.3.6]{ABW}.  For the $L'$-matrix, this follows from \cite[Cor. 8.3.4]{ABW}.  For the $R$-matrix, this follows from \cite[Cor. 8.3.1]{ABW} (after substituting $s = (y/\alpha)^{1/2}$).  For the $R'$-matrix, this follows from \cite[Cor. 8.3.8]{ABW} (after substituting $s = (-y/\alpha)^{-1/2}$).  For the $R''$-matrix, this follows from \cite[Cor. 8.3.3]{ABW}.
\end{proof}

It turns out that these vertices satisfy three Yang-Baxter equations.  All three follow from \cite[Prop. 5.1.4]{ABW} plus Lemma \ref{lem:degen-abw}; for interested readers, we give a detailed derivation of Propositions \ref{prop:YBEpurplewhite} and \ref{prop:YBEpurple} in Appendix \ref{sec:abw-ybe-limits}.  Proposition \ref{prop:YBEwhite} is proven in a different way in \cite[Theorem 4.1]{CGKM}.

\begin{prop} \label{prop:YBEwhite}

The $L$ and $R$ matrices satisfy the Yang-Baxter equation
\[
\sum_{\text{interior paths}}
w\left(
\resizebox{1.8cm}{!}{
  \begin{tikzpicture}[baseline=(current bounding box.center)] 
 \draw[] (-1,0.5) -- (0,1.5); \draw[] (-1,1.5) -- (0,0.5); 
 \draw[step=1.0,black,thin] (0,0) grid (1,2); 
 \node at (0.5,0.5) {$x$}; \node at (0.5,1.5) {$y$};
 \node[left] at (-1,1.5) {$\J_1$}; \node[left] at (-1,0.5) {$\I_1$}; \node[below] at (0.5,0) {$\K_1$};
 \node[right] at (1,1.5) {$\I_3$}; \node[right] at (1,0.5) {$\J_3$}; \node[above] at (0.5,2) {$\K_3$};
 \end{tikzpicture} 
 }
 \right)
=
\sum_{\text{interior paths}}
w\left(
\resizebox{1.8cm}{!}{
 \begin{tikzpicture}[baseline=(current bounding box.center)] 
 \draw[] (2,0.5) -- (1,1.5); \draw[] (2,1.5) -- (1,0.5); 
 \draw[step=1.0,black,thin] (0,0) grid (1,2); 
 \node at (0.5,0.5) {$y$}; \node at (0.5,1.5) {$x$};
 \node[left] at (0,1.5) {$\J_1$}; \node[left] at (0,0.5) {$\I_1$}; \node[below] at (0.5,0) {$\K_1$};
 \node[right] at (2,0.5) {$\J_3$}; \node[right] at (2,1.5) {$\I_3$}; \node[above] at (0.5,2) {$\K_3$};
 \end{tikzpicture}
 }
 \right)
\]
for any choice of boundary condition $\I_1,\J_1,\K_1,\I_3,\J_3,\K_3$.

\end{prop}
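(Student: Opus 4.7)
The plan is to derive this Yang-Baxter equation as a degeneration of the Yang-Baxter equation for the full ABW vertex weights \cite[Prop.~5.1.4]{ABW}, using the limits catalogued in Lemma \ref{lem:degen-abw}; this is the strategy used in the appendix for the other two YBEs (Propositions \ref{prop:YBEpurplewhite} and \ref{prop:YBEpurple}). A different, more combinatorial proof by direct case-analysis on the boundary data appears as \cite[Thm.~4.1]{CGKM}, and could also be carried out if the limit argument proves too cumbersome.

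Concretely, I would apply the ABW YBE to the triple consisting of an $R$-crossing with spectral parameter $x/y$ and parameters $((x/\alpha)^{1/2},(y/\alpha)^{1/2})$, a lower $L$-box with spectral parameter $x/\alpha$ and parameters $((x/\alpha)^{1/2},\beta_1)$, and an upper $L$-box with spectral parameter $y/\alpha$ and parameters $((y/\alpha)^{1/2},\beta_2)$. With these choices the $r,s$ values assigned to every internal edge match on both sides of the ABW YBE, so \cite[Prop.~5.1.4]{ABW} gives a rational function identity in $x,y,t,\alpha,\beta_1,\beta_2$ between the two sums over interior path configurations.

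One then multiplies both sides by the normalization factors prescribed by Lemma \ref{lem:degen-abw} for each $L$-vertex and takes the iterated limits $\beta_1,\beta_2 \to 0$ followed by $\alpha \to 0$. Since the sums over interior paths are finite, the limits pass inside the sums, and each ABW vertex degenerates to the corresponding $L$- or $R$-weight, yielding the desired identity. The main obstacle is the bookkeeping of the powers of $\alpha$ and $\beta_i$ that arise from the three vertices: one has to verify that the combined normalization factors on the two sides of the YBE match, which uses path conservation at every individual vertex together with the fact that the total ``path content'' on the three incoming boundary edges $\I_1,\J_1,\K_1$ coincides with that on the three outgoing boundary edges $\I_3,\J_3,\K_3$. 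This is exactly the kind of power-counting carried out in detail in the appendix for Propositions \ref{prop:YBEpurplewhite} and \ref{prop:YBEpurple}, and the same template applies here.
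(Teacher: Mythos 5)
Your proposal is correct and follows the route the paper itself endorses: the text asserts that all three Yang--Baxter equations follow from \cite[Prop.~5.1.4]{ABW} together with Lemma \ref{lem:degen-abw}, and Appendix \ref{sec:abw-ybe-limits} carries out exactly this template for Propositions \ref{prop:YBEpurplewhite} and \ref{prop:YBEpurple}; for Proposition \ref{prop:YBEwhite} itself, however, the paper simply cites the direct proof of \cite[Theorem~4.1]{CGKM} rather than writing out the degeneration. One small correction to your setup: in the ABW YBE the two $L$-type vertices share a single rapidity $t$ (their parameter pairs are $(r,t)$ and $(s,t)$), so you must take $\beta_1=\beta_2=\beta$, i.e.\ substitute $\chi=x$, $\gamma=y$, $z=\alpha$, $r=(x/\alpha)^{1/2}$, $s=(y/\alpha)^{1/2}$, $t=\beta$; with independent $\beta_1,\beta_2$ the identity you start from is not literally an instance of \cite[Prop.~5.1.4]{ABW}. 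With that fix the power counting goes through as you describe: the combined normalization is $(-\alpha)^{|\J_3|+|\I_3|}\beta^{-2(|\J_3|+|\I_3|)}$ on one side and $(-\alpha)^{|\I_2|+|\J_2|}\beta^{-2(|\I_2|+|\J_2|)}$ on the other, and path conservation at the $R$-vertex gives $|\I_2|+|\J_2|=|\I_3|+|\J_3|$, so both factors depend only on boundary data, are constant over interior configurations, and can be pulled outside the finite sums before taking $\beta\to 0$ and then $\alpha\to 0$.
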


\begin{prop} \label{prop:YBEpurplewhite}

The $L$, $L'$, and $R'$ matrices satisfy the Yang-Baxter equation
\[
\sum_{\text{interior paths}}
w\left(
\resizebox{1.8cm}{!}{
  \begin{tikzpicture}[baseline=(current bounding box.center)] 
  \draw[thin,fill=violet] (0,1) rectangle (1,2); \fill[yellow] (-1,0.5) rectangle (0,1.5);
 \draw[] (-1,0.5) -- (0,1.5); \draw[] (-1,1.5) -- (0,0.5); 
 \draw[step=1.0,black,thin] (0,0) grid (1,2); 
 \node at (0.5,0.5) {$x$}; \node at (0.5,1.5) {$y$};
 \node[left] at (-1,1.5) {$\J_1$}; \node[left] at (-1,0.5) {$\I_1$}; \node[below] at (0.5,0) {$\K_1$};
 \node[right] at (1,1.5) {$\I_3$}; \node[right] at (1,0.5) {$\J_3$}; \node[above] at (0.5,2) {$\K_3$};
 \end{tikzpicture} 
 }
 \right)
=
\sum_{\text{interior paths}}
w\left(
\resizebox{1.8cm}{!}{
 \begin{tikzpicture}[baseline=(current bounding box.center)] 
 \draw[thin,fill=violet] (0,0) rectangle (1,1); \fill[yellow] (1,0.5) rectangle (2,1.5);
 \draw[] (2,0.5) -- (1,1.5); \draw[] (2,1.5) -- (1,0.5); 
 \draw[step=1.0,black,thin] (0,0) grid (1,2); 
 \node at (0.5,0.5) {$y$}; \node at (0.5,1.5) {$x$};
 \node[left] at (0,1.5) {$\J_1$}; \node[left] at (0,0.5) {$\I_1$}; \node[below] at (0.5,0) {$\K_1$};
 \node[right] at (2,0.5) {$\J_3$}; \node[right] at (2,1.5) {$\I_3$}; \node[above] at (0.5,2) {$\K_3$};
 \end{tikzpicture}
 }
 \right)
\]
for any choice of boundary condition $\I_1,\J_1,\K_1,\I_3,\J_3,\K_3$.

\end{prop}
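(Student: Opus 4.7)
The plan is to derive Proposition \ref{prop:YBEpurplewhite} as a controlled degeneration of the master Yang-Baxter equation of Aggarwal--Borodin--Wheeler [Prop.~5.1.4, ABW], using the limit formulas of Lemma \ref{lem:degen-abw} to identify the $L$, $L'$, and $R'$ vertices as specializations of the general $W$-type vertex weights. The ABW YBE is a hexagonal identity for three $W$-vertices in which each vertex carries its own spectral parameter $z_i$ and its own pair of horizontal spin parameters; crucially, the two vertices meeting along any horizontal edge must share the spin on that edge. The identity holds for all choices of boundary data.

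First, I would assign the three $W$-vertices on each side of the hexagon to the three vertices of the proposition via Lemma \ref{lem:degen-abw}: the bottom (white) vertex becomes an $L$ by taking $(-\alpha)^d \beta^{-2d} W_{x/\alpha}$ with spin pair $((x/\alpha)^{1/2},\beta)$ and sending $\alpha\to 0$, then $\beta\to 0$; the top (purple) vertex becomes an $L'$ from $Y^{-d} W_1$ with spin pair $(Sy^{-1/2}, SY^{1/2})$ as $Y\to 0$ then $S\to 0$; and the (yellow) crossing becomes an $R'$ from $W_{x/y}$ with spin pair $((x/\alpha)^{1/2}, (-y/\alpha)^{-1/2})$ as $\alpha\to 0$. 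The key observation is that the four shared horizontal spins along the edges of the hexagon can be matched simultaneously on both sides: on each side, the $L$ and $R'$ share the spin $(x/\alpha)^{1/2}$, and the $L'$ and $R'$ share the spin $SY^{1/2}$ (which may be reparametrized to match $(-y/\alpha)^{-1/2}$ via a consistent joint choice of auxiliary parameters). With these substitutions, the ABW YBE becomes an equality of rational functions in $\alpha,\beta,Y,S$ that holds identically.

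Second, I would take the limits $\alpha,\beta,Y,S\to 0$ in the prescribed order on both sides. Pulling the normalizing prefactors $(-\alpha)^d \beta^{-2d}$ and $Y^{-d}$ outside the sums over interior edges, one verifies term-by-term that each summand converges to the product of three degenerate weights $L\cdot L'\cdot R'$ read off from Lemma \ref{lem:degen-abw}. The conservation indicator $\mathbf{1}_{\I+\J=\K+\L}$ restricts the sum over interior paths to a finite set compatible with the fixed boundary, so the limits commute with the finite sum.

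The principal obstacle is the limit bookkeeping. One must confirm that on each side of the hexagon the cumulative overall normalization is the \emph{same} scalar (so it cancels from the final identity), and that no individual interior configuration contributes a pole that fails to be absorbed by the prefactor. Configurations that would correspond to forbidden occupancies in the degenerate models (for instance, doubly occupied edges violating $\mathbf{1}_{I_i+J_i\neq 2}$, or $J_i > K_i$ violating $\mathbf{1}_{K_i \geq J_i}$) must vanish in the limit; this is where the specific choices of spin parameters in Lemma \ref{lem:degen-abw} are delicate, since they are engineered precisely so that offending contributions carry positive powers of the vanishing parameters while legal contributions carry none. Once this matching of leading orders is verified for each boundary label, the limiting identity is exactly the YBE of Proposition \ref{prop:YBEpurplewhite}.
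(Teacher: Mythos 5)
Your proposal matches the paper's proof (given in Appendix \ref{sec:abw-ybe-limits}): both derive the identity by substituting into the master Yang--Baxter equation \cite[Prop.~5.1.4]{ABW}, multiplying by the normalizing prefactors, and invoking Lemma \ref{lem:degen-abw} after taking the degeneration limits. The ``consistent joint choice of auxiliary parameters'' you invoke to match the shared spins of the $L'$ and $R'$ vertices is realized in the paper by the explicit substitution $\alpha=-S^2$ and $\beta=SY^{1/2}$ (so that $Sy^{-1/2}=(-y/\alpha)^{-1/2}$), after which the limits $S,Y\to 0$ yield the stated equation.
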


\begin{prop} \label{prop:YBEpurple}

The $L'$ and $R''$ matrices satisfy the Yang-Baxter equation
\[
\sum_{\text{interior paths}}
w\left(
\resizebox{1.8cm}{!}{
  \begin{tikzpicture}[baseline=(current bounding box.center)] 
  \draw[thin,fill=violet] (0,1) rectangle (1,2); \draw[thin,fill=violet] (0,0) rectangle (1,1); \fill[orange] (-1,0.5) rectangle (0,1.5);
 \draw[] (-1,0.5) -- (0,1.5); \draw[] (-1,1.5) -- (0,0.5); 
 \draw[step=1.0,black,thin] (0,0) grid (1,2); 
 \node at (0.5,0.5) {$x$}; \node at (0.5,1.5) {$y$};
 \node[left] at (-1,1.5) {$\J_1$}; \node[left] at (-1,0.5) {$\I_1$}; \node[below] at (0.5,0) {$\K_1$};
 \node[right] at (1,1.5) {$\I_3$}; \node[right] at (1,0.5) {$\J_3$}; \node[above] at (0.5,2) {$\K_3$};
 \end{tikzpicture} 
 }
 \right)
=
\sum_{\text{interior paths}}
w\left(
\resizebox{1.8cm}{!}{
 \begin{tikzpicture}[baseline=(current bounding box.center)] 
 \draw[thin,fill=violet] (0,0) rectangle (1,1); \draw[thin,fill=violet] (0,1) rectangle (1,2); \fill[orange] (1,0.5) rectangle (2,1.5);
 \draw[] (2,0.5) -- (1,1.5); \draw[] (2,1.5) -- (1,0.5); 
 \draw[step=1.0,black,thin] (0,0) grid (1,2); 
 \node at (0.5,0.5) {$y$}; \node at (0.5,1.5) {$x$};
 \node[left] at (0,1.5) {$\J_1$}; \node[left] at (0,0.5) {$\I_1$}; \node[below] at (0.5,0) {$\K_1$};
 \node[right] at (2,0.5) {$\J_3$}; \node[right] at (2,1.5) {$\I_3$}; \node[above] at (0.5,2) {$\K_3$};
 \end{tikzpicture}
 }
 \right)
\]
for any choice of boundary condition $\I_1,\J_1,\K_1,\I_3,\J_3,\K_3$.

\end{prop}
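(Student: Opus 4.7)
The proof follows the general strategy already used for Propositions \ref{prop:YBEwhite} and \ref{prop:YBEpurplewhite}: start from the rank-$k$ Yang-Baxter equation for three intersecting lines in \cite[Prop.~5.1.4]{ABW} and take a joint degeneration of its three vertices dictated by Lemma \ref{lem:degen-abw}. My plan is to specialize the ABW parameters on the three lines so that each of the three $W$-vertices in the ABW YBE simultaneously degenerates to the corresponding vertex in Proposition \ref{prop:YBEpurple}.

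Specifically, I would send all three ABW spectral parameters to $1$ (so every ratio $z_i/z_j$ is $1$) and assign column parameters $SY^{1/2}$ to the vertical line passing through the two purple boxes, $Sx^{-1/2}$ to the horizontal line of spectral parameter $x$, and $Sy^{-1/2}$ to the horizontal line of spectral parameter $y$. Reading Lemma \ref{lem:degen-abw} line by line, the two intersections of the vertical line with the horizontal lines then become $W_1(\,\cdot\,|\,Sx^{-1/2},SY^{1/2})$ and $W_1(\,\cdot\,|\,Sy^{-1/2},SY^{1/2})$, which under $\lim_{Y\to 0} Y^{-d}\lim_{S\to 0}$ produce the $L'_x$ and $L'_y$ boxes respectively, while the crossing of the two horizontal lines becomes $W_1(\,\cdot\,|\,Sx^{-1/2},Sy^{-1/2})$, which under $\lim_{S\to 0}$ produces the $R''_{x/y}$ vertex. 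This is precisely the configuration appearing on each side of Proposition \ref{prop:YBEpurple}.

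Passing to the limit on both sides of the ABW YBE requires checking that the renormalization $Y^{-d}$ at each purple box can be pulled uniformly out of the sum over interior configurations. The relevant exponent $d$ at each $L'$ box is the number of rightward-exiting paths $|\L|$, and path conservation through the composite of the cross and the two boxes shows that the sum $|\L_1|+|\L_2|$ on either side of the YBE is fixed by the boundary data $\I_1,\J_1,\K_1,\I_3,\J_3,\K_3$ alone. Hence the common factor $Y^{-(|\L_1|+|\L_2|)}$ appears in front of every summand on both sides, can be cancelled, and the remaining $S\to 0$ limits are finite term-by-term by the explicit formulas for $L'$ and $R''$. The main obstacle I anticipate is bookkeeping in matching ABW's conventions — which entry of $W_z(\,\cdot\,|\,r,s)$ plays the role of a column versus a row parameter, which spectral parameter is inverted in each ratio, and the orientation of the crossing — so that the specialization above actually reproduces both sides of the target equation on the nose. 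This is essentially the same translation worked out in Appendix \ref{sec:abw-ybe-limits} for Proposition \ref{prop:YBEpurplewhite}, and is in fact somewhat simpler here because the $R''$ cross requires only a single $S\to 0$ limit with no compensating power of $Y$.
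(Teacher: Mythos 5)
Your proposal is correct and is essentially the paper's own proof (Appendix~\ref{sec:abw-ybe-limits}): the same specialization $\chi=\gamma=z=1$ with column parameters $Sx^{-1/2}$, $Sy^{-1/2}$, $SY^{1/2}$, followed by the same use of path conservation at the cross to pull the factor $Y^{-|\I_3|-|\J_3|}$ uniformly out of both sums before taking $S\to 0$ and $Y\to 0$. Nothing further is needed.
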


\indent We use these vertices to construct three classes of partition functions, the first of which was studied in \cite{CGKM}.  Given a lattice $L$, the associated \textbf{partition function} is
\[ \sum_{C \in LC(L)} \weight(C) \]
where $LC(L)$ is the set of valid configurations on the lattice $L$.  In what follows, let 
\[ \lm = (\lambda^{(1)}/\mu^{(1)},\ldots,\lambda^{(k)}/\mu^{(k)}) \]
be a $k$-tuple of skew partitions, each having $p$ parts.

\begin{thm} \cite[Theorem 3.4]{CGKM} \label{thm:cgkm-lattice} The coinversion LLT polynomial $\mc{L}_{\lm}(X_n;t)$ is the partition function associated to the lattice
\[ 
\latW_n(\lm) :=
\resizebox{2cm}{!}{
\begin{tikzpicture}[baseline=(current bounding box.center)] 
\draw (0,0) grid (3,3);
\node[left] at (0,1.5) {$\vdots$}; \node[below] at (1.5,3) {$\ldots$};
\node[below] at (1.5,0) {$\bm{\mu}$}; \node[above] at (1.5,3) {$\bm{\lambda}$};
\node[left] at (0,0.5) {$x_1$}; \node[left] at (0,2.5) {$x_n$}; 
\end{tikzpicture} }.
\] 
\end{thm}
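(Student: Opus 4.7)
The plan is to construct a weight-preserving bijection between lattice configurations of $\latW_n(\lm)$ and tableaux in $\SSYT(\lm)$, sending a configuration $C$ to a tableau $T$ with $\weight(C) = t^{\coinv(T)} x^T$, so that summing over both sides of the bijection yields the desired identity.

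I would set up the bijection row by row. The $k$ colors in the vertex model are identified with the $k$ components of $\lm$: a color-$j$ path entering or exiting the row-$i$ strip of the lattice encodes placements of the label $i$ into $T^{(j)}$. The bottom and top boundary data encode $\bm{\mu}$ and $\bm{\lambda}$ respectively via a Maya-diagram convention, where column $c$ carries a color-$j$ path on its horizontal edge exactly when the corresponding content position lies on the NE boundary of the relevant partition. The $L$-vertex constraint $\mathbf{1}_{I_i + J_i \neq 2}$ prevents two paths of the same color from coinciding at a vertex, which forces the horizontal strip added in row $i$ to be a legal horizontal strip within each component; combined with the vertical path conservation this makes the row-$i$ data a valid set of $i$-labels for $T$, and iterating over rows yields a well-defined $T \in \SSYT(\lm)$.

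Next, the $x$-weight reading is immediate. The local factor $x^{|\L|}$ at each vertex in row $i$ contributes one $x_i$ per color-path exiting eastward; summing across the row gives $x_i$ to the number of $i$-labeled cells, and multiplying over rows produces $x^T$.

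The main obstacle is showing $\prod_v t^{\varphi(\L,\I+\J)} = t^{\coinv(T)}$. I would prove this by exhibiting a correspondence between index-pairs $(i,j)$ with $i<j$ contributing $1$ to the local exponent $\sum_{i<j} L_i(I_j + J_j)$ at some vertex $v$ and coinversion triples of $T$. Concretely, given a coinversion triple $(u,v,w)$, the middle cell $v$ (say with label $b$ in shape $j_v$) fixes the lattice row (row $b$) and column (its content position), while the outer cells $u, w$ belonging to a later shape $j_w > j_v$ on an adjacent content line specify which color pair is being recorded; the inequality $a \le b \le c$ defining a coinversion triple translates precisely into a path configuration at that vertex in which $L_{j_v} = 1$ and either $I_{j_w} = 1$ or $J_{j_w} = 1$, contributing the predicted factor of $t$. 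The boundary conventions $a=0$ and $c = \infty$ (cells outside $\lm$) correspond to paths entering from the bottom boundary or not exiting through the top, and are handled by the Maya-diagram encoding of $\bm{\mu}$ and $\bm{\lambda}$. The technical heart of the argument is bookkeeping to ensure each coinversion triple is witnessed at one vertex in exactly one way and every nonzero local $\varphi$-contribution corresponds to exactly one triple; once this is verified, the identity of exponents, and hence the theorem, follows.
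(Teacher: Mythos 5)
Your proposal is correct and follows essentially the same route as the cited proof in \cite{CGKM}: the paper does not reprove this theorem but notes (in the remark following the definition of $\theta$ in Section \ref{sec:relate-ls-to-g}) that the $m=0$ case of the bijection $\SSSYT(\lm) \rightarrow LC(\latS_{n,m}(\lm))$ is precisely the weight-preserving bijection $\SSYT(\lm) \rightarrow LC(\latW_n(\lm))$ used there, with colors indexing components of the tuple, lattice rows indexing entries, and columns indexing content lines. Your identification of each coinversion triple with a local contribution to $\varphi(\L,\I+\J)$ (color $j_v$ exiting right while the larger color $j_w$ is present at the same vertex) is exactly the bookkeeping that makes that bijection weight-preserving.
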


\begin{definition} \label{def:latP} We define $\mc{L}^P_{\lm}(X_n;t)$ to be the partition function associated to the lattice
\[ 
\latP_n(\lm) := 
\resizebox{2cm}{!}{
\begin{tikzpicture}[baseline=(current bounding box.center)] 
\draw[fill=violet] (0,0) rectangle (3,3); \draw (0,0) grid (3,3);
\node[left] at (0,1.5) {$\vdots$}; \node[below] at (1.5,3) {$\ldots$};
\node[below] at (1.5,0) {$\bm{\mu}$}; \node[above] at (1.5,3) {$\bm{\lambda}$};
\node[left] at (0,0.5) {$x_1$}; \node[left] at (0,2.5) {$x_n$}; 
\end{tikzpicture} } .
\] 
\end{definition}

\begin{definition} \label{def:latS} We define $\mc{L}^S_{\lm}(X_n;Y_m;t)$ to be the partition function associated to the lattice
\[ 
\latS_{n,m}(\lm) := 
\resizebox{2cm}{!}{
\begin{tikzpicture}[baseline=(current bounding box.center)] 
\draw[fill=violet] (0,3) rectangle (3,6); \draw (0,0) grid (3,6);
\node[left] at (0,1.5) {$\vdots$}; \node[left] at (0,4.5) {$\vdots$}; \node[below] at (1.5,6) {$\ldots$};
\node[below] at (1.5,0) {$\bm{\mu}$}; \node[above] at (1.5,6) {$\bm{\lambda}$};
\node[left] at (0,0.5) {$x_1$}; \node[left] at (0,2.5) {$x_n$}; \node[left] at (0,3.5) {$y_1$}; \node[left] at (0,5.5) {$y_m$}; 
\end{tikzpicture} } .
\] 
\end{definition}

Often, when it is clear from context, we will abuse notation and let the drawing of the lattice be equal to the partition function of the vertex model on the lattice.

\pagebreak

\section{Identities of Supersymmetric LLT Polynomials} \label{sec:identities}

\indent The goal of this section is to establish various properties of the partition functions $\mc{L}^S_\lm$ of Definition \ref{def:latS}.  These include four properties (summarized in Theorem \ref{thm:main3}) which generalize four properties that uniquely characterize the supersymmetric Schur polynomials.

\indent Let $P_p^{(k)}$ be the set of $k$-tuples of partitions, each having $p$ parts. Given $\bm{\lambda},\bm{\mu} \in P_p^{(k)}$ and a nonnegative integer $n$, there is a bijection
\[ \psi : LC(\latW_n(\lm)) \rightarrow LC(\latP_n(\lmp)) \]
where $\latW_n(\lm)$ is defined in Theorem \ref{thm:cgkm-lattice} and $\latP_n(\lmp)$ is defined in Definition \ref{def:latP}.  Explicitly, $\psi(C)$ is obtained from $C$ by inverting the vertical parts of the paths, reflecting the lattice over its left edge, changing each color $i$ to color $k-i$, and making the vertices purple.  For example,
\begin{center}
\resizebox{2cm}{!}{
\begin{tikzpicture}[baseline=(current bounding box.center)]
\draw (0,0) grid (3,2);
\draw[blue] (0.4,0)--(0.4,2); \draw[blue] (1.4,0)--(1.4,1.6)--(2.4,1.6)--(2.4,2);
\draw[red] (0.6,0)--(0.6,0.4)--(2.6,0.4)--(2.6,2);
\end{tikzpicture} } 
$\rightarrow$
\resizebox{2cm}{!}{
\begin{tikzpicture}[baseline=(current bounding box.center)]
\draw (0,0) grid (3,2);
\draw[blue] (1.4,2)--(1.4,1.6)--(2.4,1.6)--(2.4,0);
\draw[red] (0.6,2)--(0.6,0.4)--(2.6,0.4)--(2.6,0); \draw[red] (1.6,0)--(1.6,2);
\end{tikzpicture} } 
$\rightarrow$
\resizebox{2cm}{!}{
\begin{tikzpicture}[baseline=(current bounding box.center)]
\draw (0,0) grid (3,2);
\draw[blue] (3-1.4,2)--(3-1.4,1.6)--(3-2.4,1.6)--(3-2.4,0);
\draw[red] (3-0.6,2)--(3-0.6,0.4)--(3-2.6,0.4)--(3-2.6,0); \draw[red] (3-1.6,0)--(3-1.6,2);
\end{tikzpicture} } 
$\rightarrow$
\resizebox{2cm}{!}{
\begin{tikzpicture}[baseline=(current bounding box.center)]
\draw (0,0) grid (3,2);
\draw[red] (0.6,0)--(0.6,1.4)--(1.6,1.4)--(1.6,2);
\draw[blue] (0.4,0)--(0.4,0.6)--(2.4,0.6)--(2.4,2); \draw[blue] (1.4,0)--(1.4,2);
\end{tikzpicture} } 
$\rightarrow$
\resizebox{2cm}{!}{
\begin{tikzpicture}[baseline=(current bounding box.center)]
\draw[fill=violet] (0,0) rectangle (3,2); \draw (0,0) grid (3,2);
\draw[red] (0.6,0)--(0.6,1.4)--(1.6,1.4)--(1.6,2);
\draw[blue] (0.4,0)--(0.4,0.6)--(2.4,0.6)--(2.4,2); \draw[blue] (1.4,0)--(1.4,2);
\end{tikzpicture} } 
\end{center}
where we have done each step in order (and where blue is color 1 and red is color 2).

\indent Fix $\bm{\lambda},\bm{\mu} \in P_p^{(k)}$.  Also fix a sufficiently large number of columns; specifically, the number of columns must be larger than each $p+\lambda^{(i)}_1$ and $p+\mu^{(i)}_1$.  If $\lm$ is a horizontal strip, then there is a unique configuration $C_{\lm}$ on a single white row with top boundary $\bm{\lambda}$, bottom boundary $\bm{\mu}$, and empty left/right boundaries.  Similarly, if $\lm$ is a vertical strip, then there is a unique configuration on a single purple row with top boundary $\bm{\lambda}$, bottom boundary $\bm{\mu}$, and empty left/right boundaries.

\begin{lem} \label{lem:dream}
There exists a function $\g : P_p^{(k)} \rightarrow \Z_{\geq 0}$ so that
\[ \mc{L}_{\lm}(x;t) = t^{\g(\bm{\lambda})-\g(\bm{\mu})} \mc{L}^P_{\lmp}(x;t^{-1}) \]
for all $\bm{\lambda},\bm{\mu} \in P_p^{(k)}$ such that $\lm$ is a horizontal strip.
\end{lem}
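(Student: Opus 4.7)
The plan is to use the bijection $\psi$ defined in the paragraph preceding the lemma to relate the unique valid configuration on the white lattice to the unique valid configuration on the purple lattice, and then compare weights vertex-by-vertex.

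First I would verify that, since $\lm$ is a horizontal strip, $\mc{L}_{\lm}(x;t)$ reduces to the weight of a single configuration: paths enter the row from below at positions determined by $\bm{\mu}$ (one path per part per color) and exit on top at positions determined by $\bm{\lambda}$, with horizontal movement uniquely dictated by the horizontal-strip condition. Correspondingly, $\lmp$ is a vertical strip, so $\mc{L}^P_{\lmp}(x;t^{-1})$ also reduces to a single configuration. The bijection $\psi$ (invert vertical path labels, reflect, and apply the color swap $i \mapsto k-i$) sends one of these unique configurations to the other, so the statement to prove reduces to a comparison of the $t$-weights.

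Next I would explicitly compute both $t$-weights. From the algebraic definition, a white vertex contributes $t^{\varphi(\L,\,\I+\J)}$ and a purple vertex contributes $t^{\varphi(\L,\,\K-\J)}$; substituting $t \mapsto t^{-1}$ in the purple weight gives $t^{-\varphi(\L,\,\K-\J)}$. The $x$-powers automatically match, since both row partition functions are homogeneous of degree $|\lm| = |\lmp|$. Under $\psi$, each white vertex in a given column corresponds to a purple vertex whose edge labels are an explicit transformation of the original, so summing over the row produces two statistics, $s_W(\bm{\lambda},\bm{\mu})$ and $s_P(\bm{\lambda}',\bm{\mu}')$, and what must be shown is
\[
s_W(\bm{\lambda},\bm{\mu}) + s_P(\bm{\lambda}',\bm{\mu}') \;=\; g(\bm{\lambda}) - g(\bm{\mu})
\]
for some function $g$ depending only on a single tuple.

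To define $g$, I would expand each $\varphi$ factor as a sum over pairs of colors $(i,j)$ with $i<j$, and rewrite it as a count of certain pairs of paths (equivalently, pairs of parts $(\lambda^{(i)}_r, \lambda^{(j)}_s)$ or $(\mu^{(i)}_r, \mu^{(j)}_s)$) whose relative column positions satisfy specific inequalities forced by the path combinatorics in the row. The key observation is that the contributions involving paths entering from the top (encoding $\bm{\lambda}$) and those entering from the bottom (encoding $\bm{\mu}$) separate cleanly because the horizontal-strip/vertical-strip condition forces a controlled geometry: each contribution depends only on one of the two tuples. This lets me set
\[
g(\bm{\nu}) \;=\; \sum_{1 \leq i < j \leq k} \#\{(r,s) : \text{some explicit inequality among } \nu^{(i)}_r, \nu^{(j)}_s\},
\]
and the required identity follows by grouping the pair-sums accordingly. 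The main obstacle is bookkeeping the effect of the color reversal $i \mapsto k-i$ and the horizontal reflection inside $\psi$: these change the sign convention in $\varphi$, so one must verify that the swapped terms from the purple side combine with the white side to give a difference and not some other combination. Once that bookkeeping is settled, nonnegativity $g(\bm{\nu}) \in \Z_{\geq 0}$ is immediate from the pair-counting formula.
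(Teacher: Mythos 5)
Your setup is sound and matches the paper's: both proofs reduce to comparing the $t$-weights of the unique white configuration $C_{\lm}$ and its image $\psi(C_{\lm})$, and the $x$-powers do match for the reason you give. The gap is at the decisive step. You assert that the combined statistic (call it $\tilde{\g}(\lm)$, the total $t$-exponent $s_W + s_P$) ``separates cleanly'' into a part depending only on $\bm{\lambda}$ and a part depending only on $\bm{\mu}$, ``because each contribution depends only on one of the two tuples.'' That justification is not correct: whether a given color exits a given box to the right, or is present in that box at all, depends on where its path enters the row (governed by $\bm{\mu}$) \emph{and} where it exits (governed by $\bm{\lambda}$), so every individual vertex contribution genuinely mixes the two tuples. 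There is no direct pairwise-inequality formula that falls out of expanding $\varphi$ vertex by vertex, and the claimed function $\g(\bm{\nu}) = \sum_{i<j}\#\{(r,s):\dots\}$ is never actually produced.

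Establishing that $\tilde{\g}(\lm)$ is nonetheless of the form $\g(\bm{\lambda})-\g(\bm{\mu})$ is the real content of the lemma, and the paper devotes most of its proof to it: one \emph{defines} $\g$ recursively by $\g(\bm{\lambda}) = \g(\bm{\mu}) + \tilde{\g}(\lm)$ and then proves this is well defined (independent of the chosen horizontal strip $\bm{\mu}$) by induction on $|\bm{\lambda}|$. The induction compares two chains of horizontal strips from $\0$ up to $\bm{\lambda}$, realizes them as two path configurations on a multi-row lattice with the same boundary, connects them by corner flips, and checks that the ``good box'' statistic is invariant under each flip; the telescoping sums then give the required consistency. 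Your proposal needs either this argument or an honest derivation of a closed-form $\g$ together with a verification that it satisfies the recursion; as written, the separation is assumed rather than proved.
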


\begin{proof} 
Fix $\bm{\lambda},\bm{\mu} \in P_p^{(k)}$ such that $\lm$ is a horizontal strip.  Let $C = C_{\lm}$.  Note that $C$ is the unique configuration on a single white row with top boundary $\bm{\lambda}$, bottom boundary $\bm{\mu}$, and empty left/right boundaries. Moreover, $\psi(C)$ is the unique configuration on a single purple row with top boundary $\bm{\lambda}'$, bottom boundary $\bm{\mu}'$, and empty left/right boundaries.  Thus
\[ \begin{aligned}
&\mc{L}_{\lm}(x;t) = \weight(C) = x^\alpha t^\beta, \\
&\mc{L}^P_{\lm}(x;t) = \weight(\psi(C)) = x^\gamma t^\delta
\end{aligned} \]
for some nonnegative integers $\alpha,\beta,\gamma,\delta$.  Note
\[ \begin{aligned}
\alpha &= \# \{ (i,j) \mid \text{the $i$-th smallest color exits right in the $b$-th leftmost box in $C$} \} \\
&=\# \{ (i,j) \mid \text{the $i$-th largest color exits right in the $b$-th rightmost box in $\psi(C)$} \} = \gamma.
\end{aligned} \]
Therefore 
\[ \mc{L}_{\lm}(x;t) = x^\alpha t^\beta = t^{\beta+\delta} x^\gamma t^{-\delta} = t^{\beta+\delta} \mc{L}^P_{\lm}(x;t^{-1}). \]
Also note
\[ \begin{aligned}
\beta + \delta &= \# \{ (i,j,b) \mid \text{$i < j$, in box $b$ of $C$ color $i$ exits right and color $j$ is present} \} \\
&\hspace{0.5cm} + \# \{ (i',j',b') \mid \text{$i' > j'$, in box $b'$ of $\psi(C)$ color $i'$ is vertical and color $j'$ exits right} \} \\
&= \# \{ (i,j,b) \mid \text{$i < j$, in box $b$ of $C$ color $i$ exits right and color $j$ is present} \} \\
&\hspace{0.5cm} + \# \{ (i,j,b) \mid \text{$i < j$, in box $b$ of $C$ color $i$ is absent and color $j$ enters left} \} \\
&= \sum_b \sum_{i < j} \1_\text{$b$ is ``good" for $i$ and $j$}
\end{aligned} \]
where we say a box $b$ is ``good" for the colors $i < j$ if either color $i$ exits right and color $j$ is present, or color $i$ is absent and color $j$ enters left.
Therefore 
\[ \mc{L}_{\lm}(x;t) = t^{\tilde{\g}(\lm)} \mc{L}^P_{\lmp}(x;t^{-1}) \]
where we have defined
\[ \tilde{\g}(\bm{\lambda/\mu}) := \sum_{\substack{\text{boxes $b$} \\ \text{of $C_\lm$}}} \sum_{\substack{\text{colors} \\ i < j}} \1_\text{$b$ is ``good" for $i$ and $j$}. \]

We can recursively define the desired function $\g$ by the rule
\[ \resizebox{0.85\textwidth}{!}{
$
\g(\bm{\lambda}) = \left \{ \begin{array}{ll} \g(\bm{\mu}) + \tilde{\g}(\lm) & \text{if there exists $\bm{\mu} \in P_p^{(k)} - \{ \bm{\lambda} \}$ such that $\lm$ is a horizontal strip} \\ 0 & \text{otherwise (i.e. if $\bm{\lambda} = \0$)} \end{array} \right .. 
$
}
\]
Provided that $\g(\bm{\lambda})$ is well-defined (i.e. $\g(\bm{\mu}) + \tilde{\g}(\lm)$ is independent of $\bm{\mu}$) for all $\bm{\lambda} \in P_p^{(k)}$,
\[ \mc{L}_{\lm}(x;t) = t^{\tilde{\g}(\lm)} \mc{L}^P_{\lmp}(x;t^{-1}) = t^{\g(\bm{\lambda})-\g(\bm{\mu})} \mc{L}^P_{\lmp}(x;t^{-1}) \]
for all $\bm{\lambda},\bm{\mu} \in P_p^{(k)}$ such that $\lm$ is a horizontal strip.  To show $\g$ is well-defined, we induct on the number of cells in $\bm{\lambda}$.  Clearly $\g(\0) = 0$ is well-defined.  Fix $\bm{\lambda} \in P_p^{(k)} - \{\0\}$ and assume $\g$ is well-defined on elements of $P_p^{(k)}$ with strictly fewer boxes than $\bm{\lambda}$.  Fix $\bm{\alpha},\bm{\mu} \in P_p^{(k)} - \{ \bm{\lambda} \}$ such that $\bm{\lambda}/\bm{\alpha}$ and $\lm$ are horizontal strips.  There exist (not necessarily distinct) 
\[ \begin{aligned}
&\bm{\beta}^0 = \0,\ldots,\bm{\beta}^{r-1} = \bm{\alpha},\bm{\beta}^r = \bm{\lambda} \in P_p^{(k)}, \\
&\bm{\nu}^0 = \0,\ldots,\bm{\nu}^{r-1} = \bm{\mu}, \bm{\nu}^r = \bm{\lambda} \in P_p^{(k)} 
\end{aligned} \]
such that $\bm{\beta}^i / \bm{\beta}^{i-1}$ and $\bm{\nu}^i / \bm{\nu}^{i-1}$ are horizontal strips for all $i$.  Note that each sequence completely determines a configuration of paths on an $r\times M$ lattice with top boundary $\bm{\lambda}$ and bottom boundary $\bm{0}$, since they determine the state of the paths at every row.

Since the two configurations have the same top and bottom boundary, it is possible to get from one configuration to the other via corner flips
\[ 
\resizebox{1.5cm}{!}{
\begin{tikzpicture}[baseline=(current bounding box.center)] 
\draw (0,0) grid (2,2);
\draw[green] (0.5,0.5) -- (0.5,1.5) -- (1.5,1.5);
\end{tikzpicture} } 
\leftrightarrow
\resizebox{1.5cm}{!}{
\begin{tikzpicture}[baseline=(current bounding box.center)] 
\draw (0,0) grid (2,2);
\draw[green] (0.5,0.5) -- (1.5,0.5) -- (1.5,1.5);
\end{tikzpicture} } .
\] 
Some straightforward computations (which we verified with a computer) show that, for any configuration on a $2 \times 2$ white lattice and for any colors $h < i < j$ such that color $i$ has the form
\[ 
\resizebox{1.5cm}{!}{
\begin{tikzpicture}[baseline=(current bounding box.center)] 
\draw (0,0) grid (2,2);
\draw[green] (0.5,0.5) -- (0.5,1.5) -- (1.5,1.5);
\end{tikzpicture} } 
\]
in the configuration, flipping color $i$ changes neither the number of boxes that are ``good" for $h$ and $i$, nor the number of boxes that are ``good" for $i$ and $j$. Using the flipping argument repeatedly, we see that the quantity
\[ \sum_b \sum_{i < j} \1_\text{$b$ is ``good" for $i$ and $j$} \]
is the same for both configurations. Therefore
\[ \tilde{\g}(\bm{\beta}^r/\bm{\beta}^{r-1}) + \tilde{\g}(\bm{\beta}^{r-1}/\bm{\beta}^{r-2}) + \ldots + \tilde{\g}(\bm{\beta}^1/\bm{\beta}^0) = \tilde{\g}(\bm{\nu}^r/\bm{\nu}^{r-1}) + \tilde{\g}(\bm{\nu}^{r-1}/\bm{\nu}^{r-2}) + \ldots + \tilde{\g}(\bm{\nu}^1/\bm{\nu}^0). \]
Applying the inductive hypothesis, we have
\[ \begin{aligned}
&\tilde{\g}(\bm{\beta}^r/\bm{\beta}^{r-1}) + \g(\bm{\beta}^{r-1}) - \g(\bm{\beta}^{r-2}) + \ldots + \g(\bm{\beta}^1) - \g(\bm{\beta}^0) \\
&= \tilde{\g}(\bm{\nu}^r/\bm{\nu}^{r-1}) + \g(\bm{\nu}^{r-1}) - \g(\bm{\nu}^{r-2}) + \ldots + \g(\bm{\nu}^1) - \g(\bm{\nu}^0). 
\end{aligned} \]
The sums telescope to give
\[ \tilde{\g}(\bm{\beta}^r/\bm{\beta}^{r-1}) + \g(\bm{\beta}^{r-1}) - \g(\bm{\beta}^0) = \tilde{\g}(\bm{\nu}^r/\bm{\nu}^{r-1}) + \g(\bm{\nu}^{r-1}) - \g(\bm{\nu}^0) \]
which we can rewrite as
\[ \tilde{\g}(\bm{\lambda}/\bm{\alpha}) + \g(\bm{\alpha}) - \g(\0) = \tilde{\g}(\lm) + \g(\bm{\mu}) - \g(\0). \]
Therefore $\g(\bm{\alpha}) + \tilde{\g}(\bm{\lambda}/\bm{\alpha}) = \g(\bm{\mu}) + \tilde{\g}(\lm)$.
\end{proof}

\begin{cor} \label{cor:dconjugate}
$\g(\bm{\lambda}) = \g(\bm{\lambda}')$
\end{cor}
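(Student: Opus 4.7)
My plan is to prove the corollary by strong induction on $|\bm{\lambda}|$, reducing the identity to a local $t$-exponent comparison at a single corner shared (via conjugation) between $\bm{\lambda}$ and $\bm{\lambda}'$.

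The base case $\bm{\lambda} = \0$ is trivial since $\bm{\lambda}' = \0$ and $\g(\0) = 0$. For the inductive step, I will pick any removable corner of $\bm{\lambda}$, let $\bm{\mu}$ be $\bm{\lambda}$ with that box deleted, and note that $\lm$ is a single-box strip (hence both a horizontal and a vertical strip). The conjugate corner will be a removable corner of $\bm{\lambda}'$, and deleting it gives $\bm{\mu}'$ with $\lmp$ also a single-box strip. The recursive rule behind Lemma \ref{lem:dream} will then give
\[ \g(\bm{\lambda}) = \g(\bm{\mu}) + \tilde{\g}(\lm) \quad \text{and} \quad \g(\bm{\lambda}') = \g(\bm{\mu}') + \tilde{\g}(\lmp), \]
so, combined with $\g(\bm{\mu}) = \g(\bm{\mu}')$ from the inductive hypothesis, the corollary will reduce to the identity $\tilde{\g}(\lm) = \tilde{\g}(\lmp)$ for single-box strips.

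To prove that last identity, I will use the formula $\tilde{\g}(\sigma) = \beta(\sigma) + \delta(\sigma')$ extracted from the proof of Lemma \ref{lem:dream}, where $\beta(\sigma)$ is the $t$-exponent of the monomial $\mc{L}_\sigma(x;t)$ and $\delta(\sigma')$ is that of $\mc{L}^P_{\sigma'}(x;t)$. Since a single-box $\sigma$ is also a vertical strip, $\mc{L}^P_\sigma(x;t)$ is itself a monomial with $t$-exponent $\delta(\sigma)$. The key step is to show $\beta(\sigma) = \delta(\sigma)$. The unique white and unique purple configurations for such $\sigma$ have identical path support: one color $c$ performs a single NE corner at some column $a$ and a single SW corner at column $a+1$, while every other present color runs straight vertically. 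The $t$-exponent contribution of a vertex vanishes unless $\L \neq \0$, which in both configurations happens only at the NE corner in column $a$; there the white weight contributes $\sum_{j > c} L_c(I_j + J_j)$ while the purple weight contributes $\sum_{j > c} L_c(K_j - J_j)$. Since every $j > c$ present at column $a$ is vertical, $(I_j, J_j, K_j, L_j) = (1,0,1,0)$, making $I_j + J_j = 1 = K_j - J_j$ and forcing the two sums to coincide.

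Applying $\beta(\sigma) = \delta(\sigma)$ to both $\lm$ and $\lmp$ will give $\tilde{\g}(\lm) = \beta(\lm) + \delta(\lmp) = \delta(\lm) + \beta(\lmp) = \tilde{\g}(\lmp)$, finishing the induction. The main obstacle is the local identity $\beta(\sigma) = \delta(\sigma)$: it depends crucially on $\sigma$ being a single box, since only then are all non-moving colors at the NE corner vertical, so the white ($I_j + J_j$) and purple ($K_j - J_j$) $t$-exponent formulas agree term by term. For strips with multiple NE/SW corners these formulas generally diverge, but the single-box reduction is enough because any $\bm{\lambda}$ can be built from $\0$ one cell at a time.
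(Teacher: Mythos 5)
Your proof is correct and follows essentially the same route as the paper: induct on the number of cells, remove a single box so that $\lm$ is simultaneously a horizontal and vertical strip, and reduce to the identity $\tilde{\g}(\lm) = \tilde{\g}(\lmp)$, which both arguments establish by comparing the $t$-exponents of the unique white and purple single-row configurations at the one vertex where the moving color exits right (where vertical spectator colors give $I_j+J_j = 1 = K_j - J_j$). The only cosmetic difference is that you exploit the symmetry $\beta(\sigma)=\delta(\sigma)$ to avoid explicitly tracking the reflection-and-color-reversal bijection $\psi$ that the paper uses to match its two counts.
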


\begin{proof}

We induct on the number of cells in $\bm{\lambda}$.  Note that $\g(\0) = 0 = \g(\0')$.  Fix $\bm{\lambda} \in P_p^{(k)} - \{\0\}$ and assume $\g(\bm{\mu}) = \g(\bm{\mu}')$ for all $\bm{\mu} \in P_p^{(k)}$ with strictly fewer cells than $\bm{\lambda}$.  Since $\bm{\lambda} \neq \0$, there exists $\bm{\mu} \in P_p^{(k)}$ that can be obtained by removing a single cell $u$ from $\bm{\lambda}$.  We want to show $\g(\bm{\lambda}) = \g(\bm{\lambda}')$.  It is enough to show $\tilde{\g}(\lm) = \tilde{\g}(\lmp)$, since then
\[ \g(\bm{\lambda}) = \g(\bm{\mu}) + \tilde{\g}(\lm) = \g(\bm{\mu}') + \tilde{\g}(\lmp) = \g(\bm{\lambda}'). \]

Let $\lambda^{(i)}$ be the partition to which $u$ belongs.  Since $\lm$ consists of the single cell $u$ in $\lambda^{(i)}/\mu^{(i)}$, every color in every box in $C_\lm$ is either vertical or absent, with the exception of the color $i$ in two adjacent boxes, which has the form
\[ 
\resizebox{2cm}{!}{
\begin{tikzpicture}[baseline=(current bounding box.center)] 
\draw (0,0) grid (2,1);
\draw[green] (0.5,0) -- (0.5,0.5) -- (1.5,0.5) -- (1.5,1);
\node[above] at (0.5,1) {$b$}; \node[above] at (1.5,1) {$b+1$}; 
\end{tikzpicture} } .
\]
Note that $C_\lmp$ is exactly the configuration $\psi(C_\lm)$ with white in place of purple.  Therefore every color in every box in $C_\lmp$ is either vertical or absent, with the exception of the color $i' = k-i$ in two adjacent boxes, which has the form 
\[ 
\resizebox{2cm}{!}{
\begin{tikzpicture}[baseline=(current bounding box.center)] 
\draw (0,0) grid (2,1);
\draw[green] (0.5,0) -- (0.5,0.5) -- (1.5,0.5) -- (1.5,1);
\node[above] at (0.5,1) {$b'-1$}; \node[above] at (1.5,1) {$b'$}; 
\end{tikzpicture} } .
\]
We have
\[ 
\resizebox{0.85\textwidth}{!}{$
\begin{aligned}
\tilde{\g}(\lm) =& \# \{ j > i : \text{$j$ is vertical in box $b$ of $C_\lm$} \} + \# \{ h < i : \text{$h$ is absent in box $b+1$ of $C_\lm$} \} \\
=& \# \{ j' < i' : \text{$j'$ is absent in box $b'$ of $C_\lmp$} \} + \# \{ h' > i' : \text{$h'$ is vertical in box $b'-1$ of $C_\lmp$} \} \\
=& \tilde{\g}(\lmp).
\end{aligned}
$}
\]
\end{proof}

\begin{thm} \label{thm:SSLLTxyswap}
\[ \mc{L}^S_{\lm}(X_n;Y_m;t) = t^{\g(\bm{\lambda}) - \g(\bm{\mu})} \mc{L}^S_{\lmp}(Y_m;X_n;t^{-1}) \]
\end{thm}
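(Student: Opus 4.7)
My plan is to expand $\mc{L}^S_{\lm}(X_n;Y_m;t)$ as a sum over intermediate shape sequences, apply Lemma~\ref{lem:dream} and a purple-row analog row-by-row, telescope the resulting $t$-exponents, and conclude using a Yang--Baxter row commutation. Concretely, using the fact that a single white (resp.\ purple) row is the unique configuration on its top/bottom boundary precisely when those boundaries form a horizontal (resp.\ vertical) strip, one writes
\[ \mc{L}^S_{\lm}(X_n;Y_m;t) = \sum_{\bm{\lambda}^{(\bullet)}} \prod_{i=1}^n \mc{L}_{\bm{\lambda}^{(i)}/\bm{\lambda}^{(i-1)}}(x_i;t) \cdot \prod_{j=1}^m \mc{L}^P_{\bm{\lambda}^{(n+j)}/\bm{\lambda}^{(n+j-1)}}(y_j;t), \]
summed over sequences $\bm{\mu} = \bm{\lambda}^{(0)} \subseteq \cdots \subseteq \bm{\lambda}^{(n+m)} = \bm{\lambda}$ in $P_p^{(k)}$ with each $\bm{\lambda}^{(i)}/\bm{\lambda}^{(i-1)}$ a horizontal strip for $i \le n$ and a vertical strip for $i > n$.

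Next I would derive the purple counterpart of Lemma~\ref{lem:dream}: for a vertical strip $\bgp$,
\[ \mc{L}^P_{\bgp}(y;t) = t^{\g(\bm{\beta})-\g(\bm{\gamma})} \mc{L}_{\bm{\beta}/\bm{\gamma}}(y;t^{-1}). \]
This follows by applying Lemma~\ref{lem:dream} to the horizontal strip $\bm{\beta}/\bm{\gamma}$, using Corollary~\ref{cor:dconjugate} to identify $\g(\bm{\beta}')=\g(\bm{\beta})$ and $\g(\bm{\gamma}')=\g(\bm{\gamma})$, and then substituting $t \mapsto t^{-1}$. Plugging Lemma~\ref{lem:dream} into each white-row factor and this dual into each purple-row factor, the $t$-exponents $\g(\bm{\lambda}^{(i)}) - \g(\bm{\lambda}^{(i-1)})$ telescope across the whole sequence to produce the overall factor $t^{\g(\bm{\lambda})-\g(\bm{\mu})}$. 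The remaining sum is the partition function of a ``swapped'' lattice: $n$ purple rows on the bottom with parameters $x_1,\dots,x_n$, then $m$ white rows on top with parameters $y_1,\dots,y_m$, top boundary $\bm{\lambda}'$, bottom boundary $\bm{\mu}'$, and global parameter $t^{-1}$.

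To identify this swapped lattice with $\mc{L}^S_{\lmp}(Y_m;X_n;t^{-1})$, whose lattice has $m$ white rows on the bottom and $n$ purple rows on top, I would commute each adjacent (purple-below, white-above) pair by the standard train argument based on Proposition~\ref{prop:YBEpurplewhite}: attach an $R'$ vertex on the right of the pair, push it leftward through the row pair via repeated YBE applications across the columns, and then cancel the $R'$ contributions at the empty left and right boundaries. By path conservation both $R'$ weights equal $R_{y/x}^{'(k)}(\0,\0,\0,\0) = (1+y/x)^{-k}$, so they cancel, giving equality of the partition functions before and after the swap. After $nm$ such commutations the rows are rearranged into the standard order, and the partition function equals $\mc{L}^S_{\lmp}(Y_m;X_n;t^{-1})$, which yields the claimed identity. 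The bulk of the argument is a mechanical application of Lemma~\ref{lem:dream} and its purple dual combined with telescoping; the main technical subtlety is this final YBE-based row commutation with empty-boundary cancellation, which is the only step where the Yang--Baxter integrability of the model is used in an essential way.
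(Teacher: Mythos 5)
Your proposal follows essentially the same route as the paper's proof: expand the lattice row by row, apply Lemma~\ref{lem:dream} to the white rows and its purple dual (obtained from Lemma~\ref{lem:dream} with $t\mapsto t^{-1}$ plus Corollary~\ref{cor:dconjugate}) to the purple rows, telescope the exponents of $t$, and then restore the standard row order via the train argument based on Proposition~\ref{prop:YBEpurplewhite}. The only slip is that the empty $R'$ cross has weight $R_{y/x}^{'(k)}(\0,\0,\0,\0)=1$ rather than $(1+y/x)^{-k}$, which does not affect the cancellation step.
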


\begin{proof} If $\lm$ is a horizontal strip, then by Lemma \ref{lem:dream}, we have
\[ \mc{L}_{\lm}(x;t) = t^{\g(\bm{\lambda})-\g(\bm{\mu})} \mc{L}^P_{\lmp}(x;t^{-1}). \]
If $\lm$ is a vertical strip, then by Lemma \ref{lem:dream} (with $t^{-1}$ in place of $t$ and $\lmp$ in place of $\lm$) and Corollary \ref{cor:dconjugate}, we have
\[ \mc{L}^P_{\lm}(x;t) = t^{\g(\bm{\lambda})-\g(\bm{\mu})} \mc{L}_{\lmp}(x;t^{-1}). \]
Using these lemmas at each row of our lattice, we have
\[ 
\resizebox{0.85\textwidth}{!}{
$
\begin{aligned}
&\mc{L}^S_{\lm}(X_n;Y_m;t) =  
\resizebox{2cm}{!}{
\begin{tikzpicture}[baseline=(current bounding box.center)] 
\draw[fill=violet] (0,3) rectangle (3,6); \draw (0,0) grid (3,6);
\node[left] at (0,1.5) {$\vdots$}; \node[left] at (0,4.5) {$\vdots$}; \node[below] at (1.5,6) {$\ldots$};
\node[below] at (1.5,0) {$\bm{\mu}$}; \node[above] at (1.5,6) {$\bm{\lambda}$};
\node[left] at (0,0.5) {$x_1$}; \node[left] at (0,2.5) {$x_n$}; \node[left] at (0,3.5) {$y_1$}; \node[left] at (0,5.5) {$y_m$}; 
\end{tikzpicture} } \\
&= \sum \mc{L}^P_{\bm{\lambda}/\bm{\alpha^{m+n-1}}}(y_m;t)\ldots\mc{L}^P_{\bm{\alpha^{n+1}}/\bm{\alpha^{n}}}(y_1;t) \mc{L}_{\bm{\alpha^{n}}/\bm{\alpha^{n-1}}}(x_n;t)\ldots\mc{L}_{\bm{\alpha^{1}}/\bm{\mu}}(x_1;t) \\
&= \sum t^{\g(\bm{\lambda})-\g(\bm{\mu})}\mc{L}_{\bm{\lambda}'/\bm{\alpha^{m+n-1'}}}(y_m;t^{-1})\ldots\mc{L}_{\bm{\alpha^{n+1'}}/\bm{\alpha^{n'}}}(y_1;t^{-1}) \mc{L}^P_{\bm{\alpha^{n'}}/\bm{\alpha^{n-1'}}}(x_n;t^{-1})\ldots\mc{L}^P_{\bm{\alpha^{1'}}/\bm{\mu}'}(x_1;t^{-1}) \\
&= t^{\g(\bm{\lambda})-\g(\bm{\mu})} \sum \mc{L}_{\bm{\lambda}'/\bm{\beta^{m+n-1}}}(y_m;t^{-1})\ldots\mc{L}_{\bm{\beta^{n+1}}/\bm{\beta^{n}}}(y_1;t^{-1}) \mc{L}^P_{\bm{\beta^{n}}/\bm{\beta^{n-1}}}(x_n;t^{-1})\ldots\mc{L}^P_{\bm{\beta^{1}}/\bm{\mu}'}(x_1;t^{-1}) \\
&= t^{\g(\bm{\lambda})-\g(\bm{\mu})} \resizebox{2cm}{!}{
\begin{tikzpicture}[baseline=(current bounding box.center)] 
\draw[fill=violet] (0,0) rectangle (3,3); \draw (0,0) grid (3,6);
\node[left] at (0,1.5) {$\vdots$}; \node[left] at (0,4.5) {$\vdots$}; \node[below] at (1.5,6) {$\ldots$};
\node[below] at (1.5,0) {$\bm{\mu}'$}; \node[above] at (1.5,6) {$\bm{\lambda}'$};
\node[left] at (0,0.5) {$x_1$}; \node[left] at (0,2.5) {$x_n$}; \node[left] at (0,3.5) {$y_1$}; \node[left] at (0,5.5) {$y_m$}; 
\end{tikzpicture} }
= t^{\g(\bm{\lambda})-\g(\bm{\mu})} \resizebox{2cm}{!}{
\begin{tikzpicture}[baseline=(current bounding box.center)] 
\draw[fill=violet] (0,3) rectangle (3,6); \draw (0,0) grid (3,6);
\node[left] at (0,1.5) {$\vdots$}; \node[left] at (0,4.5) {$\vdots$}; \node[below] at (1.5,6) {$\ldots$};
\node[below] at (1.5,0) {$\bm{\mu}'$}; \node[above] at (1.5,6) {$\bm{\lambda}'$};
\node[left] at (0,0.5) {$y_1$}; \node[left] at (0,2.5) {$y_m$}; \node[left] at (0,3.5) {$x_1$}; \node[left] at (0,5.5) {$x_n$}; 
\end{tikzpicture} }
= \mc{L}^S_{\lmp}(Y_m;X_n;t^{-1})
\end{aligned}
$
}
\]
where 
\begin{itemize}
\item the sums in the second and third lines are over all $\bm{\alpha}^0 = \bm{\mu},\ldots,\bm{\alpha}^{m+n} = \bm{\lambda}$ such that $\bm{\alpha}^i / \bm{\alpha}^{i-1}$ is a horizontal strip for all $i \leq n$ and a vertical strip for all $i > n$, 
\item the sum in the fourth line is over all $\bm{\beta}^0 = \bm{\mu}',\ldots,\bm{\beta}^{m+n} = \bm{\lambda}'$ such that $\bm{\beta}^i / \bm{\beta}^{i-1}$ is a vertical strip for all $i \leq n$ and a horizontal strip for all $i > n$, and
\item the second-to-last equality uses repeated applications of Prop. \ref{prop:YBEpurplewhite}, as illustrated below.
\[ \begin{aligned}
\resizebox{2.4cm}{!}{ \begin{tikzpicture}[baseline=(current bounding box.center)] 
\draw[fill=violet] (0,1) rectangle (3,2); \draw (0,0) grid (3,2); \node at (1.5,0.5) {$\ldots$}; \node at (1.5,1.5) {$\ldots$};
\draw[black,fill=white] (0,1.5) circle (.5ex); \draw[black,fill=white] (0,0.5) circle (.5ex);
\draw[black,fill=white] (3,1.5) circle (.5ex); \draw[black,fill=white] (3,0.5) circle (.5ex);
\node[left] at (0,0.5) {$x_i$}; \node[left] at (0,1.5) {$y_j$};
\end{tikzpicture} }
=
\resizebox{3.2cm}{!}{ \begin{tikzpicture}[baseline=(current bounding box.center)] 
\fill[yellow] (-1,0.5) rectangle (0,1.5); \draw (-1,1.5)--(0,0.5); \draw (-1,0.5)--(0,1.5);
\draw[black,fill=white] (-1,1.5) circle (.5ex); \draw[black,fill=white] (-1,0.5) circle (.5ex);
\draw[fill=violet] (0,1) rectangle (3,2); \draw (0,0) grid (3,2); \node at (1.5,0.5) {$\ldots$}; \node at (1.5,1.5) {$\ldots$};
\draw[black,fill=white] (3,1.5) circle (.5ex); \draw[black,fill=white] (3,0.5) circle (.5ex);
\node[left] at (-1,0.5) {$y_j$}; \node[left] at (-1,1.5) {$x_i$}; 
\end{tikzpicture} }
=
\resizebox{3.2cm}{!}{ \begin{tikzpicture}[baseline=(current bounding box.center)] 
\draw[fill=violet] (0,0) rectangle (3,1); \draw (0,0) grid (3,2); \node at (1.5,0.5) {$\ldots$}; \node at (1.5,1.5) {$\ldots$};
\draw[black,fill=white] (0,1.5) circle (.5ex); \draw[black,fill=white] (0,0.5) circle (.5ex);
\node[left] at (0,0.5) {$y_j$}; \node[left] at (0,1.5) {$x_i$};
\fill[yellow] (3,0.5) rectangle (4,1.5); \draw (3,1.5)--(4,0.5); \draw (3,0.5)--(4,1.5);
\draw[black,fill=white] (4,1.5) circle (.5ex); \draw[black,fill=white] (4,0.5) circle (.5ex);
\end{tikzpicture} }
=
\resizebox{2.4cm}{!}{ \begin{tikzpicture}[baseline=(current bounding box.center)] 
\draw[fill=violet] (0,0) rectangle (3,1); \draw (0,0) grid (3,2); \node at (1.5,0.5) {$\ldots$}; \node at (1.5,1.5) {$\ldots$};
\draw[black,fill=white] (0,1.5) circle (.5ex); \draw[black,fill=white] (0,0.5) circle (.5ex);
\draw[black,fill=white] (3,1.5) circle (.5ex); \draw[black,fill=white] (3,0.5) circle (.5ex);
\node[left] at (0,0.5) {$y_j$}; \node[left] at (0,1.5) {$x_i$};
\end{tikzpicture} }
\end{aligned} \]
\end{itemize}
\end{proof}

The technique used to swap a white row and a purple row at the end of the previous proof is sometimes called the ``train argument."  This technique is used again to prove the following lemma.

\begin{lem} \label{lem:permuterows}
The partition function associated to any lattice that can be obtained from the lattice $\latS_{n,m}$ (Definition \ref{def:latS}) by permuting the rows is equal to $\mc{L}_\lm^S(X_n;Y_m;t)$.  In particular, $\mc{L}_\lm^S(X_n;Y_m;t)$ is symmetric in the $X$ and $Y$ variables separately. 
\end{lem}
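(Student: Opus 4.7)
The plan is to reduce the lemma to the single claim that the partition function of $\latS_{n,m}(\lm)$ is invariant under swapping any two adjacent rows, since every permutation in $S_{n+m}$ is a product of adjacent transpositions. The symmetry of $\mc{L}^S_\lm(X_n;Y_m;t)$ in $X$ (respectively $Y$) will then follow by restricting to adjacent swaps of two white rows (respectively two purple rows), as these correspond to adjacent transpositions of the $X$-variables (respectively $Y$-variables) and generate $S_n$ (respectively $S_m$).

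For an adjacent row swap I would split into three cases according to the colors of the two rows, and in each case apply the \emph{train argument} used at the end of the proof of Theorem \ref{thm:SSLLTxyswap}. If both rows are white, with spectral parameters $x_i$ and $x_{i+1}$, I would attach an $R$-cross at the far left of the two-row strip and repeatedly invoke Proposition \ref{prop:YBEwhite} to slide it one column to the right at each step; after traversing all columns the cross emerges on the right and the two rows (together with their spectral parameters) are exchanged. The analogous argument with an $R''$-cross and Proposition \ref{prop:YBEpurple} handles two adjacent purple rows, and the argument with an $R'$-cross and Proposition \ref{prop:YBEpurplewhite} handles a white row adjacent to a purple row, exchanging the ordering of the two colors together with their spectral parameters.

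For this argument to truly yield invariance, attaching the cross on the left and then removing it on the right must contribute no weight. This is where the empty left and right boundaries of $\latS_{n,m}$ enter: path conservation forces each attached cross vertex to have all four incident edges labelled by $\0$, and a direct check of the algebraic formulas in Section \ref{sec:VertexModel} shows that
\[ R^{(k)}_{y/x}(\0,\0,\0,\0) \;=\; R'^{(k)}_{y/x}(\0,\0,\0,\0) \;=\; R''^{(k)}_{x/y}(\0,\0,\0,\0) \;=\; 1, \]
since each is a product of empty Pochhammer symbols and a vacuous power of $t$. Hence both the insertion and the removal contribute the factor $1$, and the partition function is unchanged by each adjacent swap. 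I expect the main obstacle to be purely bookkeeping: correctly tracking the spectral parameters through each of the three Yang-Baxter equations and confirming in each picture that the resulting move is indeed the desired row swap (in particular that the colors, not just the spectral parameters, are permuted consistently).
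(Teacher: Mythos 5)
Your proposal is correct and is essentially the paper's own proof: the paper likewise reduces to swapping two adjacent rows via the train argument, invoking Proposition \ref{prop:YBEwhite} for two white rows, Proposition \ref{prop:YBEpurplewhite} for a white and a purple row, and Proposition \ref{prop:YBEpurple} for two purple rows. Your additional check that the attached cross has all edges $\0$ and weight $1$ on the empty boundaries is exactly the point that makes the train argument close up.
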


\begin{proof}
Two rows can be swapped using the train argument along with Proposition \ref{prop:YBEwhite} (to swap two white rows), Proposition \ref{prop:YBEpurplewhite} (to swap a white row and a purple row), or Proposition \ref{prop:YBEpurple} (to swap two purple rows).
\end{proof}

\begin{lem} \label{lem:cancellation}
Suppose $n,m \geq 1$.  Then $\mc{L}_\lm^S(X_{n-1},r;Y_{m-1},-r;t) = \mc{L}_\lm^S(X_{n-1};Y_{m-1};t)$.
\end{lem}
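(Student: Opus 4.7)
The plan is to use Lemma \ref{lem:permuterows} to rearrange rows so that the white row with spectral parameter $r$ sits directly beneath the purple row with spectral parameter $-r$, and then to show that this adjacent pair collapses to the identity transfer matrix. Since row permutations preserve the partition function, we may freely assume the lattice $\latS_{n,m}(\lm)$ is stratified vertically into three blocks: a \emph{lower} block of the $n-1$ white rows indexed by $X_{n-1}$, a \emph{middle} strip consisting of the single white row at $r$ beneath the single purple row at $-r$, and an \emph{upper} block of the $m-1$ purple rows indexed by $Y_{m-1}$.

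Factoring the partition function through the state $\bm{\alpha}$ on the top edge of the lower block and the state $\bm{\beta}$ on the bottom edge of the upper block yields
\[
\mc{L}_\lm^S(X_{n-1},r;Y_{m-1},-r;t) = \sum_{\bm{\alpha},\bm{\beta}} Z^{\mathrm{low}}_{\bm{\alpha}}(X_{n-1};t)\, T_{\bm{\alpha},\bm{\beta}}(r,-r)\, Z^{\mathrm{up}}_{\bm{\beta}}(Y_{m-1};t),
\]
where $Z^{\mathrm{low}}$ and $Z^{\mathrm{up}}$ are partition functions of the outer blocks (independent of $r$), and $T_{\bm{\alpha},\bm{\beta}}(r,-r)$ is the partition function of the two-row strip with empty left and right boundaries. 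The crux of the proof is to show $T_{\bm{\alpha},\bm{\beta}}(r,-r)=\delta_{\bm{\alpha},\bm{\beta}}$. Granted this, the double sum collapses to $\sum_{\bm{\alpha}} Z^{\mathrm{low}}_{\bm{\alpha}} Z^{\mathrm{up}}_{\bm{\alpha}}= \mc{L}_\lm^S(X_{n-1};Y_{m-1};t)$.

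To establish the identity claim, I would factor the strip through the intermediate state $\bm{\gamma}$ between the two rows, writing $T_{\bm{\alpha},\bm{\beta}}(r,-r) = \sum_{\bm{\gamma}} \mc{L}^P_{\bm{\beta}/\bm{\gamma}}(-r;t)\, \mc{L}_{\bm{\gamma}/\bm{\alpha}}(r;t)$, where the sum ranges over $\bm{\gamma}$ with $\bm{\gamma}/\bm{\alpha}$ a horizontal strip (forced by the $L$ vertex) and $\bm{\beta}/\bm{\gamma}$ a vertical strip (forced by the $L'$ vertex). Extracting the sign from $(-r)^{|\bm{\beta}/\bm{\gamma}|}$, each term carries a prefactor $(-1)^{|\bm{\beta}/\bm{\gamma}|}$ relative to the corresponding value at parameter $r$. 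The goal is to build a sign-reversing, $r$- and $t$-weight-preserving involution on the set of configurations with $\bm{\alpha}\neq\bm{\beta}$: given a nontrivial configuration, locate a canonical ``leftmost'' horizontal segment of some color (in either row) and move it between the two rows, which keeps the total power of $r$ unchanged but flips the sign. The only surviving fixed point is the configuration with $\bm{\gamma}=\bm{\alpha}=\bm{\beta}$ in which every path goes straight up through both rows, contributing weight $1$ in each vertex; this handles the diagonal case.

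The main obstacle is constructing the involution explicitly and verifying that it respects all constraints: no $L$-vertex crossings in the white row, the $K_i\geq J_i$ rule in the purple row, and preservation of the $t$-statistics encoded by the $\varphi(\cdot,\cdot)$ factors in the vertex weights. A careful case analysis by color is required, because the multi-color rules $\delta_i, \delta'_i$ make the local weight depend on which higher-colored paths are present. If the involution route proves intractable, an alternative I would try is to invoke Lemma \ref{lem:dream} and Corollary \ref{cor:dconjugate} to rewrite $\mc{L}_{\bm{\gamma}/\bm{\alpha}}(r;t)$ as $t^{g(\bm{\gamma})-g(\bm{\alpha})}\mc{L}^P_{\bm{\gamma}'/\bm{\alpha}'}(r;t^{-1})$, reducing the desired identity to an orthogonality statement between two single-row partition functions at related parameters, where the sign from $-r$ cancels against the $g$-statistics.
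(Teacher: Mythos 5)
Your proposal follows essentially the same route as the paper's proof: use Lemma \ref{lem:permuterows} to place the white row at $r$ directly below the purple row at $-r$, factor the partition function through the boundaries of that two-row strip, and kill every configuration in which the strip is nontrivial via a sign-reversing corner-flip involution, leaving only the all-vertical, weight-one configuration. The one detail your sketch gets backwards is the canonical choice for the flip: the paper takes the \emph{rightmost} pair of columns admitting a corner and the \emph{largest} color with a corner there, and this maximality is precisely what forces every other color to pass straight through those columns and every larger color to have no corner in them --- the hypotheses under which the local weight ratio is exactly $-1$; a ``leftmost'' choice would leave the $t$-exponents contributed by higher colors (via the $\varphi$ factors) uncontrolled, so the flip would change the weight by $-t^{\text{something}}$ rather than $-1$.
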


\begin{proof}

Using Lemma \ref{lem:permuterows}, we can write
\[ \begin{aligned}
\mc{L}_\lm^S(X_{n-1},r;Y_{m-1},-r;t) 
= \resizebox{2cm}{!}{
\begin{tikzpicture}[baseline=(current bounding box.center)] 
\draw[fill=violet] (0,3) rectangle (3,6); \draw[fill=violet] (0,7) rectangle (3,8); \draw (0,0) grid (3,8);
\node[left] at (0,1.5) {$\vdots$}; \node[left] at (0,4.5) {$\vdots$}; \node[below] at (1.5,8) {$\ldots$};
\node[below] at (1.5,0) {$\bm{\mu}$}; \node[above] at (1.5,8) {$\bm{\lambda}$};
\node[left] at (0,0.5) {$x_1$}; \node[left] at (0,2.5) {$x_{n-1}$}; \node[left] at (0,3.5) {$y_1$}; \node[left] at (0,5.5) {$y_{m-1}$}; \node[left] at (0,6.5) {$r$}; \node[left] at (0,7.5) {$-r$}; 
\end{tikzpicture} }
= \sum_{\bm{\alpha}}
\resizebox{2cm}{!}{
\begin{tikzpicture}[baseline=(current bounding box.center)] 
\draw[fill=violet] (0,1) rectangle (3,2); \draw (0,0) grid (3,2);
\node[below] at (1.5,2) {$\ldots$};
\node[below] at (1.5,0) {$\bm{\alpha}$}; \node[above] at (1.5,2) {$\bm{\lambda}$};
\node[left] at (0,0.5) {$r$}; \node[left] at (0,1.5) {$-r$}; 
\end{tikzpicture} }
\resizebox{2cm}{!}{
\begin{tikzpicture}[baseline=(current bounding box.center)] 
\draw[fill=violet] (0,3) rectangle (3,6); \draw (0,0) grid (3,6);
\node[left] at (0,1.5) {$\vdots$}; \node[left] at (0,4.5) {$\vdots$}; \node[below] at (1.5,6) {$\ldots$};
\node[below] at (1.5,0) {$\bm{\mu}$}; \node[above] at (1.5,6) {$\bm{\alpha}$};
\node[left] at (0,0.5) {$x_1$}; \node[left] at (0,2.5) {$x_{n-1}$}; \node[left] at (0,3.5) {$y_1$}; \node[left] at (0,5.5) {$y_{m-1}$};
\end{tikzpicture} } .
\end{aligned} \]
We will show that, for all $\bm{\alpha} \neq \bm{\lambda}$, there is an involution $\varphi_{\bm{\alpha}}$ on the set of configurations of the lattice
\[ 
L_{\bm{\lambda}/\bm{\alpha}} = \resizebox{2cm}{!}{
\begin{tikzpicture}[baseline=(current bounding box.center)] 
\draw[fill=violet] (0,1) rectangle (3,2); \draw (0,0) grid (3,2);
\node[below] at (1.5,2) {$\ldots$};
\node[below] at (1.5,0) {$\bm{\alpha}$}; \node[above] at (1.5,2) {$\bm{\lambda}$};
\node[left] at (0,0.5) {$r$}; \node[left] at (0,1.5) {$-r$}; 
\end{tikzpicture} } 
\]
such that $\weight(\varphi_{\bm{\alpha}}(C)) = -\weight(C)$ for all $C$.  Therefore
\[ 
\resizebox{0.85\textwidth}{!}{
$
\mc{L}_\lm^S(X_{n-1},r;Y_{m-1},-r;t) =
\resizebox{2cm}{!}{
\begin{tikzpicture}[baseline=(current bounding box.center)] 
\draw[fill=violet] (0,1) rectangle (3,2); \draw (0,0) grid (3,2);
\node[below] at (1.5,2) {$\ldots$};
\node[below] at (1.5,0) {$\bm{\lambda}$}; \node[above] at (1.5,2) {$\bm{\lambda}$};
\node[left] at (0,0.5) {$r$}; \node[left] at (0,1.5) {$-r$}; 
\end{tikzpicture} }
\resizebox{2cm}{!}{
\begin{tikzpicture}[baseline=(current bounding box.center)] 
\draw[fill=violet] (0,3) rectangle (3,6); \draw (0,0) grid (3,6);
\node[left] at (0,1.5) {$\vdots$}; \node[left] at (0,4.5) {$\vdots$}; \node[below] at (1.5,6) {$\ldots$};
\node[below] at (1.5,0) {$\bm{\mu}$}; \node[above] at (1.5,6) {$\bm{\lambda}$};
\node[left] at (0,0.5) {$x_1$}; \node[left] at (0,2.5) {$x_{n-1}$}; \node[left] at (0,3.5) {$y_1$}; \node[left] at (0,5.5) {$y_{m-1}$};
\end{tikzpicture} } 
=
\resizebox{2cm}{!}{
\begin{tikzpicture}[baseline=(current bounding box.center)] 
\draw[fill=violet] (0,3) rectangle (3,6); \draw (0,0) grid (3,6);
\node[left] at (0,1.5) {$\vdots$}; \node[left] at (0,4.5) {$\vdots$}; \node[below] at (1.5,6) {$\ldots$};
\node[below] at (1.5,0) {$\bm{\mu}$}; \node[above] at (1.5,6) {$\bm{\lambda}$};
\node[left] at (0,0.5) {$x_1$}; \node[left] at (0,2.5) {$x_{n-1}$}; \node[left] at (0,3.5) {$y_1$}; \node[left] at (0,5.5) {$y_{m-1}$};
\end{tikzpicture} } 
= \mc{L}_\lm^S(X_{n-1};Y_{m-1};t).
$
}
\]

Fix $\bm{\alpha} \neq \bm{\lambda}$ and fix a configuration $C$ on the lattice $L_{\bm{\lambda}/\bm{\alpha}}$.  Since $\bm{\alpha} \neq \bm{\lambda}$, there exist two consecutive columns $c$ and $c+1$ of $C$ and a color $i$ such that, in columns $c$ and $c+1$ of $C$, color $i$ has the form
\[ 
\resizebox{1.5cm}{!}{
\begin{tikzpicture}[baseline=(current bounding box.center)] 
\draw[fill=violet] (0,1) rectangle (2,2); \draw (0,0) grid (2,2);
\draw[green] (0.5,0.5) -- (0.5,1.5) -- (1.5,1.5) -- (1.5,2);
\draw[black,fill=white] (1.5,0) circle (.5ex); \draw[black,fill=white] (2,0.5) circle (.5ex); \draw[black,fill=white] (2,1.5) circle (.5ex); \draw[black,fill=white] (1,0.5) circle (.5ex); \draw[black,fill=white] (1.5,1) circle (.5ex);
\end{tikzpicture} } 
\text{ or }
\resizebox{1.5cm}{!}{
\begin{tikzpicture}[baseline=(current bounding box.center)] 
\draw[fill=violet] (0,1) rectangle (2,2); \draw (0,0) grid (2,2);
\draw[green] (0.5,0.5) -- (1.5,0.5) -- (1.5,1.5) -- (1.5,2);
\draw[black,fill=white] (1.5,0) circle (.5ex); \draw[black,fill=white] (2,0.5) circle (.5ex); \draw[black,fill=white] (2,1.5) circle (.5ex); \draw[black,fill=white] (0.5,1) circle (.5ex); \draw[black,fill=white] (1,1.5) circle (.5ex);
\end{tikzpicture} }. 
\]
Let $c$ be the rightmost column for which there exists a color of this form in columns $c$ and $c+1$, and let $i$ be the largest color of this form in columns $c$ and $c+1$.  We define $\varphi_{\bm{\alpha}}(C)$ to be the result of flipping color $i$ in columns $c$ and $c+1$
\[ 
\resizebox{1.5cm}{!}{
\begin{tikzpicture}[baseline=(current bounding box.center)] 
\draw[fill=violet] (0,1) rectangle (2,2); \draw (0,0) grid (2,2);
\draw[green] (0.5,0.5) -- (0.5,1.5) -- (1.5,1.5) -- (1.5,2);
\draw[black,fill=white] (1.5,0) circle (.5ex); \draw[black,fill=white] (2,0.5) circle (.5ex); \draw[black,fill=white] (2,1.5) circle (.5ex); \draw[black,fill=white] (1,0.5) circle (.5ex); \draw[black,fill=white] (1.5,1) circle (.5ex);
\end{tikzpicture} } 
\leftrightarrow
\resizebox{1.5cm}{!}{
\begin{tikzpicture}[baseline=(current bounding box.center)] 
\draw[fill=violet] (0,1) rectangle (2,2); \draw (0,0) grid (2,2);
\draw[green] (0.5,0.5) -- (1.5,0.5) -- (1.5,1.5) -- (1.5,2);
\draw[black,fill=white] (1.5,0) circle (.5ex); \draw[black,fill=white] (2,0.5) circle (.5ex); \draw[black,fill=white] (2,1.5) circle (.5ex); \draw[black,fill=white] (0.5,1) circle (.5ex); \draw[black,fill=white] (1,1.5) circle (.5ex);
\end{tikzpicture} }. 
\]
Clearly $\varphi_{\bm{\alpha}}$ is an involution.  To show $\weight(\varphi_{\bm{\alpha}}(C)) = -\weight(C)$, we need to show 
\begin{equation} \label{eq:2by2weight}
\weight \left ( \resizebox{1.5cm}{!}{
\begin{tikzpicture}[baseline=(current bounding box.center)] 
\draw[fill=violet] (0,1) rectangle (2,2); \draw (0,0) grid (2,2);
\draw[green] (0.5,0.5) -- (0.5,1.5) -- (1.5,1.5) -- (1.5,2);
\draw[black,fill=white] (1.5,0) circle (.5ex); \draw[black,fill=white] (2,0.5) circle (.5ex); \draw[black,fill=white] (2,1.5) circle (.5ex); \draw[black,fill=white] (1,0.5) circle (.5ex); \draw[black,fill=white] (1.5,1) circle (.5ex);
\node[left] at (0,0.5) {$r$}; \node[left] at (0,1.5) {$-r$};
\end{tikzpicture} } \right )
=
- \weight \left ( \resizebox{1.5cm}{!}{
\begin{tikzpicture}[baseline=(current bounding box.center)] 
\draw[fill=violet] (0,1) rectangle (2,2); \draw (0,0) grid (2,2);
\draw[green] (0.5,0.5) -- (1.5,0.5) -- (1.5,1.5) -- (1.5,2);
\draw[black,fill=white] (1.5,0) circle (.5ex); \draw[black,fill=white] (2,0.5) circle (.5ex); \draw[black,fill=white] (2,1.5) circle (.5ex); \draw[black,fill=white] (0.5,1) circle (.5ex); \draw[black,fill=white] (1,1.5) circle (.5ex);
\node[left] at (0,0.5) {$r$}; \node[left] at (0,1.5) {$-r$};
\end{tikzpicture} } \right ) 
\end{equation}
regardless of the paths taken by the other colors.  However, by the maximality of $c$ and $i$, we know that every color not equal to $i$ must have the form
\[ 
\resizebox{1.5cm}{!}{
\begin{tikzpicture}[baseline=(current bounding box.center)] 
\draw[fill=violet] (0,1) rectangle (2,2); \draw (0,0) grid (2,2);
 \draw[black,fill=white] (2,0.5) circle (.5ex); \draw[black,fill=white] (2,1.5) circle (.5ex);
\end{tikzpicture} } 
\]
and every color greater than $i$ must not have the form
\[ 
\resizebox{1.5cm}{!}{
\begin{tikzpicture}[baseline=(current bounding box.center)] 
\draw[fill=violet] (0,1) rectangle (2,2); \draw (0,0) grid (2,2);
\draw[red] (0.5,0.5) -- (0.5,1.5) -- (1.5,1.5);
\end{tikzpicture} } 
\text{ or }
\resizebox{1.5cm}{!}{
\begin{tikzpicture}[baseline=(current bounding box.center)] 
\draw[fill=violet] (0,1) rectangle (2,2); \draw (0,0) grid (2,2);
\draw[red] (0.5,0.5) -- (1.5,0.5) -- (1.5,1.5);
\end{tikzpicture} }. 
\]
With these constraints, some straightforward computations (which we verified with a computer) show Equation \ref{eq:2by2weight} holds.
\end{proof}

\indent Combining Lemmas \ref{lem:permuterows} and \ref{lem:cancellation}, we can now conclude that the polynomials $\mc{L}_\lm^S(X;Y;t)$ are supersymmetric in the $X$ and $Y$ variables.

\begin{definition} \label{def:supersymmetric}
A family of polynomials $\{ p(X_n;Y_m) : n,m \in \mathbb{Z}_{\geq 0} \}$ is \textbf{supersymmetric} if 
\begin{itemize}
    \item $p(\sigma(X_n);Y_m) = p(X_n;Y_m;t)$ for any permutation $\sigma \in S_n$ \\
    (i.e. $p(X_n;Y_m)$ is symmetric in the $X$ variables),
    \item $p(X_n;\tau(Y_m)) = p(X_n;Y_m;t)$ for any permutation $\tau \in S_m$ \\
    (i.e. $p(X_n;Y_m)$ is symmetric in the $Y$ variables), and
    \item $p(X_{n-1},r;Y_{m-1},-r) = p(X_{n-1};Y_{m-1})$ when $n,m \geq 1$. 
\end{itemize}
\end{definition}

\begin{thm} \label{thm:supersymmetric}
The polynomials $\mc{L}_\lm^S(X_n;Y_m;t)$ are supersymmetric in the $X$ and $Y$ variables.  
\end{thm}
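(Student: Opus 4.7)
The plan is to observe that Theorem \ref{thm:supersymmetric} is essentially a bookkeeping corollary of the two lemmas that immediately precede it, since the three bullets in Definition \ref{def:supersymmetric} match exactly the conclusions already established. First I would invoke Lemma \ref{lem:permuterows}, whose statement explicitly provides symmetry in the $X$ variables and separately in the $Y$ variables; this discharges the first two bullets of Definition \ref{def:supersymmetric} applied to the family $\{\mc{L}_\lm^S(X_n;Y_m;t)\}_{n,m\geq 0}$.

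Next I would invoke Lemma \ref{lem:cancellation}, which states precisely $\mc{L}_\lm^S(X_{n-1},r;Y_{m-1},-r;t) = \mc{L}_\lm^S(X_{n-1};Y_{m-1};t)$ whenever $n,m\geq 1$, discharging the third bullet. Since the defining property of supersymmetry is the conjunction of these three conditions, the theorem follows immediately.

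There is essentially no obstacle here, since all the work has been front-loaded into the two lemmas (the train argument behind Lemma \ref{lem:permuterows} using the Yang--Baxter equations of Propositions \ref{prop:YBEwhite}, \ref{prop:YBEpurplewhite}, and \ref{prop:YBEpurple}, and the sign-reversing involution behind Lemma \ref{lem:cancellation}). The proof is therefore a single sentence of the form ``Combine Lemmas \ref{lem:permuterows} and \ref{lem:cancellation} with Definition \ref{def:supersymmetric}.'' The only thing worth remarking is that one should note the $Y$-symmetry really does come out of Lemma \ref{lem:permuterows}: the lemma allows \emph{arbitrary} permutations of rows of $\latS_{n,m}(\lm)$, in particular permutations that fix the white/purple partition, which yields symmetry in each of $X$ and $Y$ separately.
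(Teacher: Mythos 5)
Your proposal is correct and is exactly the paper's own argument: the paper proves the theorem with the single sentence ``Combining Lemmas \ref{lem:permuterows} and \ref{lem:cancellation}, we can now conclude that the polynomials $\mc{L}_\lm^S(X;Y;t)$ are supersymmetric,'' matching your observation that the three bullets of Definition \ref{def:supersymmetric} are discharged by those two lemmas.
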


We proceed by proving a certain restriction property for the polynomials $\mc{L}_\lm^S(X_n;Y_m;t)$.

\begin{lem}[Restriction] \label{lem:restriction} We have
\[ \begin{aligned} 
&\mc{L}_\lm^S(X_{n-1},0;Y_m;t) = \mc{L}_\lm^S(X_{n-1};Y_m;t), \\
&\mc{L}_\lm^S(X_n;Y_{m-1},0;t) = \mc{L}_\lm^S(X_n;Y_{m-1};t). 
\end{aligned} \]
\end{lem}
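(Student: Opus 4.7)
The plan is to read off the restriction property directly from the algebraic definitions of the $L$- and $L'$-matrices, using the fact that a zero spectral parameter kills any vertex configuration in which a path exits to the right.

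More precisely, consider the lattice $\latS_{n,m}(\lm)$ with $x_n=0$. The $n$-th white row uses the weight
\[ L_0^{(k)}(\I,\J,\K,\L) = \1_{\I+\J=\K+\L}\,\prod_{i=1}^k \1_{I_i+J_i\neq 2}\,\cdot\,0^{|\L|}\,t^{\varphi(\L,\I+\J)}, \]
which vanishes whenever $|\L|>0$. So in any configuration contributing nonzero weight, we must have $\L=\0$ at \emph{every} vertex of this row, i.e.\ no color ever exits to the right. Since the left boundary of the row is empty, induction on the column index (the $\J$ entering each vertex equals the $\L$ exiting the vertex to its left, hence is $\0$) gives $\J=\0$ at every vertex as well. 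Path conservation then forces $\I=\K$, so each vertex simply sends its incoming vertical colors straight up, and its weight reduces to $0^0\,t^0=1$. Consequently the $n$-th white row acts as the identity on boundary data, and removing it produces a bijection of nonzero configurations between $\latS_{n,m}(\lm)\big|_{x_n=0}$ and $\latS_{n-1,m}(\lm)$ that preserves weight.

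The case $y_m=0$ is analogous, using the $L'$-weight
\[ L_0^{'(k)}(\I,\J,\K,\L) = \1_{\I+\J=\K+\L}\,\prod_{i=1}^k \1_{K_i\geq J_i}\,\cdot\,0^{|\L|}\,t^{\varphi(\L,\K-\J)}, \]
which again vanishes unless $\L=\0$. The same column-by-column propagation forces $\J=\0$ in every purple vertex of the $m$-th row (using the empty left boundary), after which the indicator $\1_{K_i\geq J_i}=\1_{K_i\geq 0}$ is automatic, conservation gives $\I=\K$, and each vertex contributes weight $1$. Hence the $m$-th purple row may be deleted without changing the partition function.

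There is no substantive obstacle; the whole content is that specializing a spectral parameter to zero turns the corresponding row into a trivial "pass-through" row, which is immediate from the explicit weight formulas. The only point worth a sentence of care is the column-by-column argument that the empty left boundary combined with $\L=\0$ everywhere forces $\J=\0$ everywhere, so path conservation really does collapse each vertex to the identity $\I=\K$.
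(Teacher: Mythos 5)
Your proof is correct and follows essentially the same route as the paper: the key observation in both is that a zero spectral parameter annihilates any vertex where a path exits right, forcing the corresponding row to act as the identity (the paper phrases this as the single zero-parameter row equalling $1_{\bm{\lambda}=\bm{\alpha}}$ after first moving that row to the top via Lemma \ref{lem:permuterows}, whereas you argue the pass-through property in place, which is a cosmetic difference). The paper also offers an alternative derivation of the second identity via Theorem \ref{thm:SSLLTxyswap} and Corollary \ref{cor:dconjugate}, but your direct argument with the $L'$-weights matches its primary one.
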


\begin{proof}
Using Lemma \ref{lem:permuterows}, we can write
\[ \begin{aligned}
\mc{L}_\lm^S(X_{n-1},0;Y_m;t) 
= \resizebox{2cm}{!}{
\begin{tikzpicture}[baseline=(current bounding box.center)] 
\draw[fill=violet] (0,3) rectangle (3,6); \draw (0,0) grid (3,7);
\node[left] at (0,1.5) {$\vdots$}; \node[left] at (0,4.5) {$\vdots$}; \node[below] at (1.5,7) {$\ldots$};
\node[below] at (1.5,0) {$\bm{\mu}$}; \node[above] at (1.5,7) {$\bm{\lambda}$};
\node[left] at (0,0.5) {$x_1$}; \node[left] at (0,2.5) {$x_{n-1}$}; \node[left] at (0,3.5) {$y_1$}; \node[left] at (0,5.5) {$y_m$}; \node[left] at (0,6.5) {$0$}; 
\end{tikzpicture} }
= \sum_{\bm{\alpha}}
\resizebox{2cm}{!}{
\begin{tikzpicture}[baseline=(current bounding box.center)] 
\draw (0,0) grid (3,1);
\node[below] at (1.5,1) {$\ldots$};
\node[below] at (1.5,0) {$\bm{\alpha}$}; \node[above] at (1.5,1) {$\bm{\lambda}$};
\node[left] at (0,0.5) {$0$}; 
\end{tikzpicture} }
\resizebox{2cm}{!}{
\begin{tikzpicture}[baseline=(current bounding box.center)] 
\draw[fill=violet] (0,3) rectangle (3,6); \draw (0,0) grid (3,6);
\node[left] at (0,1.5) {$\vdots$}; \node[left] at (0,4.5) {$\vdots$}; \node[below] at (1.5,6) {$\ldots$};
\node[below] at (1.5,0) {$\bm{\mu}$}; \node[above] at (1.5,6) {$\bm{\alpha}$};
\node[left] at (0,0.5) {$x_1$}; \node[left] at (0,2.5) {$x_{n-1}$}; \node[left] at (0,3.5) {$y_1$}; \node[left] at (0,5.5) {$y_m$};
\end{tikzpicture} } 
\end{aligned} \]
It is easy to see that 
\[ \begin{aligned}
\resizebox{2cm}{!}{
\begin{tikzpicture}[baseline=(current bounding box.center)] 
\draw (0,0) grid (3,1);
\node[below] at (1.5,1) {$\ldots$};
\node[below] at (1.5,0) {$\bm{\alpha}$}; \node[above] at (1.5,1) {$\bm{\lambda}$};
\node[left] at (0,0.5) {$0$}; 
\end{tikzpicture} } = 1_{\bm{\lambda}=\bm{\alpha}}.
\end{aligned} \]
Therefore 
\begin{equation} \label{eq:restrictwhite}
\mc{L}_\lm^S(X_{n-1},0;Y_m;t) =
\resizebox{2cm}{!}{
\begin{tikzpicture}[baseline=(current bounding box.center)] 
\draw[fill=violet] (0,3) rectangle (3,6); \draw (0,0) grid (3,6);
\node[left] at (0,1.5) {$\vdots$}; \node[left] at (0,4.5) {$\vdots$}; \node[below] at (1.5,6) {$\ldots$};
\node[below] at (1.5,0) {$\bm{\mu}$}; \node[above] at (1.5,6) {$\bm{\lambda}$};
\node[left] at (0,0.5) {$x_1$}; \node[left] at (0,2.5) {$x_{n-1}$}; \node[left] at (0,3.5) {$y_1$}; \node[left] at (0,5.5) {$y_m$};
\end{tikzpicture} } = \mc{L}_\lm^S(X_{n-1};Y_m;t)
\end{equation}
A similar argument shows that 
\[ \mc{L}_\lm^S(X_n;Y_{m-1},0;t) = \mc{L}_\lm^S(X_n;Y_{m-1};t). \]  Alternatively, we can deduce
\[ \begin{aligned}
\mc{L}_\lm^S(X_n;Y_{m-1},0;t) &= t^{g(\bm{\lambda})-g(\bm{\mu})}\mc{L}_\lmp^S(Y_{m-1},0;X_n;t^{-1}) &\text{(Theorem \ref{thm:SSLLTxyswap})} \\
&= t^{g(\bm{\lambda})-g(\bm{\mu})}\mc{L}_\lmp^S(Y_{m-1};X_n;t^{-1}) &\text{(Equation \ref{eq:restrictwhite})} \\
&= t^{g(\bm{\lambda})-g(\bm{\mu})} (t^{-1})^{g(\bm{\lambda}')-g(\bm{\mu}')} \mc{L}_\lm^S(X_n;Y_{m-1};t) &\text{(Theorem \ref{thm:SSLLTxyswap})} \\
&= \mc{L}_\lm^S(X_n;Y_{m-1};t). &\text{(Corollary \ref{cor:dconjugate})} \\
\end{aligned} \] 
\end{proof}

\begin{lem} \label{lem:homogeneous}
The polynomial $\mc{L}_\lm^S(X_n;Y_m;t)$ is homogeneous in the $X$ and $Y$ variables of degree $|\lm| = |\bm{\lambda}| - |\bm{\mu}|$ i.e.
\[ \mc{L}_\lm^S(rX_n;rY_m;t) = r^{|\lm|} \mc{L}_\lm^S(X_n;Y_m;t). \] 
\end{lem}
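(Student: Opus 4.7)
The plan is to argue directly at the level of configurations: for every valid configuration $C$ of the lattice $\latS_{n,m}(\lm)$, the weight $\weight(C)$ is (up to a factor depending only on $t$) a single monomial in the variables $\{x_1,\ldots,x_n,y_1,\ldots,y_m\}$ whose total degree is exactly $|\lm|$. Summing over $C$ then yields homogeneity immediately.

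First, the lattice $\latS_{n,m}(\lm)$ uses only $L$-vertices (in white rows, spectral parameter $x_i$) and $L'$-vertices (in purple rows, spectral parameter $y_j$); no $R$-type vertices appear in the lattice itself. Inspection of the algebraic definitions shows that the dependence of such a vertex weight on its spectral parameter is exactly $x_i^{|\L|}$ (respectively $y_j^{|\L|}$), where $\L$ denotes the right-exiting edge label. Consequently, the combined $x,y$-degree of $\weight(C)$ equals $\sum_v |\L_v|$, summed over all vertices $v$ of the lattice.

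Next, for each fixed color $c \in \{1,\ldots,k\}$, the quantity $\sum_v (\L_v)_c$ counts the number of internal horizontal edges of $\latS_{n,m}(\lm)$ on which color $c$ is present in $C$. Because the left and right boundaries of $\latS_{n,m}(\lm)$ carry no paths and paths move only up or right, every color-$c$ path enters through a unique bottom-boundary edge and exits through a unique top-boundary edge; the number of interior horizontal edges it traverses equals its total horizontal displacement, namely (exit column) $-$ (entry column). Summing over all color-$c$ paths, this total equals $\bigl(\sum \text{exit columns}\bigr) - \bigl(\sum \text{entry columns}\bigr)$, a quantity insensitive to how bottom paths are matched with top paths.

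Finally, the standard partition-to-boundary dictionary (implicit in Theorem \ref{thm:cgkm-lattice} and already used in Lemma \ref{lem:restriction}) encodes a partition $\tau$ with $p$ parts as paths at columns $\{\tau_i + (p+1-i)\}_{i=1}^{p}$. Thus the shift $(p+1-i)$ cancels between the exit and entry sums, giving total color-$c$ displacement $|\lambda^{(c)}| - |\mu^{(c)}|$; summing over $c$ produces $\sum_v|\L_v| = |\bm{\lambda}| - |\bm{\mu}| = |\lm|$ for every configuration $C$, which is precisely the claimed homogeneity. The only potential pitfall is making the column dictionary precise, but this is pure bookkeeping and not a substantive obstacle; no appeal to the Yang-Baxter equation or to Theorem \ref{thm:SSLLTxyswap} is needed.
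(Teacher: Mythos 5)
Your proof is correct and takes essentially the same route as the paper: the paper's one-line argument is precisely the observation that in any configuration the total number of right steps is $|\lm|$, which is the fact you establish in detail via the $x^{|\L|}$-dependence of the $L$ and $L'$ weights and the column-displacement bookkeeping. You have simply made explicit what the paper leaves implicit.
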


\begin{proof} This follows from the fact that, in any configuration of the lattice 
\[ 
\resizebox{2cm}{!}{
\begin{tikzpicture}[baseline=(current bounding box.center)] 
\draw[fill=violet] (0,3) rectangle (3,6); \draw (0,0) grid (3,6);
\node[left] at (0,1.5) {$\vdots$}; \node[left] at (0,4.5) {$\vdots$}; \node[below] at (1.5,6) {$\ldots$};
\node[below] at (1.5,0) {$\bm{\mu}$}; \node[above] at (1.5,6) {$\bm{\lambda}$};
\node[left] at (0,0.5) {$rx_1$}; \node[left] at (0,2.5) {$rx_n$}; \node[left] at (0,3.5) {$ry_1$}; \node[left] at (0,5.5) {$ry_m$}; 
\end{tikzpicture} } ,
\]
the total number of right steps taken by the paths is $|\lm|$. 
\end{proof}

\subsection{The Factorization Property}

The goal of this subsection is to prove the following lemma.

\begin{lem}[Factorization] \label{lem:factorization}
Fix $\bm{\lambda} \in P_p^{(k)}$.  Suppose there exist $\bm{\tau}$ and $\bm{\eta}$ such that, for all $i$,
\[ \lambda^{(i)} = (m+\tau^{(i)}_1,\ldots,m+\tau^{(i)}_n,\eta^{(i)}_1,\ldots,\eta^{(i)}_s) \]
where $s = p-n$.  Then
\[ \begin{aligned}
\mc{L}_{\bm{\lambda}}^S(X_n;Y_m;t) = \mc{L}_{\bm{\tau}}(X_n;t) 
\cdot t^{g(\bm{\eta})} \mc{L}_{\bm{\eta}'}(Y_m;t^{-1}) 
\cdot \prod_{l=0}^{k-1} \prod_{i=1}^n \prod_{j=1}^m (t^l x_i+y_j).
\end{aligned} \]
\end{lem}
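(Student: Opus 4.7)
The plan is to prove the lemma in two stages: first reduce the statement to the special case $\bm{\eta}=\bm{0}$, and then prove that case by induction on $m$.

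For the first stage, I would establish
\[ \mc{L}^S_{\bm{\lambda}}(X_n;Y_m;t) = \mc{L}^P_{\bm{\eta}}(Y_m;t)\cdot\mc{L}^S_{(m+\bm{\tau})}(X_n;Y_m;t), \]
where $(m+\bm{\tau})\in P_n^{(k)}$ denotes the $k$-tuple with parts $(m+\tau^{(i)}_1,\ldots,m+\tau^{(i)}_n)$.  Iterating Lemma \ref{lem:dream} across the $m$ purple rows (together with Corollary \ref{cor:dconjugate}) gives $\mc{L}^P_{\bm{\eta}}(Y_m;t)=t^{g(\bm{\eta})}\mc{L}_{\bm{\eta}'}(Y_m;t^{-1})$, which accounts for the middle factor in the lemma.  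To prove the factorization, decompose the partition function over the intermediate state $\bm{\nu}$ between the $n$-th white row and the first purple row,
\[ \mc{L}^S_{\bm{\lambda}}(X_n;Y_m;t)=\sum_{\bm{\nu}}\mc{L}_{\bm{\nu}}(X_n;t)\cdot\mc{L}^P_{\bm{\lambda}/\bm{\nu}}(Y_m;t), \]
and argue that only $\bm{\nu}$ with $\nu^{(i)}_r=0$ for $r>n$ contribute.  The key input is that the $L$-vertex horizontal-strip constraint $\mathbf{1}_{I_i+J_i\neq 2}$, combined with the preservation of the relative order of same-color paths (they only move right), prevents the $s$ ``bottom'' paths of each color (which target positions below $m+s$ at the top) from overtaking the $n$ ``top'' paths of the same color in the white region.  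Once this structural reduction is in hand, the purple partition function $\mc{L}^P_{\bm{\lambda}/\bm{\nu}}(Y_m;t)$ splits according to row-range into a contribution from the top $n$ rows (absorbed into $\mc{L}^S_{(m+\bm{\tau})}$) and a contribution from the bottom $s$ rows (equal to $\mc{L}^P_{\bm{\eta}}$).

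For the second stage I would prove the special case $\bm{\eta}=\bm{0}$,
\[ \mc{L}^S_{(m+\bm{\tau})}(X_n;Y_m;t) = \mc{L}_{\bm{\tau}}(X_n;t)\cdot\prod_{l=0}^{k-1}\prod_{i=1}^n\prod_{j=1}^m(t^lx_i+y_j), \]
by induction on $m$.  The base case $m=0$ is immediate from Lemma \ref{lem:restriction}: $\mc{L}^S_{\bm{\tau}}(X_n;\emptyset;t)=\mc{L}_{\bm{\tau}}(X_n;t)$ and the product is empty.  For the inductive step, peel off the topmost purple row:
\[ \mc{L}^S_{(m+1+\bm{\tau})}(X_n;Y_{m+1};t) = \sum_{\bm{\alpha}}\mc{L}^P_{(m+1+\bm{\tau})/\bm{\alpha}}(y_{m+1};t)\cdot\mc{L}^S_{\bm{\alpha}}(X_n;Y_m;t). \]
Because the strip $(m+1+\bm{\tau})/\bm{\alpha}$ must be vertical, each $\bm{\alpha}^{(i)}$ is forced to have the form $(m+\bm{\tau}'^{(i)})$ for some $\bm{\tau}'\supseteq\bm{\tau}$ satisfying $\bm{\tau}'\subseteq\bm{\tau}+(1^n)$; applying the inductive hypothesis and collecting the factor $\prod_{l=0}^{k-1}(t^lx_i+y_{m+1})$ produced by the new row recovers the claimed product.

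The main obstacle is Stage 1.  A bottom path might in principle move in the white region once a top path has moved right in an earlier row, so the confinement of bottom paths to columns $<s$ is not literally true configuration-by-configuration, and the argument has to be upgraded: one either proves by a row-by-row triangular argument that terms with $\nu^{(i)}_r>0$ for some $r>n$ cancel in sign-reversing pairs (similar in spirit to Lemma \ref{lem:cancellation}), or else invokes the Yang-Baxter equations (Propositions \ref{prop:YBEwhite}-\ref{prop:YBEpurple}) through a train argument that swaps rows until the bottom and top paths are separated on the lattice.  Stage 2 is comparatively mechanical, but the color-interaction $t^{\varphi}$ factors must be tracked carefully to match the explicit $t^l$ in the product.
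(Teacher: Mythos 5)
Your Stage 1 is essentially the paper's Lemma \ref{lem:etaprime}, and your worry there is actually unfounded: since the bottom boundary is $\bm{0}$, the intermediate shape $\bm{\nu}$ between the white and purple blocks is a union of $n$ horizontal strips, so $\nu^{(i)}_r=0$ for $r>n$ in \emph{every} configuration — no cancellation or train argument is needed. The genuinely nontrivial part of Stage 1, which you pass over, is showing that the purple partition function $\mc{L}^P_{\bm{\lambda}/\bm{\nu}}(Y_m;t)$ splits (including its $t$-weights, which record interactions between colors) into the top-$n$-rows and bottom-$s$-rows contributions; the paper does this by the counting argument that a path gains at most one column per purple row, so the paths headed for the parts $m+\tau^{(i)}_r$ stay strictly to the right of those headed for $\eta^{(i)}_r$ throughout the purple block.

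The real gap is in Stage 2, which you call "comparatively mechanical" but which is where all the work in the paper lives. Peeling off the top purple row gives
\[ \mc{L}^S_{(m+1)+\bm{\tau}}(X_n;Y_{m+1};t)=\sum_{\bm{\tau}'}\mc{L}^P_{((m+1)+\bm{\tau})/(m+\bm{\tau}')}(y_{m+1};t)\,\mc{L}^S_{m+\bm{\tau}'}(X_n;Y_m;t), \]
and after applying the inductive hypothesis the step you need is the Pieri-type identity
\[ \sum_{\bm{\tau}'}\mc{L}^P_{((m+1)+\bm{\tau})/(m+\bm{\tau}')}(y;t)\,\mc{L}_{\bm{\tau}'}(X_n;t)=\mc{L}_{\bm{\tau}}(X_n;t)\prod_{l=0}^{k-1}\prod_{i=1}^n(t^lx_i+y). \]
The new row does not "produce the factor $\prod_l(t^lx_i+y_{m+1})$": it produces one monomial in $y_{m+1},t$ per summand, multiplying a \emph{different} LLT polynomial $\mc{L}_{\bm{\tau}'}$ in each term, and the reorganization of this sum into $\mc{L}_{\bm{\tau}}$ times the product is exactly the $m=1$ case of the lemma. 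So your induction reduces the statement to its first nontrivial instance without proving it. The paper takes a different route: it proves the vanishing $\mc{L}^S_{\bm{\lambda}}(X_{n-1},r;Y_{m-1},-t^lr;t)=0$ for $l=0,\dots,k-1$ via a sign-reversing involution on the two exceptional rows (Lemma \ref{lem:generalcancel}), deduces from symmetry that $\prod_{l,i,j}(t^lx_i+y_j)$ divides $\mc{L}^S_{\bm{\lambda}}$, shows by a $Y$-degree count that the remaining factor is $Y$-free, and identifies it as $\mc{L}_{\bm{\tau}}(X_n;t)$ by extracting the coefficient of $y_1^{nk}\cdots y_m^{nk}$. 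To complete your argument you would need to supply a proof of the Pieri rule above (or replace Stage 2 with an argument of the paper's type).
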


Throughout this subsection, let $\bm{\lambda}$, $\bm{\tau}$, and $\bm{\eta}$ be as in the above lemma.  Moreover, it is easy to see that the above lemma holds if $n=0$ or $m=0$, so we will assume $n,m \geq 1$ throughout the rest of this subsection.  To prove the above lemma, we need two smaller lemmas.

\begin{lem} \label{lem:etaprime}
Let $\bm{\lambda}$, $\bm{\tau}$, and $\bm{\eta}$ be as in Lemma \ref{lem:factorization}.  The polynomial
\[ t^{g(\bm{\eta})} \mc{L}_{\bm{\eta}'}(Y_m;t^{-1}) = \mc{L}^P_{\bm{\eta}}(Y_m;t) \]
is a factor of the polynomial $\mc{L}_{\bm{\lambda}}^S(X_n;Y_m;t)$.  In fact, 
\[ 
\mc{L}_{\bm{\lambda}}^S(X_n;Y_m;t) 
= 
\mc{L}^P_{\bm{\eta}}(Y_m;t) \cdot \mc{L}^S_{m+\bm{\tau}}(X_n;Y_m;t)
\]
where $(m+\bm{\tau})^{(i)}_j = m + \bm{\tau}^{(i)}_j$ for all $i$ and $j$.
\end{lem}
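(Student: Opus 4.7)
For the first identity $t^{g(\bm{\eta})}\mc{L}_{\bm{\eta}'}(Y_m;t^{-1}) = \mc{L}^P_{\bm{\eta}}(Y_m;t)$, my plan is to apply Lemma~\ref{lem:dream} row by row. Both partition functions expand telescopically over the $m$ rows of their respective lattices: $\mc{L}^P_{\bm{\eta}}(Y_m;t) = \sum\prod_{i=1}^{m}\mc{L}^P_{\bm{\alpha}^{i}/\bm{\alpha}^{i-1}}(y_{i};t)$ summed over chains $\bm{0}=\bm{\alpha}^{0}\subseteq\cdots\subseteq\bm{\alpha}^{m}=\bm{\eta}$ with vertical-strip successive differences, and analogously $\mc{L}_{\bm{\eta}'}(Y_m;t^{-1})$ expands as a sum over horizontal-strip chains for $\bm{\eta}'$. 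Conjugation $\bm{\beta}^{i}=(\bm{\alpha}^{i})'$ bijects these two classes of chains; solving Lemma~\ref{lem:dream} for the purple factor and using Corollary~\ref{cor:dconjugate} yields $\mc{L}^P_{\bm{\alpha}^{i}/\bm{\alpha}^{i-1}}(y_{i};t) = t^{g(\bm{\alpha}^{i})-g(\bm{\alpha}^{i-1})}\mc{L}_{(\bm{\alpha}^{i})'/(\bm{\alpha}^{i-1})'}(y_{i};t^{-1})$ at each row, and the telescoping powers of $t$ collapse to $t^{g(\bm{\eta})}$.

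For the main factorization, I would split the lattice $\latS_{n,m}(\bm{\lambda})$ along the white-purple interface:
\[
\mc{L}^S_{\bm{\lambda}}(X_n;Y_m;t) \;=\; \sum_{\bm{\alpha}}\mc{L}^P_{\bm{\lambda}/\bm{\alpha}}(Y_m;t)\,\mc{L}_{\bm{\alpha}}(X_n;t),
\]
with an analogous expansion for $\mc{L}^S_{m+\bm{\tau}}(X_n;Y_m;t)$. Since $\mc{L}_{\bm{\alpha}}(X_n;t)$ vanishes unless every $\alpha^{(i)}$ has at most $n$ nonzero parts (columns of an SSYT with entries in $\{1,\ldots,n\}$ are bounded by $n$), the effective sum is restricted to such $\bm{\alpha}$; then the last $s$ parts of each $\alpha^{(i)}$ are zero, so $(\bm{\lambda}/\bm{\alpha})^{(i)}$ decomposes by rows into a top piece $(m+\tau^{(i)})/\alpha^{(i)}$ and a bottom piece $\eta^{(i)}$. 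It therefore suffices to establish the purely purple factorization
\[
\mc{L}^P_{\bm{\lambda}/\bm{\alpha}}(Y_m;t) \;=\; \mc{L}^P_{\bm{\eta}}(Y_m;t)\cdot\mc{L}^P_{(m+\bm{\tau})/\bm{\alpha}}(Y_m;t)
\]
for every admissible $\bm{\alpha}$; summing against $\mc{L}_{\bm{\alpha}}(X_n;t)$ then yields the lemma. To prove this purple identity, I would use non-crossing of same-color paths: for each color $i$, the $p=n+s$ color-$i$ paths split into an $\eta$-group of $s$ paths (from bottom columns $0,\ldots,s-1$ to top columns $\eta^{(i)}_{s},\ldots,\eta^{(i)}_{1}+s-1$) and a $\tau$-group of $n$ paths (from the first $n$ positions of $\alpha^{(i)}$ to the first $n$ positions of $m+\tau^{(i)}$). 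The inequality $\eta^{(i)}_{1}\le m+\tau^{(i)}_{n}$ forces these two groups to remain column-disjoint at every horizontal edge, enabling a canonical decomposition of each configuration into an $\eta$-subconfiguration on $\latP_m(\bm{\eta})$ and a $\tau$-subconfiguration on $\latP_m((m+\bm{\tau})/\bm{\alpha})$.

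The main obstacle is verifying that the multi-color $L'$-weights factor correctly under this split. The $L'$-matrix carries the cross-color factor $t^{\varphi(\L,\K-\J)}$ along with the $t^{\delta'_i}$ shifts in its $k$-color definition, both of which couple distinct colors at the same vertex. At a vertex where the $\eta$-group of one color and the $\tau$-group of a different color both appear, the cross-color $t$-contributions in the full weight must be accounted for in precisely one of the two subconfiguration weights. Since within each color the two groups occupy disjoint columns, each color contributes to at most one subconfiguration per vertex; a careful case analysis of the five one-color $L'$-weight types together with the explicit form of $\varphi$ and $\delta'_i$ should confirm that the cross-color $t$-factors distribute correctly between the subconfiguration weights. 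This bookkeeping is the technical heart of the argument.
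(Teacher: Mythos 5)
Your argument is correct and is essentially the paper's proof in a slightly different packaging: the paper likewise reduces to intermediate boundaries $\bm{\alpha}$ with at most $n$ nonzero parts per partition (phrased there as the $\eta$-paths being frozen in the white rows) and uses the same separation estimate --- the color-$i$ path for part $n$ must end at column $n-m-\tau^{(i)}_n$, hence sits weakly right of column $n-j$ after $j$ purple rows, while the path for part $n+1$ sits weakly left of column $n+1-j$ --- to decouple the two groups of paths. Two remarks. First, the opening identity $t^{g(\bm{\eta})}\mc{L}_{\bm{\eta}'}(Y_m;t^{-1})=\mc{L}^P_{\bm{\eta}}(Y_m;t)$ is immediate from Theorem \ref{thm:SSLLTxyswap} with $n=0$ together with $g(\0)=0$; your row-by-row telescoping simply re-derives the proof of that theorem in this special case. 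Second, the weight bookkeeping you defer does close, and more easily than you fear: the separation estimates show the only box the two groups can share in purple row $j$ is the one in column $n+1-j$, where an $\eta$-color necessarily enters from the left and exits through the top (one-color weight $1$, and not of the vertical form counted by $\delta'$), while a $\tau$-color enters from below and exits to the right; since no $\eta$-color can be vertical at that box, every shift $t^{\delta'_i}$ is computed entirely within the $\tau$-group, and the $k$-color $L'$-weight visibly splits as the product of the two subconfiguration weights (the same-color crossing case $xt^{\delta'_i}=1\cdot xt^{\delta'_i}$ included).
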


\begin{lem} \label{lem:generalcancel} 
Let $\bm{\lambda}$, $\bm{\tau}$, and $\bm{\eta}$ be as in Lemma \ref{lem:factorization}.  Then
\[ \mc{L}_{\bm{\lambda}}^S(X_{n-1},r;Y_{m-1},-t^lr;t) =  0 \]
for all $l \in \{0,\ldots,k-1\}$.
\end{lem}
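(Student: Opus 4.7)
The strategy is to adapt the sign-reversing involution of Lemma \ref{lem:cancellation} to the specialization $y_m = -t^l r$, with the factorization hypothesis on $\bm{\lambda}$ supplying the essential combinatorial input.

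First, by Lemma \ref{lem:permuterows}, I rearrange the rows of the lattice $\latS_{n,m}(\bm{\lambda})$ so that the white row with parameter $x_n = r$ sits directly below the purple row with parameter $y_m = -t^l r$, creating a two-row ``target strip'' at the top of the lattice. By Lemma \ref{lem:etaprime}, I reduce to the case $\bm{\eta} = \bm{0}$, so each $\lambda^{(i)}$ consists of exactly $n$ parts of size at least $m$. Splitting the partition function at the bottom of the target strip gives
\[
\mc{L}^S_{\bm{\lambda}}(X_{n-1},r;Y_{m-1},-t^lr;t) = \sum_{\bm{\alpha}} T^{(l)}_{\bm{\lambda}/\bm{\alpha}}(r) \cdot \mc{L}^S_{\bm{\alpha}}(X_{n-1};Y_{m-1};t),
\]
where $T^{(l)}$ is the target-strip contribution.

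For $\bm{\alpha} \neq \bm{\lambda}$, I construct an involution on target-strip configurations paralleling that of Lemma \ref{lem:cancellation}: locate the rightmost column-pair $(c, c+1)$ in the strip at which some color makes a corner, and then select the color $i$ at this pair for which the weight ratio of the flipped configuration to the original equals $-1$ under the specialization. A short local computation shows the ratio is $(y_m/x_n)\,t^{\delta'_i - \delta_i} = -t^{l-(\delta_i-\delta'_i)}$, where $\delta_i$ counts colors larger than $i$ present at the white half of the corner and $\delta'_i$ counts colors larger than $i$ vertical at the purple half, so the condition is $\delta_i - \delta'_i = l$. The main obstacle is proving that at least one color with $\delta_i - \delta'_i = l$ always exists at the rightmost corner-bearing column-pair; this is where the factorization hypothesis is essential, and I expect to establish it through a $2 \times 2$ case analysis combined with a pigeonhole-style argument, showing that the set of achievable $\delta_i - \delta'_i$ values among the corner-forming colors always contains $l$. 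Self-inversiveness of the involution then follows, as in Lemma \ref{lem:cancellation}, by verifying that the choice of column-pair and chosen color are preserved under the flip.

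For the fixed point $\bm{\alpha} = \bm{\lambda}$, the target strip is all-vertical of weight $1$, contributing $\mc{L}^S_{\bm{\lambda}}(X_{n-1};Y_{m-1};t)$. This vanishes under the factorization hypothesis by iterating Lemma \ref{lem:cancellation}: after $\min(n-1,m-1)$ such applications the polynomial reduces to either $\mc{L}^P_{\bm{\lambda}}(Y_{m-n};t)$ (when $n\leq m$) or $\mc{L}_{\bm{\lambda}}(X_{n-m};t)$ (when $n\geq m$), each of which is zero because under $\bm{\eta} = \bm{0}$ some $\lambda^{(i)}$ (respectively its conjugate) has a column of length exceeding the number of remaining variables, and hence no valid tableau filling exists.
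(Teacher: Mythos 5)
Your proposal has a genuine gap at its central step. You place the two special rows at the \emph{top} of the lattice and try to run a local involution at the rightmost corner-bearing column pair, selecting a color $i$ with $\delta_i - \delta'_i = l$; you acknowledge that the existence of such a color is "the main obstacle" and defer it to an unproved pigeonhole argument. But there is no reason such a color exists at that column pair: the rightmost pair may carry a corner of only a single color, whose local statistic $\delta_i-\delta'_i$ can be any value in $\{0,\ldots,k-1\}$ depending on the configuration, with no mechanism forcing it to equal $l$. The paper resolves exactly this difficulty by placing the two rows at the \emph{bottom} of the lattice, where the boundary is $\bm{0}$: the hypothesis $\lambda^{(i)}_n \geq m$ then forces \emph{every} path of \emph{every} color starting in column $1$ to take a right step somewhere in those two rows (otherwise that color's path from column $n$ could take at most $m-1$ right steps in the remaining purple rows). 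This makes the exit columns $c_i \leq 0$ well-defined for all $k$ colors, so one can order the colors globally by $(c_i, i)$ and flip the $(l+1)$-th largest color in that order — a global choice, not a choice at a fixed column pair — and the local weight computation then gives $b-a = \#\{j : j\succ i\} = l$, hence ratio $-t^{l+a-b}=-1$. As a bonus, every term in the bottom-strip decomposition satisfies $\alpha^{(i)}_1>0$, so there is no surviving fixed point at all.

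Your treatment of the fixed point $\bm{\alpha}=\bm{\lambda}$ is also logically reversed: Lemma \ref{lem:cancellation} says that a specialization of $\mc{L}^S_{\bm{\lambda}}(X_{n-1};Y_{m-1};t)$ equals $\mc{L}^S_{\bm{\lambda}}(X_{n-2};Y_{m-2};t)$, so vanishing of the latter does not imply vanishing of the former. (The fixed-point contribution does in fact vanish, but for a different reason: $n-1$ horizontal strips followed by $m-1$ vertical strips starting from $\bm{0}$ can only produce $\lambda^{(i)}_n \leq m-1$, contradicting $\lambda^{(i)}_n\geq m$.) Since the paper's bottom placement eliminates the fixed point entirely and makes the sign-reversing involution provable, you should restructure along those lines rather than patch the top-strip argument.
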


Given these two lemmas, let us prove Lemma \ref{lem:factorization}.

\begin{proof}[Proof of Lemma \ref{lem:factorization}]

Fix $l \in \{0,\ldots,k-1\}$.  Since  
\[ \mc{L}_{\bm{\lambda}}^S(X_{n-1},r;Y_{m-1},-t^lr;t) = 0 \]
by Lemma \ref{lem:generalcancel}, we know that $t^lx_n + y_m$ is a factor of $\mc{L}_{\bm{\lambda}}^S(X_n;Y_m;t)$.  Thus, since $\mc{L}_{\bm{\lambda}}^S(X_n;Y_m;t)$ is symmetric in the $X$ and $Y$ variables separately by Lemma \ref{lem:permuterows}, we know that 
\[ \prod_{i=1}^n \prod_{j=1}^m (t^l x_i+y_j) \]
is a factor of $\mc{L}_{\bm{\lambda}}^S(X_n;Y_m;t)$.  Since this holds for all $l \in \{0,\ldots,k-1\}$, we know that 
\[ \prod_{l=0}^{k-1} \prod_{i=1}^n \prod_{j=1}^m (t^l x_i+y_j) \]
is a factor of $\mc{L}_{\bm{\lambda}}^S(X_n;Y_m;t)$.  Moreover, since $t^{g(\bm{\eta})} \mc{L}_{\bm{\eta}'}(Y_m;t^{-1})$ is a factor of $\mc{L}_{\bm{\lambda}}^S(X_n;Y_m;t)$ by Lemma \ref{lem:etaprime}, we know that 
\[ t^{g(\bm{\eta})} \mc{L}_{\bm{\eta}'}(Y_m;t^{-1}) \cdot \prod_{l=0}^{k-1} \prod_{i=1}^n \prod_{j=1}^m (t^l x_i+y_j) \]
is a factor of $\mc{L}_{\bm{\lambda}}^S(X_n;Y_m;t)$.  Thus there is a polynomial $f(X_n;Y_m;t)$ such that
\begin{equation} \label{eq:lambdafactor}
\mc{L}_{\bm{\lambda}}^S(X_n;Y_m;t) = f(X_n;Y_m;t) \cdot t^{g(\bm{\eta})} \mc{L}_{\bm{\eta}'}(Y_m;t^{-1}) \cdot \prod_{l=0}^{k-1} \prod_{i=1}^n \prod_{j=1}^m (t^l x_i+y_j).
\end{equation}
It remains to show $f(X_n;Y_m;t)=\mc{L}_{\bm{\tau}}(X_n;t)$.  

Combining Equation \ref{eq:lambdafactor} with Lemma \ref{lem:etaprime}, we get
\begin{equation} 
\label{eq:mtaufactor} \mc{L}_{m+\bm{\tau}}^S(X_n;Y_m;t) = f(X_n;Y_m;t) \cdot \prod_{l=0}^{k-1} \prod_{i=1}^n \prod_{j=1}^m (t^l x_i+y_j) 
\end{equation}
Given a polynomial $p(X_n;Y_m;t)$, let $\mdy \left ( p(X_n;Y_m;t) \right )$ be the total degree of $p(X_n;Y_m;t)$ in the $Y$ variables.  Recall $\mc{L}^S_{m+\bm{\tau}}(X_n;Y_m;t)$ is the partition function associated to the lattice $S_{n,m}(m+\bm{\tau})$ of Def. \ref{def:latS}.
 
In any configuration of this lattice, a given path can take at most one step right in a purple row.  Thus, since there are $m$ purple rows and since there are $n$ paths of each of the $k$ colors, we have 
\[ \mdy \left ( \mc{L}^S_{m+\bm{\tau}}(X_n;Y_m;t) \right ) \leq mnk = \mdy \left ( \prod_{l=0}^{k-1} \prod_{i=1}^n \prod_{j=1}^m (t^l x_i+y_j) \right ). \]
This inequality along with Equation \ref{eq:mtaufactor} implies $f(X_n;Y_m;t) = h(X_n;t)$ for some polynomial $h(X_n;t)$.  It remains to show $h(X_n;t) = \mc{L}_{\bm{\tau}}(X_n;t)$.

We can rewrite Equation \ref{eq:mtaufactor} as
\[ 
\resizebox{2cm}{!}{
\begin{tikzpicture}[baseline=(current bounding box.center)] 
\draw[fill=violet] (0,3) rectangle (3,6); \draw (0,0) grid (3,6);
\node[left] at (0,1.5) {$\vdots$}; \node[left] at (0,4.5) {$\vdots$}; \node[below] at (1.5,6) {$\ldots$};
\node[below] at (1.5,0) {$\bm{\0}$}; \node[above] at (1.5,6) {$m+\bm{\tau}$};
\node[left] at (0,0.5) {$x_1$}; \node[left] at (0,2.5) {$x_n$}; \node[left] at (0,3.5) {$y_1$}; \node[left] at (0,5.5) {$y_m$}; 
\end{tikzpicture} }
= h(X_n;t) \cdot \prod_{l=0}^{k-1} \prod_{i=1}^n \prod_{j=1}^m (t^l x_i+y_j).
\]
We interpret both sides of this equation as polynomials in the $Y$ variables.  On the right-hand side, the coefficient of $y_1^{nk} \ldots y_m^{nk}$ is $h(X_n;t)$.  For the lattice on the left-hand side, a configuration has a weight of the form $p(X_n;t) \cdot y_1^{nk} \ldots y_m^{nk}$ for some polynomial $p(X_n;t)$ if and only if each path of each color takes one right step in each purple row.  Thus the $y_1^{nk} \ldots y_m^{nk}$ term on the left-hand side is exactly
\[ 
\begin{aligned}
\resizebox{4cm}{!}{
\begin{tikzpicture}[baseline=(current bounding box.center)] 
\draw[fill=violet] (0,3) rectangle (9,6); \draw (0,0) grid (9,6);
\node[left] at (0,1.5) {$\vdots$}; \node[left] at (0,4.5) {$\vdots$}; \node[below] at (4.5,6) {$\ldots$};
\node[below,scale=2] at (4.5,0) {$\bm{\0}$}; \node[above,scale=2] at (4.5,6) {$m+\bm{\tau}$}; \node[scale=2] at (4.5,3) {$\bm{\tau}$};
\node[left] at (0,0.5) {$x_1$}; \node[left] at (0,2.5) {$x_n$}; \node[left] at (0,3.5) {$y_1$}; \node[left] at (0,5.5) {$y_m$}; 
\draw[step=1.0,blue,line width = 1.5mm] (0.5,3) -- (0.5,3.5) -- (1.5,3.5) -- (1.5,4.5) -- (2.5,4.5) -- (2.5,5.5) -- (3.5,5.5) -- (3.5,6);
\draw[step=1.0,black,line width = 1.5mm] (2+0.5,3) -- (2+0.5,3.5) -- (2+1.5,3.5) -- (2+1.5,4.5) -- (2+2.5,4.5) -- (2+2.5,5.5) -- (2+3.5,5.5) -- (2+3.5,6);
\draw[step=1.0,red,line width = 1.5mm] (3+0.5,3) -- (3+0.5,3.5) -- (3+1.5,3.5) -- (3+1.5,4.5) -- (3+2.5,4.5) -- (3+2.5,5.5) -- (3+3.5,5.5) -- (3+3.5,6);
\draw[step=1.0,black,line width = 1.5mm] (5+0.5,3) -- (5+0.5,3.5) -- (5+1.5,3.5) -- (5+1.5,4.5) -- (5+2.5,4.5) -- (5+2.5,5.5) -- (5+3.5,5.5) -- (5+3.5,6);
\end{tikzpicture} }
= &
\resizebox{1.5cm}{!}{
\begin{tikzpicture}[baseline=(current bounding box.center)] 
\draw (0,0) grid (3,3);
\node[left] at (0,1.5) {$\vdots$}; \node[below] at (1.5,3) {$\ldots$};
\node[below,scale=2] at (1.5,0) {$\bm{\0}$}; \node[above,scale=2] at (1.5,3) {$\bm{\tau}$};
\node[left] at (0,0.5) {$x_1$}; \node[left] at (0,2.5) {$x_n$}; 
\end{tikzpicture} }
\cdot 
\resizebox{4cm}{!}{
\begin{tikzpicture}[baseline=(current bounding box.center)] 
\draw[fill=violet] (0,3) rectangle (9,6); \draw (0,3) grid (9,6);
\node[left] at (0,4.5) {$\vdots$}; \node[below] at (4.5,6) {$\ldots$};
\node[above,scale=2] at (4.5,6) {$m+\bm{\tau}$}; \node[below,scale=2] at (4.5,3) {$\bm{\tau}$};
\node[left] at (0,3.5) {$y_1$}; \node[left] at (0,5.5) {$y_m$}; 
\draw[step=1.0,blue,line width = 1.5mm] (0.5,3) -- (0.5,3.5) -- (1.5,3.5) -- (1.5,4.5) -- (2.5,4.5) -- (2.5,5.5) -- (3.5,5.5) -- (3.5,6);
\draw[step=1.0,black,line width = 1.5mm] (2+0.5,3) -- (2+0.5,3.5) -- (2+1.5,3.5) -- (2+1.5,4.5) -- (2+2.5,4.5) -- (2+2.5,5.5) -- (2+3.5,5.5) -- (2+3.5,6);
\draw[step=1.0,red,line width = 1.5mm] (3+0.5,3) -- (3+0.5,3.5) -- (3+1.5,3.5) -- (3+1.5,4.5) -- (3+2.5,4.5) -- (3+2.5,5.5) -- (3+3.5,5.5) -- (3+3.5,6);
\draw[step=1.0,black,line width = 1.5mm] (5+0.5,3) -- (5+0.5,3.5) -- (5+1.5,3.5) -- (5+1.5,4.5) -- (5+2.5,4.5) -- (5+2.5,5.5) -- (5+3.5,5.5) -- (5+3.5,6);
\end{tikzpicture} } \\
=& \mc{L}_{\bm{\tau}}(X_n;t) \cdot y_1^{nk} \ldots y_m^{nk}.
\end{aligned}
\]
(Here, each path takes one right step in each purple row.)  Thus $h(X_n;t) = \mc{L}_{\bm{\tau}}(X_n;t)$.
\end{proof}

We are left to prove Lemmas \ref{lem:etaprime} and \ref{lem:generalcancel}.

\begin{proof}[Proof of Lemma \ref{lem:etaprime}]

Using Theorem \ref{thm:SSLLTxyswap} and the fact that $g(\bm{\0}) = 0$, we have
\[ \mc{L}^P_{\bm{\eta}}(Y_m;t) = \mc{L}^S_{\bm{\eta}}(;Y_m;t) = t^{g(\bm{\eta})} \mc{L}^S_{\bm{\eta}'}(Y_m;;t^{-1}) = t^{g(\bm{\eta})} \mc{L}_{\bm{\eta}'}(Y_m;t^{-1}). \]
Recall $\mc{L}_{\bm{\lambda}}^S(X_n;Y_m;t)$ is the partition function associated to the lattice
\[ 
\resizebox{2cm}{!}{
\begin{tikzpicture}[baseline=(current bounding box.center)] 
\draw[fill=violet] (0,3) rectangle (3,6); \draw (0,0) grid (3,6);
\node[left] at (0,1.5) {$\vdots$}; \node[left] at (0,4.5) {$\vdots$}; \node[below] at (1.5,6) {$\ldots$};
\node[below] at (1.5,0) {$\bm{\0}$}; \node[above] at (1.5,6) {$\bm{\lambda}$};
\node[left] at (0,0.5) {$x_1$}; \node[left] at (0,2.5) {$x_n$}; \node[left] at (0,3.5) {$y_1$}; \node[left] at (0,5.5) {$y_m$}; 
\end{tikzpicture} }.
\]
Any configuration of this lattice must have the form
\[ 
\resizebox{4cm}{!}{
\begin{tikzpicture}[baseline=(current bounding box.center)] 
\draw[fill=violet] (0,5) rectangle (10,8); \draw (0,0) grid (10,8);
\node[left] at (0,2.5) {$\vdots$}; \node[left] at (0,6.5) {$\vdots$}; \node[below] at (8.5,8) {$\ldots$};
\node[below,scale=2] at (5,-0.3) {$\bm{\0}$}; \node[above,scale=2] at (5,8) {$\bm{\lambda}$};
\node[left] at (0,0.5) {$x_1$}; \node[left] at (0,4.5) {$x_n$}; \node[left] at (0,5.5) {$y_1$}; \node[left] at (0,7.5) {$y_m$}; 
\node[below] at (0.5,0) {$p$}; \node[below] at (1.5,0) {$\ldots$}; \node[below] at (2.5,0) {$n+1$}; \node[below] at (3.5,0) {$n$}; \node[below] at (4.5,0) {$n-1$}; \node[below] at (5.5,0) {$\ldots$}; \node[below] at (6.5,0) {$2$}; \node[below] at (7.5,0) {$1$};
\draw[step=1.0,black,line width = 1.5mm] (0.5,0) -- (0.5,5); \draw[step=1.0,black,line width = 1.5mm] (2.5,0) -- (2.5,5); \draw[step=1.0,black,line width = 1.5mm] (3.5,0) -- (3.5,4.5);
\draw[step=1.0,black,line width = 1.5mm] (4.5,0) -- (4.5,3.5);
\draw[step=1.0,black,line width = 1.5mm] (6.5,0) -- (6.5,1.5);
\draw[step=1.0,black,line width = 1.5mm] (7.5,0) -- (7.5,0.5);
\end{tikzpicture} }
\]
where we have labelled the columns for convenience.  Fix $j \in \{1,\ldots,m\}$.  Note that a path can take at most one step right in a given purple row.  Since $\lambda^{(i)}_n = m + \tau_n^{(i)} \geq m$ for all $i$, the paths starting in column $n$ must exit the $m$-th purple row weakly right of column $n-m$, so the paths starting in column $n$ must exit the $j$-th purple row weakly right of column $n-j$.  Since the paths starting in column $n+1$ enter the first purple row in column $n+1$, they must exit the $j$-th purple row weakly left of column $n+1-j$.  This argument shows that the remainder of the paths starting in columns $n+1,\ldots,p$ and the remainder of the paths starting in columns $1,\ldots,n$ can be chosen independently of each other, and that the weight of the overall configuration is the weight of the configuration consisting of the paths starting in columns $n+1,\ldots,p$ times the weight of the configuration consisting of the paths starting in columns $1,\ldots,n$.

It follows that
\[ \begin{aligned}
\mc{L}_{\bm{\lambda}}^S(X_n;Y_m;t) 
= 
\resizebox{4cm}{!}{
\begin{tikzpicture}[baseline=(current bounding box.center)] 
\draw[fill=violet] (0,5) rectangle (10,8); \draw (0,0) grid (10,8);
\node[left] at (0,2.5) {$\vdots$}; \node[left] at (0,6.5) {$\vdots$}; \node[below] at (8.5,8) {$\ldots$};
\node[below,scale=2] at (5,-0.3) {$\bm{\0}$}; \node[above,scale=2] at (5,8) {$\bm{\lambda}_{\{n+1,\ldots,p\}}$};
\node[left] at (0,0.5) {$x_1$}; \node[left] at (0,4.5) {$x_n$}; \node[left] at (0,5.5) {$y_1$}; \node[left] at (0,7.5) {$y_m$}; 
\node[below] at (0.5,0) {$p$}; \node[below] at (1.5,0) {$\ldots$}; \node[below] at (2.5,0) {$n+1$}; \node[below] at (3.5,0) {$n$}; \node[below] at (4.5,0) {$n-1$}; \node[below] at (5.5,0) {$\ldots$}; \node[below] at (6.5,0) {$2$}; \node[below] at (7.5,0) {$1$};
\draw[step=1.0,black,line width = 1.5mm] (0.5,0) -- (0.5,5); 
\draw[step=1.0,black,line width = 1.5mm] (2.5,0) -- (2.5,5); 
\end{tikzpicture} }
\cdot 
\resizebox{4cm}{!}{
\begin{tikzpicture}[baseline=(current bounding box.center)] 
\draw[fill=violet] (0,5) rectangle (10,8); \draw (0,0) grid (10,8);
\node[left] at (0,2.5) {$\vdots$}; \node[left] at (0,6.5) {$\vdots$}; \node[below] at (8.5,8) {$\ldots$};
\node[below,scale=2] at (5,-0.3) {$\bm{\0}$}; \node[above,scale=2] at (5,8) {$\bm{\lambda}_{\{1,\ldots,n\}}$};
\node[left] at (0,0.5) {$x_1$}; \node[left] at (0,4.5) {$x_n$}; \node[left] at (0,5.5) {$y_1$}; \node[left] at (0,7.5) {$y_m$}; 
\node[below] at (0.5,0) {$p$}; \node[below] at (1.5,0) {$\ldots$}; \node[below] at (2.5,0) {$n+1$}; \node[below] at (3.5,0) {$n$}; \node[below] at (4.5,0) {$n-1$}; \node[below] at (5.5,0) {$\ldots$}; \node[below] at (6.5,0) {$2$}; \node[below] at (7.5,0) {$1$};
\draw[step=1.0,black,line width = 1.5mm] (3.5,0) -- (3.5,4.5);
\draw[step=1.0,black,line width = 1.5mm] (4.5,0) -- (4.5,3.5);
\draw[step=1.0,black,line width = 1.5mm] (6.5,0) -- (6.5,1.5);
\draw[step=1.0,black,line width = 1.5mm] (7.5,0) -- (7.5,0.5);
\end{tikzpicture} }
\end{aligned} \]
where, for a set $S = \{s_1 < \ldots < s_j\} \subseteq \{1,\ldots,p\}$, we define 
\[ \bm{\lambda}_S = (\lambda^{(1)}_S,\ldots,\lambda^{(k)}_S)  \text{ with } \lambda^{(i)}_S = (\lambda^{(k)}_{s_1},\ldots,\lambda^{(k)}_{s_j}). \]
Since $\bm{\lambda}_{\{n+1,\ldots,p\}} = \bm{\eta}$, the first factor is exactly
\[ \begin{aligned}
\resizebox{4cm}{!}{
\begin{tikzpicture}[baseline=(current bounding box.center)] 
\draw[fill=violet] (0,5) rectangle (10,8); \draw (0,0) grid (10,8);
\node[left] at (0,2.5) {$\vdots$}; \node[left] at (0,6.5) {$\vdots$}; \node[below] at (8.5,8) {$\ldots$};
\node[below,scale=2] at (5,-0.3) {$\bm{\0}$}; \node[above,scale=2] at (5,8) {$\bm{\eta}$};
\node[left] at (0,0.5) {$x_1$}; \node[left] at (0,4.5) {$x_n$}; \node[left] at (0,5.5) {$y_1$}; \node[left] at (0,7.5) {$y_m$}; 
\node[below] at (0.5,0) {$p$}; \node[below] at (1.5,0) {$\ldots$}; \node[below] at (2.5,0) {$n+1$}; \node[below] at (3.5,0) {$n$}; \node[below] at (4.5,0) {$n-1$}; \node[below] at (5.5,0) {$\ldots$}; \node[below] at (6.5,0) {$2$}; \node[below] at (7.5,0) {$1$};
\draw[step=1.0,black,line width = 1.5mm] (0.5,0) -- (0.5,5); 
\draw[step=1.0,black,line width = 1.5mm] (2.5,0) -- (2.5,5); 
\end{tikzpicture} }
=
\resizebox{2cm}{!}{
\begin{tikzpicture}[baseline=(current bounding box.center)] 
\draw[fill=violet] (0,0) rectangle (3,3); \draw (0,0) grid (3,3);
\node[left] at (0,1.5) {$\vdots$}; \node[below] at (1.5,3) {$\ldots$};
\node[below,scale=2] at (1.5,0) {$\bm{\0}$}; \node[above,scale=2] at (1.5,3) {$\bm{\eta}$};
\node[left] at (0,0.5) {$y_1$}; \node[left] at (0,2.5) {$y_m$}; 
\end{tikzpicture} }
= \mc{L}^P_{\bm{\eta}}(Y_m;t).
\end{aligned} \]
Since $\bm{\lambda}_{\{1,\ldots,n\}} = m + \bm{\tau}$, the second factor is exactly
\[ \begin{aligned}
\resizebox{4cm}{!}{
\begin{tikzpicture}[baseline=(current bounding box.center)] 
\draw[fill=violet] (0,5) rectangle (10,8); \draw (0,0) grid (10,8);
\node[left] at (0,2.5) {$\vdots$}; \node[left] at (0,6.5) {$\vdots$}; \node[below] at (8.5,8) {$\ldots$};
\node[below,scale=2] at (5,-0.3) {$\bm{\0}$}; \node[above,scale=2] at (5,8) {$m+\bm{\tau}$};
\node[left] at (0,0.5) {$x_1$}; \node[left] at (0,4.5) {$x_n$}; \node[left] at (0,5.5) {$y_1$}; \node[left] at (0,7.5) {$y_m$}; 
\node[below] at (0.5,0) {$p$}; \node[below] at (1.5,0) {$\ldots$}; \node[below] at (2.5,0) {$n+1$}; \node[below] at (3.5,0) {$n$}; \node[below] at (4.5,0) {$n-1$}; \node[below] at (5.5,0) {$\ldots$}; \node[below] at (6.5,0) {$2$}; \node[below] at (7.5,0) {$1$};
\draw[step=1.0,black,line width = 1.5mm] (3.5,0) -- (3.5,4.5);
\draw[step=1.0,black,line width = 1.5mm] (4.5,0) -- (4.5,3.5);
\draw[step=1.0,black,line width = 1.5mm] (6.5,0) -- (6.5,1.5);
\draw[step=1.0,black,line width = 1.5mm] (7.5,0) -- (7.5,0.5);
\end{tikzpicture} }
=
\resizebox{2cm}{!}{
\begin{tikzpicture}[baseline=(current bounding box.center)] 
\draw[fill=violet] (0,3) rectangle (3,6); \draw (0,0) grid (3,6);
\node[left] at (0,1.5) {$\vdots$}; \node[left] at (0,4.5) {$\vdots$}; \node[below] at (1.5,6) {$\ldots$};
\node[below] at (1.5,0) {$\bm{\0}$}; \node[above] at (1.5,6) {$m+\bm{\tau}$};
\node[left] at (0,0.5) {$x_1$}; \node[left] at (0,2.5) {$x_n$}; \node[left] at (0,3.5) {$y_1$}; \node[left] at (0,5.5) {$y_m$}; 
\end{tikzpicture} }
= \mc{L}^S_{m+\bm{\tau}}(X_n;Y_m;t).
\end{aligned} \]
\end{proof}

\begin{proof}[Proof of Lemma \ref{lem:generalcancel}]

By Lemma \ref{lem:permuterows}, $\mc{L}_{\bm{\lambda}}^S(X_{n-1},r;Y_{m-1},-t^lr;t)$ is the partition function associated to the lattice
\[ 
\resizebox{2cm}{!}{
\begin{tikzpicture}[baseline=(current bounding box.center)] 
\draw[fill=violet] (0,5) rectangle (3,8); \draw[fill=violet] (0,1) rectangle (3,2); \draw (0,0) grid (3,8);
\node[left] at (0,3.5) {$\vdots$}; \node[left] at (0,6.5) {$\vdots$}; \node[below] at (1.5,8) {$\ldots$};
\node[below] at (1.5,0) {$\bm{\0}$}; \node[above] at (1.5,8) {$\bm{\lambda}$};
\node[left] at (0,2.5) {$x_1$}; \node[left] at (0,4.5) {$x_{n-1}$}; \node[left] at (0,5.5) {$y_1$}; \node[left] at (0,7.5) {$y_{m-1}$}; \node[left] at (0,0.5) {$r$}; \node[left] at (0,1.5) {$-t^lr$}; 
\end{tikzpicture} }
\] 
Any configuration of this lattice must have the form
\[ 
\resizebox{4cm}{!}{
\begin{tikzpicture}[baseline=(current bounding box.center)] 
\draw[fill=violet] (0,5) rectangle (10,8); \draw[fill=violet] (0,1) rectangle (10,2); \draw (0,0) grid (10,8);
\node[left] at (0,3.5) {$\vdots$}; \node[left] at (0,6.5) {$\vdots$}; \node[below] at (8.5,8) {$\ldots$};
\node[below,scale=2] at (5,-0.3) {$\bm{\0}$}; \node[above,scale=2] at (5,8) {$\bm{\lambda}$};
\node[left] at (0,2.5) {$x_1$}; \node[left] at (0,4.5) {$x_{n-1}$}; \node[left] at (0,5.5) {$y_1$}; \node[left] at (0,7.5) {$y_{m-1}$}; \node[left] at (0,0.5) {$r$}; \node[left] at (0,1.5) {$-t^lr$}; 
\node[below] at (0.5,0) {$p$}; \node[below] at (1.5,0) {$\ldots$}; \node[below] at (2.5,0) {$n+1$}; \node[below] at (3.5,0) {$n$}; \node[below] at (4.5,0) {$n-1$}; \node[below] at (5.5,0) {$\ldots$}; \node[below] at (6.5,0) {$2$}; \node[below] at (7.5,0) {$1$};
\draw[step=1.0,black,line width = 1.5mm] (0.5,0) -- (0.5,5); \draw[step=1.0,black,line width = 1.5mm] (2.5,0) -- (2.5,5); \draw[step=1.0,black,line width = 1.5mm] (3.5,0) -- (3.5,4.5);
\draw[step=1.0,black,line width = 1.5mm] (4.5,0) -- (4.5,3.5);
\draw[step=1.0,black,line width = 1.5mm] (6.5,0) -- (6.5,1.5);
\draw[step=1.0,black,line width = 1.5mm] (7.5,0) -- (7.5,0.5);
\end{tikzpicture} }
\]
where we have labelled the columns for convenience.  In a configuration, if there exists a color $i$ such that the path of color $i$ starting in column 1 goes vertically in the bottom two rows, then color $i$ must have the form
\[ 
\resizebox{4cm}{!}{
\begin{tikzpicture}[baseline=(current bounding box.center)] 
\draw[fill=violet] (0,5) rectangle (10,8); \draw[fill=violet] (0,1) rectangle (10,2); \draw (0,0) grid (10,8);
\node[left] at (0,3.5) {$\vdots$}; \node[left] at (0,6.5) {$\vdots$}; \node[below] at (8.5,8) {$\ldots$};
\node[below,scale=2] at (5,-0.3) {$\bm{\0}$}; \node[above,scale=2] at (5,8) {$\bm{\lambda}$};
\node[left] at (0,2.5) {$x_1$}; \node[left] at (0,4.5) {$x_{n-1}$}; \node[left] at (0,5.5) {$y_1$}; \node[left] at (0,7.5) {$y_{m-1}$}; \node[left] at (0,0.5) {$r$}; \node[left] at (0,1.5) {$-t^lr$}; 
\node[below] at (0.5,0) {$p$}; \node[below] at (1.5,0) {$\ldots$}; \node[below] at (2.5,0) {$n+1$}; \node[below] at (3.5,0) {$n$}; \node[below] at (4.5,0) {$n-1$}; \node[below] at (5.5,0) {$\ldots$}; \node[below] at (6.5,0) {$2$}; \node[below] at (7.5,0) {$1$};
\draw[green,thick] (0.5,0) -- (0.5,5.5); \draw[green,thick] (2.5,0) -- (2.5,5.5); \draw[green,thick] (3.5,0) -- (3.5,5.5);
\draw[green,thick] (4.5,0) -- (4.5,4.5);
\draw[green,thick] (6.5,0) -- (6.5,3.5);
\draw[green,thick] (7.5,0) -- (7.5,2.5);
\end{tikzpicture} }.
\]
A path can take at most one step right in a given purple row, so the path of color $i$ starting in column $n$ can make at most $m-1$ total steps right in the lattice.  However, since $\lambda^{(i)}_n = m + \tau_n^{(i)} \geq m$, the path of color $i$ starting in column $n$ must make at least $m$ total steps right in the lattice.  This is a contradiction, which means that in any configuration, every path starting in column 1 must make at least one step right somewhere in the bottom two rows.  Therefore
\[ \begin{aligned}
\mc{L}_\lm^S(X_{n-1},r;Y_{m-1},-t^lr;t) 
= \resizebox{2cm}{!}{
\begin{tikzpicture}[baseline=(current bounding box.center)] 
\draw[fill=violet] (0,5) rectangle (3,8); \draw[fill=violet] (0,1) rectangle (3,2); \draw (0,0) grid (3,8);
\node[left] at (0,3.5) {$\vdots$}; \node[left] at (0,6.5) {$\vdots$}; \node[below] at (1.5,8) {$\ldots$};
\node[below] at (1.5,0) {$\bm{\0}$}; \node[above] at (1.5,8) {$\bm{\lambda}$};
\node[left] at (0,2.5) {$x_1$}; \node[left] at (0,4.5) {$x_{n-1}$}; \node[left] at (0,5.5) {$y_1$}; \node[left] at (0,7.5) {$y_{m-1}$}; \node[left] at (0,0.5) {$r$}; \node[left] at (0,1.5) {$-t^lr$}; 
\end{tikzpicture} }
= \sum_{\bm{\alpha}}
\resizebox{2cm}{!}{
\begin{tikzpicture}[baseline=(current bounding box.center)] 
\draw[fill=violet] (0,1) rectangle (3,2); \draw (0,0) grid (3,2);
\node[below] at (1.5,2) {$\ldots$};
\node[below] at (1.5,0) {$\bm{\0}$}; \node[above] at (1.5,2) {$\bm{\alpha}$};
\node[left] at (0,0.5) {$r$}; \node[left] at (0,1.5) {$-t^lr$}; 
\end{tikzpicture} }
\resizebox{2cm}{!}{
\begin{tikzpicture}[baseline=(current bounding box.center)] 
\draw[fill=violet] (0,3) rectangle (3,6); \draw (0,0) grid (3,6);
\node[left] at (0,1.5) {$\vdots$}; \node[left] at (0,4.5) {$\vdots$}; \node[below] at (1.5,6) {$\ldots$};
\node[below] at (1.5,0) {$\bm{\alpha}$}; \node[above] at (1.5,6) {$\bm{\lambda}$};
\node[left] at (0,0.5) {$x_1$}; \node[left] at (0,2.5) {$x_{n-1}$}; \node[left] at (0,3.5) {$y_1$}; \node[left] at (0,5.5) {$y_{m-1}$};
\end{tikzpicture} } 
\end{aligned} \]
where the sum is over all $\bm{\alpha}$ such that $\alpha^{(i)}_1 > 0$ for all $i$.  We will show that, for all such $\bm{\alpha}$, there is an involution $\varphi_{\bm{\alpha}}$ on the set of configurations of the lattice
\[ 
L_{\bm{\alpha}} = \resizebox{2cm}{!}{
\begin{tikzpicture}[baseline=(current bounding box.center)] 
\draw[fill=violet] (0,1) rectangle (3,2); \draw (0,0) grid (3,2);
\node[below] at (1.5,2) {$\ldots$};
\node[below] at (1.5,0) {$\bm{\0}$}; \node[above] at (1.5,2) {$\bm{\alpha}$};
\node[left] at (0,0.5) {$r$}; \node[left] at (0,1.5) {$-t^lr$}; 
\end{tikzpicture} }
\]
such that $\weight(\varphi_{\bm{\alpha}}(C)) = -\weight(C)$ for all $C$.  Therefore
\[ \mc{L}_\lm^S(X_{n-1},r;Y_{m-1},-r;t) = 0. \]

Fix $\bm{\alpha}$ with $\alpha^{(i)}_1 > 0$ for all $i$ and fix a configuration $C$ on the lattice $L_{\bm{\alpha}}$.  We label the columns for convenience as follows.
\[ 
\resizebox{5cm}{!}{
\begin{tikzpicture}[baseline=(current bounding box.center)] 
\draw[fill=violet] (0,1) rectangle (7,2); \draw (0,0) grid (7,2);
\node[below] at (1.5,2) {$\ldots$};
\node[below] at (0.5,0) {$p$}; \node[below] at (1.5,0) {$\ldots$}; \node[below] at (2.5,0) {$2$}; \node[below] at (3.5,0) {$1$}; \node[below] at (4.5,0) {$0$}; \node[below] at (5.5,0) {$-1$}; \node[below] at (6.5,0) {$\ldots$};
\node[below,scale=2] at (3.5,-0.3) {$\bm{\0}$}; \node[above,scale=2] at (3.5,2) {$\bm{\alpha}$};
\node[left] at (0,0.5) {$r$}; \node[left] at (0,1.5) {$-t^lr$}; 
\end{tikzpicture} } 
\]
Given a color $i$, let $c_i$ be the column in which the path of color $i$ starting in column 1 exits the lattice through the top.  Since every path starting in column 1 must make at least one step right, we have $c_i \leq 0$ for all $i$. We define an ordering $\prec$ on the colors by
\[ i \prec j \Leftrightarrow c_i > c_j \text{ OR } c_i = c_j \text{ and } i < j. \]
Let $i$ be the $(l+1)$-th largest color in this ordering (so that $l = \#\{ j : i \prec j \}$).  In columns $c_i+1$ and $c_i$ of $C$, color $i$ has the form
\[ 
\resizebox{1.5cm}{!}{
\begin{tikzpicture}[baseline=(current bounding box.center)] 
\draw[fill=violet] (0,1) rectangle (2,2); \draw (0,0) grid (2,2);
\draw[green] (0.5,0.5) -- (0.5,1.5) -- (1.5,1.5) -- (1.5,2);
\draw[black,fill=white] (1.5,0) circle (.5ex); \draw[black,fill=white] (2,0.5) circle (.5ex); \draw[black,fill=white] (2,1.5) circle (.5ex); \draw[black,fill=white] (1,0.5) circle (.5ex); \draw[black,fill=white] (1.5,1) circle (.5ex);
\end{tikzpicture} } 
\text{ or }
\resizebox{1.5cm}{!}{
\begin{tikzpicture}[baseline=(current bounding box.center)] 
\draw[fill=violet] (0,1) rectangle (2,2); \draw (0,0) grid (2,2);
\draw[green] (0.5,0.5) -- (1.5,0.5) -- (1.5,1.5) -- (1.5,2);
\draw[black,fill=white] (1.5,0) circle (.5ex); \draw[black,fill=white] (2,0.5) circle (.5ex); \draw[black,fill=white] (2,1.5) circle (.5ex); \draw[black,fill=white] (0.5,1) circle (.5ex); \draw[black,fill=white] (1,1.5) circle (.5ex);
\end{tikzpicture} }. 
\]
We define $\varphi_{\bm{\alpha}}(C)$ to be the result of flipping color $i$ in columns $c_i+1$ and $c_i$
\[ 
\resizebox{1.5cm}{!}{
\begin{tikzpicture}[baseline=(current bounding box.center)] 
\draw[fill=violet] (0,1) rectangle (2,2); \draw (0,0) grid (2,2);
\draw[green] (0.5,0.5) -- (0.5,1.5) -- (1.5,1.5) -- (1.5,2);
\draw[black,fill=white] (1.5,0) circle (.5ex); \draw[black,fill=white] (2,0.5) circle (.5ex); \draw[black,fill=white] (2,1.5) circle (.5ex); \draw[black,fill=white] (1,0.5) circle (.5ex); \draw[black,fill=white] (1.5,1) circle (.5ex);
\end{tikzpicture} } 
\leftrightarrow
\resizebox{1.5cm}{!}{
\begin{tikzpicture}[baseline=(current bounding box.center)] 
\draw[fill=violet] (0,1) rectangle (2,2); \draw (0,0) grid (2,2);
\draw[green] (0.5,0.5) -- (1.5,0.5) -- (1.5,1.5) -- (1.5,2);
\draw[black,fill=white] (1.5,0) circle (.5ex); \draw[black,fill=white] (2,0.5) circle (.5ex); \draw[black,fill=white] (2,1.5) circle (.5ex); \draw[black,fill=white] (0.5,1) circle (.5ex); \draw[black,fill=white] (1,1.5) circle (.5ex);
\end{tikzpicture} }. 
\]
Clearly $\varphi_{\bm{\alpha}}$ is an involution.  To show $\weight(\varphi_{\bm{\alpha}}(C)) = -\weight(C)$, we need to show 
\begin{equation} \label{eq:2by2weightL}
\weight \left ( \resizebox{2cm}{!}{
\begin{tikzpicture}[baseline=(current bounding box.center)] 
\draw[fill=violet] (0,1) rectangle (2,2); \draw (0,0) grid (2,2);
\draw[green] (0.5,0.5) -- (0.5,1.5) -- (1.5,1.5) -- (1.5,2);
\draw[black,fill=white] (1.5,0) circle (.5ex); \draw[black,fill=white] (2,0.5) circle (.5ex); \draw[black,fill=white] (2,1.5) circle (.5ex); \draw[black,fill=white] (1,0.5) circle (.5ex); \draw[black,fill=white] (1.5,1) circle (.5ex);
\node[left] at (0,0.5) {$r$}; \node[left] at (0,1.5) {$-t^lr$};
\end{tikzpicture} } \right )
=
- \weight \left ( \resizebox{2cm}{!}{
\begin{tikzpicture}[baseline=(current bounding box.center)] 
\draw[fill=violet] (0,1) rectangle (2,2); \draw (0,0) grid (2,2);
\draw[green] (0.5,0.5) -- (1.5,0.5) -- (1.5,1.5) -- (1.5,2);
\draw[black,fill=white] (1.5,0) circle (.5ex); \draw[black,fill=white] (2,0.5) circle (.5ex); \draw[black,fill=white] (2,1.5) circle (.5ex); \draw[black,fill=white] (0.5,1) circle (.5ex); \draw[black,fill=white] (1,1.5) circle (.5ex);
\node[left] at (0,0.5) {$r$}; \node[left] at (0,1.5) {$-t^lr$};
\end{tikzpicture} } \right ) 
\end{equation}
regardless of the paths taken by the other colors.  We label the boxes for convenience as follows.
\[ 
\resizebox{1.5cm}{!}{
\begin{tikzpicture}[baseline=(current bounding box.center)] 
\draw[fill=violet] (0,1) rectangle (2,2); \draw (0,0) grid (2,2);
\node at (1.5,1.5) {$1$}; \node at (0.5,1.5) {$2$}; \node at (0.5,0.5) {$3$}; \node at (1.5,0.5) {$4$};
\end{tikzpicture} }
\]
Compared to the configuration with color $i$ absent, the presence of color $i$ in the form
\[
\resizebox{2cm}{!}{
\begin{tikzpicture}[baseline=(current bounding box.center)] 
\draw[fill=violet] (0,1) rectangle (2,2); \draw (0,0) grid (2,2);
\draw[green] (0.5,0.5) -- (0.5,1.5) -- (1.5,1.5) -- (1.5,2);
\draw[black,fill=white] (1.5,0) circle (.5ex); \draw[black,fill=white] (2,0.5) circle (.5ex); \draw[black,fill=white] (2,1.5) circle (.5ex); \draw[black,fill=white] (1,0.5) circle (.5ex); \draw[black,fill=white] (1.5,1) circle (.5ex);
\node[left] at (0,0.5) {$r$}; \node[left] at (0,1.5) {$-t^lr$};
\end{tikzpicture} }
\]
contributes $-t^l r \cdot t^a$ to the overall weight, where
\[ a = \# \{ j > i : \text{$j$ is vertical in box 2} \} + \# \{ j < i : \text{$j$ exits right in box 3} \}. \]
Compared to the configuration with color $i$ absent, the presence of color $i$ in the form
\[
\resizebox{2cm}{!}{
\begin{tikzpicture}[baseline=(current bounding box.center)] 
\draw[fill=violet] (0,1) rectangle (2,2); \draw (0,0) grid (2,2);
\draw[green] (0.5,0.5) -- (1.5,0.5) -- (1.5,1.5) -- (1.5,2);
\draw[black,fill=white] (1.5,0) circle (.5ex); \draw[black,fill=white] (2,0.5) circle (.5ex); \draw[black,fill=white] (2,1.5) circle (.5ex); \draw[black,fill=white] (0.5,1) circle (.5ex); \draw[black,fill=white] (1,1.5) circle (.5ex);
\node[left] at (0,0.5) {$r$}; \node[left] at (0,1.5) {$-t^lr$};
\end{tikzpicture} }
\]
contributes $r \cdot t^b$ to the overall weight, where
\[ \begin{aligned}
b &= \# \{ j > i : \text{$j$ appears in box 3} \} + \# \{ j < i : \text{$j$ exits right in box 1} \} \\
&\hspace{0.5cm} + \# \{ j < i : \text{$j$ exits right in box 3} \} + \# \{ j < i : \text{$j$ exits right in box 4} \}. 
\end{aligned} \]
It is easy to see that 
\[ \begin{aligned}
b-a &= \# \{ j > i : \text{$j$ appears in box 3} \} - \# \{ j > i : \text{$j$ is vertical in box 2} \} \\
&\hspace{0.5cm} + \# \{ j < i : \text{$j$ exits right in box 1} + \# \{ j < i : \text{$j$ exits right in box 4} \} \\
&= \# \{ j > i : c_i \geq c_j \} + \# \{ j < i : c_i > c_j \} = \# \{ j : j \succ i \} = l.
\end{aligned} \]
Therefore 
\[ \begin{aligned}
\displaystyle \frac{
\weight \left ( \resizebox{2cm}{!}{
\begin{tikzpicture}[baseline=(current bounding box.center)] 
\draw[fill=violet] (0,1) rectangle (2,2); \draw (0,0) grid (2,2);
\draw[green] (0.5,0.5) -- (0.5,1.5) -- (1.5,1.5) -- (1.5,2);
\draw[black,fill=white] (1.5,0) circle (.5ex); \draw[black,fill=white] (2,0.5) circle (.5ex); \draw[black,fill=white] (2,1.5) circle (.5ex); \draw[black,fill=white] (1,0.5) circle (.5ex); \draw[black,fill=white] (1.5,1) circle (.5ex);
\node[left] at (0,0.5) {$r$}; \node[left] at (0,1.5) {$-t^lr$};
\end{tikzpicture} } \right )
}{
\weight \left ( \resizebox{2cm}{!}{
\begin{tikzpicture}[baseline=(current bounding box.center)] 
\draw[fill=violet] (0,1) rectangle (2,2); \draw (0,0) grid (2,2);
\draw[green] (0.5,0.5) -- (1.5,0.5) -- (1.5,1.5) -- (1.5,2);
\draw[black,fill=white] (1.5,0) circle (.5ex); \draw[black,fill=white] (2,0.5) circle (.5ex); \draw[black,fill=white] (2,1.5) circle (.5ex); \draw[black,fill=white] (0.5,1) circle (.5ex); \draw[black,fill=white] (1,1.5) circle (.5ex);
\node[left] at (0,0.5) {$r$}; \node[left] at (0,1.5) {$-t^lr$};
\end{tikzpicture} } \right ) 
}
= \frac{-t^lr \cdot t^a}{r \cdot t^b} = -t^{l+a-b} = -1.
\end{aligned} \]
Thus Equation \ref{eq:2by2weightL} holds.
\end{proof}

\subsection{Swapping single rows}

\indent In this subsection, we prove an identity of the supersymmetric LLT polynomials in the case $p=1$ and $\bm{\mu} = \0$.  Since $p=1$, $\bm{\lambda}$ can be written as $((\lambda_1),\ldots,(\lambda_k))$.  Moreover, in any configuration of the lattice $\latS_{n,m}(\bm{\lambda})$, there is exactly one path of each of the $k$ colors, and these paths enter the lattice through the bottom in the same column.  Given nonnegative integers $\nu_1,\ldots,\nu_k$, we define
\[ \Inv((\nu_1),\ldots,(\nu_k)) = \# \{ a < b : \nu_a > \nu_b \}. \]
Given $\bm{\lambda} \in P_1^{(k)}$, we say that $\bm{\nu} \in P_1^{(k)}$ is a rearrangement of $\bm{\lambda}$ if there exists a permutation $\sigma \in S_k$ such that $\nu_i = \lambda_{\sigma(i)}$ for all $i \in \{1,\ldots,k\}$.

\begin{prop} \label{prop:single-rows-swap}
Let $\bm{\lambda} \in P_1^{(k)}$. If $\bm{\nu} \in P_1^{(k)}$ is a rearrangement of $\bm{\lambda}$, then 
\[ \mc{L}^S_{\bm{\lambda}}(X_n;Y_m;t) = t^{\Inv(\bm{\lambda})-\Inv(\bm{\nu})} \mc{L}^S_{\bm{\nu}}(X_n;Y_m;t). \]
\end{prop}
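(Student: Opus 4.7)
Proof plan: I would first reduce to adjacent transpositions. Any rearrangement $\bm{\nu}$ of $\bm{\lambda}$ is given by a permutation $\sigma\in S_k$, which is a product of adjacent transpositions $\sigma_{a_1}\cdots\sigma_{a_r}$. Since the factor $t^{\Inv(\bm{\lambda})-\Inv(\bm{\nu})}$ is a telescoping product of the per-step factors, it suffices to prove the identity when $\bm{\nu}=\sigma_a(\bm{\lambda})$ for a single adjacent transposition. The case $\lambda_a=\lambda_{a+1}$ is trivial ($\bm{\lambda}=\bm{\nu}$ as tuples), and the cases $\lambda_a<\lambda_{a+1}$ and $\lambda_a>\lambda_{a+1}$ are symmetric under exchange of $\bm{\lambda}$ and $\bm{\nu}$, so I can assume $\lambda_a>\lambda_{a+1}$, reducing everything to showing
\[
\mc{L}^S_{\bm{\lambda}}(X_n;Y_m;t)=t\cdot \mc{L}^S_{\sigma_a(\bm{\lambda})}(X_n;Y_m;t).
\]

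The plan for this single-swap identity is a train argument analogous to those used in Lemmas \ref{lem:permuterows} and \ref{lem:cancellation}. I would attach to the top of the lattice $\latS_{n,m}(\bm{\lambda})$ an auxiliary ``color-exchange'' vertex that interchanges colors $a$ and $a+1$ on the top boundary, sending exit-data $\bm{\lambda}$ to exit-data $\bm{\nu}$ and contributing a local factor of $t$. The candidate for this vertex is an appropriate specialization of the $R$-, $R'$-, or $R''$-matrix (at a spectral value where it degenerates to a pure color-permutation operator). Using Propositions \ref{prop:YBEwhite}, \ref{prop:YBEpurplewhite}, and \ref{prop:YBEpurple}, I would then slide this auxiliary vertex downward through all $n+m$ rows of the lattice. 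At the bottom boundary, where $\bm{\mu}=\bm{0}$ forces every color to enter at the same column, the crossing acts trivially on the paths, and the partition function of the resulting lattice is $\mc{L}^S_{\bm{\nu}}(X_n;Y_m;t)$. Comparing the two expressions for the combined partition function yields the desired $\mc{L}^S_{\bm{\lambda}}=t\cdot \mc{L}^S_{\bm{\nu}}$.

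The main obstacle is identifying the correct color-exchange auxiliary vertex and verifying that it satisfies a Yang-Baxter relation with the $L$- and $L'$-matrices. The ready-made $R,R',R''$ matrices swap two rows (acting in the auxiliary space), whereas here I need an operator in the ``quantum'' (color) direction. I would first check whether such an operator arises as a limit of one of the existing $R$-matrices (e.g.\ $R_{x/y}$ with $x=y$, or $R''_{x/y}$ with $x=y\,t^{\pm 1}$), and otherwise import it from the ABW vertex model via Lemma \ref{lem:degen-abw}. It is important to note that the naive bijection which simply relabels colors $a\leftrightarrow a+1$ in each configuration does \emph{not} produce a uniform weight ratio: the per-vertex change $\Delta_v = L_{a+1}(I_a+J_a) - L_a(I_{a+1}+J_{a+1})$ in the white rows (and its purple analogue) varies configuration by configuration, so the YBE-based reorganization is essential.

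As a backup, I would try a two-step reduction. First, when $\lambda_i\ge m$ for all $i$, Lemma \ref{lem:factorization} with $n=1$, $s=0$ gives $\mc{L}^S_{\bm{\lambda}}(X_n;Y_m;t)=\mc{L}_{\bm{\lambda}-m}(X_n;t)\cdot\prod_{l,i,j}(t^lx_i+y_j)$, reducing to a purely white-row statement on coinversion LLT polynomials. I would then prove the rearrangement identity for $\mc{L}_{\bm{\lambda}}(X_n;t)$ directly on the Bylund--Haiman side, by swapping tableau entries between shapes $a$ and $a+1$ and carefully tracking how this shifts coinversion triples between the two adjacent content lines (this step should give a uniform change of $\pm 1$ per tableau once the content-line bookkeeping is done correctly). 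Finally, Theorem \ref{thm:SSLLTxyswap} combined with Corollary \ref{cor:dconjugate} would extend the identity out of the $\lambda_i\ge m$ regime by using the $X\leftrightarrow Y$ duality to swap roles and eliminate the side constraint.
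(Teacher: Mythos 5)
Your reduction to a single adjacent transposition with $\lambda_a>\lambda_{a+1}$ matches the paper, but the core of the argument --- actually proving $\mc{L}^S_{\bm{\lambda}}=t\cdot\mc{L}^S_{\bm{\nu}}$ for that one swap --- is missing. You correctly observe that globally relabeling colors $a\leftrightarrow a+1$ in every configuration does not give a uniform weight ratio, but the fix is not a new Yang--Baxter operator acting in the color direction; it is a \emph{partial} relabeling. Since the two colors enter in the same column and color $a$ exits strictly to the right of color $a+1$, every configuration contains a vertex where color $a$ enters weakly left of and exits strictly right of color $a+1$ (a white vertex with $a$ exiting right while $a+1$ is vertical, or a purple vertex with $a$ turning right while $a+1$ goes straight up). Taking the North-Eastern-most such vertex $V$ and swapping the two colors only in the vertices North-East of $V$ produces a configuration with top boundary $\bm{\nu}$; a short case check (at $V$ itself the weight drops by exactly $t$, at vertices not NE of $V$ nothing changes, and at vertices NE of $V$ the swap is weight-preserving because there the two colors either don't both appear or appear in a symmetric crossing pattern) gives the uniform factor of $t$. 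This is the same mechanism as \cite[Prop.~5.5]{CGKM}, which the paper explicitly extends. Your primary plan instead hinges on an auxiliary color-exchange vertex satisfying a YBE with the $L$ and $L'$ matrices; you never exhibit such a vertex, and constructing and verifying it would be the entire difficulty, so as written the plan is not a proof.

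The backup route also has a gap at its last step. Lemma \ref{lem:factorization} with $s=0$ does reduce the case $\lambda_i\ge m$ (all $i$) to the known white-row statement, since the factor $\prod_{l,i,j}(t^lx_i+y_j)$ is independent of $\bm{\lambda}$. But the proposed extension beyond $\lambda_i\ge m$ via Theorem \ref{thm:SSLLTxyswap} and Corollary \ref{cor:dconjugate} does not go through: conjugation sends a tuple of one-row partitions to a tuple of one-column partitions, so $\bm{\lambda}'\notin P_1^{(k)}$ and the proposition no longer applies on the other side of the duality; moreover the constraint involves the fixed number $m$ of $Y$-variables and is not relaxed by exchanging the roles of $X$ and $Y$. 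The direct partial-swap bijection avoids all of this.
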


\begin{proof} We start with some simple reductions.  It is enough to consider the case
where $\lambda_1 \geq \cdots \geq \lambda_k$.  Thus it is enough to show that, given $i \in \{1,\ldots,k-1\}$, 
\[ \mc{L}^S_{\bm{\lambda}}(X_n;Y_m;t) = t \cdot \mc{L}^S_{\bm{\nu}}(X_n;Y_m;t) \]
where $\lambda_1 \geq \cdots \geq \lambda_i > \lambda_{i+1} \geq \cdots \geq \lambda_k$ and 
\[ \nu_j = \left \{ \begin{array}{ll} \lambda_j & j \neq i,i+1 \\ \lambda_{i+1} & j = i \\ \lambda_i & j = i+1 \end{array} \right . . \]
We will let blue be color $i$ and red be color $i+1$.

We will now define a bijection
\[ \rho : LC(\latS_{n,m}(\bm{\lambda})) \rightarrow LC(\latS_{n,m}(\bm{\nu})). \]
Fix a configuration $C \in \latS_{n,m}(\bm{\lambda})$.  Since $\lambda_i \geq \lambda_{i+1}$, the column in which color $i$ exits the lattice is strictly right of the column in which color $i+1$ exits the lattice.  Therefore, since color $i$ and color $i+1$ enter the lattice in the same column, $C$ must have a row in which color $i$ enters weakly left and exits strictly right of color $i+1$.  Thus $C$ must have a vertex of the form
\[
\begin{tikzpicture}[baseline=(current bounding box.center)]
\draw (0,0) rectangle (1,1);
\draw[blue] (0.5,0.6)--(1,0.6);
\draw[red] (0.6,0.5)--(0.6,1);
\end{tikzpicture}
\text{ or }
\begin{tikzpicture}[baseline=(current bounding box.center)]
\draw[fill=violet] (0,0) rectangle (1,1);
\draw[blue] (0.4,0)--(0.4,0.6)--(1,0.6);
\draw[red] (0.6,0)--(0.6,1);
\end{tikzpicture} .
\]
Consider the North-Eastern-most vertex $V$ of this form.  Swap color $i$ and color $i+1$ in every vertex North-East of $V$.  For example,
\[
	\resizebox{0.2\textwidth}{!}{
	\begin{tikzpicture}[baseline=(current bounding box.center)]
	\draw[fill=violet] (0,3) rectangle (5,4);
	\draw (0,0) grid (5,4);
	
	\draw[blue] (0.4,0)--(0.4,0.6)--(1.4,0.6)--(2.4,0.6)--(2.4,2.6)--(3.4,2.6);
	\draw[blue, very thick] (3.4,2.6)--(4.4,2.6)--(4.4,4);
	\draw[red] (0.6,0)--(0.6,0.4)--(1.6,0.4)--(1.6,1.4)--(3.6,1.4)--(3.6,2.4);
	\draw[red, very thick] (3.6,2.4)--(3.6,4);
	
	\draw[black, very thick] (3,2)--(4,2)--(4,3)--(3,3)--(3,2);

	\end{tikzpicture} }
	\mapsto
	\resizebox{0.2\textwidth}{!}{
	\begin{tikzpicture}[baseline=(current bounding box.center)]
	\draw[fill=violet] (0,3) rectangle (5,4);
	\draw (0,0) grid (5,4);
	
	\draw[blue] (0.4,0)--(0.4,0.6)--(1.4,0.6)--(2.4,0.6)--(2.4,2.6)--(3.4,2.6);
	\draw[blue, very thick] (3.4,2.6)--(3.4,4);
	\draw[red] (0.6,0)--(0.6,0.4)--(1.6,0.4)--(1.6,1.4)--(3.6,1.4)--(3.6,2.4);
	\draw[red, very thick] (3.6,2.4)--(4.4,2.4)--(4.4,4);
	
	\draw[black, very thick] (3,2)--(4,2)--(4,3)--(3,3)--(3,2);
	
	\end{tikzpicture} }
\]
and
\[ 
    \resizebox{0.2\textwidth}{!}{
	\begin{tikzpicture}[baseline=(current bounding box.center)]
	\draw[fill=violet] (0,1) rectangle (5,4);
	\draw (0,0) grid (5,4);
	
	\draw[red] (0.6,0)--(0.6,0.4)--(1.6,0.4)--(2.6,0.4)--(2.6,2.4)--(3.6,2.4);
	\draw[red, very thick] (2.6,1.4)--(2.6,2.4)--(3.6,2.4)--(3.6,4);
	\draw[blue] (0.4,0)--(0.4,0.6)--(2.4,0.6)--(2.4,1.6)--(3.4,1.6)--(3.4,2.6);
	\draw[blue, very thick] (2.4,1.6)--(3.4,1.6)--(3.4,2.6)--(4.6,2.6)--(4.6,4);
	
	\draw[black, very thick] (2,1)--(3,1)--(3,2)--(2,2)--(2,1);
	
	\end{tikzpicture} }
	\mapsto
	\resizebox{0.2\textwidth}{!}{
	\begin{tikzpicture}[baseline=(current bounding box.center)]
	\draw[fill=violet] (0,1) rectangle (5,4);
	\draw (0,0) grid (5,4);
	
	\draw[red] (0.6,0)--(0.6,0.4)--(1.6,0.4)--(2.6,0.4)--(2.6,1.4);
	\draw[red, very thick] (2.6,1.4)--(3.6,1.4)--(3.6,2.4)--(4.4,2.4)--(4.4,4);
	\draw[blue] (0.4,0)--(0.4,0.6)--(2.4,0.6)--(2.4,1.6);
	\draw[blue, very thick] (2.4,1.6)--(2.4,2.6)--(3.4,2.6)--(3.4,4);
	
	\draw[black, very thick] (2,1)--(3,1)--(3,2)--(2,2)--(2,1);
	
	\end{tikzpicture} }
\]
The result is a configuration $\rho(C) \in LC(\latS_{n,m}(\bm{\nu}))$.  It's clear that $\rho$ is a bijection.  We will now compare $\weight(C)$ with $\weight(\rho(C))$.  There are four types of vertices to consider.
\begin{enumerate}
    \item For the vertex $V$ itself, the effect of applying $\rho$ is
    \[
    \begin{tikzpicture}[baseline=(current bounding box.center)]
    \draw (0,0) rectangle (1,1);
    \draw[blue] (0.5,0.6)--(1,0.6);
    \draw[red] (0.6,0.5)--(0.6,1);
    \end{tikzpicture}
    \mapsto
    \begin{tikzpicture}[baseline=(current bounding box.center)]
    \draw (0,0) rectangle (1,1);
    \draw[red] (0.5,0.4)--(1,0.4);
    \draw[blue] (0.4,0.5)--(0.4,1);
    \end{tikzpicture}
    \text{ or }
    \begin{tikzpicture}[baseline=(current bounding box.center)]
    \draw[fill=violet] (0,0) rectangle (1,1);
    \draw[blue] (0.4,0)--(0.4,0.6)--(1,0.6);
    \draw[red] (0.6,0)--(0.6,1);
    \end{tikzpicture} 
    \mapsto
    \begin{tikzpicture}[baseline=(current bounding box.center)]
    \draw[fill=violet] (0,0) rectangle (1,1);
    \draw[red] (0.6,0)--(0.6,0.4)--(1,0.4);
    \draw[blue] (0.4,0)--(0.4,1);
    \end{tikzpicture} .
    \]
    In either case, it is easy to see that the weight before applying $\rho$ is $t$ times the weight after applying $\rho$.
    \item For any vertex that is not North-East of $V$, then $\rho$ does not change the configuration, so the weight is not changed.
    \item For any vertex that is North-East of $V$ and either color $i$ or color $i+1$ is absent in this vertex, then $\rho$ swaps color $i$ or color $i+1$, but the weight is not changed.
    \item  For any vertex that is North-East of $V$ and both color $i$ and color $i+1$ are present in this vertex, then this vertex must have the form
    \[ 
    \begin{tikzpicture}[baseline=(current bounding box.center)]
    \draw[fill=violet] (0,0) rectangle (1,1);
    \draw[blue] (0.4,0)--(0.4,0.6)--(1,0.6);
    \draw[red] (0,0.4)--(0.6,0.4)--(0.6,1);
    \end{tikzpicture} .
    \]
    \noindent Applying $\rho$ swaps color $i$ and color $i+1$, resulting in
    \[ 
    \begin{tikzpicture}[baseline=(current bounding box.center)]
    \draw[fill=violet] (0,0) rectangle (1,1);
    \draw[red] (0.6,0)--(0.6,0.4)--(1,0.4);
    \draw[blue] (0,0.6)--(0.4,0.6)--(0.4,1);
    \end{tikzpicture} .
    \]
    \noindent It is easy to see that the weight is not changed.

\end{enumerate}
Therefore
\[ \weight(C) = t \cdot \weight(\rho(C)) \]
and the proposition follows.
\end{proof}

\begin{remark}
The above proposition extends \cite[Prop. 5.5]{CGKM}, and in fact these two propositions are proven in nearly identical ways.
\end{remark}

\pagebreak

\section{Relating $\mc{L}^S$ to $\mc{G}$ \label{sec:relate-ls-to-g}}

\indent The goal of this section is to relate the partition function $\mc{L}^S_\lm$ of Definition \ref{def:latS} to the super ribbon function $\mc{G}^{(k)}_{\lambda/\mu}$ of Definition \ref{def:super-ribbon-function}.  In \cite{CYZZ} the authors construct a lattice model whose partition function is equal to the spin LLT polynomials. Lemma \ref{lem:white-spin} below, which relates our vertex model to the spin LLT polynomial, can also be interpreted as a mapping between our vertex model and the one in \cite{CYZZ} (see Remark \ref{rem:CYZZ}).  We will adopt the following conventions.  Fix a positive integer $k$.  Let $\lm$ be the $k$-quotient of $\lambda/\mu$.  Let $\mc{A} = \{1 < 2 < \cdots < n \}$ and $\mc{A'} = \{ 1' < 2' < \cdots < m' \}$.  We use the ordering $1 < 2 < \ldots < n < 1' < 2' < \ldots < m'$ on $\mc{A} \cup \mc{A'}$.  

We begin by constructing a bijection
\[ \SSSYT(\lm) \rightarrow LC(\latS_{n,m}(\lm)) \]
where $LC(\latS_{n,m}(\lm))$ is the set of configurations on the lattice $\latS_{n,m}(\lm)$ from Definition \ref{def:latS}.  We do this in the usual way in which each row of $i$-th tableaux maps to a path of color $i$.  Precisely, given $\bm{T} = (T^{(1)},\ldots,T^{(k)}) \in \SSSYT(\lm)$, the corresponding $C \in LC(\latS_{n,m}(\lm))$ is constructed as follows.  Fix $i \in [k]$ and fix a row
\begin{center}
\ytableausetup{nosmalltableaux}
\ytableausetup{nobaseline}
\begin{ytableau}
\none & \none & \none & \none & \none & \none[c] & \none[\enspace c+1] & \none[\enspace...] & \none[\hspace{1cm} c+j-1] \\
\none & \none & \none & \none & \none[\diag] & \none[\diag] & \none[...] & \none[\diag] \\
*(lightgray) & *(lightgray)... & *(lightgray) & e_1 & e_2 & ... & e_j & \none & \none & \none & \none[T^{(i)}] \\
\none & \none & \none[\diag] & \none[\diag] & \none[...] & \none[\diag]
\end{ytableau}
\end{center}
\noindent in $T^{(i)}$.  (Here we have labelled the diagonal content lines going through the row.)  The corresponding path in $C$ has color $i$, enters the lattice via the bottom of column $c$, exits the lattice via the top of column $c+j$, and crosses from column $c+l-1$ to column $c+l$ at 
\[ \left \{ \begin{array}{ll} \text{the $a$-th white row} & \text{if $e_l = a \in \mc{A}$} \\ \text{the $a$-th purple row} & \text{if $e_l = a' \in \mc{A'}$} \end{array} \right . \]
for each index $l \in [j]$.  Recall that the Littlewood $k$-quotient map is a bijection
\[ \SRT_k(\lambda/\mu) \rightarrow \SSSYT(\lm). \]
The composition of these two bijections gives a bijection
\[ \theta: \SRT_k(\lambda/\mu) \rightarrow LC(\latS_{n,m}(\lm)). \]

\begin{example}
Let $n=3$, $m=4$, and $k=3$.  Recall Example \ref{ex:extended-lqm}.
\[ \resizebox{4cm}{!}{
\begin{tikzpicture}[baseline=(current bounding box.center)]
\fill[gray] (0,0) rectangle (2,1);
\draw[thin] (8,0) -- (0,0) -- (0,7) -- (1,7) -- (1,6) -- (4,6) -- (4,5) -- (6,5) -- (6,2) -- (7,2) -- (7,1) -- (8,1) -- (8,0);
\draw[thin] (2,0) -- (2,1) -- (0,1);
\draw[thin] (1,1) -- (1,7);
\draw[thin] (0,4) -- (5,4) -- (5,0);
\draw[thin] (1,2) -- (3,2) -- (3,0);
\draw[thin] (1,3) -- (4,3) -- (4,1) -- (5,1) -- (5,0);
\draw[thin] (3,2) -- (4,2);
\draw[thin] (2,6) -- (2,5) -- (3,5) -- (3,4) -- (4,4) -- (4,3);
\draw[thin] (5,3) -- (6,3) -- (6,0);
\draw[thin] (4,5) -- (4,4);
\node[scale=2] at (0.5,3.5) {$1$}; \node[scale=2] at (0.5,6.5) {$2$};
\node[scale=2] at (1.5,1.5) {$1$}; \node[scale=2] at (1.5,2.5) {$2$}; \node[scale=2] at (1.5,3.5) {$3$}; \node[scale=2] at (1.5,5.5) {$1'$};
\node[scale=2] at (2.5,5.5) {$2'$}; \node[scale=2] at (3.5,1.5) {$1$};
\node[scale = 2] at (4.5,3.5) {$1'$}; \node[scale = 2] at (4.5,4.5) {$3'$};
\node[scale = 2] at (5.5,2.5) {$3'$}; \node[scale = 2] at (6.5,1.5) {$4'$};
\end{tikzpicture}
}
\hspace{1cm} \leftrightarrow \hspace{1cm}
\resizebox{5cm}{!}{
\begin{tikzpicture}[baseline=(current bounding box.center)]
\draw (0,0) grid (3,2);
\draw (4+0,0) grid (4+2,1);
\draw (7+0,0) grid (7+1,3); \draw (7+1,0) grid (7+2,1);
\node[scale=2] at (0.5,0.5) {$1$}; \node[scale=2] at (0.5,1.5) {$2$}; \node[scale=2] at (1.5,0.5) {$1$}; \node[scale=2] at (1.5,1.5) {$2'$}; \node[scale=2] at (2.5,0.5) {$3'$}; \node[scale=2] at (2.5,1.5) {$3'$};
\node[scale=2] at (4+0.5,0.5) {$3$}; \node[scale=2] at (4+1.5,0.5) {$1'$};
\node[scale=2] at (7+0.5,0.5) {$1$}; \node[scale=2] at (7+1.5,0.5) {$4'$}; \node[scale=2] at (7+0.5,1.5) {$2$}; \node[scale=2] at (7+0.5,2.5) {$1'$};
\end{tikzpicture} }
\]
The corresponding configuration is
\[
\resizebox{4cm}{!}{
\begin{tikzpicture}[baseline=(current bounding box.center)] 
\draw[fill=violet] (-1,3) rectangle (5,7); \draw (-1,0) grid (5,7);
\node[left] at (-1,0.5) {$x_1$}; \node[left] at (-1,1.5) {$x_2$}; \node[left] at (-1,2.5) {$x_3$}; \node[left] at (-1,3.5) {$y_1$}; \node[left] at (-1,4.5) {$y_2$}; \node[left] at (-1,5.5) {$y_3$}; \node[left] at (-1,6.5) {$y_4$};
\draw[blue] (1.25,0)--(1.25,0.75)--(3.25,0.75)--(3.25,5.75)--(4.25,5.75)--(4.25,7);
\draw[blue] (0.25,0)--(0.25,1.75)--(1.25,1.75)--(1.25,4.75)--(2.25,4.75)--(2.25,5.75)--(3.25,5.75)--(3.25,7);
\draw[green] (1.5,0)--(1.5,2.5)--(2.5,2.5)--(2.5,3.5)--(3.5,3.5)--(3.5,7);
\draw[red] (1.75,0)--(1.75,0.25)--(2.75,0.25)--(2.75,6.25)--(3.75,6.25)--(3.75,7);
\draw[red] (0.75,0)--(0.75,1.25)--(1.75,1.25)--(1.75,7);
\draw[red] (-0.25,0)--(-0.25,3.25)--(0.75,3.25)--(0.75,7);
\end{tikzpicture} }
\]
where blue is color $1$, green is color $2$, and red is color $3$.
\end{example}

\begin{remark}
When $m=0$, the bijection $\SSSYT(\lm) \rightarrow LC(\latS_{n,m}(\lm))$ becomes a bijection
\[ \SSYT(\lm) \rightarrow LC(\latW_n(\lm)). \]
This bijection was used in \cite{CGKM} to prove Theorem \ref{thm:cgkm-lattice}. 
\end{remark}

\begin{remark}
The bijection $\theta$ restricts to bijections
\[ \begin{aligned} 
&\HRS_k(\lambda/\mu) \rightarrow LC(\latW_1(\lm)), \hspace{1cm} \VRS_k(\lambda/\mu) \rightarrow LC(\latP_1(\lm))
\end{aligned} \]
by artificially labelling each ribbon in a horizontal (vertical) $k$-ribbon strip with a $1$ ($1'$).
\end{remark}

\indent For the rest of this section, we will switch to drawing our Young diagrams in Russian convention, so that rows are oriented SW-to-NE and columns are oriented SE-to-NW.  The reason for this switch is to allow for an elegant graphical interpretation of $\theta$. Let $T \in \SRT_k(\lambda/\mu)$ and let $C = \theta(T) \in LC(\latS_{n,m}(\lm))$.  By the construction of $\theta$, we note that
\begin{enumerate}
    \item for each $i \in \mc{A}$, the horizontal ribbon strip $\lambda_{\leq i}/\lambda_{\leq i-1}$ of ribbons labelled $i$ in $T$ corresponds to the $i$-th white row in $C$; and
    \item for each $i' \in \mc{A'}$, the vertical ribbon strip $\lambda_{\leq i}/\lambda_{\leq i-1}$ of ribbons labelled $i'$ in $T$ corresponds to the $i$-th purple row in $C$.
\end{enumerate}
Given a horizontal (vertical) ribbon strip inside $T$, we ``drop down" the Maya diagrams of the top and bottom boundaries to obtain the top and bottom boundaries of the corresponding white (purple) row in $C$.  Moreover, the top and bottom boundaries of the row uniquely determine the path configuration of the row.

\begin{example} \label{ex:theta-graphical} Take $k=3$.  Let blue be color 1, green be color 2, and red be color 3.  Consider the following horizontal 3-ribbon strip of shape $(6,6,3,0,0,0)/(0,0,0,0,0,0)$.
\begin{center}
\resizebox{5cm}{!}{
\begin{tikzpicture}
\draw (-6,6)--(0,0)--(6,6);
\draw (-3,3)--(0,6)--(1,5)--(4,8)--(6,6);
\draw (-2,4)--(1,1); \draw (-1,5)--(2,2); \draw (0,6)--(3,3);
\draw (2,6)--(3,5)--(4,6)--(5,5);
\end{tikzpicture} }  
\end{center}
We now color the steps on the top and bottom boundaries, from left to right.  A South-East step in position $i \mod k$ gets color $i$.  North-East steps are not colored.
\begin{center}
\resizebox{5cm}{!}{
\begin{tikzpicture}
\draw (-6,6)--(0,0)--(6,6);
\draw (-3,3)--(0,6)--(1,5)--(4,8)--(6,6);
\draw (-2,4)--(1,1); \draw (-1,5)--(2,2); \draw (0,6)--(3,3);
\draw (2,6)--(3,5)--(4,6)--(5,5);
\draw[blue, thick] (-6,6)--(-5,5); \draw[blue, thick] (0,6)--(1,5); \draw[blue,thick] (-3,3)--(-2,2);
\draw[green, thick] (-5,5)--(-4,4); \draw[green, thick] (4,8)--(5,7); \draw[green,thick] (-2,2)--(-1,1);
\draw[red, thick] (-4,4)--(-3,3); \draw[red, thick] (5,7)--(6,6); \draw[red,thick] (-1,1)--(0,0);
\end{tikzpicture} }
\end{center}
We now ``drop down" the steps on the top and bottom boundaries of the horizontal 3-ribbon strip to obtain the top and bottom boundaries of the corresponding white row.  The steps in positions $(j-1)k+1,\ldots,(j-1)k+k$ correspond to the $j$-th leftmost vertex.  A step of color $i$ corresponds to a particle of color $i$, indicating that a path of color $i$ is incident at the edge.
\begin{center}
\resizebox{5cm}{!}{
\begin{tikzpicture}
\draw (-6,6)--(0,0)--(6,6);
\draw (-3,3)--(0,6)--(1,5)--(4,8)--(6,6);
\draw (-2,4)--(1,1); \draw (-1,5)--(2,2); \draw (0,6)--(3,3);
\draw (2,6)--(3,5)--(4,6)--(5,5);
\draw[blue, thick] (-6,6)--(-5,5); \draw[blue, thick] (0,6)--(1,5); \draw[blue,thick] (-3,3)--(-2,2);
\draw[green, thick] (-5,5)--(-4,4); \draw[green, thick] (4,8)--(5,7); \draw[green,thick] (-2,2)--(-1,1);
\draw[red, thick] (-4,4)--(-3,3); \draw[red, thick] (5,7)--(6,6); \draw[red,thick] (-1,1)--(0,0);
\draw[dashed] (-5,-0.25)--(-5,8.25);
\draw[dashed] (-4,-0.25)--(-4,8.25);
\draw[dashed] (-3,-0.75)--(-3,8.25);
\draw[dashed] (-2,-0.25)--(-2,8.25);
\draw[dashed] (-1,-0.25)--(-1,8.25);
\draw[dashed] (0,-0.75)--(0,8.25);
\draw[dashed] (1,-0.25)--(1,8.25);
\draw[dashed] (2,-0.25)--(2,8.25);
\draw[dashed] (3,-0.75)--(3,8.25);
\draw[dashed] (4,-0.25)--(4,8.25);
\draw[dashed] (5,-0.25)--(5,8.25);
\draw[blue,fill=blue] (-5.5,-0.5) circle (0.1);
\draw[green,fill=green] (-4.5,-0.5) circle (0.1);
\draw[red,fill=red] (-3.5,-0.5) circle (0.1);
\draw[blue] (-2.5,-0.5) circle (0.1);
\draw[green] (-1.5,-0.5) circle (0.1);
\draw[red] (-0.5,-0.5) circle (0.1);
\draw[blue,fill=blue] (0.5,-0.5) circle (0.1);
\draw[green] (1.5,-0.5) circle (0.1);
\draw[red] (2.5,-0.5) circle (0.1);
\draw[blue] (3.5,-0.5) circle (0.1);
\draw[green,fill=green] (4.5,-0.5) circle (0.1);
\draw[red,fill=red] (5.5,-0.5) circle (0.1);

\draw (-6,-1) rectangle (6,-4);
\draw (-3,-4)--(-3,-1); \draw (0,-4)--(0,-1); \draw (3,-4)--(3,-1);
\draw[blue,fill=blue] (-5.5,-4.5) circle (0.1);
\draw[green,fill=green] (-4.5,-4.5) circle (0.1);
\draw[red,fill=red] (-3.5,-4.5) circle (0.1);
\draw[blue,fill=blue] (-2.5,-4.5) circle (0.1);
\draw[green,fill=green] (-1.5,-4.5) circle (0.1);
\draw[red,fill=red] (-0.5,-4.5) circle (0.1);
\draw[blue] (0.5,-4.5) circle (0.1);
\draw[green] (1.5,-4.5) circle (0.1);
\draw[red] (2.5,-4.5) circle (0.1);
\draw[blue] (3.5,-4.5) circle (0.1);
\draw[green] (4.5,-4.5) circle (0.1);
\draw[red] (5.5,-4.5) circle (0.1);
\draw[dashed] (-3,-4.75)--(-3,-4.25);
\draw[dashed] (0,-4.75)--(0,-4.25);
\draw[dashed] (3,-4.75)--(3,-4.25);
\end{tikzpicture} }
\end{center}
With these top and bottom boundary conditions (and requiring that no paths be incident at the left and right edges of the row), there is a unique path configuration.
\begin{center}
\resizebox{5cm}{!}{
\begin{tikzpicture}
\draw[blue,fill=blue] (-5.5,-0.5) circle (0.1);
\draw[green,fill=green] (-4.5,-0.5) circle (0.1);
\draw[red,fill=red] (-3.5,-0.5) circle (0.1);
\draw[blue] (-2.5,-0.5) circle (0.1);
\draw[green] (-1.5,-0.5) circle (0.1);
\draw[red] (-0.5,-0.5) circle (0.1);
\draw[blue,fill=blue] (0.5,-0.5) circle (0.1);
\draw[green] (1.5,-0.5) circle (0.1);
\draw[red] (2.5,-0.5) circle (0.1);
\draw[blue] (3.5,-0.5) circle (0.1);
\draw[green,fill=green] (4.5,-0.5) circle (0.1);
\draw[red,fill=red] (5.5,-0.5) circle (0.1);
\draw[dashed] (-3,-0.75)--(-3,-0.25);
\draw[dashed] (0,-0.75)--(0,-0.25);
\draw[dashed] (3,-0.75)--(3,-0.25);
\draw (-6,-1) rectangle (6,-4);
\draw (-3,-4)--(-3,-1); \draw (0,-4)--(0,-1); \draw (3,-4)--(3,-1);
\draw[blue,fill=blue] (-5.5,-4.5) circle (0.1);
\draw[green,fill=green] (-4.5,-4.5) circle (0.1);
\draw[red,fill=red] (-3.5,-4.5) circle (0.1);
\draw[blue,fill=blue] (-2.5,-4.5) circle (0.1);
\draw[green,fill=green] (-1.5,-4.5) circle (0.1);
\draw[red,fill=red] (-0.5,-4.5) circle (0.1);
\draw[blue] (0.5,-4.5) circle (0.1);
\draw[green] (1.5,-4.5) circle (0.1);
\draw[red] (2.5,-4.5) circle (0.1);
\draw[blue] (3.5,-4.5) circle (0.1);
\draw[green] (4.5,-4.5) circle (0.1);
\draw[red] (5.5,-4.5) circle (0.1);
\draw[dashed] (-3,-4.75)--(-3,-4.25);
\draw[dashed] (0,-4.75)--(0,-4.25);
\draw[dashed] (3,-4.75)--(3,-4.25);
\draw[blue] (-5.5,-4)--(-5.5,-1); \draw[blue] (-2.5,-4)--(-2.5,-1.5)--(0.5,-1.5)--(0.5,-1);
\draw[green] (-4.5,-4)--(-4.5,-1); \draw[green] (-1.5,-4)--(-1.5,-2.5)--(4.5,-2.5)--(4.5,-1);
\draw[red] (-3.5,-4)--(-3.5,-1); \draw[red] (-0.5,-4)--(-0.5,-3.5)--(5.5,-3.5)--(5.5,-1);
\end{tikzpicture} }
\end{center}

\end{example}

\begin{remark} \label{rmk:theta-commutes}
We leave it as an exercise for the reader to verify that our graphical interpretation of $\theta$ is correct (see Lemma \ref{lem:lqm-properties}).  We also leave it as an exercise for the reader to check that the diagram
\begin{center}
\begin{tikzcd}
\HRS_k(\lambda/\mu) \arrow[r, "conjugate"] \arrow[d, "\theta"] & \VRS_k(\lambda'/\mu') \arrow[d, "\theta"] \\
LC(\latW_1(\lm)) \arrow[r, "\psi"]                             & LC(\latP_1(\lmp))                        
\end{tikzcd}
\end{center}
commutes.  Here $\psi$ is the bijection defined at the beginning of Section \ref{sec:identities}.  (This result is not needed in the rest of this paper.)
\end{remark}

\indent In order to relate $\mc{L}^S_\lm$ to $\mc{G}^{(k)}_{\lambda/\mu}$, we must consider how the spin
\[ \spin(T) = \sum_{\text{ribbons $R$ in $T$}} (h(R)-1) \]
of a horizontal (vertical) ribbon strip $T$ appears in the corresponding path configuration $\theta(T)$ of a single white (purple) row.  Clearly $\spin(T)$ equals the number of positions that the tile 
\begin{center}
\begin{tikzpicture}[scale=0.5,baseline=(current bounding box.center)] \draw[dashed] (0,0)--(1,1)--(-1,3)--(-2,2)--(0,0); \end{tikzpicture} 
\end{center}
(two cells in the same column and ribbon) can be placed in $T$.  For example, if $T$ is
\begin{center}
\resizebox{4cm}{!}{
\begin{tikzpicture}
\draw (-3,3)--(0,0)--(1,1)--(-2,4)--(-3,3);
\draw (1,1)--(3,3)--(2,4)--(1,3)--(0,4)--(-1,3)--(1,1);
\draw (0,4)--(2,6)--(4,4)--(3,3);
\draw (4,4)--(6,6);
\node[scale=2] at (-2,3) {$1$}; \node[scale=2] at (-1,2) {$2$}; \node[scale=2] at (0,1) {$3$};
\node[scale=2] at (0,3) {$4$}; \node[scale=2] at (1,2) {$5$}; \node[scale=2] at (2,3) {$6$};
\node[scale=2] at (0,3) {$4$}; \node[scale=2] at (1,2) {$5$}; \node[scale=2] at (2,3) {$6$};
\node[scale=2] at (1,4) {$7$}; \node[scale=2] at (2,5) {$8$}; \node[scale=2] at (3,4) {$9$};
\end{tikzpicture} }  
\end{center}
there are 4 such positions - cells 1 and 2, cells 2 and 3, cells 4 and 5, and cells 8 and 9 - and indeed the spin is 4.  We can count these positions according to the  ``slice" containing the middle of each tile.  In our example, the slices are given by 
\begin{center}
\resizebox{4cm}{!}{
\begin{tikzpicture}
\draw (-3,3)--(0,0)--(1,1)--(-2,4)--(-3,3);
\draw (1,1)--(3,3)--(2,4)--(1,3)--(0,4)--(-1,3)--(1,1);
\draw (0,4)--(2,6)--(4,4)--(3,3);
\draw (4,4)--(6,6);
\draw[dashed] (-2,-0.25)--(-2,8.25);
\draw[dashed] (-1,-0.25)--(-1,8.25);
\draw[dashed] (0,-0.25)--(0,8.25);
\draw[dashed] (1,-0.25)--(1,8.25);
\draw[dashed] (2,-0.25)--(2,8.25);
\draw[dashed] (3,-0.25)--(3,8.25);
\draw[dashed] (4,-0.25)--(4,8.25);
\draw[dashed] (5,-0.25)--(5,8.25);
\node[scale=2] at (-2.5,7) {$1$}; \node[scale=2] at (-1.5,7) {$2$}; \node[scale=2] at (-0.5,7) {$3$}; \node[scale=2] at (0.5,7) {$4$}; \node[scale=2] at (1.5,7) {$5$}; \node[scale=2] at (2.5,7) {$6$}; \node[scale=2] at (3.5,7) {$7$}; \node[scale=2] at (4.5,7) {$8$}; \node[scale=2] at (5.5,7) {$9$}; 
\end{tikzpicture} }  
\end{center}
so slices 2, 3, 4, and 6 each contribute 1 to the spin and the other slices do not contribute to the spin.  In each slice, there are only four shapes that can appear.
\begin{enumerate}
    \item a column parallelogram (two adjacent halves of cells in the same column and ribbon)
    \item a row parallelogram (two adjacent halves of cells in the same row and ribbon)
    \item a head triangle (half of the head of a ribbon)
    \item a tail triangle (half of the tail of a ribbon)
\end{enumerate}
\begin{center}
\begin{tabular}{cccc}
1. 
    \begin{tikzpicture}[scale=0.5,baseline=(current bounding box.center)] \draw[dashed] (0,0)--(-1,1)--(-1,3)--(0,2)--(0,0); \draw (0,0)--(-1,1); \draw (0,2)--(-1,3); \end{tikzpicture} 
& 2.
    \begin{tikzpicture}[scale=0.5,baseline=(current bounding box.center)] \draw[dashed] (0,0)--(1,1)--(1,3)--(0,2)--(0,0); \draw (0,0)--(1,1); \draw (0,2)--(1,3); \end{tikzpicture} 
& 3.
    \begin{tikzpicture}[scale=0.5,baseline=(current bounding box.center)] \draw[dashed] (0,0)--(0,2); \draw (0,0)--(1,1)--(0,2); \end{tikzpicture} 
& 4.
    \begin{tikzpicture}[scale=0.5,baseline=(current bounding box.center)] \draw[dashed] (0,0)--(0,2); \draw (0,0)--(-1,1)--(0,2); \end{tikzpicture} 
\end{tabular}
\end{center}
(Of course, a slice could also consist of a single SE step or a single NE step.)  Any other shapes cannot appear in a slice, because if it did, the ribbon containing the shape would either contain a $2 \times 2$ square or not be a valid skew shape.  It is clear that the contribution of a slice to the spin equals the number of column parallelograms in the slice.

In the following two lemmas, we use the above discussion to characterize the spin in terms of $\theta(T)$, when $T$ is a horizontal/vertical ribbon strip.

\begin{lem} \label{lem:white-spin}
Let $T$ be a horizontal $k$-ribbon strip.  In the corresponding white row $\theta(T)$, 
\[
\spin(T) = \sum_{\alb} \left(
\# \begin{tikzpicture}[baseline=(current bounding box.center)]
\draw (0,0)--(1,0)--(1,1)--(0,1)--(0,0);
\draw[blue] (0,0.6)--(0.5,0.6);
\draw[red] (0,0.4)--(0.5,0.4);
\end{tikzpicture}
+
\# \begin{tikzpicture}[baseline=(current bounding box.center)]
\draw (0,0)--(1,0)--(1,1)--(0,1)--(0,0);
\draw[blue] (1,0.6)--(0.5,0.6);
\draw[red] (1,0.4)--(0.5,0.4);
\end{tikzpicture}
+
\# \begin{tikzpicture}[baseline=(current bounding box.center)]
\draw (0,0)--(1,0)--(1,1)--(0,1)--(0,0);
\draw[blue] (0.4,0)--(0.4,0.5);
\draw[red] (0,0.4)--(0.5,0.4);
\end{tikzpicture}
+
\# \begin{tikzpicture}[baseline=(current bounding box.center)]
\draw (0,0)--(1,0)--(1,1)--(0,1)--(0,0);
\draw[blue] (1,0.6)--(0.5,0.6);
\draw[red] (0.6,0.5)--(0.6,1);
\end{tikzpicture}
\right)
\]
where blue is color $a$ and red is color $b$.
\end{lem}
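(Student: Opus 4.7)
The plan is to establish a per-ribbon spin formula and then rearrange algebraically to recover the four-pattern sum on the right-hand side.

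Via the graphical description of $\theta$ in Example~\ref{ex:theta-graphical}, each $k$-ribbon $R$ of $T$ corresponds to a horizontal step of some color $c$ in the white row $\theta(T)$, carrying a color-$c$ particle from a vertex $v'$ to its right neighbour $v'+1$. In Maya terms this is the move of a $1$-bit from position $p$ (the color-$c$ slot of $v'$'s block) to $p+k$. By the slice discussion directly preceding the lemma, $\spin(R)$ equals the number of $1$'s at the intermediate positions $p+1,\ldots,p+k-1$ in the Maya diagram at the moment $R$ is placed, using the canonical order in which ribbons are added by increasing source position. A case check on the four non-absent local states (horizontal, vertical, BR-corner, LT-corner) of each intermediate color would show that the intermediate bit at position $p+j$---which sits in $v'$'s block when the corresponding color $d$ satisfies $d>c$, or in $v'+1$'s block when $d<c$---equals $1$ at the placement of $R$ exactly when color $d$ is \emph{present} at the corresponding vertex, where ``present'' means the bottom and left edge bits of the color sum to $1$ (equivalently, the top and right bits sum to $1$, by path conservation and the no-double-entry rule of the $L$-matrix). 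This yields the per-ribbon identity
\[
\spin(R) = \#\{d>c : d \text{ present at } v'\} + \#\{d<c : d \text{ present at } v'+1\}.
\]

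Summing over all ribbons and re-indexing by the vertices of $\theta(T)$, using that color $c$ makes a horizontal step from $v$ to $v+1$ exactly when color $c$'s right-edge bit at $v$ equals $1$, gives
\[
\spin(T) = \sum_v\Bigl(\sum_c L_v[c]\cdot\#\{d>c : d \text{ present at }v\} + \sum_c J_v[c]\cdot\#\{d<c : d \text{ present at }v\}\Bigr),
\]
where $I_v[c], J_v[c], K_v[c], L_v[c]$ denote the color-$c$ bits of the bottom, left, top, and right edges of vertex $v$ respectively. Expanding the presence indicator as $L_v[d]+K_v[d]$ in the first sum and as $I_v[d]+J_v[d]$ in the second, and then relabeling each unordered pair $\{c,d\}$ as $a<b$, splits the double sum into four sums of the form $L_v[a]L_v[b]$, $L_v[a]K_v[b]$, $I_v[a]J_v[b]$, and $J_v[a]J_v[b]$---matching patterns $2$, $4$, $3$, and $1$ of the lemma, respectively.

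The main obstacle I anticipate is the per-ribbon spin identity itself. One must verify, for each of the four local states of an intermediate color $d$, that $d$'s own ribbons (which can toggle the Maya bit at the intermediate position) are placed in the order required to make that bit equal $1$ at the moment $R$ is laid down. This reduces to a comparison of the source positions of $d$'s incoming and outgoing ribbons to the source position $p$ of $R$; in each of the four states the required order is forced by the canonical left-to-right rule. Once this case check is complete, the algebraic steps above finish the proof.
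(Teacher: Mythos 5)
Your route is genuinely different from the paper's and, in outline, it works. The paper organizes the count slice-by-slice (equivalently, per pair of a vertex and a color): it classifies the five possible slice shapes H1--H5, proves the correspondence with the five one-color vertex states (the delicate point being H2 $\leftrightarrow$ W2, handled by following chains of ribbons), and then identifies the column parallelograms of a slice with the colors present at the vertex. You instead organize the count ribbon-by-ribbon, using the Maya-diagram fact that $h(R)-1$ equals the number of occupied positions strictly between the source $p$ and the target $p+k$ of the corresponding particle move. Your identification of the intermediate positions $p+1,\dots,p+k-1$ with the pairs (color $d>c$ at $v'$) and (color $d<c$ at $v'+1$) is correct; your five-case check (absent, vertical, horizontal, and the two corner states) does go through, since at an intermediate position $q$ any arrival move has source $q-k<p$ and any departure move has source $q>p$, so in source-increasing order the bit equals $1$ at the moment $R$ is placed exactly when the color is present; and the final expansion of the presence indicator into $L+K$ (resp.\ $I+J$) correctly reproduces the four patterns. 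Once complete, this is arguably a cleaner derivation of the four-pattern formula, at the price of working with an explicit insertion order.

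The one place where you owe a real argument is the ``canonical order'' itself. You need that inserting the ribbons of $T$ in order of increasing source position (equivalently, increasing adjusted content of the tail) produces a valid partition at every intermediate step \emph{and} that the skew shape added at step $t$ is exactly the $t$-th ribbon of the tiling $T$ (same cells, hence same height). This is precisely where the hypothesis that $T$ is a \emph{horizontal} strip enters, and it is not automatic: for $k=2$ and $\lambda/\mu=(2,2)/(0,0)$ the two admissible orders of performing the moves produce two different tilings (two vertical dominoes versus two horizontal dominoes) with total spins $2$ and $0$, so both the tiling recovered and the spin computed genuinely depend on the order. The paper's restrictions on slices (head triangles only at the bottom of a slice, tail triangles only at the top, deduced from the southern-boundary condition) are essentially the combinatorial input needed here; you should either import that argument or prove directly that for a horizontal strip the union of $\mu$ with the ribbons of smallest tail content is always a Young diagram. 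With that lemma in place your proof is complete.
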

\begin{proof}
The fact that $T$ is a horizontal ribbon strip restricts the possible forms of the slices.
\begin{enumerate}
    \item If a head triangle appears, it must be at the bottom of the slice.  This is because the head of a ribbon must touch the SE boundary.
    \item If a tail triangle appears, it must be at the top of the slice.  Suppose the tail triangle of ribbon $R$ appears below a shape in ribbon $S$ in slice $i$.  Then, in all slices in which both $R$ and $S$ appear, $S$ is above $R$.  Note that $R$ appears in slices $i,\ldots,i+k$ as every ribbon has length $k$. Similarly $S$ appears in slices $i-j,\ldots,i-j+k$ for some $j \in \{0,\ldots,k\}$.  In particular, in slice $i-j+k$, the head triangle of $S$ appears above $R$, which contradicts the first restriction.
\end{enumerate}
With these restrictions in mind, one can show that each slice must have one of the following five forms:
\begin{center}
\resizebox{0.4\textwidth}{!}{
\begin{tabular}{ccccccccc}
     \begin{tikzpicture}
     \draw[dashed] (0,0)--(0,2); \draw[dashed] (1,1)--(1,3); \node at (0.5,1.5) {$\vdots$};
     \draw[thick] (0,0)--(1,1); \draw[thick] (0,2)--(1,3);
     \end{tikzpicture}
     & &
     \begin{tikzpicture}
     \draw[dashed] (0,0)--(0,4); \draw[dashed] (1,1)--(1,5); \node at (0.5,2.5) {$\vdots$};
     \draw[thick] (0,0)--(1,1); \draw[thick] (1,1)--(0,2); \draw[thick] (1,5)--(0,4)--(1,3);
     \end{tikzpicture}
     & &
     \begin{tikzpicture}
     \draw[dashed] (0,0)--(0,2); \draw[dashed] (-1,1)--(-1,3); \node at (-0.5,1.5) {$\vdots$};
     \draw[green,thick] (0,0)--(-1,1); \draw[green,thick] (-1,3)--(0,2);
     \end{tikzpicture}
     & &
     \begin{tikzpicture}
     \draw[dashed] (0,0)--(0,4); \draw[dashed] (-1,1)--(-1,3); \node at (-0.5,1.5) {$\vdots$};
     \draw[green,thick] (0,0)--(-1,1); \draw[thick] (0,2)--(-1,3)--(0,4);
     \end{tikzpicture}
     & &
     \begin{tikzpicture}
     \draw[dashed] (0,0)--(0,4); \draw[dashed] (1,1)--(1,3); \node at (0.5,2.5) {$\vdots$};
     \draw[thick] (0,0)--(1,1); \draw[thick] (1,1)--(0,2); \draw[green,thick] (0,4)--(1,3);
     \end{tikzpicture}
     \\
     H1 & & H2 & & H3 & & H4 & & H5
\end{tabular}
}
\end{center}
Here $\vdots$ indicates arbitrarily many (possibly 0) copies of the shape, and South-East steps on the top/bottom boundaries are colored as in our graphical interpretation of $\theta$ (see Example \ref{ex:theta-graphical}).  
\begin{remark}\label{rem:CYZZ}
The five types of slices we draw here are in bijection with the five types of allowed vertices given in \cite[Figure 14]{CYZZ}. To see this suppose we are looking at a slice whose top and bottom boundaries correspond to color $a$. Then we can map it to a vertex of the form given in \cite{CYZZ} by assigning arrows to the edges of the vertex according to the rules: 
\begin{enumerate}
    \item If the top boundary of the slice is a NE (SE) step then the top edge of the vertex gets a down (up) arrow. Similarly for the bottom boundary of the slice.
    \item If the slice contains a head triangle, assign a left arrow to the $k^{th}$ eastern horizontal edge. If the slice contains a tail triangle, assign a left arrow to the $1^{st}$ western horizontal edge. 
    \item If a ribbon whose head is in a slice of color $b$ passes through the slice, then assign left arrows to the the $(b+k-a \mod k)^{th}$ eastern horizontal edge and the $((b+k-a \mod k) + 1)^{st}$ western horizontal edge. 
    \item Otherwise assign right arrows to the horizontal edges. 
\end{enumerate}
Assigning each slice a weight of $x$ if there is a head triangle, and $t^{\# \text{col. parellelogram}}$, makes this a weight-preserving bijection. Through the bijection $\theta$ defined above, the rest of this lemma can be seen as giving a weight-preserving bijection between our vertex model and that of \cite{CYZZ}.
\end{remark}

We claim that there is a one-to-one correspondence between the five possible forms of a slice in $T$ and the five possible path configurations of the corresponding color in the corresponding white vertex in $\theta(T)$.
\begin{center}
\resizebox{0.5\textwidth}{!}{
\begin{tabular}{ccc}
H1 $\leftrightarrow$
     \begin{tikzpicture}[baseline=(current bounding box.center)]
     \draw (0,0)--(1,0)--(1,1)--(0,1)--(0,0);
     \end{tikzpicture} W1
     &
H2 $\leftrightarrow$
     \begin{tikzpicture}[baseline=(current bounding box.center)]
     \draw (0,0)--(1,0)--(1,1)--(0,1)--(0,0);
     \draw[green,thick] (0,0.5)--(1,0.5);
     \end{tikzpicture} W2
     & \\
     \\
H3 $\leftrightarrow$
     \begin{tikzpicture}[baseline=(current bounding box.center)]
     \draw (0,0)--(1,0)--(1,1)--(0,1)--(0,0);
     \draw[green,thick] (0.5,0)--(0.5,1);
     \end{tikzpicture} W3
     &
H4 $\leftrightarrow$
     \begin{tikzpicture}[baseline=(current bounding box.center)]
     \draw (0,0)--(1,0)--(1,1)--(0,1)--(0,0);
     \draw[green,thick] (0.5,0)--(0.5,0.5)--(1,0.5);
     \end{tikzpicture} W4
     &
H5 $\leftrightarrow$
     \begin{tikzpicture}[baseline=(current bounding box.center)]
     \draw (0,0)--(1,0)--(1,1)--(0,1)--(0,0);
     \draw[green,thick] (0,0.5)--(0.5,0.5)--(0.5,1);
     \end{tikzpicture} W5
\end{tabular}
}
\end{center}
The correspondence is obvious for H3, H4, and H5 from the top/bottom boundary conditions.  Moreover, from the top/bottom boundary conditions, slices of the form H1 or H2 correspond to path configurations of the form W1 or W2.  To show the correspondence for H1 and H2, we will show that a slice of the form H2 always gives a configuration of the form W2 and vice-versa.  
\begin{itemize}
\item Suppose slice $i$ has the form H2.  It contains the head triangle of a ribbon, so  slice $i-k$ will contain the tail triangle of this ribbon. This slice then must have the form H2 or H4.  If slice $i-k$ has the form H2, then we can repeat this argument to show that slice $i-2k$ has the form H2 or H4.  Since there are finitely many ribbons, eventually we find that slice $i-jk$ has the form H4, for some positive integer $j$, and slices $i-(j-1)k,\ldots,i-k,i$ have the form H2.  Since slice $i-jk$ has the form H4, the corresponding path configuration has the form W4, in which the path exits right.  Thus the path configuration corresponding to slice $i-(j-1)k$ must have the path entering left, so it must have the form W2, in which the path exits right.  Repeating, we conclude that the path configuration corresponding to slice $i$ has the form W2. 
\item Suppose slice $i$ corresponds to a path configuration of the form W2.  We know a path enters the slice from the left, so slice $i-k$ must correspond to a path configuration in which the path exits right. This path configuration must have the form W2 or W4.  If slice $i-k$ corresponds to a path configuration of the form W2, then we can repeat this argument to show that slice $i-2k$ corresponds to a path configuration of the form W2 or W4.  Since there are finitely many vertices, eventually we find that slice $i-jk$ corresponds to a path configuration of the form W4 and slices $i-(j-1)k,\ldots,i-k,i$ correspond to path configurations of the form W2, for some positive integer $j$.  Since slice $i-jk$ corresponds to a path configuration of the form W4, it must have the form H4, so it contains the tail triangle of a ribbon.  We see that slice $i-(j-1)k$ contains the head triangle of this ribbon, so this slice has the form H2. It follows that slice $i-(j-1)k$ also contains the tail triangle of a ribbon.  Repeating, we conclude that slice $i$ has the form H2.
\end{itemize}

Recall that slice $i = (j-1)k+a$ in $T$ corresponds to color $a$ in the $j$-th leftmost vertex $V$ in $\theta(T)$.  The contribution of this slice to $\spin(T)$ equals the number of column parallelograms in the slice.  Looking at the five possible forms of a slice, we see that this equals zero if slice $i$ has the form H1, which is equivalent to color $a$ being absent in $V$.  Otherwise, it equals the number of ribbons $R$ that appear in slice $i$ but whose head/tail triangles do not.  

Let $R$ be such a ribbon, and let slice $s$ be the slice that contains the tail triangle of $R$.  Since slice $a$ contains a column parallelogram of $R$ but not the head/tail triangle of $R$, we have $s < i < s+k$.  Let $b \in \{1,\ldots,k\}$ be such that $b \equiv s \mod k$, and note that $b \neq a$.  If $b < a$, then the tail triangle of $R$ appears in slice $(j-1)k+b$, so slice $(j-1)k+b$ has the form H2 or H4, so the path configuration of color $b$ in $V$ has the form W2 or W4, so color $b$ exits $V$ to the right.  If $b > a$, then the head triangle of $R$ appears in slice $(j-1)k+b$, so slice $(j-1)k+b$ has the form H2 or H5, so the path configuration of color $b$ in $V$ has the form W2 or W5, so color $b$ enters $V$ from the right.

We can now conclude
\[ \begin{aligned} 
\spin(T) &= \sum_{V} \sum_a \1_{\text{$a$ appears in $V$}} \cdot \left ( \sum_{b < a} \1_{\text{$b$ exits $V$ to the right}}+ \sum_{b > a} \1_{\text{$b$ enters $V$ from the left}} \right ) \\
&= \sum_{\alb} \left(
\# \begin{tikzpicture}[baseline=(current bounding box.center)]
\draw (0,0)--(1,0)--(1,1)--(0,1)--(0,0);
\draw[blue] (1,0.6)--(0.5,0.6);
\draw[red] (1,0.4)--(0.5,0.4);
\end{tikzpicture}
+
\# \begin{tikzpicture}[baseline=(current bounding box.center)]
\draw (0,0)--(1,0)--(1,1)--(0,1)--(0,0);
\draw[blue] (1,0.6)--(0.5,0.6);
\draw[red] (0.6,0.5)--(0.6,1);
\end{tikzpicture}
\right) 
+ \sum_{\alb} \left(
\# \begin{tikzpicture}[baseline=(current bounding box.center)]
\draw (0,0)--(1,0)--(1,1)--(0,1)--(0,0);
\draw[blue] (0,0.6)--(0.5,0.6);
\draw[red] (0,0.4)--(0.5,0.4);
\end{tikzpicture}
+
\# \begin{tikzpicture}[baseline=(current bounding box.center)]
\draw (0,0)--(1,0)--(1,1)--(0,1)--(0,0);
\draw[blue] (0.4,0)--(0.4,0.5);
\draw[red] (0,0.4)--(0.5,0.4);
\end{tikzpicture}
\right) \\
&= \sum_{\alb} \left(
\# \begin{tikzpicture}[baseline=(current bounding box.center)]
\draw (0,0)--(1,0)--(1,1)--(0,1)--(0,0);
\draw[blue] (0,0.6)--(0.5,0.6);
\draw[red] (0,0.4)--(0.5,0.4);
\end{tikzpicture}
+
\# \begin{tikzpicture}[baseline=(current bounding box.center)]
\draw (0,0)--(1,0)--(1,1)--(0,1)--(0,0);
\draw[blue] (1,0.6)--(0.5,0.6);
\draw[red] (1,0.4)--(0.5,0.4);
\end{tikzpicture}
+
\# \begin{tikzpicture}[baseline=(current bounding box.center)]
\draw (0,0)--(1,0)--(1,1)--(0,1)--(0,0);
\draw[blue] (0.4,0)--(0.4,0.5);
\draw[red] (0,0.4)--(0.5,0.4);
\end{tikzpicture}
+
\# \begin{tikzpicture}[baseline=(current bounding box.center)]
\draw (0,0)--(1,0)--(1,1)--(0,1)--(0,0);
\draw[blue] (1,0.6)--(0.5,0.6);
\draw[red] (0.6,0.5)--(0.6,1);
\end{tikzpicture}
\right)
\end{aligned} \]
where $V$ varies over the vertices in $\theta(T)$ and $a$ and $b$ vary over the colors.
\end{proof}

\begin{lem} 
Let $T$ be a vertical $k$-ribbon strip.  In the corresponding purple row $\theta(T)$, 
\[
\spin(T) = \sum_{\alb} \left(
\# \begin{tikzpicture}[baseline=(current bounding box.center)]
\draw[fill=violet] (0,0) rectangle (1,1);
\draw[blue] (0.4,0)--(0.4,0.6)--(1,0.6);
\draw[red] (0.6,0)--(0.6,1);
\end{tikzpicture}
+
\# \begin{tikzpicture}[baseline=(current bounding box.center)]
\draw[fill=violet] (0,0) rectangle (1,1);
\draw[blue] (0,0.6)--(1,0.6);
\draw[blue] (0.4,0)--(0.4,1);
\draw[red] (0.6,0)--(0.6,1);
\end{tikzpicture}
+
\# \begin{tikzpicture}[baseline=(current bounding box.center)]
\draw[fill=violet] (0,0) rectangle (1,1);
\draw[blue] (0.4,0)--(0.4,1);
\draw[red] (0,0.4)--(0.6,0.4)--(0.6,1);
\end{tikzpicture}
+
\# \begin{tikzpicture}[baseline=(current bounding box.center)]
\draw[fill=violet] (0,0) rectangle (1,1);
\draw[blue] (0.4,0)--(0.4,1);
\draw[red] (0.6,0)--(0.6,1);
\draw[red] (0,0.4)--(1,0.4);
\end{tikzpicture}
\right)
\]
where blue is color $a$ and red is color $b$.
\end{lem}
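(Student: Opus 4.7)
The plan is to mirror the argument of Lemma \ref{lem:white-spin}, replacing horizontal ribbon strips with vertical ones and white vertices with purple ones. I will classify the allowed forms of diagonal slices in a vertical $k$-ribbon strip $T$, match each form bijectively to one of the five possible single-color configurations at the corresponding purple vertex of $\theta(T)$, and then count column parallelograms contributing to $\spin(T)$.

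For the classification, the defining feature of a vertical strip is that the tail of each ribbon is adjacent to the \emph{western} boundary, in contrast to the horizontal case where heads lie on the southern boundary. This imposes a constraint on tail triangles dual to the constraint on head triangles before, and a parallel case analysis yields five allowed slice forms, say $V1,\ldots,V5$. A purple vertex has five single-color configurations available: absent, vertical, the two L-shapes, and the ``both horizontal and vertical'' configuration $P3$ permitted by the $K_i \ge J_i$ rule of the $L'$-matrix. I will match the $V$-forms to these configurations; the cases containing a head or tail triangle require a chain-of-ribbons induction directly paralleling the $H2 \leftrightarrow W2$ argument from the previous lemma.

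With the bijection in hand, each slice again contributes to $\spin(T)$ exactly the number of column parallelograms it contains. Translating this count across the bijection yields, for each purple vertex $V$ and each pair of colors $a < b$, a contribution of $1$ in precisely the four configurations depicted in the statement: either $a$ exits right while $b$ is purely vertical (the $P5$ or $P3$ case for $a$ paired with $P1$ for $b$, which are Types~1 and~2), or $a$ is purely vertical while $b$ enters left (the $P1$ case for $a$ paired with $P2$ or $P3$ for $b$, which are Types~3 and~4). Summing over all $V$ and all pairs $a<b$ then produces the claimed identity.

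The main obstacle will be the careful accounting of column parallelograms in slices corresponding to the $P3$ configuration, in which a single color occupies both the horizontal and the vertical through-edges of a vertex simultaneously. This configuration has no white analog, so the matching between slice shapes and vertex configurations at $P3$ must be derived from scratch rather than imported from the previous lemma. Verifying that the resulting column-parallelogram count distributes correctly so that Types~2 and~4 (rather than being double-counted or missed) arise in exact agreement with the statement will be the most delicate part of the argument.
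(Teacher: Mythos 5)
Your proposal follows the paper's proof essentially verbatim: classify the slices of a vertical strip into five forms using the dual head/tail constraints, match them bijectively to the five single-color path configurations at a purple vertex, and note that a slice contributes column parallelograms only when its color is \emph{purely} vertical, in which case the count equals the number of smaller colors exiting right plus the number of larger colors entering left. Your identification of the four contributing configurations --- including the correct handling of the cross configuration, which is what prevents the both-colors-crossing case from being counted --- is exactly the paper's accounting, so the outline is correct and takes the same route.
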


\begin{proof}
We follow the same ideas as in the proof of the previous lemma.  The fact that $T$ is a vertical ribbon strip restricts the possible forms of the slices.
\begin{enumerate}
    \item If a tail triangle appears, it must be at the bottom of the slice.
    \item If a head triangle appears, it must be at the top of the slice.
\end{enumerate}
With these restrictions in mind, we see that each slice must have one of the following five forms.
\begin{center}
\resizebox{0.4\textwidth}{!}{
\begin{tabular}{ccccccccc}
     \begin{tikzpicture}
     \draw[dashed] (0,0)--(0,2); \draw[dashed] (-1,1)--(-1,3); \node at (-0.5,1.5) {$\vdots$};
     \draw[green,thick] (0,0)--(-1,1); \draw[green,thick] (-1,3)--(0,2);
     \end{tikzpicture}
     & &
     \begin{tikzpicture}
     \draw[dashed] (0,0)--(0,4); \draw[dashed] (-1,1)--(-1,5); \node at (-0.5,2.5) {$\vdots$};
     \draw[green,thick] (0,0)--(-1,1); \draw[green,thick] (-1,5)--(0,4);
     \draw[thick] (0,4)--(-1,3); \draw[thick] (0,2)--(-1,1);
     \end{tikzpicture}
     & &
     \begin{tikzpicture}
     \draw[dashed] (0,0)--(0,2); \draw[dashed] (1,1)--(1,3); \node at (0.5,1.5) {$\vdots$};
     \draw[thick] (0,0)--(1,1); \draw[thick] (0,2)--(1,3);
     \end{tikzpicture}
     & &
     \begin{tikzpicture}
     \draw[dashed] (0,0)--(0,4); \draw[dashed] (-1,1)--(-1,3); \node at (-0.5,2.5) {$\vdots$};
     \draw[green,thick] (0,0)--(-1,1);
     \draw[thick] (0,2)--(-1,1); \draw[thick] (0,4)--(-1,3);
     \end{tikzpicture}
     & &
     \begin{tikzpicture}
     \draw[dashed] (0,0)--(0,4); \draw[dashed] (1,1)--(1,3); \node at (0.5,1.5) {$\vdots$};
     \draw[thick] (0,0)--(1,1); \draw[thick] (1,3)--(0,2); \draw[green,thick] (0,4)--(1,3);
     \end{tikzpicture}
     \\
     V1 & & V2 & & V3 & & V4 & & V5
\end{tabular}
}
\end{center}
Arguing similarly to Lemma \ref{lem:white-spin} one can show there is a one-to-one correspondence between the five possible forms of a slice in $T$ and the five possible path configurations of the corresponding color in the corresponding purple vertex of $\theta(T)$.
\begin{center}
\begin{tabular}{ccc}   
     V1 $\leftrightarrow$ \begin{tikzpicture}[baseline=(current bounding box.center)]
     \draw[fill=violet] (0,0) rectangle (1,1);
     \draw[green,thick] (0.5,0)--(0.5,1);
     \end{tikzpicture} P1
     &
     V2 $\leftrightarrow$ \begin{tikzpicture}[baseline=(current bounding box.center)]
     \draw[fill=violet] (0,0) rectangle (1,1);
     \draw[green,thick] (0,0.5)--(1,0.5);
     \draw[green,thick] (0.5,0)--(0.5,1);
     \end{tikzpicture} P2
     & \\ \\
     V3 $\leftrightarrow$ \begin{tikzpicture}[baseline=(current bounding box.center)]
     \draw[fill=violet] (0,0) rectangle (1,1);
     \end{tikzpicture} P3
     &
     V4 $\leftrightarrow$ \begin{tikzpicture}[baseline=(current bounding box.center)]
     \draw[fill=violet] (0,0) rectangle (1,1);
     \draw[green,thick] (0.5,0)--(0.5,0.5)--(1,0.5);
     \end{tikzpicture} P4
     &
     V5 $\leftrightarrow$ \begin{tikzpicture}[baseline=(current bounding box.center)]
     \draw[fill=violet] (0,0) rectangle (1,1);
     \draw[green,thick] (0,0.5)--(0.5,0.5)--(0.5,1);
     \end{tikzpicture} P5
\end{tabular}
\end{center}

Recall that slice $i = (j-1)k+a$ in $T$ corresponds to color $a$ in the $j$-th leftmost vertex $V$ in $\theta(T)$.  The contribution of this slice to $\spin(T)$ equals the number of column parallelograms in the slice.  Looking at the five possible forms of a slice, we see that this equals zero unless slice $i$ has the form V1, which is equivalent to the path configuration of color $a$ having form P1 in $V$, that is, color $a$ being vertical in $V$.  
In that case, the contribution to the spin equals the number of ribbons $R$ that appear in slice $i$ but whose head/tail triangles do not.  One can show that this equals the number of smaller colors $b < a$ that exits $V$ to the right plus the number of larger colors $b > a$ that enter $V$ from the left.

Fro this we conclude 
\[ \begin{aligned} 
\spin(T) &= \sum_{V} \sum_a \1_{\text{$a$ is vertical in $V$}} \cdot \left ( \sum_{b < a} \1_{\text{$b$ exits $V$ to the right}}+ \sum_{b > a} \1_{\text{$b$ enters $V$ from the left}} \right ) \\
&= \sum_{\alb} \left(
\# \begin{tikzpicture}[baseline=(current bounding box.center)]
\draw[fill=violet] (0,0) rectangle (1,1);
\draw[blue] (0.4,0)--(0.4,0.6)--(1,0.6);
\draw[red] (0.6,0)--(0.6,1);
\end{tikzpicture}
+
\# \begin{tikzpicture}[baseline=(current bounding box.center)]
\draw[fill=violet] (0,0) rectangle (1,1);
\draw[blue] (0,0.6)--(1,0.6);
\draw[blue] (0.4,0)--(0.4,1);
\draw[red] (0.6,0)--(0.6,1);
\end{tikzpicture}
\right) + \sum_{\alb} \left(
\# \begin{tikzpicture}[baseline=(current bounding box.center)]
\draw[fill=violet] (0,0) rectangle (1,1);
\draw[blue] (0.4,0)--(0.4,1);
\draw[red] (0,0.4)--(0.6,0.4)--(0.6,1);
\end{tikzpicture}
+
\# \begin{tikzpicture}[baseline=(current bounding box.center)]
\draw[fill=violet] (0,0) rectangle (1,1);
\draw[blue] (0.4,0)--(0.4,1);
\draw[red] (0.6,0)--(0.6,1);
\draw[red] (0,0.4)--(1,0.4);
\end{tikzpicture}
\right) \\
&= \sum_{\alb} \left(
\# \begin{tikzpicture}[baseline=(current bounding box.center)]
\draw[fill=violet] (0,0) rectangle (1,1);
\draw[blue] (0.4,0)--(0.4,0.6)--(1,0.6);
\draw[red] (0.6,0)--(0.6,1);
\end{tikzpicture}
+
\# \begin{tikzpicture}[baseline=(current bounding box.center)]
\draw[fill=violet] (0,0) rectangle (1,1);
\draw[blue] (0,0.6)--(1,0.6);
\draw[blue] (0.4,0)--(0.4,1);
\draw[red] (0.6,0)--(0.6,1);
\end{tikzpicture}
+
\# \begin{tikzpicture}[baseline=(current bounding box.center)]
\draw[fill=violet] (0,0) rectangle (1,1);
\draw[blue] (0.4,0)--(0.4,1);
\draw[red] (0,0.4)--(0.6,0.4)--(0.6,1);
\end{tikzpicture}
+
\# \begin{tikzpicture}[baseline=(current bounding box.center)]
\draw[fill=violet] (0,0) rectangle (1,1);
\draw[blue] (0.4,0)--(0.4,1);
\draw[red] (0.6,0)--(0.6,1);
\draw[red] (0,0.4)--(1,0.4);
\end{tikzpicture}
\right)
\end{aligned} \]
where $V$ varies over the vertices in $\theta(T)$ and $a$ and $b$ vary over the colors.
\end{proof}

We are now ready to relate $\mc{L}^S_\lm$ to $\mc{G}^{(k)}_{\lambda/\mu}$.

\begin{prop} \label{prop-L-equals-G} Let $\lm$ be the $k$-quotient of $\lambda/\mu$.  Then
\[
\mathcal{L}^S_{\lm}(X_n;Y_m;t) = t^{\square} \mathcal{G}_{\lambda/\mu}^{(k)}(X_n;Y_m;t^{1/2})
\]
for some half-integer $\square \in \frac{1}{2} \mathbb{Z}$.  In fact, in any configuration of the lattice $\latS_{n,m}(\lm)$,
\[
\square = \frac{1}{2}\sum_{\alb} \left(
\# \begin{tikzpicture}[baseline=(current bounding box.center)]
\draw[fill=violet] (0,0) rectangle (1,1);
\draw[blue] (0.4,0)--(0.4,0.6)--(1,0.6);
\draw[red] (0.6,0)--(0.6,1);
\end{tikzpicture}
+
\# \begin{tikzpicture}[baseline=(current bounding box.center)]
\draw[fill=violet] (0,0) rectangle (1,1);
\draw[blue] (0,0.6)--(1,0.6);
\draw[blue] (0.4,0)--(0.4,1);
\draw[red] (0.6,0)--(0.6,1);
\end{tikzpicture}
-
\# \begin{tikzpicture}[baseline=(current bounding box.center)]
\draw[fill=violet] (0,0) rectangle (1,1);
\draw[blue] (0.4,0)--(0.4,1);
\draw[red] (0,0.4)--(0.6,0.4)--(0.6,1);
\end{tikzpicture}
-
\# \begin{tikzpicture}[baseline=(current bounding box.center)]
\draw[fill=violet] (0,0) rectangle (1,1);
\draw[blue] (0.4,0)--(0.4,1);
\draw[red] (0.6,0)--(0.6,1);
\draw[red] (0,0.4)--(1,0.4);
\end{tikzpicture}
\right) + 
\frac{1}{2} \sum_{\alb} \left(
\# \begin{tikzpicture}[baseline=(current bounding box.center)]
\draw (0,0)--(1,0)--(1,1)--(0,1)--(0,0);
\draw[blue] (1,0.6)--(0.5,0.6);
\draw[red] (0.6,0.5)--(0.6,1);
\end{tikzpicture}
-
\# \begin{tikzpicture}[baseline=(current bounding box.center)]
\draw (0,0)--(1,0)--(1,1)--(0,1)--(0,0);
\draw[blue] (0.4,0)--(0.4,0.5);
\draw[red] (0,0.4)--(0.5,0.4);
\end{tikzpicture}
\right) .
\]

\end{prop}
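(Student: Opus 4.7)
The plan is to use the bijection $\theta : \SRT_k(\lambda/\mu) \to LC(\latS_{n,m}(\lm))$ constructed earlier in this section to match contributions on both sides. Fix $T \in \SRT_k(\lambda/\mu)$ and let $C = \theta(T)$. I want to show that the weight of $C$ in $\mc{L}^S_\lm(X_n;Y_m;t)$ equals $t^{\square(C)} \cdot t^{\spin(T)/2} \cdot x^{\weight(T)} y^{\weight'(T)}$, where $\square(C)$ is computed by the formula in the statement. Summing over $T$ then gives the desired identity, and the fact that the formula yields the same value for every $C$ falls out as a corollary.

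The monomial parts match almost tautologically. By construction of $\theta$, a ribbon of $T$ labelled $i \in \mc{A}$ corresponds to a right step in the $i$-th white row of $C$, and a ribbon labelled $j' \in \mc{A}'$ corresponds to a right step in the $j$-th purple row. Reading off the $L$ and $L'$ weights, each right step in a white row at parameter $x_i$ contributes one factor of $x_i$, and similarly for $y_j$ in the purple rows. Summing over vertices recovers $x^{\weight(T)} y^{\weight'(T)}$ exactly.

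For the $t$-exponents, the weight factorizes as $\weight(C) = \prod_V w(V)$, so the total $t$-exponent is $a(C) = \sum_V \varphi_V$, where $\varphi_V = \varphi(\L,\I+\J)$ for white vertices (by the $L$-matrix formula) and $\varphi_V = \varphi(\L,\K-\J)$ for purple vertices (by the $L'$-matrix formula). The two spin lemmas give local decompositions $\spin(T) = \sum_V s(V)$, where $s(V)$ is the sum of four indicator expressions in each case, and the statement's formula for $\square(C)$ similarly decomposes as $\sum_V q(V)/2$. Thus it suffices to establish the local identity $\varphi_V = \tfrac{1}{2}\bigl(q(V) + s(V)\bigr)$ at every vertex $V$.

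The identity is checked pair-by-pair in the colors. For each pair $a < b$, all relevant quantities depend only on the local configurations of colors $a$ and $b$ at $V$; each color takes one of five possible shapes (W1--W5 for white vertices, P1--P5 for purple). For white vertices, the contribution of the pair $(a,b)$ to $\varphi_V$ is $L_a(I_b+J_b)$, which is $1$ precisely when $a$ exits right and $b$ is present; Lemma \ref{lem:white-spin} gives the contribution to $s(V)$ as an explicit sum of four configurations, and the statement's white part of $\square(C)$ gives the contribution to $q(V)$ as a difference of two configurations. A direct enumeration (exploiting path conservation $I+J = K+L$ to reduce the $25$ nominal cases) confirms the identity. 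The purple case proceeds identically using $\varphi(\L,\K-\J)$, the purple spin lemma, and the purple part of $\square(C)$. The main obstacle is this bookkeeping: keeping track of which indicator fires in which orientation so that the $\pm$ signs in the $\square$ formula line up precisely with what is missing from twice the $\varphi$ expression relative to $s(V)$.
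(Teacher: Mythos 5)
Your overall strategy---push everything through the bijection $\theta$, match the monomials, and compare the $t$-exponent of $\weight(\theta(T))$ with $\tfrac{1}{2}\spin(T)$ via the two spin lemmas---is the same as the paper's. But there are two genuine problems. First, the per-vertex identity $\varphi_V = \tfrac{1}{2}(q(V)+s(V))$ that you propose to verify by enumeration is \emph{false} at white vertices. Take a white vertex at which two colors $a<b$ both enter from the left and both exit through the top: then $\varphi_V=0$ (neither color exits right, so $L_a(I_b+J_b)=0$), while $s(V)=1$ (the ``both enter from the left'' configuration of Lemma \ref{lem:white-spin} fires) and $q(V)=0$, giving $\tfrac{1}{2}(q(V)+s(V))=\tfrac{1}{2}\neq 0$. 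The identity you need only holds after summing over an entire white row, where the empty left/right boundaries force the number of boxes at which $a$ and $b$ both enter from the left to equal the number at which they both exit to the right (a ``both exit right'' at one box is a ``both enter left'' at the next). This row-level telescoping is exactly the step the paper performs when it replaces $\#\{\text{both enter left}\}+\#\{\text{both exit right}\}$ by $2\,\#\{\text{both exit right}\}$; an honest vertex-by-vertex enumeration would not confirm your identity. (For purple vertices the per-vertex identity does hold, since there $\varphi_V$, the purple spin count, and the purple part of $\square$ are all built from the same two local configurations in which $a$ exits right while $b$ is vertical.)

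Second, and more fundamentally, even once you have $\weight(\theta(T)) = t^{\square(C)}\, t^{\spin(T)/2}\, x^{\weight(T)} y^{\weight'(T)}$ with $\square(C)$ given by the stated formula, you cannot sum over $T$ and factor out $t^{\square}$ unless $\square(C)$ takes the same value on every configuration $C \in LC(\latS_{n,m}(\lm))$. This does not ``fall out as a corollary''; it is a separate claim, and it is precisely the content of the proposition's second assertion. The paper proves it by reducing to two colors and running a corner-flipping argument (checking that flipping a single corner of one path leaves the expression for $\square$ unchanged, a finite verification). Without this step your argument only shows that $\mathcal{L}^S_{\lm}$ equals a sum of terms with a priori configuration-dependent exponents $\square(C)$, which is strictly weaker than the proposition.
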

\begin{proof}
Recall the bijection $\theta : \SRT_k(\lambda/\mu) \rightarrow LC(\latS_{n,m}(\lm))$.  It is enough to show that 
\[ \weight(\theta(T)) = t^{\square} t^{\spin(T)} x^{\weight(T)} y^{\weight'(T)} \]
for all $T \in \SRT_k(\lambda/\mu)$, for some half-integer $\square$ that does not depend on $T$.  Fix a super ribbon tableaux $T \in \SRT_k(\lambda/\mu)$ and let $C = \theta(T) \in LC(\latS_{n,m}(\lm))$ be the corresponding path configuration.  It is clear that the $X$ weights ($Y$ weights) match, since each ribbon labelled $a \in \mc{A}$ ($a' \in \mc{A'}$) in $T$ corresponds to a path taking a step right in the $a$-th white (purple) row of $C$.  We are left to consider the powers of $t$.  From the previous lemmas, we see that
\[
\begin{aligned}
\spin(T) &= \sum_{\alb} \left(
\# \begin{tikzpicture}[baseline=(current bounding box.center)]
\draw (0,0)--(1,0)--(1,1)--(0,1)--(0,0);
\draw[blue] (0,0.6)--(0.5,0.6);
\draw[red] (0,0.4)--(0.5,0.4);
\end{tikzpicture}
+
\# \begin{tikzpicture}[baseline=(current bounding box.center)]
\draw (0,0)--(1,0)--(1,1)--(0,1)--(0,0);
\draw[blue] (1,0.6)--(0.5,0.6);
\draw[red] (1,0.4)--(0.5,0.4);
\end{tikzpicture}
+
\# \begin{tikzpicture}[baseline=(current bounding box.center)]
\draw (0,0)--(1,0)--(1,1)--(0,1)--(0,0);
\draw[blue] (0.4,0)--(0.4,0.5);
\draw[red] (0,0.4)--(0.5,0.4);
\end{tikzpicture}
+
\# \begin{tikzpicture}[baseline=(current bounding box.center)]
\draw (0,0)--(1,0)--(1,1)--(0,1)--(0,0);
\draw[blue] (1,0.6)--(0.5,0.6);
\draw[red] (0.6,0.5)--(0.6,1);
\end{tikzpicture}
\right) \\
&\enspace +  \sum_{\alb} \left(
\# \begin{tikzpicture}[baseline=(current bounding box.center)]
\draw[fill=violet] (0,0) rectangle (1,1);
\draw[blue] (0.4,0)--(0.4,0.6)--(1,0.6);
\draw[red] (0.6,0)--(0.6,1);
\end{tikzpicture}
+
\# \begin{tikzpicture}[baseline=(current bounding box.center)]
\draw[fill=violet] (0,0) rectangle (1,1);
\draw[blue] (0,0.6)--(1,0.6);
\draw[blue] (0.4,0)--(0.4,1);
\draw[red] (0.6,0)--(0.6,1);
\end{tikzpicture}
+
\# \begin{tikzpicture}[baseline=(current bounding box.center)]
\draw[fill=violet] (0,0) rectangle (1,1);
\draw[blue] (0.4,0)--(0.4,1);
\draw[red] (0,0.4)--(0.6,0.4)--(0.6,1);
\end{tikzpicture}
+
\# \begin{tikzpicture}[baseline=(current bounding box.center)]
\draw[fill=violet] (0,0) rectangle (1,1);
\draw[blue] (0.4,0)--(0.4,1);
\draw[red] (0.6,0)--(0.6,1);
\draw[red] (0,0.4)--(1,0.4);
\end{tikzpicture}
\right) \\
&= \sum_{\alb} \left(
2 \cdot \# \begin{tikzpicture}[baseline=(current bounding box.center)]
\draw (0,0)--(1,0)--(1,1)--(0,1)--(0,0);
\draw[blue] (1,0.6)--(0.5,0.6);
\draw[red] (1,0.4)--(0.5,0.4);
\end{tikzpicture}
+
\# \begin{tikzpicture}[baseline=(current bounding box.center)]
\draw (0,0)--(1,0)--(1,1)--(0,1)--(0,0);
\draw[blue] (0.4,0)--(0.4,0.5);
\draw[red] (0,0.4)--(0.5,0.4);
\end{tikzpicture}
+
\# \begin{tikzpicture}[baseline=(current bounding box.center)]
\draw (0,0)--(1,0)--(1,1)--(0,1)--(0,0);
\draw[blue] (1,0.6)--(0.5,0.6);
\draw[red] (0.6,0.5)--(0.6,1);
\end{tikzpicture}
\right) \\
&\enspace +  \sum_{\alb} \left(
\# \begin{tikzpicture}[baseline=(current bounding box.center)]
\draw[fill=violet] (0,0) rectangle (1,1);
\draw[blue] (0.4,0)--(0.4,0.6)--(1,0.6);
\draw[red] (0.6,0)--(0.6,1);
\end{tikzpicture}
+
\# \begin{tikzpicture}[baseline=(current bounding box.center)]
\draw[fill=violet] (0,0) rectangle (1,1);
\draw[blue] (0,0.6)--(1,0.6);
\draw[blue] (0.4,0)--(0.4,1);
\draw[red] (0.6,0)--(0.6,1);
\end{tikzpicture}
+
\# \begin{tikzpicture}[baseline=(current bounding box.center)]
\draw[fill=violet] (0,0) rectangle (1,1);
\draw[blue] (0.4,0)--(0.4,1);
\draw[red] (0,0.4)--(0.6,0.4)--(0.6,1);
\end{tikzpicture}
+
\# \begin{tikzpicture}[baseline=(current bounding box.center)]
\draw[fill=violet] (0,0) rectangle (1,1);
\draw[blue] (0.4,0)--(0.4,1);
\draw[red] (0.6,0)--(0.6,1);
\draw[red] (0,0.4)--(1,0.4);
\end{tikzpicture}
\right) \\
&= 2\coinv(C) - \sum_{\alb} \left(
\# \begin{tikzpicture}[baseline=(current bounding box.center)]
\draw (0,0)--(1,0)--(1,1)--(0,1)--(0,0);
\draw[blue] (1,0.6)--(0.5,0.6);
\draw[red] (0.6,0.5)--(0.6,1);
\end{tikzpicture}
-
\# \begin{tikzpicture}[baseline=(current bounding box.center)]
\draw (0,0)--(1,0)--(1,1)--(0,1)--(0,0);
\draw[blue] (0.4,0)--(0.4,0.5);
\draw[red] (0,0.4)--(0.5,0.4);
\end{tikzpicture}
\right) \\
&\enspace + 2\coinv'(C) - \sum_{\alb} \left(
\# \begin{tikzpicture}[baseline=(current bounding box.center)]
\draw[fill=violet] (0,0) rectangle (1,1);
\draw[blue] (0.4,0)--(0.4,0.6)--(1,0.6);
\draw[red] (0.6,0)--(0.6,1);
\end{tikzpicture}
+
\# \begin{tikzpicture}[baseline=(current bounding box.center)]
\draw[fill=violet] (0,0) rectangle (1,1);
\draw[blue] (0,0.6)--(1,0.6);
\draw[blue] (0.4,0)--(0.4,1);
\draw[red] (0.6,0)--(0.6,1);
\end{tikzpicture}
-
\# \begin{tikzpicture}[baseline=(current bounding box.center)]
\draw[fill=violet] (0,0) rectangle (1,1);
\draw[blue] (0.4,0)--(0.4,1);
\draw[red] (0,0.4)--(0.6,0.4)--(0.6,1);
\end{tikzpicture}
-
\# \begin{tikzpicture}[baseline=(current bounding box.center)]
\draw[fill=violet] (0,0) rectangle (1,1);
\draw[blue] (0.4,0)--(0.4,1);
\draw[red] (0.6,0)--(0.6,1);
\draw[red] (0,0.4)--(1,0.4);
\end{tikzpicture}
\right)
\end{aligned}
\]
where 
\[
\coinv(C) = \sum_{\alb} \left(
\# \begin{tikzpicture}[baseline=(current bounding box.center)]
\draw (0,0)--(1,0)--(1,1)--(0,1)--(0,0);
\draw[blue] (1,0.6)--(0.5,0.6);
\draw[red] (1,0.4)--(0.5,0.4);
\end{tikzpicture}
+
\# \begin{tikzpicture}[baseline=(current bounding box.center)]
\draw (0,0)--(1,0)--(1,1)--(0,1)--(0,0);
\draw[blue] (1,0.6)--(0.5,0.6);
\draw[red] (0.6,0.5)--(0.6,1);
\end{tikzpicture}
\right) 
\]
is the power of $t$ coming from the white boxes in $C$ and 
\[
\coinv'(C) = \sum_{\alb} \left(
\# \begin{tikzpicture}[baseline=(current bounding box.center)]
\draw[fill=violet] (0,0) rectangle (1,1);
\draw[blue] (0.4,0)--(0.4,0.6)--(1,0.6);
\draw[red] (0.6,0)--(0.6,1);
\end{tikzpicture}
+
\# \begin{tikzpicture}[baseline=(current bounding box.center)]
\draw[fill=violet] (0,0) rectangle (1,1);
\draw[blue] (0,0.6)--(1,0.6);
\draw[blue] (0.4,0)--(0.4,1);
\draw[red] (0.6,0)--(0.6,1);
\end{tikzpicture}
\right)
\]
is the power of $t$ coming from the purple boxes in $C$.  Thus
\[ \frac{1}{2} \spin(T) + \square = \coinv(C) + \coinv'(C) \] 
where
\[
\square = \frac{1}{2}\sum_{\alb} \left(
\# \begin{tikzpicture}[baseline=(current bounding box.center)]
\draw[fill=violet] (0,0) rectangle (1,1);
\draw[blue] (0.4,0)--(0.4,0.6)--(1,0.6);
\draw[red] (0.6,0)--(0.6,1);
\end{tikzpicture}
+
\# \begin{tikzpicture}[baseline=(current bounding box.center)]
\draw[fill=violet] (0,0) rectangle (1,1);
\draw[blue] (0,0.6)--(1,0.6);
\draw[blue] (0.4,0)--(0.4,1);
\draw[red] (0.6,0)--(0.6,1);
\end{tikzpicture}
-
\# \begin{tikzpicture}[baseline=(current bounding box.center)]
\draw[fill=violet] (0,0) rectangle (1,1);
\draw[blue] (0.4,0)--(0.4,1);
\draw[red] (0,0.4)--(0.6,0.4)--(0.6,1);
\end{tikzpicture}
-
\# \begin{tikzpicture}[baseline=(current bounding box.center)]
\draw[fill=violet] (0,0) rectangle (1,1);
\draw[blue] (0.4,0)--(0.4,1);
\draw[red] (0.6,0)--(0.6,1);
\draw[red] (0,0.4)--(1,0.4);
\end{tikzpicture}
\right) + 
\frac{1}{2} \sum_{\alb} \left(
\# \begin{tikzpicture}[baseline=(current bounding box.center)]
\draw (0,0)--(1,0)--(1,1)--(0,1)--(0,0);
\draw[blue] (1,0.6)--(0.5,0.6);
\draw[red] (0.6,0.5)--(0.6,1);
\end{tikzpicture}
-
\# \begin{tikzpicture}[baseline=(current bounding box.center)]
\draw (0,0)--(1,0)--(1,1)--(0,1)--(0,0);
\draw[blue] (0.4,0)--(0.4,0.5);
\draw[red] (0,0.4)--(0.5,0.4);
\end{tikzpicture}
\right) .
\]
If we can show that $\square$ is independent of the configuration $C$, then the result follows.  It is enough to check this for two colors.  This can be shown with a standard corner flipping argument, which we verified with a computer.
\end{proof}

\pagebreak

\section{Cauchy Identity}\label{sec:Cauchy}

Recall from \cite{ABW,Lam,CGKM} that the LLT polynomials satisfy a Cauchy identity. We would like to prove a similar theorem for the supersymmetric LLT polynomials. We will do so in the style of \cite{CGKM,WHEELER2016543}.

We will need the following change of variables for the $L$ and $L'$ weights, which we represent as a gray face and light-purple face, respectively. 
\[
\begin{aligned}
&\begin{tikzpicture}[baseline = (current bounding box).center]
\draw[fill = gray] (0,0) rectangle (1,1);
\node[below] at (0.5,0) {$\I$};
\node[above] at (0.5,1) {$\K$};
\node[left] at (0,0.5) {$\J$};
\node[right] at (1,0.5) {$\L$};
\node at (0.5,0.5) {$\bar x$};
\end{tikzpicture} 
:= x^kt^{\binom{k}{2}}L^{(k)}_{\bar x}(\I,\J;\K,\L), &\begin{tikzpicture}[baseline = (current bounding box).center]
\draw[fill = violet!40!white] (0,0) rectangle (1,1);
\node[below] at (0.5,0) {$\I$};
\node[above] at (0.5,1) {$\K$};
\node[left] at (0,0.5) {$\J$};
\node[right] at (1,0.5) {$\L$};
\node at (0.5,0.5) {$\bar x$};
\end{tikzpicture} 
:= x^k L'^{(k)}_{1/x}(\I,\J;\K,\L) 
\end{aligned}
\]
Here $\bar x = \frac{1}{xt^{k-1}}$. This change of variable is chosen in part so that the gray face in which every color appears as a horizontal path and the light-purple face in which every color appears as a cross have weight one. 
\[ \begin{aligned}
&\begin{tikzpicture}[baseline = (current bounding box).center]
\draw[fill = gray] (0,0) rectangle (1,1);
\draw[ultra thick] (0,0.5)--(1,0.5);
\end{tikzpicture} 
=1,
&\begin{tikzpicture}[baseline = (current bounding box).center]
\draw[fill = violet!40!white] (0,0) rectangle (1,1);
\draw[ultra thick] (0,0.5)--(1,0.5);
\draw[ultra thick] (0.5,0)--(0.5,1);
\end{tikzpicture} 
=1
\end{aligned} \]
It will also be useful to define $P_{l_1,l_2}^{(k)}$, for $0\le l_1,l_2 \le \infty$, to be the set of $k$-tuples of partitions with $l_1$ parts whose largest part is less than or equal to $l_2$.

\subsection{Single rows}

In order to construct our Cauchy identity, we will employ infinitely long rows of vertices. To help us align the partitions on the top/bottom boundaries of a row with the columns in the row, we will mark the zero content line on the top/bottom boundaries of our rows with x's.  One can think of the zero content line as follows: if the bottom (top) boundary of the row were given by $\bm{\lambda}=\bm{0}$, then on the bottom (top) boundary of the row, to the left of the x every column would be dense with paths, and to the right of the x every column would be empty.

For the white and purple vertices, it is relatively easy to define a row of infinite length.  We start by defining the following finite length rows, where the allowed states on the top and bottom boundaries are indexed by partitions $P_{l_1,l_2}^{(k)}$. Pictorially these are given by
\[
\begin{aligned}
\resizebox{0.25\textwidth}{!}{
\begin{tikzpicture}[baseline = (current bounding box).center]
\draw[] (0,0) rectangle (4,1);
\draw (0,0) grid (4,1);
\draw[fill=white] (0,0.5) circle (0.1);
\draw[fill=white] (4,0.5) circle (0.1);
\node at (2,0) {x}; \node at (2,1) {x};
\node[below] at (1,0) {$\leftarrow$ $l_1$ $\rightarrow$};
\node[below] at (3,0) {$\leftarrow$ $l_2$ $\rightarrow$};
\end{tikzpicture}
} \enspace \resizebox{0.25\textwidth}{!}{
\begin{tikzpicture}[baseline = (current bounding box).center]
\draw[fill = violet] (0,0) rectangle (4,1);
\draw (0,0) grid (4,1);
\draw[fill=white] (0,0.5) circle (0.1);
\draw[fill=white] (4,0.5) circle (0.1);
\node at (2,0) {x}; \node at (2,1) {x};
\node[below] at (1,0) {$\leftarrow$ $l_1$ $\rightarrow$};
\node[below] at (3,0) {$\leftarrow$ $l_2$ $\rightarrow$};
\end{tikzpicture}
}
\end{aligned}
\]
Each row has length $l_1+l_2$ and we explicitly mark the zero content line.  We can increase $l_1$ by extending the partitions indexing the boundary states with zero parts; similarly we can increase $l_2$ by adding empty faces to the right. Note that increasing $l_1$ adds faces of the form
\[
\resizebox{0.2\textwidth}{!}{
\begin{tikzpicture}[baseline = (current bounding box).center]
\draw[] (0,0) rectangle (1,1);
\draw[fill=black] (0.5,0) circle (0.1);
\draw[fill=black] (0.5,1) circle (0.1);
\draw[fill=white] (0,0.5) circle (0.1);
\draw[fill=white] (1,0.5) circle (0.1);
\draw[ultra thick] (0.5,0) -- (0.5,1);
\end{tikzpicture} \text{ or }
\begin{tikzpicture}[baseline = (current bounding box).center]
\draw[fill = violet] (0,0) rectangle (1,1);
\draw[fill=black] (0.5,0) circle (0.1);
\draw[fill=black] (0.5,1) circle (0.1);
\draw[fill=white] (0,0.5) circle (0.1);
\draw[fill=white] (1,0.5) circle (0.1);
\draw[ultra thick] (0.5,0) -- (0.5,1);
\end{tikzpicture}
}
\]
on the left, while increasing $l_2$ adds faces of the form
\[
\resizebox{0.2\textwidth}{!}{
\begin{tikzpicture}[baseline = (current bounding box).center]
\draw[] (0,0) rectangle (1,1);
\draw[fill=white] (0.5,0) circle (0.1);
\draw[fill=white] (0.5,1) circle (0.1);
\draw[fill=white] (0,0.5) circle (0.1);
\draw[fill=white] (1,0.5) circle (0.1);
\end{tikzpicture} \text{ or }
\begin{tikzpicture}[baseline = (current bounding box).center]
\draw[fill = violet] (0,0) rectangle (1,1);
\draw[fill=white] (0.5,0) circle (0.1);
\draw[fill=white] (0.5,1) circle (0.1);
\draw[fill=white] (0,0.5) circle (0.1);
\draw[fill=white] (1,0.5) circle (0.1);
\end{tikzpicture}
}
\]
on the right; since these vertices have weight 1, increasing $l_1$ and $l_2$ does not change the weight of the row.  In fact, we may take $l_1,l_2\to \infty$ and allow the boundary states to be indexed by partitions with infinitely many parts as long as only finitely many parts are nonzero.

For the gray and light-purple vertices, we must be slightly more careful. For the gray faces, we consider a row of finite length, such that the allowed states on the bottom are indexed by partitions in $P_{l_1,l_2}^{(k)}$, and the allowed states on the top are indexed by partitions in $P_{l_1-1,l_2}^{(k)}$. We draw this as
\[
\resizebox{0.25\textwidth}{!}{
\begin{tikzpicture}[baseline = (current bounding box).center]
\draw[fill = gray] (0,0) rectangle (4,1);
\draw (0,0) grid (4,1);
\draw[fill=white] (0,0.5) circle (0.1);
\draw[fill=black] (4,0.5) circle (0.1);
\node at (2,0) {x}; \node at (1,1) {x};
\node[below] at (1,0) {$\leftarrow$ $l_1$ $\rightarrow$};
\node[below] at (3,0) {$\leftarrow$ $l_2$ $\rightarrow$};
\end{tikzpicture}
}
\]
The boundary condition on the right allows us to increase $l_2$ by adding faces where every path is horizontal without changing the weight of the row. However, increasing $l_1$ by adding zero parts to the partitions does affect the weight since faces where all the paths are vertical have a nontrivial contribution due to the change of variable.  

For the light-purple faces, we consider a row of finite length, such that the allowed states on the bottom are indexed by partitions in $P_{l_1,l_2}^{(k)}$, and the allowed states on the top are indexed by partitions in $P_{l_1+1,l_2-1}^{(k)}$.  We draw this as
\[
\resizebox{0.25\textwidth}{!}{
\begin{tikzpicture}[baseline = (current bounding box).center]
\draw[fill = violet!40!white] (0,0) rectangle (4,1);
\draw (0,0) grid (4,1);
\draw[fill=black] (0,0.5) circle (0.1);
\draw[fill=white] (4,0.5) circle (0.1);
\node at (2,0) {x}; \node at (3,1) {x};
\node[below] at (1,0) {$\leftarrow$ $l_1$ $\rightarrow$};
\node[below] at (3,0) {$\leftarrow$ $l_2$ $\rightarrow$};
\end{tikzpicture}
}
\]
In this case, we can increase $l_1$ by adding zero parts to the partitions without changing the weight of the row, as this amounts to adding faces on the left where every color is a cross. However, increasing $l_2$ by adding empty faces on the right does affect the weight. Later, we will see that the contribution to the weight coming from increasing $l_1$ in the case of the gray faces, and the contribution to the weight coming from increasing $l_2$ in the case of the light-purple faces, cancels out in the Cauchy identity, allowing us to circumvent this issue.

\begin{remark}
Suppose the bottom boundary of a row is indexed by $\bm{\mu}$ while the top boundary is indexed by $\bm{\lambda}$. Recall that, for the white faces, in order for the row to have a nonzero weight, $\bm{\lambda}$ must be obtained from $\bm{\mu}$ by adding a horizontal strip. Similarly, for purple faces, $\bm{\lambda}$ must be obtained from $\bm{\mu}$ by adding a vertical strip.

For the gray faces, the effect of shifting the zero content line is such that, in order for the row to have a nonzero weight, $\bm{\lambda}$ must be obtained from $\bm{\mu}$ by removing a horizontal strip. For light-purple, $\bm{\lambda}$ must be obtained from $\bm{\mu}$ by removing a vertical strip.

\end{remark}

\subsection{Some partition functions}

Here we will construct certain lattice models, using the single rows above, whose partition functions will be used in our Cauchy identities. In what follows, we will always consider our partitions $\bm{\lambda}$ and $\bm{\mu}$ to be tuples of partitions, each with a infinitely many parts, only finitely many of which are nonzero. We will truncate the partitions, removing only zero parts, to limit the number of parts as needed.

Given $\bm{\lambda}$ and $\bm{\mu}$, choose positive integers $l_1,l_2$ such that each partition has less than or equal to $l_1$ nonzero parts and largest part less than or equal to $l_2$. Truncate $\bm{\lambda}$ and $\bm{\mu}$ so that they are in $P_{l_1,l_2}^{(k)}$. Recall from Section \ref{sec:VertexModel} that for the white faces we have
\[
\mathcal{L}_{\bm{\lambda}/\bm{\mu}}(X_m;t) =\resizebox{3cm}{!}{
\begin{tikzpicture}[baseline=(current bounding box.center)] 
\draw (0,0) grid (4,3);
\node[left] at (0,1.5) {$\vdots$};
\node[below] at (2,0) {$\bm{\mu}$}; \node[above] at (2,3) {$\bm{\lambda}$};
\node[] at (2,0) {x}; \node[] at (2,1) {x}; \node[] at (2,2) {x}; \node[] at (2,3) {x};
\node[left] at (0,0.5) {$x_1$}; \node[left] at (0,2.5) {$x_m$}; 
\draw[fill=white] (4,0.5) circle (0.1); \draw[fill=white] (4,1.5) circle (0.1); \draw[fill=white] (4,2.5) circle (0.1);
\draw[fill=white] (0,0.5) circle (0.1); \draw[fill=white] (0,1.5) circle (0.1); \draw[fill=white] (0,2.5) circle (0.1);
\node[below] at (1,0) {$\leftarrow\; l_1\; \rightarrow$};
\node[below] at (3,0) {$\leftarrow\; l_2\; \rightarrow$};
\end{tikzpicture} }
\]
and for the purple faces we have
\[
\mathcal{L}^P_{\bm{\lambda}/\bm{\mu}}(X_m;t) =\resizebox{3cm}{!}{
\begin{tikzpicture}[baseline=(current bounding box.center)] 
\draw[fill=violet] (0,0) rectangle (4,3);
\draw (0,0) grid (4,3);
\node[left] at (0,1.5) {$\vdots$};
\node[below] at (2,0) {$\bm{\mu}$}; \node[above] at (2,3) {$\bm{\lambda}$};
\node[] at (2,0) {x}; \node[] at (2,1) {x}; \node[] at (2,2) {x}; \node[] at (2,3) {x};
\node[left] at (0,0.5) {$x_1$}; \node[left] at (0,2.5) {$x_m$}; 
\draw[fill=white] (4,0.5) circle (0.1); \draw[fill=white] (4,1.5) circle (0.1); \draw[fill=white] (4,2.5) circle (0.1);
\draw[fill=white] (0,0.5) circle (0.1); \draw[fill=white] (0,1.5) circle (0.1); \draw[fill=white] (0,2.5) circle (0.1);
\node[below] at (1,0) {$\leftarrow\; l_1\; \rightarrow$};
\node[below] at (3,0) {$\leftarrow\; l_2\; \rightarrow$};
\end{tikzpicture} }
\]
where both are independent of the choice of $l_1$ and $l_2$. In particular, the limit as $l_1,l_2 \to \infty$ of these partition functions is well-defined.

For the gray faces, fix the number of variables $m$. This time, given $\bm{\lambda}$ and $\bm{\mu}$, choose positive integers $l_1,l_2$ such that each partition of $\bm{\lambda}$ has less than or equal to $l_1$ nonzero parts, each partition of $\bm{\mu}$ has less than or equal to $l_1-m$ nonzero parts, and each partition of both tuples has largest part less than or equal to $l_2$. Truncate the partitions so that each partition in $\bm{\lambda}\in P_{l_1,l_2}^{(k)}$ and$\bm{\mu}\in P_{l_1-m,l_2}^{(k)}$. Define
\[
\mathcal{L}^*_{\bm{\lambda}/\bm{\mu}}(X_m;t) :=\resizebox{3cm}{!}{
\begin{tikzpicture}[baseline=(current bounding box.center)] 
\draw[fill=gray] (0,0) rectangle (5,3);
\draw (0,0) grid (5,3);
\node[left] at (0,1.5) {$\vdots$};
\node[below] at (2.5,-0.5) {$\bm{\lambda}$}; \node[above] at (2.5,3) {$\bm{\mu}$};
\node[] at (4,0) {x}; \node[] at (3,1) {x}; \node[] at (2,2) {x}; \node[] at (1,3) {x};
\node[left] at (0,0.5) {$\bar x_1$}; \node[left] at (0,2.5) {$\bar x_m$}; 
\draw[fill=black] (5,0.5) circle (0.1); \draw[fill=black] (5,1.5) circle (0.1); \draw[fill=black] (5,2.5) circle (0.1);
\draw[fill=white] (0,0.5) circle (0.1); \draw[fill=white] (0,1.5) circle (0.1); \draw[fill=white] (0,2.5) circle (0.1);
\node[below] at (2,0) {$\leftarrow\; l_1\; \rightarrow$};
\node[below] at (4.5,0) {$\leftarrow\; l_2\; \rightarrow$};
\end{tikzpicture} 
}
\]
We have the following proposition.
\begin{prop}\label{prop:Lstar}
\[
\mathcal{L}^*_{\bm{\lambda}/\bm{\mu}}(X_m;t) = (x_1\ldots x_m)^{(l_1-m)k}(x^{\rho_m})^k t^{(ml_1-\binom{m+1}{2})\binom{k}{2}} t^{d(\bm{\lambda},\bm{\mu})}\mathcal{L}_{\bm{\lambda}/\bm{\mu}}(X_m;t)
\]
where $d(\bm{\lambda},\bm{\mu})$ and $\mathcal{L}_{\bm{\lambda}/\bm{\mu}}$ are independent of $l_1$ and $l_2$. Furthermore, $d(\bm{\lambda},\bm{\mu})$ is given by
\[
d(\bm{\lambda},\bm{\mu}) = \sum_{a<b} \# \{i,j | \tilde\mu^{(a)}_j-j>\tilde\mu^{(b)}_i-i\} - \sum_{a<b} \# \{i,j | \lambda^{(a)}_j-j>\lambda^{(b)}_i-i\}
\]
where $\tilde \mu =(\underbrace{l_2,\ldots,l_2}_{m},\mu)$.
\end{prop}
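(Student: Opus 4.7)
The plan is to reduce $\mc{L}^*_{\bm{\lambda}/\bm{\mu}}(X_m;t)$ to the white-face partition function $\mc{L}_{\bm{\lambda}/\bm{\mu}}(X_m;t)$ by unpacking the definition of the gray face. Writing each gray face as $x_r^k t^{\binom{k}{2}}$ times $L^{(k)}_{\bar x_r}$ and pulling out the scalar factor expresses $\mc{L}^*_{\bm{\lambda}/\bm{\mu}}(X_m;t)$ as an overall scalar times an $L$-face partition function at the dual parameters $\bar x_r = 1/(x_r t^{k-1})$.

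Next, I isolate the ``frozen'' regions of the lattice forced by the boundary conditions and the shift of the zero content line. On row $r$ (indexed from the bottom), the leftmost $l_1 - r$ columns are forced into the all-vertical configuration, because both the top and bottom boundaries of that row are densely packed with particles at these positions; each such face has gray weight $x_r^k t^{\binom{k}{2}}$. Summing over all rows gives exactly the claimed prefactor: the uniform portion of $l_1 - m$ forced columns per row produces $(x_1 \cdots x_m)^{(l_1-m)k}$, the triangular excess of $m - r$ columns per row gives $(x^{\rho_m})^k$ with $\rho_m = (m-1, \ldots, 0)$, and the accumulated $t$-weights total $t^{\binom{k}{2}\sum_{r=1}^m(l_1-r)} = t^{(ml_1-\binom{m+1}{2})\binom{k}{2}}$. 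Faces to the right of the lattice are locked into the all-horizontal state by the dense right boundary and each have gray weight $1$.

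After stripping the frozen regions, the remaining faces form a partition function with $L$-weights at parameters $\bar x_r$ whose top and bottom boundary data encode $\bm{\mu}$ and $\bm{\lambda}$ in the standard convention of Theorem \ref{thm:cgkm-lattice}. To convert from the dual parameters $\bar x_r$ to the original $x_r$, I would use the identity $\bar x_r^{|\L|} = x_r^{-|\L|} t^{-(k-1)|\L|}$ at each face. Although $|\L|$ depends on the internal face state, summed over the entire lattice it is controlled only by $\bm{\lambda}$ and $\bm{\mu}$, since for each color the total horizontal distance traveled is determined by the boundaries. The $t$-piece $t^{-(k-1)|\L|}$ combines with the local $t^{\varphi(\L,\I+\J)}$ factor to produce the standard white-face weight $x_r^{|\L|} t^{\varphi(\L,\I+\J)}$ up to an overall boundary-only $t$-correction.

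The main obstacle, and the crux of the proof, is identifying this boundary-only $t$-correction as $t^{d(\bm{\lambda},\bm{\mu})}$ in the stated combinatorial form. I expect to do this by expanding $\varphi(\L,\I+\J) = \sum_{a<b}L_a(I_b+J_b)$, summing over all faces, and reducing the result to a count of ordering mismatches between color-$a$ and color-$b$ particles at the top and bottom of the lattice. The positions of the color-$a$ particles on the bottom boundary are the values $\lambda^{(a)}_i - i$ (up to a color-independent shift), while those on the top boundary are the values $\tilde\mu^{(a)}_i - i$, where $\tilde\mu$ is $\bm{\mu}$ prepended with $m$ copies of $l_2$ because the effective top boundary is $\bm{\mu}$ augmented by the $m$ length-$l_2$ strips coming from the forced-vertical triangle. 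The pairwise counts then assemble into exactly the two summations defining $d(\bm{\lambda},\bm{\mu})$, with the sign difference matching the orientation mismatch between the gray and white partition functions; a direct check then shows that the $l_2$-dependent parts of the two summations cancel, giving the claimed independence of $d(\bm{\lambda},\bm{\mu})$ and $\mc{L}_{\bm{\lambda}/\bm{\mu}}(X_m;t)$ from $l_1$ and $l_2$.
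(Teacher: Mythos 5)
Your overall strategy (pull the scalar $x_r^kt^{\binom{k}{2}}$ out of each gray face, reduce to an $L$-face partition function, and identify the leftover power of $t$ as an inversion-count difference between the two boundaries) is the right shape and is essentially what the cited proof of \cite[Prop.~6.11]{CGKM} does. However, the step you use to produce the prefactor is false. The leftmost $l_1-r$ columns of row $r$ are \emph{not} forced to be all-vertical: the bottom boundary is densely packed there only when the relevant parts of $\bm{\lambda}$ vanish (e.g.\ if $\lambda^{(c)}_{l_1}>0$ the leftmost column carries no color-$c$ particle at all), and even when a particle is present at the bottom of a leftmost column it may turn right immediately, so that face is not weight $x_r^kt^{\binom{k}{2}}$. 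The correct accounting is global rather than local: \emph{every} one of the $l_1+l_2$ faces in row $r$ contributes the scalar $x_r^kt^{\binom{k}{2}}$, and the total number of horizontal unit steps in row $r$ is fixed by that row's top and bottom boundaries to be $k(l_2+r)-|\bm{\alpha}/\bm{\beta}|$ (one path per color exits at the far right, and the content line shifts by one); combining $\left(x_r^kt^{\binom{k}{2}}\right)^{l_1+l_2}$ with $\bar x_r^{\,k(l_2+r)-|\bm{\alpha}/\bm{\beta}|}$ yields $x_r^{k(l_1-r)}x_r^{|\bm{\alpha}/\bm{\beta}|}$ times a power of $t$, which is where the stated prefactor actually comes from. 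Relatedly, after ``stripping the frozen region'' you treat the remaining faces as bare $L_{\bar x_r}$-faces, which silently drops the $x_r^kt^{\binom{k}{2}}$ carried by every non-frozen gray face; without those factors your substitution $\bar x_r^{|\L|}=x_r^{-|\L|}t^{-(k-1)|\L|}$ leaves uncancelled negative powers of $x_r$.

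The second, and more serious, gap is that the crux of the proposition --- that the residual power of $t$ is the \emph{same for every configuration} and equals the displayed $d(\bm{\lambda},\bm{\mu})$ --- is only announced (``I expect to do this by expanding $\varphi$\dots''). Configuration-independence is exactly what must be proved; in \cite{CGKM} it is established by a corner-flip argument (flipping one color's corner changes the gray-side and white-side exponents of $t$ by the same amount), after which $d(\bm{\lambda},\bm{\mu})$ is computed by evaluating a single extremal configuration. You also need an explicit bijection between gray configurations (with $\bm{\lambda}$ on the bottom, $\bm{\mu}$ on top, and $m$ paths per color exiting right) and white configurations of $\latW_m(\lm)$ (with $\bm{\mu}$ on the bottom and $\bm{\lambda}$ on top), which reverses the order of the rows; this is glossed over. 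Finally, the $m$ parts equal to $l_2$ prepended in $\tilde{\bm{\mu}}$ record the $m$ right-exiting paths per color (whose effective exit positions are $l_2-1,\dots,l_2-m$ relative to the content line), not a frozen triangle on the left.
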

The proof is essentially identical to that given in Prop 6.11 of \cite{CGKM} for which this is a slight generalization.  Note that $\mathcal{L}^*$ is independent of $l_2$ and we may take $l_2\to\infty$.

For the light-purple faces, we again fix the number of variables $m$. Now given $\bm{\lambda}$ and $\bm{\mu}$ let $l_1, l_2$ be positive integers such that the number of nonzero parts of every partition in $\bm{\lambda}$ and $\bm{\mu}$ is less than or equal to $l_1$, the largest part of every partition in $\bm{\lambda}$ is less than or equal to $l_2$, and the largest part of every partition in $\bm{\mu}$ is less than or equal to $l_2-m$. Truncate the partitions so that $\bm{\lambda}\in P_{l_1,l_2}^{(k)}$, and $\bm{\mu}\in P_{l_1+m,l_2-m}^{(k)}$. Define
\[
(\mathcal{L}^P)^*_{\bm{\lambda}/\bm{\mu}}(X_m;t) :=\resizebox{3cm}{!}{
\begin{tikzpicture}[baseline=(current bounding box.center)] 
\draw[fill=violet!40!white] (0,0) rectangle (5,3);
\draw (0,0) grid (5,3);
\node[left] at (0,1.5) {$\vdots$};
\node[below] at (2.5,-0.5) {$\bm{\lambda}$}; \node[above] at (2.5,3) {$\bm{\mu}$};
\node[] at (4,3) {x}; \node[] at (3,2) {x}; \node[] at (2,1) {x}; \node[] at (1,0) {x};
\node[left] at (0,0.5) {$\bar x_1$}; \node[left] at (0,2.5) {$\bar x_m$}; 
\draw[fill=white] (5,0.5) circle (0.1); \draw[fill=white] (5,1.5) circle (0.1); \draw[fill=white] (5,2.5) circle (0.1);
\draw[fill=black] (0,0.5) circle (0.1); \draw[fill=black] (0,1.5) circle (0.1); \draw[fill=black] (0,2.5) circle (0.1);
\node[below] at (0.5,0) {$\leftarrow\; l_1\; \rightarrow$};
\node[below] at (3,0) {$\leftarrow\; l_2\; \rightarrow$};
\end{tikzpicture} 
}
\]

We would like to be able to write $(\mathcal{L}^P)^*$ in terms of $\mathcal{L}^P$. In order to do so we will prove a series of lemmas. 

Let $\bm{\lambda} = (\lambda^{(1)},\ldots, \lambda^{(k)})$. Define the \textbf{complement} of $\bm{\lambda}$ to be
\[
\bm{\lambda}^c = ((\lambda^{(k)})^c,\ldots, (\lambda^{(1)})^c),
\]
that is, we complement each partition and reverse their order in the tuple.

\begin{lem}
Given tuples of partitions $\bm{\lambda}\in P_{l_1,l_2}^{(k)}$ and $\bm{\mu}\in P_{l_1,l_2-m}^{(k)}$. Let $\tilde {\bm{\mu}} \in P_{l_1,l_2}^{(k)}$ be the tuple of partitions one gets by adding $m$ to every part of every partition in $\bm{\mu}$. Then
\[
\mathcal{L}^P_{\tilde{\bm{\mu}}/\bm{\lambda}}(X_m;t) = t^{d_P(\bm{\lambda},\bm{\mu})} \mathcal{L}^P_{\bm{\lambda}^c/\tilde{\bm{\mu}}^c}(X_m;t)
\]
where $d_P(\bm{\lambda},\bm{\mu})$ is independent of the choice of $l_1$ and $l_2$.
\end{lem}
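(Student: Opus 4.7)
The approach I would take is to establish a weight-preserving bijection (up to a configuration-independent power of $t$) between the lattice configurations contributing to $\mathcal{L}^P_{\tilde{\bm{\mu}}/\bm{\lambda}}(X_m;t)$ and those contributing to $\mathcal{L}^P_{\bm{\lambda}^c/\tilde{\bm{\mu}}^c}(X_m;t)$. The key observation is that taking a partition's complement in $P^{(k)}_{l_1,l_2}$ corresponds at the level of Maya diagrams to two operations: swapping E/S-steps and reversing both the order of entries (a horizontal flip of the boundary) and the order of the colors. This suggests constructing a map $\Phi$ on configurations that combines a horizontal reflection of each purple row, a particle/hole swap $\I \mapsto \mathbf{1} - \I$ at every edge, and a reversal of the color ordering $a \leftrightarrow k+1-a$.

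The first main task is to verify that $\Phi$ takes valid configurations of the $L'$-vertex lattice to valid ones. The path-conservation constraint $\I + \J = \K + \L$ is manifestly preserved by these operations, but the inequality $K_i \geq J_i$ requires care since naive particle-hole alone reverses it; the horizontal reflection together with color reversal is what restores admissibility. I would perform this verification color-by-color on all five single-color purple face types, and then lift to the multi-color case.

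The second task is to compute how the vertex weights transform. For each row I would track the $t$-exponent $\sum_i \varphi(\L, \K - \J)$ together with the offsets $\delta'_i$ built into the $k$-color definition of $L'$. Grouping these contributions by pairs of colors, the difference between the weight of a configuration and its image under $\Phi$ will telescope into an expression depending only on the boundary data at the top and bottom of each row. Summing over the $m$ rows and using the freedom in choosing the intermediate states, this difference should reduce to an inversion-type count $d_P(\bm{\lambda}, \bm{\mu})$ (analogous to the $d(\bm{\lambda}, \bm{\mu})$ of Proposition~\ref{prop:Lstar}), written in terms of the quantities $\lambda^{(a)}_j - j$ and $\tilde{\mu}^{(b)}_i - i$. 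To show this count is independent of the particular configuration summed over, I would apply a corner-flip argument exactly analogous to the one in the proof of Lemma~\ref{lem:dream}: any two configurations of a single purple row with the same top and bottom boundaries differ by local flips of two-by-two blocks, and a direct check (one small computer-verified case analysis) shows each flip preserves the ratio of weights between a configuration and its $\Phi$-image.

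Finally, independence of $d_P$ from $l_1$ and $l_2$ is immediate from the construction: increasing $l_1$ appends columns on the left of the row in which every color is a cross, and increasing $l_2$ appends empty columns on the right. Because of the normalization chosen when defining the purple face, each such face has weight $1$ both before and after applying $\Phi$, so enlarging either bound contributes $1 \cdot 1 = 1$ to the weight ratio. The main obstacle I anticipate is step two: the careful accounting of the $\varphi$-contributions and the $\delta'_i$-offsets under the combined reflection, particle-hole, and color-reversal. It is easy to guess the final closed form for $d_P$, but writing a clean proof that the cross-terms between colors telescope correctly will likely require the explicit boundary-dependent inversion formula together with a corner-flip invariance argument, rather than a purely local per-vertex check.
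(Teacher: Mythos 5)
Your overall strategy matches the paper's: exhibit a bijection on purple-row configurations realizing box-complementation, show via a corner-flip argument that the resulting shift in the power of $t$ is configuration-independent, and observe that enlarging $l_1$ or $l_2$ affects neither side. However, the bijection you propose does not work. Composing a left-right reflection with the particle/hole swap $\I \mapsto \1 - \I$ on every edge sends a face with labels $(\I,\J,\K,\L)$ (bottom, left, top, right) to one with labels $(\1-\I,\1-\L,\1-\K,\1-\J)$, and path conservation for the image reads $\I + \L = \J + \K$, which together with the original conservation $\I + \J = \K + \L$ forces $\J = \L$. Concretely, the admissible one-color $L'$-face in which a path enters from the left and exits through the top is sent to a face with paths incident only at the bottom and left edges, violating conservation; and the purely vertical face is sent to a purely horizontal face, which is not among the five admissible $L'$ configurations. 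Color reversal cannot repair this, since it only permutes which color occupies a given shape. So your claim that conservation is ``manifestly preserved'' is false, and the weight computation in your second step has nothing to act on.

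The correct map, which is what the paper uses, involves no particle/hole swap at all: one rotates the entire path picture by $180$ degrees (equivalently, composes your left-right flip with an up-down flip, which exchanges the roles of the top and bottom boundaries and forces reversing the order of the spectral parameters $x_1,\dots,x_m$), and then reverses the color order. A $180$-degree rotation carries SW-to-NE paths to SW-to-NE paths, preserves conservation (the condition $\K+\L=\I+\J$ is unchanged), and permutes the five admissible $L'$ face types among themselves; the box-complements $\bm{\lambda}^c$ and $\tilde{\bm{\mu}}^c$ arise purely from rotating the positions of the path endpoints inside the $l_1 \times l_2$ window, not from complementing occupation numbers. Once the bijection is fixed this way, the remainder of your outline (horizontal steps map to horizontal steps so the $x$-weight is preserved, corner flips give configuration-independence of the $t$-shift, and stability in $l_1,l_2$ holds because the added paths staircase or cross identically on both sides) goes through essentially as in the paper.
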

\begin{proof}
There is a bijection between configurations with bottom boundary $\bm{\lambda}$ and top boundary $\tilde{\bm{\mu}}$ and configurations with bottom boundary $\tilde{\bm{\mu}}^c$ and top boundary $\bm{\lambda}^c$ (where the complement is taken in an $l_1\times l_2$ box) given by drawing the corresponding paths, rotating the drawing 180 degrees, swapping the order of the colors, and reversing the order of the $x_i$. 

For example, with $\bm{\lambda} = ((2,1,0),(1,1,1))$, $\bm{\mu} = ((1,0,0),(1,1,0))$, $l_1=m=3$, $l_2=4$,  we have $\bm{\lambda}^c=((3,3,3),(4,3,2))$, $\tilde{\bm{\mu}} = ((4,3,3),(4,4,3))$, $\tilde{\bm{\mu}}^c = ((1,0,0),(1,1,0))$. For a particular configuration we would map
\[
\resizebox{3cm}{!}{
\begin{tikzpicture}[baseline=(current bounding box.center)] 
\draw[fill=violet] (0,0) rectangle (7,3);
\draw (0,0) grid (7,3);
\node[left] at (0,1.5) {$x_2$};
\node[] at (3,0) {x}; \node[] at (3,1) {x}; \node[] at (3,2) {x}; \node[] at (3,3) {x};
\node[left] at (0,0.5) {$x_1$}; \node[left] at (0,2.5) {$x_m$}; 
\draw[fill=white] (7,0.5) circle (0.1); \draw[fill=white] (7,1.5) circle (0.1); \draw[fill=white] (7,2.5) circle (0.1);
\draw[fill=white] (0,0.5) circle (0.1); \draw[fill=white] (0,1.5) circle (0.1); \draw[fill=white] (0,2.5) circle (0.1);
\draw[ultra thick, blue] (4.4,0)--(4.4,0.6)--(5.4,0.6)--(5.4,2.6)--(6.4,2.6)--(6.4,3);
\draw[ultra thick, blue] (2.4,0)--(2.4,0.6)--(3.4,0.6)--(3.4,2.6)--(4.4,2.6)--(4.4,3);
\draw[ultra thick, blue] (0.4,0)--(0.4,0.6)--(1.4,0.6)--(1.4,1.6)--(2.4,1.6)--(2.4,2.6)--(3.4,2.6)--(3.4,3);
\draw[ultra thick, red] (3.6,0)--(3.6,0.4)--(4.6,0.4)--(4.6,1.4)--(5.6,1.4)--(5.6,2.4)--(6.6,2.4)--(6.6,3);
\draw[ultra thick, red] (2.6,0)--(2.6,0.4)--(3.6,0.4)--(3.6,1.4)--(4.6,1.4)--(4.6,2.4)--(5.6,2.4)--(5.6,3);
\draw[ultra thick, red] (1.6,0)--(1.6,1.4)--(2.6,1.4)--(2.6,2.4)--(3.6,2.4)--(3.6,3);
\node[below] at (3.5,0) {$\bm{\lambda}$}; \node[above] at (3.5,3) {$\tilde{\bm{\mu}}$};
\end{tikzpicture} } 
\mapsto
\resizebox{3cm}{!}{
\begin{tikzpicture}[baseline=(current bounding box.center)] 
\draw[fill=violet] (0,0) rectangle (7,3);
\draw (0,0) grid (7,3);
\node[left] at (0,1.5) {$x_2$};
\node[] at (3,0) {x}; \node[] at (3,1) {x}; \node[] at (3,2) {x}; \node[] at (3,3) {x};
\node[left] at (0,0.5) {$x_1$}; \node[left] at (0,2.5) {$x_m$}; 
\draw[fill=white] (7,0.5) circle (0.1); \draw[fill=white] (7,1.5) circle (0.1); \draw[fill=white] (7,2.5) circle (0.1);
\draw[fill=white] (0,0.5) circle (0.1); \draw[fill=white] (0,1.5) circle (0.1); \draw[fill=white] (0,2.5) circle (0.1);
\draw[ultra thick, blue] (3.4,0)--(3.4,0.6)--(4.4,0.6)--(4.4,1.6)--(5.4,1.6)--(5.4,3);
\draw[ultra thick, blue] (0.4,0)--(0.4,0.6)--(1.4,0.6)--(1.4,1.6)--(2.4,1.6)--(2.4,2.6)--(3.4,2.6)--(3.4,3);
\draw[ultra thick, blue] (1.4,0)--(1.4,0.6)--(2.4,0.6)--(2.4,1.6)--(3.4,1.6)--(3.4,2.6)--(4.4,2.6)--(4.4,3);
\draw[ultra thick, red] (3.6,0)--(3.6,0.4)--(4.6,0.4)--(4.6,1.4)--(5.6,1.4)--(5.6,2.4)--(6.6,2.4)--(6.6,3);
\draw[ultra thick, red] (2.6,0)--(2.6,0.4)--(3.6,0.4)--(3.6,2.4)--(4.6,2.4)--(4.6,3);
\draw[ultra thick, red] (0.6,0)--(0.6,0.4)--(1.6,0.4)--(1.6,2.4)--(2.6,2.4)--(2.6,3);
\node[above] at (3.5,3) {$\bm{\lambda}^c$}; \node[below] at (3.5,0) {$\tilde{\bm{\mu}}^c$};
\end{tikzpicture} } 
\]

It is easy that under this bijection the $x$ weight does not change as horizontal steps remain horizontal steps. A corner flipping argument shows that the difference in the power of $t$ before and after the mapping does not depend on the configuration (see for example Lemma 6.6 of \cite{CGKM}).  This shows that
\[
\mathcal{L}^P_{\tilde{\bm{\mu}}/\bm{\lambda}}(X_m;t) = t^{d_P(\bm{\lambda},\bm{\mu})} \mathcal{L}^P_{\bm{\lambda}^c/\tilde{\bm{\mu}}^c}(X_m;t).
\]
Note that increasing $l_1$ by adding zero parts to $\bm{\lambda}$ and parts of size $m$ to $\tilde{\bm{\mu}}$ does not change the power of $t$ on either side of the bijection as this only adds paths that staircase. Similarly increasing $l_2$ by adding empty columns does not effect the power of $t$ on either side. Thus $d_P(\bm{\lambda},\bm{\mu})$ is independent of $l_1$ and $l_2$.
\end{proof}

In fact, we can calculate $d_P(\bm{\lambda},\bm{\mu})$ explicitly. 
\begin{lem}\label{lem:dp}
Let $\bm{\lambda},\bm{\mu}$ be as in the previous lemma.  Then
\[
d_P(\bm{\lambda},\bm{\mu}) = \sum_{a<b} \#\{i,j| \mu^{(a)}_j-j > \mu^{(b)}_i-i\} - \sum_{a<b} \#\{i,j| \lambda^{(a)}_j-j > \lambda^{(b)}_i-i\}.
\]
\end{lem}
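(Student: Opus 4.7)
The plan is to exploit the fact — established in the preceding lemma via corner flipping — that $d_P(\bm\lambda,\bm\mu)$ is independent both of $l_1,l_2$ and of the particular lattice configuration chosen. Because the bijection in the preceding lemma is explicit (180° rotation, color-order reversal $a\leftrightarrow k+1-a$, and variable-order reversal $x_i\leftrightarrow x_{m+1-i}$), the per-vertex change in $t$-exponent can be written down in closed form. Using
\[
L'^{(k)}_x(\I,\J;\K,\L) = \mathbf{1}_{\I+\J=\K+\L}\prod_i \mathbf{1}_{K_i\geq J_i}\cdot x^{|\L|} t^{\varphi(\L,\K-\J)},
\]
and the identity $\varphi(\mathrm{rev}\,A,\mathrm{rev}\,B) = \varphi(B,A)$ (checked by swapping the summation indices $i\leftrightarrow k+1-i$), one finds that a LHS vertex with state $(\I,\J;\K,\L)$ is sent to a RHS vertex with $t$-exponent $\varphi(\I-\L,\J)$. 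The per-vertex difference therefore collapses to
\[
\varphi(\L,\K-\J)-\varphi(\I-\L,\J) \;=\; \sum_{a<b}(L_aK_b-I_aJ_b),
\]
and I would take $d_P(\bm\lambda,\bm\mu)$ to equal the sum of this quantity over all vertices of any one configuration.

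Next, I would reduce the global sum to a two-color computation. For a single pair $a<b$, the sum $\sum_{\mathrm{vertices}}(L_aK_b-I_aJ_b)$ depends only on the two paths of colors $a$ and $b$; by path conservation and a small corner-flipping argument (swapping how the two paths interact locally at a $2\times 2$ block changes $\sum L_aK_b$ and $\sum I_aJ_b$ by compensating amounts), this quantity depends only on the top and bottom boundary positions of these two colors, not on their interior trajectories. I would then interpret it as the difference between the number of inversions of the color-$a$/color-$b$ particle columns on the top boundary and on the bottom boundary.

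Finally I would translate boundary positions into partition data. By the Maya-like indexing of purple boundary states (together with the convention about the zero-content line), the column occupied by the $j$-th particle of color $a$ on the top boundary is $\tilde\mu^{(a)}_j-j+c_{l_1,l_2}$, and on the bottom boundary it is $\lambda^{(a)}_j-j+c_{l_1,l_2}$, for a constant $c_{l_1,l_2}$ depending only on $l_1,l_2$. Since the uniform shift from $\tilde\mu^{(a)}_j$ to $\mu^{(a)}_j$ cancels in any pairwise comparison, the two-color inversion count on the top is exactly $\#\{i,j\mid \mu^{(a)}_j-j>\mu^{(b)}_i-i\}$, and on the bottom it is $\#\{i,j\mid \lambda^{(a)}_j-j>\lambda^{(b)}_i-i\}$. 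Summing over $a<b$ yields the formula, and the constants $c_{l_1,l_2}$ drop out — confirming the $l_1,l_2$-independence asserted in the previous lemma.

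The main obstacle is the two-color reduction, i.e.\ showing rigorously that $\sum(L_aK_b-I_aJ_b)$ is a boundary-only invariant equal to the claimed inversion difference. The cleanest way I foresee is to read the sum as a signed count of ``color-$a$ crossings under color-$b$,'' then invoke a homotopy/winding-number argument: continuously deforming the $a$- and $b$-paths (via the local flips used in the preceding lemma) preserves this signed crossing number, so it equals the value computed in any canonical configuration — for instance the one where the two paths each move straight up and turn right as late as possible — in which case the count reduces directly to the boundary inversion difference above.
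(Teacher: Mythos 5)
Your proposal is correct, and it takes a genuinely different route from the paper's. Your per-vertex identity is right: with the RHS vertex having bottom $\mathrm{rev}(\K)$, left $\mathrm{rev}(\L)$, top $\mathrm{rev}(\I)$, right $\mathrm{rev}(\J)$, the exponent difference is $\varphi(\L,\K-\J)-\varphi(\mathrm{rev}(\J),\mathrm{rev}(\I-\L))=\sum_{a<b}(L_aK_b-I_aJ_b)$, and this signed crossing count does reproduce $\mathrm{Inv}_{\mathrm{top}}-\mathrm{Inv}_{\mathrm{bot}}$ on small examples. The paper instead never writes down a per-vertex formula: it reduces to two colors, treats the case $\bm{\mu}=\bm{0}$ by explicitly computing $\mathrm{coinv}'$ of the ``lowest'' configuration of $\tilde{\bm{\mu}}/\bm{\lambda}$ and of its image (the ``highest'' configuration of $\bm{\lambda}^c/\tilde{\bm{\mu}}^c$) and subtracting, and then handles general $\bm{\mu}$ by an additivity/telescoping argument, $d_P(\bm{\lambda},\bm{\mu})=d_P(\bm{\lambda},\bm{0})-d_P(\tilde{\bm{\mu}},\bm{0})$, obtained by stacking rows. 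What your approach buys is a direct, conceptual identification of $d_P$ as a boundary inversion difference, with no detour through $\bm{\mu}=\bm{0}$ and only one configuration to evaluate; what the paper's approach buys is that it never has to establish the per-color-pair boundary invariance of $\sum_v(L_aK_b-I_aJ_b)$ --- it simply reuses the already-proved well-definedness of the total $d_P$ together with row additivity. Two caveats on your side: the per-pair invariance under corner flips (flipping color $a$ must preserve the $(a,b)$-contribution for each fixed $b$, not just the total) is an extra check you should carry out explicitly, in the same spirit as the paper's other corner-flip verifications; and the final evaluation of the signed crossing count on a canonical configuration --- which is where essentially all of the paper's computation lives --- is asserted rather than performed in your sketch, so it would need to be written out (the paper's ``all paths as low as possible'' configuration is a convenient choice for this).
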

\begin{proof}
It is enough to compute $d_P$ for two colors. First let's assume that $\bm{\mu}$ is empty. In this case, every part in $\tilde{\bm{\mu}}$ has size $m$. We can calculate $d_P(\bm{\lambda},\bm{0})$ using any choice of configuration. We will pick the configuration of $\tilde{\bm{\mu}}/\bm{\lambda}$ such all the paths are as low as possible, call it $T$. In this case, each path will begin as a staircase going right until it reaches the column in which it ends, and will then travel vertically to its end point. Consider a single path of the first color (blue) corresponding to the $j$-th row of the skew-shape. For it to contribute a power of $t$ a path of the second color (red) must travel vertically in a face in which the first color exits right. Suppose we have such a red path, corresponding to the $i$-th row. As the paths only travel vertically in the column in which they end, this means that the blue path must end to the right of the red path, that is, $j<i$. Further, in order for the red path to cross the blue path while traveling vertically it's staircase must be below the blue staircase, so the blue path must start to the red paths left. That is, $\lambda^{(1)}_j - j < \lambda^{(2)}_i-i$. We see that
\[
\text{coinv}'(T) = \#\{j<i|\lambda^{(1)}_j - j \le \lambda^{(2)}_i-i\}.
\]

Using our mapping, the configuration $T$ gets mapped to a configuration $T'$ of $\bm{\lambda}^c/\tilde{\bm{\mu}}^c$ in which all the paths are as high as possible. In this case, the paths all begin travelling vertically and then staircase to thier end point. Similar reasoning shows that for the $j$-th blue path to exit right while the $i$-th red path is vertical, we must have that the blue path begins in the same column or to the left of the red path, and the blue path ends to the right of red path. This gives
\[
\begin{aligned}
\text{coinv}'(T') = & \#\{j\ge i|(\lambda^c)^{(1)}_j-j>(\lambda^c)^{(2)}_i-i\} \\
= & \#\{j\ge i| \lambda^{(1)}_j - j > \lambda^{(2)}_i-i\}.
\end{aligned}
\]
From this we find
\[
\begin{aligned}
d_P(\bm{\lambda},\bm{0}) = &  \#\{j<i|\lambda^{(1)}_j - j \le \lambda^{(2)}_i-i\} - \#\{j\ge i| \lambda^{(1)}_j - j > \lambda^{(2)}_i-i\}\\
 = & \#\{j<i|\lambda^{(1)}_j - j \le \lambda^{(2)}_i-i\} + \#\{j<i|\lambda^{(1)}_j-j>\lambda^{(2)}_i-i\} \\
 & -\#\{j<i|\lambda^{(1)}_j-j>\lambda^{(2)}_i-i\} - \#\{j\ge i| \lambda^{(1)}_j - j > \lambda^{(2)}_i-i\} \\
 = & \#\{j<i\} - \#\{i,j| \lambda^{(1)}_j-j > \lambda^{(2)}_i-i\}
\end{aligned}
\]
Noting that $\#\{i,j| \mu^{(1)}_j-j > \mu^{(2)}_i-i\} = \#\{j<i\}$ when $\bm{\mu}=\bm{0}$, and summing over all pairs of colors $a<b$ gives the result.

To prove the general case, let $\bm{\lambda}$ and $\bm{\mu}$ be as in the statement of the lemma. Consider a lattice with $n+m$ rows.  Let $\bm{\nu} = \bm{0}$, and $\tilde{\bm{\nu}}$ be such that every part has length $n+m$. From the above calculation we know that
\[
d_P(\bm{\lambda},\bm{0}) = \sum_{a<b}\#\{j<i\} - \sum_{a<b}\#\{i,j| \lambda^{(a)}_j-j > \lambda^{(b)}_i-i\}.
\]
This can be calculated using any configuration of $\bm{\lambda}/\tilde{\bm{\nu}}$. Let us choose the configuration such that the top boundary of the $m$-th row is given by $\tilde{\bm{\mu}}$. Then the contribution the the change in power of $t$ from the rows above the $m$-th row is given by 
\[
d_P(\tilde{\bm{\mu}},\bm{0}) = \sum_{a<b}\#\{j<i\} - \sum_{a<b}\#\{i,j| \tilde\mu^{(a)}_j-j > \tilde\mu^{(b)}_i-i\}
\]
while the contribution from the $m$-th row and below is given by $d_P(\bm{\lambda},\bm{\mu})$. Since the contribution from the two pieces must equal the overall change in power of $t$, we see that
\[
\begin{aligned}
d_P(\bm{\lambda},\bm{\mu}) = & d_P(\bm{\lambda},\bm{0}) - d_P(\tilde{\bm{\mu}},\bm{0}) \\
= & \sum_{a<b}\#\{i,j| \tilde\mu^{(a)}_j-j > \tilde\mu^{(b)}_i-i\} -  \sum_{a<b}\#\{i,j| \lambda^{(a)}_j-j > \lambda^{(b)}_i-i\} \\
= & \sum_{a<b}\#\{i,j| \mu^{(a)}_j-j > \mu^{(b)}_i-i\} -  \sum_{a<b}\#\{i,j| \lambda^{(a)}_j-j > \lambda^{(b)}_i-i\}
\end{aligned}
\]
as desired.
\end{proof}

\begin{lem}
Let $\bm{\lambda},\bm{\mu},m,l_1,l_2$ be as in the previous lemma. We have
\[
\resizebox{3cm}{!}{
\begin{tikzpicture}[baseline=(current bounding box.center)] 
\draw[fill=violet!40!white] (0,0) rectangle (5,3);
\draw (0,0) grid (5,3);
\node[left] at (0,1.5) {$\vdots$};
\node[below] at (2.5,-0.5) {$\bm{\lambda}$}; \node[above] at (2.5,3) {$\tilde{\bm{\mu}}$};
\node[] at (3,0) {x}; \node[] at (3,1) {x}; \node[] at (3,2) {x}; \node[] at (3,3) {x};
\node[left] at (0,0.5) {$\bar x_1$}; \node[left] at (0,2.5) {$\bar x_m$}; 
\draw[fill=white] (5,0.5) circle (0.1); \draw[fill=white] (5,1.5) circle (0.1); \draw[fill=white] (5,2.5) circle (0.1);
\draw[fill=white] (0,0.5) circle (0.1); \draw[fill=white] (0,1.5) circle (0.1); \draw[fill=white] (0,2.5) circle (0.1);
\node[below] at (1.5,0) {$\leftarrow\; l_1\; \rightarrow$};
\node[below] at (4,0) {$\leftarrow\; l_2\; \rightarrow$};
\end{tikzpicture} 
} 
=(x_1\ldots x_m)^{k(l_1+l_2)} t^{d_P(\bm{\lambda},\bm{\mu})} \mathcal{L}^P_{\bm{\lambda}^c/\tilde{\bm{\mu}}^c}(X_m^{-1};t) 
\]
\end{lem}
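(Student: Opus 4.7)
The plan is to reduce directly to the previous lemma by exploiting a per-vertex relation between the light-purple weight and the standard purple weight, which is essentially built into the change-of-variable definition.

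First I would observe that, by unpacking the change of variable $\bar x = 1/(x t^{k-1})$, the light-purple vertex weight can be rewritten as $x^k L'^{(k)}_{1/x}(\I,\J;\K,\L) = x^{k-|\L|} t^{\varphi(\L,\K-\J)}$, which is exactly $x^k$ times the weight of a standard purple vertex with parameter $y = 1/x = \bar x t^{k-1}$. The two vertices enforce the identical local constraints $\I + \J = \K + \L$ and $K_i \ge J_i$, so they admit the same set of local path configurations; consequently there is a trivial (identity) weight-rescaling bijection between configurations of the light-purple lattice and configurations of the corresponding standard purple lattice with the same boundary data.

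Next I would identify the boundary conditions. Crucially, the lattice on the left-hand side of the lemma has white circles on \emph{both} the left and right edges and the zero content line is fixed at column $l_1$ in every row (unlike $(\mathcal{L}^P)^*$, whose left edge is black and whose zero content line shifts). It therefore has exactly the boundary data of a standard $\mathcal{L}^P$ lattice with bottom $\bm{\lambda}$ and top $\tilde{\bm{\mu}}$. Applying the vertex identity above to each of the $m(l_1+l_2)$ vertices, with a factor of $x_i^k$ at each of the $l_1+l_2$ vertices in row $i$, the light-purple partition function equals $(x_1\cdots x_m)^{k(l_1+l_2)}$ times the standard purple partition function $\mathcal{L}^P_{\tilde{\bm{\mu}}/\bm{\lambda}}(X_m^{-1};t)$.

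Finally, I would invoke the previous lemma (with $X_m$ replaced by $X_m^{-1}$) to rewrite
\[
\mathcal{L}^P_{\tilde{\bm{\mu}}/\bm{\lambda}}(X_m^{-1};t) = t^{d_P(\bm{\lambda},\bm{\mu})}\,\mathcal{L}^P_{\bm{\lambda}^c/\tilde{\bm{\mu}}^c}(X_m^{-1};t),
\]
and combine to obtain the claimed identity. There is no real obstacle; the only subtle points are the careful bookkeeping of the change of variable (being clear that $\bar x$ and $1/x$ differ by a power of $t$) and noticing that the boundary of the lemma's lattice matches the standard $\mathcal{L}^P$ boundary, at which point the argument is a couple of lines.
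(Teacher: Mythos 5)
Your proposal is correct and is essentially the paper's own argument: the paper likewise observes that a light-purple face with label $\bar x$ is $x^k$ times a purple face with parameter $1/x$, pulls out the factor $(x_1\cdots x_m)^{k(l_1+l_2)}$ over all $m(l_1+l_2)$ faces to identify the lattice with $\mathcal{L}^P_{\tilde{\bm{\mu}}/\bm{\lambda}}(X_m^{-1};t)$, and then invokes the previous lemma. You have merely spelled out the bookkeeping (the explicit form $x^{k-|\L|}t^{\varphi(\L,\K-\J)}$ and the matching of boundary data) that the paper leaves implicit.
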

\begin{proof}
To change from purple face to light-purple faces we take $x\mapsto \frac{1}{x}$ and multiply every face by $x^k$. Along with the previous lemma this gives
\[
\resizebox{3cm}{!}{
\begin{tikzpicture}[baseline=(current bounding box.center)] 
\draw[fill=violet!40!white] (0,0) rectangle (5,3);
\draw (0,0) grid (5,3);
\node[left] at (0,1.5) {$\vdots$};
\node[below] at (2.5,-0.5) {$\bm{\lambda}$}; \node[above] at (2.5,3) {$\tilde{\bm{\mu}}$};
\node[] at (3,0) {x}; \node[] at (3,1) {x}; \node[] at (3,2) {x}; \node[] at (3,3) {x};
\node[left] at (0,0.5) {$\bar x_1$}; \node[left] at (0,2.5) {$\bar x_m$}; 
\draw[fill=white] (5,0.5) circle (0.1); \draw[fill=white] (5,1.5) circle (0.1); \draw[fill=white] (5,2.5) circle (0.1);
\draw[fill=white] (0,0.5) circle (0.1); \draw[fill=white] (0,1.5) circle (0.1); \draw[fill=white] (0,2.5) circle (0.1);
\node[below] at (1.5,0) {$\leftarrow\; l_1\; \rightarrow$};
\node[below] at (4,0) {$\leftarrow\; l_2\; \rightarrow$};
\end{tikzpicture} 
} 
= (x_1\ldots x_m)^{k(l_1+l_2)} t^{d_P(\bm{\lambda},\bm{\mu})} \mathcal{L}^P_{\bm{\lambda}^c/\tilde{\bm{\mu}}^c}(\frac{1}{x_1},\ldots, \frac{1}{x_m};t)
\]
as desired.
\end{proof}
\begin{lem}
Let $\bm{\lambda},\bm{\mu},m,l_1,l_2$ be as in the previous lemma, except now consider $\bm{\mu}$ as an element of $P_{l_1+m,l_2}^{(k)}$ (that is, add $m$ more parts equal to zero). We have
\[
\begin{aligned}
\resizebox{3cm}{!}{
\begin{tikzpicture}[baseline=(current bounding box.center)] 
\draw[fill=violet!40!white] (0,0) rectangle (5,3);
\draw (0,0) grid (5,3);
\node[left] at (0,1.5) {$\vdots$};
\node[below] at (2.5,-0.5) {$\bm{\lambda}$}; \node[above] at (2.5,3) {$\bm{\mu}$};
\node[] at (1,0) {x}; \node[] at (2,1) {x}; \node[] at (3,2) {x}; \node[] at (4,3) {x};
\node[left] at (0,0.5) {$\bar x_1$}; \node[left] at (0,2.5) {$\bar x_m$}; 
\draw[fill=white] (5,0.5) circle (0.1); \draw[fill=white] (5,1.5) circle (0.1); \draw[fill=white] (5,2.5) circle (0.1);
\draw[fill=black] (0,0.5) circle (0.1); \draw[fill=black] (0,1.5) circle (0.1); \draw[fill=black] (0,2.5) circle (0.1);
\node[below] at (0.5,0) {$\leftarrow\; l_1\; \rightarrow$};
\node[below] at (3,0) {$\leftarrow\; l_2\; \rightarrow$};
\end{tikzpicture} 
} 
& =  (x_1\ldots x_m)^{k(l_1+l_2-m+1)}(x^{\rho_m})^k t^{d_P(\bm{\lambda},\bm{\mu})} \mathcal{L}^P_{\bm{\lambda}^c/\tilde{\bm{\mu}}^c}(X_m^{-1};t) 
\end{aligned}
\]
\end{lem}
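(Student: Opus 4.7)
The strategy is to reduce to the preceding lemma, which evaluates the light-purple partition function on the same $m$-row lattice but with a vertical zero-content line and empty left boundary, and with top partition $\tilde{\bm{\mu}}$ in place of $\bm{\mu}$. Call that partition function $Z^{\mathrm{vert}}_{\bm{\lambda}/\tilde{\bm{\mu}}}(X_m;t)$. Using
\[
\prod_{i=1}^m x_i^{-k(i-1)}\cdot(x_1\cdots x_m)^{k(l_1+l_2)}=(x_1\cdots x_m)^{k(l_1+l_2-m+1)}(x^{\rho_m})^k,
\]
the claim reduces to proving the multiplicative identity
\[
(\mathcal{L}^P)^*_{\bm{\lambda}/\bm{\mu}}(X_m;t)=\prod_{i=1}^m x_i^{-k(i-1)}\cdot Z^{\mathrm{vert}}_{\bm{\lambda}/\tilde{\bm{\mu}}}(X_m;t).
\]
Combined with the preceding lemma, this gives the desired formula.

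To prove the multiplicative identity I would construct a weight-preserving bijection between configurations of the two lattices. The key structural observation is that the full-left boundary of the diagonal drawing, together with the $L'$ constraint $K_i\ge J_i$, propagates a cascading triangular region in the bottom-left corner: by induction on the column index $c$, the face at $(c,r)$ with $c\le r$ is forced to satisfy $J_i=K_i=1$ for every color $i$, and hence $L_i=I_i$ by path conservation. Peeling off this triangular region leaves a configuration that matches a configuration of the vertical drawing with empty left boundary and top $\tilde{\bm{\mu}}$, because the $m$ extra top particles of the diagonal drawing at positions $1,\dots,m$ are precisely absorbed by the cascading forced right edges coming from the left boundary. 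Counting weights: the light-purple contribution factors as $x_r^{k(l_1+l_2)-H_r}\,t^{\varphi_r}$ per row, where $H_r=\sum_c|\L_{r,c}|$ is the total horizontal-move count; and the forced right edges in the triangular region contribute exactly $k(r-1)$ extra units to $H_r$ in row $r$, producing the factor $\prod_i x_i^{-k(i-1)}$.

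The main obstacle is verifying that this bijection preserves the $t$-weight arising from $\varphi(\L,\K-\J)$ in the $L'$ matrix---equivalently, that the $\delta'_i$ parameters in the $k$-color factorization transform consistently when one removes the forced triangular region from the diagonal drawing. I expect this to be handled by a standard corner-flipping argument of the type already used elsewhere in the paper (for example in Lemma \ref{lem:dream} and Corollary \ref{cor:dconjugate}). Once that verification is in hand, plugging in the explicit formula for $Z^{\mathrm{vert}}$ from the preceding lemma yields the target expression and completes the proof.
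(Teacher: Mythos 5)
Your strategy is the paper's own (the paper compresses it into two sentences: the entering paths are forced, they repack the top boundary so that $\tilde{\bm{\mu}}$ reads as $\bm{\mu}$ against the shifted zero content line, and the weight changes by the stated monomial), and your bookkeeping of the forced triangular region and of the $x$-weight is correct: the identity $\prod_i x_i^{-k(i-1)}=(x_1\cdots x_m)^{-k(m-1)}(x^{\rho_m})^k$ does reduce the claim to comparing the two lattices row by row, and the $k(r-1)$ forced right-exits in row $r$ account exactly for the monomial discrepancy.

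The one step you leave open, invariance of the $t$-weight, does not actually need a corner-flipping argument, and corner flipping alone would not finish it anyway (it only shows the discrepancy is configuration-independent; you would still need to evaluate it on one configuration to see it is zero). It closes by direct face-by-face inspection. Under your bijection, every face strictly to the right of column $r$ in row $r$ carries identical edge labels in the two configurations, so those $t$-contributions agree trivially. In the triangular region of the diagonal lattice, every face with $c\le r$ has $\bm{J}=\bm{K}=\mathbf{1}$, so no color is of the purely vertical type counted by $\delta'_i$ (equivalently $\varphi(\bm{L},\bm{K}-\bm{J})=0$ since $\bm{K}-\bm{J}=\mathbf{0}$), and these faces contribute $t^0$. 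In the corresponding region of the vertical lattice, the faces in columns $c\le r-1$ of row $r$ are empty (because $\tilde\mu_j\ge m$ forces $\beta^{r-1}_j\ge r-1$, so no particle sits left of column $r$ at level $r-1$), and the face in column $r$ has $\bm{J}=\mathbf{0}$ and $\bm{K}=\mathbf{0}$ (the particle there, necessarily the last part, is forced rightward by the same boundary inequality), so $\varphi(\bm{L},\bm{K}-\bm{J})=0$ there as well. Hence the $t$-weights agree exactly and the lemma follows from the preceding one as you intend.
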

\begin{proof}
The paths entering from the right must end packed to the right at the top. This, along with the shift right by $m$ of the zero content line, means that the top boundary is now given by $\bm{\mu}$. Adding the path entering from the right changes the weight by the factor $(x_m^{m-1}\ldots x_2)^{-k} = (x_1\ldots x_m)^{-k(m-1)}(x^{\rho_m})^k $.
\end{proof}
Finally we must relate the LLT polynomial of a skew partition with that of its complement.
\begin{lem}
Given tuples of partitions $\bm{\lambda}\in P_{l_1,l_2}^{(k)}$ and $\bm{\mu}\in P_{l_1,l_2-m}^{(k)}$. Let $\tilde {\bm{\mu}}\in P_{l_1,l_2}^{(k)}$ be the tuple of partitions one gets by adding $m$ to every part of every partition in $\bm{\mu}$. We have
\[
\mathcal{L}^P_{\bm{\lambda}/\bm{\mu}}(X_m;t) = (x_1\ldots x_m)^{kl_1} \mathcal{L}^P_{\bm{\lambda}^c/\tilde{\bm{\mu}}^c}(X_m^{-1};t).
\]
\end{lem}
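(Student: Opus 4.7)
The plan is to construct an explicit bijection between configurations summed over on the two sides and to track how the weights transform. Expanding $\mathcal{L}^P_{\bm{\lambda}/\bm{\mu}}(X_m;t)$ as a sum over vertical-strip chains $\bm{\mu} = \bm{\alpha}_0 \subseteq \cdots \subseteq \bm{\alpha}_m = \bm{\lambda}$, and similarly writing $\mathcal{L}^P_{\bm{\lambda}^c/\tilde{\bm{\mu}}^c}(X_m^{-1};t)$ as a sum over chains $\tilde{\bm{\mu}}^c = \bm{\beta}_0 \subseteq \cdots \subseteq \bm{\beta}_m = \bm{\lambda}^c$, I would propose the bijection $\bm{\beta}_i := (\bm{\alpha}_i + (m-i))^c$, where the complement is taken in the $l_1 \times l_2$ box. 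A direct check shows $\bm{\beta}_0 = (\bm{\mu}+m)^c = \tilde{\bm{\mu}}^c$ and $\bm{\beta}_m = \bm{\lambda}^c$, and that $\bm{\beta}_{i-1}\subseteq\bm{\beta}_i$ is a vertical strip exactly when $\bm{\alpha}_{i-1}\subseteq\bm{\alpha}_i$ is, via the identity $(\beta_i)_j - (\beta_{i-1})_j = 1 - ((\alpha_i)_{l_1-j+1} - (\alpha_{i-1})_{l_1-j+1}) \in \{0,1\}$; the bound $(\alpha_i)_{l_1-j+1} \le l_2 - (m-i)$ needed for the complement to be well-defined follows from $\bm{\mu} \subseteq \bm{\alpha}_i \subseteq \bm{\lambda}$ combined with the vertical-strip condition.

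The $X$-weights then match once the prefactor is included. Per color the strip sizes satisfy $|\bm{\beta}_i/\bm{\beta}_{i-1}| = l_1 - |\bm{\alpha}_i/\bm{\alpha}_{i-1}|$, so summing over all $k$ colors and evaluating at $x_i^{-1}$, the $i$-th $\bm{\beta}$-row contributes $x_i^{-(kl_1 - |\bm{\alpha}_i/\bm{\alpha}_{i-1}|)}$, and multiplication by the prefactor $(x_1\cdots x_m)^{kl_1}$ exactly recovers $\prod_i x_i^{|\bm{\alpha}_i/\bm{\alpha}_{i-1}|}$, which is the $x$-weight of the $\bm{\alpha}$-chain.

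The main obstacle is verifying that the $t$-weights coincide under the bijection. The single-row $t$-weight is a product of local factors $t^{\varphi(\L,\K-\J)}$ which, for each pair of colors $\alb$, records an interaction between the larger color being vertical and the smaller color exiting to the right. My plan is to reduce to the case $k=2$ by multilinearity of $\varphi$ and then do a case-by-case analysis on the five allowed purple vertex types, showing that the pairwise contribution of any two colors to the row $\bm{\alpha}_{i-1}\to\bm{\alpha}_i$ equals its contribution to the row $\bm{\beta}_{i-1}\to\bm{\beta}_i$; the key combinatorial observation is that the complement-and-shift operation exchanges the positions of the horizontal jog segments (SW-to-E / cross / L-corner) in a way that is symmetric with respect to the $\varphi$-bookkeeping.

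An alternative route, which may be cleaner, is to derive the identity by combining the preceding (unnumbered) purple rotation lemma $\mathcal{L}^P_{\tilde{\bm{\mu}}/\bm{\lambda}}(X_m;t) = t^{d_P(\bm{\lambda},\bm{\mu})} \mathcal{L}^P_{\bm{\lambda}^c/\tilde{\bm{\mu}}^c}(X_m;t)$ (which one can apply at $X_m^{-1}$ since it is an identity of rational functions) with a chain-reversal bijection $\bm{\gamma}_i := \bm{\alpha}_{m-i} + i$ accompanied by reversal of the $x$-variables; this should produce $\mathcal{L}^P_{\bm{\lambda}/\bm{\mu}}(X_m;t) = (x_1\cdots x_m)^{kl_1}\,t^{-d_P(\bm{\lambda},\bm{\mu})}\,\mathcal{L}^P_{\tilde{\bm{\mu}}/\bm{\lambda}}(X_m^{-1};t)$, and the $t^{d_P}$ factors from the two steps should cancel. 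Tracking these $t$-exponents in detail — using the explicit formula in Lemma \ref{lem:dp} — is essentially the same corner-flipping bookkeeping that would justify the direct bijection above, so either approach ultimately reduces to the same two-color check.
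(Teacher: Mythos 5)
Your first route is essentially the paper's own proof: the same complement-in-an-$l_1\times l_2$-box bijection on configurations, the same bookkeeping showing the $x$-weights match up to the prefactor $(x_1\cdots x_m)^{kl_1}$, and a deferred finite check that the $t$-exponent is unchanged (the paper organizes that last step as a corner-flipping argument showing the $t$-discrepancy is configuration-independent, then evaluates it to $0$ on one configuration, rather than your row-by-row two-color vertex analysis, but both reduce to the same local computation). The proposal is correct.
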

\begin{proof}
Given a filling of $\bm{\lambda}/\bm{\mu}$ and integers $l_1,l_2$ as above, we can biject to a filling of $\bm{\lambda}^c/\tilde{\bm{\mu}}^c$ as follows. For each skew partition in $\bm{\lambda}/\bm{\mu}$ draw it inside an $l_1\times l_2$ box. Given any valid filling of the skew shape, go from left to right, row-by-row and fill the cells of the box with the largest available integer not already used in that row. After rotating 180 degrees the newly filled cells of the box are a valid filling for the corresponding partition in $\bm{\lambda}^c/\tilde{\bm{\mu}}^c$. For example, let $\lambda = (3,3,1,0),\mu = (1,0,0,0), l_1=m=4,l_2=6$.  Then we have $\lambda^c = (6,5,3,3)$ and $\tilde \mu^c = (2,2,2,1)$. Consider the filling below.
\[ \begin{ytableau}
*(lightgray) & *(lightgray) & *(lightgray) & *(lightgray) & *(lightgray) & *(lightgray)\\
2 & *(lightgray) & *(lightgray) & *(lightgray) & *(lightgray) & *(lightgray)\\
2 & 3 & 4 & *(lightgray) & *(lightgray) & *(lightgray)\\
*(lightgray) & 1 & 2 & *(lightgray) & *(lightgray) & *(lightgray)
\end{ytableau}
\quad \rightarrow \quad
\begin{ytableau}
*(lightgray) 4 & *(lightgray) 3 & *(lightgray) 2 & *(lightgray)  1 & *(lightgray) & *(lightgray) \\
2 & *(lightgray) 4 & *(lightgray) 3 & *(lightgray) 1 & *(lightgray) & *(lightgray) \\
2 & 3 & 4 & *(lightgray) 1 & *(lightgray) & *(lightgray) \\
*(lightgray) & 1 & 2 & *(lightgray) 4 & *(lightgray) 3 & *(lightgray)
\end{ytableau}
\quad \rightarrow \quad
\begin{ytableau}
 *(lightgray) & 3 & 4 & \none[] & \none[] & \none[] \\
 *(lightgray)& *(lightgray) & 1 & \none[] & \none[] & \none[] \\
 *(lightgray) & *(lightgray) & 1 & 3 & 4 & \none[] \\
 *(lightgray) & *(lightgray) & 1 & 2 & 3 & 4
\end{ytableau}
\]
Note that under this map the $x$ weights transform as $x^T\mapsto (x_1\ldots x_m)^{kl_1} (x^T)^{-1}$. We are left to determine what happens to the powers of $t$. It is easy to check that in terms of lattice paths flipping a corner of color $a$ up (down) on one side of the mapping corresponds to flipping a corner of color $k-a+1$ down (up) on the other. As the space of configurations on both sides is connected under such flips, a corner flipping argument (see Lemma 6.7 in \cite{CGKM}) shows the difference in the powers of $t$ does not depend on the configuration. Thus there is some overall power of $t$ difference, call it $\tilde d_p(\bm{\lambda},\bm{\mu})$. We need only to compute the difference in the power of $t$ for a specific choice of configurations to compute $\tilde d_p(\bm{\lambda},\bm{\mu})$. A similar argument to that of the proof of Lemma \ref{lem:dp} shows $\tilde d_p(\bm{\lambda},\bm{\mu}) = 0$.
\end{proof}

Combining all the above lemmas gives the following proposition.
\begin{prop}\label{prop:Lpstar}
\[
\begin{aligned}
(\mathcal{L}^P)^*_{\bm{\lambda}/\bm{\mu}}(X_m;t) = & (x_1\ldots x_m)^{k(l_2-m+1)}(x^{\rho_m})^k t^{d_P(\bm{\lambda},\bm{\mu})} \mathcal{L}^P_{\bm{\lambda}/\bm{\mu}}(X_m;t)
\end{aligned}
\]
where the whole thing is independent of $l_1$, and $d_P(\bm{\lambda},\bm{\mu})$ and $\mathcal{L}^P_{\bm{\lambda}/\bm{\mu}}$ are also independent of $l_2$.
\end{prop}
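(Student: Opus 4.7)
The plan is to combine the two preceding lemmas mechanically. Applying the penultimate lemma with $\bm{\lambda}\in P_{l_1,l_2}^{(k)}$ and $\bm{\mu}\in P_{l_1+m,l_2-m}^{(k)}\subseteq P_{l_1+m,l_2}^{(k)}$ gives
\[
(\mathcal{L}^P)^*_{\bm{\lambda}/\bm{\mu}}(X_m;t) = (x_1\cdots x_m)^{k(l_1+l_2-m+1)}(x^{\rho_m})^k\,t^{d_P(\bm{\lambda},\bm{\mu})}\,\mathcal{L}^P_{\bm{\lambda}^c/\tilde{\bm{\mu}}^c}(X_m^{-1};t),
\]
with $\tilde{\bm{\mu}}$ obtained from $\bm{\mu}$ by adding $m$ to every part (so that the $m$ trailing zeros in $\bm{\mu}$ become parts of size $m$), and both complements computed in the $l_1\times l_2$ box. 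The final lemma, rearranged, states that
\[
\mathcal{L}^P_{\bm{\lambda}^c/\tilde{\bm{\mu}}^c}(X_m^{-1};t) = (x_1\cdots x_m)^{-kl_1}\,\mathcal{L}^P_{\bm{\lambda}/\bm{\mu}}(X_m;t),
\]
and substituting this back and collecting the exponent $k(l_1+l_2-m+1)-kl_1=k(l_2-m+1)$ yields exactly the formula in the proposition.

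For the independence claims, $\mathcal{L}^P_{\bm{\lambda}/\bm{\mu}}$ depends only on the underlying tuple of skew shapes, so it is independent of both $l_1$ and $l_2$, and the explicit formula for $d_P$ from Lemma \ref{lem:dp} involves only the parts of $\bm{\lambda}$ and $\bm{\mu}$ themselves and so is $l_2$-independent. The $l_1$-independence of the whole expression follows from the earlier single-row discussion: increasing $l_1$ amounts to prepending light-purple faces on the left in which every color appears as a cross, each of weight $1$, so $(\mathcal{L}^P)^*_{\bm{\lambda}/\bm{\mu}}$ does not change, and the established identity then transfers this invariance to the RHS.

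The main obstacle is careful bookkeeping: I must ensure that the complements $\bm{\lambda}^c$ and $\tilde{\bm{\mu}}^c$ appearing via the two lemmas are interpreted in the same $l_1\times l_2$ ambient box, and that the $m$ trailing zero parts of $\bm{\mu}$ (needed to accommodate the shift in the zero content line) indeed become parts of size $m$ in $\tilde{\bm{\mu}}$, so that the two invocations produce the \emph{same} skew shape for $\bm{\lambda}^c/\tilde{\bm{\mu}}^c$. Once those normalizations are aligned, the proof is purely algebraic, reducing to collecting powers of $x_1\cdots x_m$.
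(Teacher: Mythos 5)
Your proposal is correct and is exactly the combination the paper intends (the paper simply states ``Combining all the above lemmas gives the following proposition'' without writing out the algebra): apply the lemma computing $(\mathcal{L}^P)^*_{\bm{\lambda}/\bm{\mu}}$ in terms of $\mathcal{L}^P_{\bm{\lambda}^c/\tilde{\bm{\mu}}^c}(X_m^{-1};t)$, then the complementation lemma to return to $\mathcal{L}^P_{\bm{\lambda}/\bm{\mu}}(X_m;t)$, and collect the exponent $k(l_1+l_2-m+1)-kl_1=k(l_2-m+1)$. Your bookkeeping remarks about the common $l_1\times l_2$ box and the trailing zero parts of $\bm{\mu}$ becoming parts of size $m$ in $\tilde{\bm{\mu}}$, as well as the $l_1$- and $l_2$-independence claims, match the paper's setup.
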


\subsection{Cauchy Identities}
Using the above we will now prove several Cauchy identities for the $\mathcal{L}$ and $\mathcal{L}^P$.

\begin{prop}\label{cauchy2}
Let $\bm{\mu}$ and $\bm{\nu}$ be tuples of partitions each with infinitely many parts only finitely many of which are nonzero. Then
\[
\sum_{\bm{\lambda}} t^{d(\bm{\mu},\bm{\lambda})}\mc{L}_{\bm{\mu}/\bm{\lambda}}(Y_m;t) \mc{L}^P_{\bm{\nu}/\bm{\lambda}}(X_n;t) = \left(\prod_{i,j}\prod_{l=0}^{k-1}(1+x_iy_jt^l)^{-1} \right) \sum_{\bm{\lambda}}t^{d(\bm{\lambda},\bm{\nu})} \mc{L}^P_{\bm{\lambda}/\bm{\mu}}(X_n;t) \mc{L}_{\bm{\lambda}/\bm{\nu}}(Y_m;t)
\]
\end{prop}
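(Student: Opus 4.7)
The plan is the standard train argument for integrable vertex models (cf.~\cite[Sec.~6]{CGKM} and \cite{WHEELER2016543}): I would show that both sides of the identity arise as two different evaluations of a single lattice partition function, with the Yang-Baxter equation of Proposition \ref{prop:YBEpurplewhite} bridging the two.

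First I would construct a lattice consisting of $n$ light-purple rows (with spectral parameters $\bar x_1,\ldots,\bar x_n$) and $m$ gray rows (with spectral parameters $\bar y_1,\ldots,\bar y_m$), with bottom boundary $\bm{\nu}$, top boundary $\bm{\mu}$, and left/right edges chosen as in the definitions of $(\mc{L}^P)^*$ and $\mc{L}^*$. With the gray rows stacked above the light-purple rows, summing over the intermediate partition $\bm{\lambda}$ at the interface and then applying Propositions \ref{prop:Lstar} and \ref{prop:Lpstar} rewrites this partition function as the LHS of the identity times a monomial prefactor that is independent of $\bm{\lambda}$.

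Next, I would attach $nm$ copies of the (appropriately rescaled) yellow $R'$ vertex at one corner of the lattice, one per pair (gray row, light-purple row), with internal edges initialised so that each of these vertices contributes weight $1$. I would then apply the train argument, swapping each gray row past each light-purple row using Proposition \ref{prop:YBEpurplewhite}, which applies directly here because the gray and light-purple faces are obtained from the white ($L$) and purple ($L'$) faces by a change of variables together with a scalar rescaling, and both operations pass through the Yang-Baxter equation without changing its combinatorial content. After $nm$ such swaps, the gray rows lie below the light-purple rows; summing over the new intermediate partition and reapplying Propositions \ref{prop:Lstar} and \ref{prop:Lpstar} gives the RHS of the identity (again up to a monomial prefactor) times the collective weight of the yellow vertices. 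This collective weight equals the expected factor $\prod_{l=0}^{k-1}\prod_{i,j}(1+x_iy_jt^l)^{-1}$, which arises from the one-color $R'$ weights of the form $\tfrac{1}{1+y/x}$ combined via the product-over-colors structure of the $k$-color $R'$ matrix.

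The main obstacle will be careful bookkeeping, rather than a conceptual difficulty. One must verify that the monomial and $t$-power prefactors from Propositions \ref{prop:Lstar} and \ref{prop:Lpstar} cancel between the two sides, that the positions of the zero content lines are tracked correctly across the train argument so that the intermediate partitions on the two sides range over the correct sets, and that the resulting $t$-powers collapse to $t^{d(\bm{\mu},\bm{\lambda})}$ on the LHS and $t^{d(\bm{\lambda},\bm{\nu})}$ on the RHS. The explicit combinatorial formulas for $d$ and $d_P$ in Proposition \ref{prop:Lstar} and Lemma \ref{lem:dp} are the essential inputs for this last step, and I expect the matching to follow from a direct computation using these formulas together with the identity $d(\bm{\mu},\bm{\lambda})+d(\bm{\lambda},\bm{\nu})+d_P(\bm{\lambda},\bm{\mu})+d_P(\bm{\nu},\bm{\lambda})=\mathrm{const}$ for an appropriate constant that depends only on $\bm{\mu}$ and $\bm{\nu}$.
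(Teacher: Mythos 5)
Your overall strategy (stack the two blocks, attach a staircase of yellow $R'$ crosses initialised to have weight one, drag them through with Proposition \ref{prop:YBEpurplewhite}, and convert the starred partition functions back to $\mc{L}$ and $\mc{L}^P$ via Propositions \ref{prop:Lstar} and \ref{prop:Lpstar}) is the right template and is what the paper does. However, your choice of row types breaks the argument. You take light-purple rows (parameters $\bar x_i$) for the $X$ variables and gray rows (parameters $\bar y_j$) for the $Y$ variables. Both of these face types \emph{remove} strips as one moves up the lattice, so with bottom boundary $\bm{\nu}$ and top boundary $\bm{\mu}$ the interface partition $\bm{\lambda}$ satisfies $\bm{\mu}\subseteq\bm{\lambda}\subseteq\bm{\nu}$ and the split reads $\sum_{\bm{\lambda}}(\mc{L}^P)^*_{\bm{\nu}/\bm{\lambda}}(X_n)\,\mc{L}^*_{\bm{\lambda}/\bm{\mu}}(Y_m)$: one skew shape has $\bm{\lambda}$ on the inside and the other has it on the outside. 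After the train argument the same mixed nesting reappears with $\bm{\mu}$ and $\bm{\nu}$ exchanged, so what you would prove is a commutation relation between a row operator and a dual row operator, not Proposition \ref{cauchy2}, whose left side has $\bm{\lambda}$ inside \emph{both} skew shapes and whose right side has it outside both. To get the Cauchy structure you must pair one strip-\emph{adding} block with one strip-\emph{removing} block: the paper puts gray rows ($\bar y_j$, giving $\mc{L}^*_{\bm{\mu}/\bm{\lambda}}$) on the bottom and ordinary purple rows ($x_i$, giving $\mc{L}^P_{\bm{\nu}/\bm{\lambda}}$ directly) on top, with $\bm{\mu}$ on the bottom boundary and $\bm{\nu}$ on top; then only Proposition \ref{prop:Lstar} is needed, the exponents $t^{d(\bm{\mu},\bm{\lambda})}$ and $t^{d(\bm{\lambda},\bm{\nu})}$ come out directly, and the monomial prefactors cancel between the two sides before taking $l_1\to\infty$.

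The same choice also spoils the kernel. In Proposition \ref{prop:YBEpurplewhite} the $R'$ cross swapping a white-type row of parameter $u$ past a purple-type row of parameter $v$ carries the ratio $v/u$. With the paper's choice this ratio is $x_i/\bar y_j=x_iy_jt^{k-1}$, and the $k$-colour product of one-colour weights of the form $1/(1+v/u)$ collapses to $\prod_{l=0}^{k-1}(1+x_iy_jt^l)^{-1}$. With your light-purple rows the underlying purple parameter is $1/x_i$, so the ratio is $y_jt^{k-1}/x_i$ and the cross weights become functions of $y_jt^l/x_i$; no monomial rescaling turns $x_i/(x_i+y_jt^l)$ into $1/(1+x_iy_jt^l)$, so the stated kernel cannot emerge from your configuration. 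Finally, no $d_P$ terms and no identity of the form $d(\bm{\mu},\bm{\lambda})+d(\bm{\lambda},\bm{\nu})+d_P(\cdots)=\mathrm{const}$ are needed for this proposition; the two $t$-exponents are produced independently by the two applications of Proposition \ref{prop:Lstar}.
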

\begin{proof}
Given $\bm{\mu}$ and $\bm{\nu}$, choose positive integer $l_1$ and $l_2$ such that maximum number of nonzero parts of a partition in $\bm{\nu}$ is less than $l_1-m$ and the largest part of any partition in $\bm{\nu}$ is less than $l_2+m$. Note this ensures that $l_1$ is greater than the maximum number of nonzero parts of a partition in $\bm{\mu}$ and that $l_2$ is greater than the largest part of any partition in $\bm{\mu}$. Truncate the partitions so that $\bm{\mu}\in P_{l_1,l_2}^{(k)}$ and $\bm{\nu}\in P_{l_1-m,l_2+m}^{(k)}$. Consider the following partition function.
\[
\resizebox{0.25\textwidth}{!}{
$
\begin{tikzpicture}[baseline = (current bounding box).center]
\draw[fill=gray] (0,0) rectangle (5,3);
\draw[step=1.0] (0,0) grid (5,3);
\draw[fill=violet] (0,3) rectangle (5,6);
\draw[step=1.0] (0,3) grid (5,6);
\node[below] at (2.5,0) {$\bm{\mu}$};
\node[above] at (2.5,6) {$\bm{\nu}$};
\draw[fill=black] (5,0.5) circle (0.1);
\draw[fill=black] (5,1.5) circle (0.1);
\draw[fill=black] (5,2.5) circle (0.1);
\draw[fill=white] (5,3.5) circle (0.1);
\draw[fill=white] (5,4.5) circle (0.1);
\draw[fill=white] (5,5.5) circle (0.1);
\draw[yellow,fill=yellow] (-3,2.5)--(-3,3.5)--(-2.5,3.5)--(-2.5,4)--(-2,4)--(-2,4.5)--(-1,4.5)--(-1,4)--(-0.5,4)--(-0.5,3.5)--(0,3.5)--(0,2.5)--(-0.5,2.5)--(-0.5,2)--(-1,2)--(-1,1.5)--(-2,1.5)--(-2,2)--(-2.5,2)--(-2.5,2.5)--cycle;
\draw (-3,3.5)--(0,0.5); \draw[fill=white] (-3,3.5) circle (0.1); \node[above left] at (-3,3.5) {$\bar{y}_1$};
\draw (-2.5,4)--(0,1.5); \draw[fill=white] (-2.5,4) circle (0.1); \node[above left] at (-2.5,4) {$\iddots$};
\draw (-2,4.5)--(0,2.5); \draw[fill=white] (-2,4.5) circle (0.1); \node[above left] at (-2,4.5) {$\bar{y}_m$};
\draw (-2,1.5)--(0,3.5); \draw[fill=white] (-2,1.5) circle (0.1); \node[below left] at (-2,1.5) {$x_1$};
\draw (-2.5,2)--(0,4.5); \draw[fill=white] (-2.5,2) circle (0.1); \node[below left] at (-2.5,2) {$\ddots$};
\draw (-3,2.5)--(0,5.5); \draw[fill=white] (-3,2.5) circle (0.1); \node[below left] at (-3,2.5) {$x_n$};
\end{tikzpicture}
$
}
\]
This can be split into three pieces as follows.
\[
\resizebox{2cm}{!}{
\begin{tikzpicture}[baseline = (current bounding box).center]
\draw[yellow,fill=yellow] (-3,2.5)--(-3,3.5)--(-2.5,3.5)--(-2.5,4)--(-2,4)--(-2,4.5)--(-1,4.5)--(-1,4)--(-0.5,4)--(-0.5,3.5)--(0,3.5)--(0,2.5)--(-0.5,2.5)--(-0.5,2)--(-1,2)--(-1,1.5)--(-2,1.5)--(-2,2)--(-2.5,2)--(-2.5,2.5)--cycle;
\draw (-3,3.5)--(0,0.5); \draw[fill=white] (-3,3.5) circle (0.1); \draw[fill=white] (0,0.5) circle (0.1); \node[above left] at (-3,3.5) {$\bar{y}_1$};
\draw (-2.5,4)--(0,1.5); \draw[fill=white] (-2.5,4) circle (0.1); \draw[fill=white] (0,1.5) circle (0.1); \node[above left] at (-2.5,4) {$\iddots$};
\draw (-2,4.5)--(0,2.5); \draw[fill=white] (-2,4.5) circle (0.1); \draw[fill=white] (0,2.5) circle (0.1); \node[above left] at (-2,4.5) {$\bar{y}_m$};
\draw (-2,1.5)--(0,3.5); \draw[fill=white] (-2,1.5) circle (0.1); \draw[fill=white] (0,3.5) circle (0.1); \node[below left] at (-2,1.5) {$x_1$};
\draw (-2.5,2)--(0,4.5); \draw[fill=white] (-2.5,2) circle (0.1); \draw[fill=white] (0,4.5) circle (0.1); \node[below left] at (-2.5,2) {$\ddots$};
\draw (-3,2.5)--(0,5.5); \draw[fill=white] (-3,2.5) circle (0.1); \draw[fill=white] (0,5.5) circle (0.1); \node[below left] at (-3,2.5) {$x_n$};
\end{tikzpicture}
}
\sum_{\bm{\lambda}} 
\resizebox{3cm}{!}{
\begin{tikzpicture}[baseline=(current bounding box.center)] 
\draw[fill=gray] (0,0) rectangle (5,3);
\draw (0,0) grid (5,3);
\node[left] at (0,1.5) {$\vdots$};
\node[below] at (2.5,-0.5) {$\bm{\mu}$}; \node[above] at (2.5,3) {$\bm{\lambda}$};
\node[] at (4,0) {x}; \node[] at (3,1) {x}; \node[] at (2,2) {x}; \node[] at (1,3) {x};
\node[left] at (0,0.5) {$\bar y_1$}; \node[left] at (0,2.5) {$\bar y_m$}; 
\draw[fill=black] (5,0.5) circle (0.1); \draw[fill=black] (5,1.5) circle (0.1); \draw[fill=black] (5,2.5) circle (0.1);
\draw[fill=white] (0,0.5) circle (0.1); \draw[fill=white] (0,1.5) circle (0.1); \draw[fill=white] (0,2.5) circle (0.1);
\node[below] at (2,0) {$\leftarrow\; l_1\; \rightarrow$};
\node[below] at (4.5,0) {$\leftarrow\; l_2\; \rightarrow$};
\end{tikzpicture} 
}
\times 
\resizebox{3cm}{!}{
\begin{tikzpicture}[baseline=(current bounding box.center)] 
\draw[fill=violet] (0,0) rectangle (5,3);
\draw (0,0) grid (5,3);
\node[left] at (0,1.5) {$\vdots$};
\node[below] at (2.5,-0.5) {$\bm{\lambda}$}; \node[above] at (2.5,3) {$\bm{\nu}$};
\node[] at (2,0) {x}; \node[] at (2,1) {x}; \node[] at (2,2) {x}; \node[] at (2,3) {x};
\node[left] at (0,0.5) {$x_1$}; \node[left] at (0,2.5) {$x_n$}; 
\draw[fill=white] (5,0.5) circle (0.1); \draw[fill=white] (5,1.5) circle (0.1); \draw[fill=white] (5,2.5) circle (0.1);
\draw[fill=white] (0,0.5) circle (0.1); \draw[fill=white] (0,1.5) circle (0.1); \draw[fill=white] (0,2.5) circle (0.1);
\node[below] at (1,0) {$\leftarrow\; l_1-m\; \rightarrow$};
\node[below] at (3.5,0) {$\leftarrow\; l_2+m\; \rightarrow$};
\end{tikzpicture} }
\]
From the previous subsection, we know every piece is independent of $l_2$, so we may take $l_2\to \infty$. Here the crosses have weight one. Using the YBE to move the crosses to the other side gives
\[
\resizebox{0.25\textwidth}{!}{
$
\begin{tikzpicture}[baseline = (current bounding box).center]
\draw[fill=violet] (0,0) rectangle (5,3);
\draw[step=1.0] (0,0) grid (5,3);
\draw[fill=gray] (0,3) rectangle (5,6);
\draw[step=1.0] (0,3) grid (5,6);
\node[below] at (2.5,0) {$\bm{\mu}$};
\node[above] at (2.5,6) {$\bm{\nu}$};
\draw[fill=white] (0,0.5) circle (0.1); \node[left] at (0,0.5) {$x_1$};
\draw[fill=white] (0,1.5) circle (0.1); \node[left] at (0,1.5) {$\vdots$};
\draw[fill=white] (0,2.5) circle (0.1); \node[left] at (0,2.5) {$x_n$};
\draw[fill=white] (0,3.5) circle (0.1); \node[left] at (0,3.5) {$\bar{y}_1$};
\draw[fill=white] (0,4.5) circle (0.1); \node[left] at (0,4.5) {$\vdots$};
\draw[fill=white] (0,5.5) circle (0.1); \node[left] at (0,5.5) {$\bar{y}_m$};
\node[above] at (4.5,6) {$\ldots$};
\draw[yellow,fill=yellow] (5,2.5)--(5,3.5)--(5.5,3.5)--(5.5,4)--(6,4)--(6,4.5)--(7,4.5)--(7,4)--(7.5,4)--(7.5,3.5)--(8,3.5)--(8,2.5)--(7.5,2.5)--(7.5,2)--(7,2)--(7,1.5)--(6,1.5)--(6,2)--(5.5,2)--(5.5,2.5)--cycle;
\draw (8,3.5)--(5,0.5); \draw[fill=white] (8,3.5) circle (0.1); 
\draw (7.5,4)--(5,1.5); \draw[fill=white] (7.5,4) circle (0.1); 
\draw (7,4.5)--(5,2.5); \draw[fill=white] (7,4.5) circle (0.1); 
\draw (7,1.5)--(5,3.5); \draw[fill=black] (7,1.5) circle (0.1); 
\draw (7.5,2)--(5,4.5); \draw[fill=black] (7.5,2) circle (0.1); 
\draw (8,2.5)--(5,5.5); \draw[fill=black] (8,2.5) circle (0.1); 
\end{tikzpicture}
$
}
\]
Since we have taken $l_2\to\infty$ and paths cannot travel horizontally across a purple face, we know that the paths originating from the bottom boundary must exit from the gray faces at the right boundary.  Splitting this into parts, we get
\[
\sum_{\bm{\lambda}} 
\resizebox{3cm}{!}{
\begin{tikzpicture}[baseline=(current bounding box.center)] 
\draw[fill=gray] (0,0) rectangle (5,3);
\draw (0,0) grid (5,3);
\node[left] at (0,1.5) {$\vdots$};
\node[below] at (2.5,-0.5) {$\bm{\lambda}$}; \node[above] at (2.5,3) {$\bm{\nu}$};
\node[] at (4,0) {x}; \node[] at (3,1) {x}; \node[] at (2,2) {x}; \node[] at (1,3) {x};
\node[left] at (0,0.5) {$\bar y_1$}; \node[left] at (0,2.5) {$\bar y_m$}; 
\draw[fill=black] (5,0.5) circle (0.1); \draw[fill=black] (5,1.5) circle (0.1); \draw[fill=black] (5,2.5) circle (0.1);
\draw[fill=white] (0,0.5) circle (0.1); \draw[fill=white] (0,1.5) circle (0.1); \draw[fill=white] (0,2.5) circle (0.1);
\node[below] at (2,0) {$\leftarrow\; l_1\; \rightarrow$};
\node[above] at (4.5,3) {$\ldots$};
\end{tikzpicture} 
}
\times 
\resizebox{3cm}{!}{
\begin{tikzpicture}[baseline=(current bounding box.center)] 
\draw[fill=violet] (0,0) rectangle (5,3);
\draw (0,0) grid (5,3);
\node[left] at (0,1.5) {$\vdots$};
\node[below] at (2.5,-0.5) {$\bm{\mu}$}; \node[above] at (2.5,3) {$\bm{\lambda}$};
\node[] at (3,0) {x}; \node[] at (3,1) {x}; \node[] at (3,2) {x}; \node[] at (3,3) {x};
\node[left] at (0,0.5) {$x_1$}; \node[left] at (0,2.5) {$x_n$}; 
\draw[fill=white] (5,0.5) circle (0.1); \draw[fill=white] (5,1.5) circle (0.1); \draw[fill=white] (5,2.5) circle (0.1);
\draw[fill=white] (0,0.5) circle (0.1); \draw[fill=white] (0,1.5) circle (0.1); \draw[fill=white] (0,2.5) circle (0.1);
\node[below] at (1.5,0) {$\leftarrow\; l_1\; \rightarrow$};
\node[above] at (4.5,3) {$\ldots$};
\end{tikzpicture} }
\times
\resizebox{2cm}{!}{
\begin{tikzpicture}[baseline = (current bounding box).center]
\draw[yellow,fill=yellow] (5,2.5)--(5,3.5)--(5.5,3.5)--(5.5,4)--(6,4)--(6,4.5)--(7,4.5)--(7,4)--(7.5,4)--(7.5,3.5)--(8,3.5)--(8,2.5)--(7.5,2.5)--(7.5,2)--(7,2)--(7,1.5)--(6,1.5)--(6,2)--(5.5,2)--(5.5,2.5)--cycle;
\draw (8,3.5)--(5,0.5); \draw[fill=white] (8,3.5) circle (0.1); \draw[fill=white] (5,0.5) circle (0.1);
\draw (7.5,4)--(5,1.5); \draw[fill=white] (7.5,4) circle (0.1); \draw[fill=white] (5,1.5) circle (0.1);
\draw (7,4.5)--(5,2.5); \draw[fill=white] (7,4.5) circle (0.1); \draw[fill=white] (5,2.5) circle (0.1);
\draw (7,1.5)--(5,3.5); \draw[fill=black] (7,1.5) circle (0.1); \draw[fill=black] (5,3.5) circle (0.1); 
\draw (7.5,2)--(5,4.5); \draw[fill=black] (7.5,2) circle (0.1); \draw[fill=black] (5,4.5) circle (0.1); 
\draw (8,2.5)--(5,5.5); \draw[fill=black] (8,2.5) circle (0.1); \draw[fill=black] (5,5.5) circle (0.1); 
\node[left] at (5,5.5) {$\bar y_m$}; \node[left] at (5,4.5) {$\vdots$}; \node[left] at (5,3.5) {$\bar y_1$}; \node[left] at (5,2.5) {$x_n$}; \node[left] at (5,1.5) {$\vdots$}; \node[left] at (5,0.5) {$x_1$};
\end{tikzpicture}
}
\]
Equating the two sums gives
\[
\begin{aligned}
\sum_{\bm{\lambda}} \mathcal{L}^*_{\bm{\mu}/\bm{\lambda}}(Y_m;t) \mathcal{L}^P_{\bm{\nu}/\bm{\lambda}}(X_n;t) = \left(\prod_{i,j}\prod_{l=0}^{k-1}(1+x_iy_jt^l)^{-1} \right) \sum_{\bm{\lambda}} \mc{L}^P_{\bm{\lambda}/\bm{\mu}}(X_n;t) \mc{L}^*_{\bm{\lambda}/\bm{\nu}}(Y_m;t).
\end{aligned}
\]
where the prefactor on the RHS comes from the weight of the crosses. Using Prop. \ref{prop:Lstar}, we get
\[ \begin{aligned}
&\sum_{\bm{\lambda}} (y_1\ldots y_m)^{(l_1-m)k}(y^{\rho_m})^k t^{(ml_1-\binom{m+1}{2})\binom{k}{2}}t^{d(\bm{\mu},\bm{\lambda})}\mathcal{L}_{\bm{\mu}/\bm{\lambda}}(Y_m;t) \mathcal{L}^P_{\bm{\nu}/\bm{\lambda}}(X_n;t) =\\
& \left(\prod_{i,j}\prod_{l=0}^{k-1}(1+x_iy_jt^l)^{-1} \right) \sum_{\bm{\lambda}} (y)^{(l_1-m)k}(y^{\rho_m})^k t^{(ml_1-\binom{m+1}{2})\binom{k}{2}}t^{d(\bm{\lambda},\bm{\nu})} \mc{L}^P_{\bm{\lambda}/\bm{\mu}}(X_n;t) \mc{L}_{\bm{\lambda}/\bm{\nu}}(Y_m;t).
\end{aligned} \]

\noindent All the terms involving $l_1$ cancel and we can then take $l_1\to\infty$ giving the proposition.
\end{proof}

An analogous proof using white faces rather than purple, and using the appropriate cross weights, gives the following proposition.
\begin{prop}\label{cauchy1}
Let $\bm{\mu}$ and $\bm{\nu}$ be tuples of partitions each with infinitely many parts only finitely many of which are nonzero. Then
\[
\sum_{\bm{\lambda}} t^{d(\bm{\mu},\bm{\lambda})}\mc{L}_{\bm{\nu}/\bm{\lambda}}(X_n;t)\mc{L}_{\bm{\mu}/\bm{\lambda}}(Y_m;t) =\prod_{i,j}\prod_{l=0}^{k-1}(1-x_iy_jt^l)  \sum_{\bm{\lambda}} t^{d(\bm{\lambda},\bm{\nu})} \mc{L}_{\bm{\lambda}/\bm{\mu}}(X_n;t) \mc{L}_{\bm{\lambda}/\bm{\nu}}(Y_m;t).
\]
\end{prop}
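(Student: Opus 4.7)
The plan is to run the same argument as in Proposition \ref{cauchy2}, but with the top layer of purple rows replaced by a top layer of white rows, and with the yellow ($R'$-matrix) crosses replaced by ordinary ($R$-matrix) crosses. The underlying integrability now comes from Proposition \ref{prop:YBEwhite} instead of Proposition \ref{prop:YBEpurplewhite}.

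\textbf{Setup.} Given $\bm{\mu}$ and $\bm{\nu}$, truncate so that $\bm{\nu}\in P_{l_1-n,l_2+n}^{(k)}$ and $\bm{\mu}\in P_{l_1,l_2}^{(k)}$ for $l_1,l_2$ sufficiently large. Form the two-tier lattice with $m$ gray rows on the bottom, carrying parameters $\bar y_1,\dots,\bar y_m$ and bottom boundary $\bm{\mu}$, and $n$ ordinary white rows on top, carrying $x_1,\dots,x_n$ and top boundary $\bm{\nu}$. Attach to the left edge a cascade of $R$-matrix crosses so that $\bar y_j$ feeds the $j$-th gray row and $x_i$ feeds the $i$-th white row, with each $x_i$-line passing through every $\bar y_j$-line via an $R$-crossing.

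\textbf{Two evaluations.} First, read the configuration without moving the crosses: the boundary data forces the cross rapidities to pass straight through (each crossing contributing~$1$), so summing over the intermediate boundary $\bm{\lambda}$ between the gray and white layers yields
\[
\sum_{\bm{\lambda}} \mathcal{L}^*_{\bm{\mu}/\bm{\lambda}}(Y_m;t)\,\mathcal{L}_{\bm{\nu}/\bm{\lambda}}(X_n;t).
\]
Second, apply Proposition \ref{prop:YBEwhite} repeatedly (the white--white train argument) to drag every cross from the left-hand side to the right-hand side. After taking $l_2\to\infty$, no path can travel horizontally to the far right through an infinite stretch of white faces without being absorbed by the boundary, so each crossing on the right must be in the fully-crossed configuration. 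By the $k$-color definition of the $R$-matrix, the weight of a single fully-crossed $(x_i,y_j)$ vertex is $\prod_{l=0}^{k-1}(1-x_iy_jt^l)$, and splitting on the new intermediate boundary gives
\[
\prod_{i,j}\prod_{l=0}^{k-1}(1-x_iy_jt^l)\;\sum_{\bm{\lambda}} \mathcal{L}_{\bm{\lambda}/\bm{\mu}}(X_n;t)\,\mathcal{L}^*_{\bm{\lambda}/\bm{\nu}}(Y_m;t).
\]

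\textbf{Conversion and cleanup.} Equate the two expressions and substitute Proposition \ref{prop:Lstar} on both sides to turn every $\mathcal{L}^*$ into $\mathcal{L}$. The $t^{d(\bm{\mu},\bm{\lambda})}$ and $t^{d(\bm{\lambda},\bm{\nu})}$ factors produced by Proposition \ref{prop:Lstar} land in exactly the places demanded by the statement, while the auxiliary prefactors $(y_1\cdots y_m)^{(l_1-m)k}(y^{\rho_m})^k t^{(ml_1-\binom{m+1}{2})\binom{k}{2}}$ appear identically on both sides and cancel. Taking $l_1\to\infty$ removes the truncation and proves the identity.

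\textbf{Main obstacle.} The only non-routine step is pinning down the weight of the fully-crossed $R$-matrix vertex as $\prod_{l=0}^{k-1}(1-x_iy_jt^l)$: one must check that the Pochhammer factor $(x/y;t)_{|\J|-|\K|}$ in the $R$-matrix weight, together with the $k$-color product over $\epsilon_i$-shifts, collapses to this product precisely in the $|\J|=k$, $|\K|=0$ state, and that the boundary/telescoping conditions indeed force this state. Once this is verified, the rest of the argument is a direct transcription of the proof of Proposition \ref{cauchy2}.
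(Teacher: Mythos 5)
Your proposal is correct and is precisely the argument the paper intends: the paper proves Proposition \ref{cauchy1} only by declaring it ``analogous'' to Proposition \ref{cauchy2}, with white faces in place of purple and the plain $R$-matrix crosses (hence Proposition \ref{prop:YBEwhite}) in place of the yellow $R'$-crosses, which is exactly what you carry out, including the conversion of both $\mathcal{L}^*(Y_m;t)$ factors via Proposition \ref{prop:Lstar} and the cancellation of the $l_1$-dependent prefactors. One small caveat: the right-hand crosses are forced into the fully-crossed state by path conservation at the crosses (the gray rows' right boundary is fully packed while the outgoing edges continuing the white-row lines carry white circles), not because paths cannot travel horizontally through white faces --- they can --- but this does not affect the validity of the argument, and your identification of the weight $\prod_{l=0}^{k-1}(1-x_iy_jt^l)$ (using the $\bar y_j$ parametrization) as the only computation requiring care is accurate.
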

\noindent Note that this was previously shown in \cite{CGKM}.

Using the white faces and the light-purple faces, we have the following.
\begin{prop}\label{cauchy3}
Let $\bm{\mu}$ and $\bm{\nu}$ be tuples of partitions each with infinitely many parts only finitely many of which are nonzero. Then
\[
\begin{aligned}
\left(\prod_{i,j}\prod_{l=0}^{k-1}(1+x_iy_jt^l)^{-1}\right) \sum_{\bm{\lambda}} t^{d_p(\bm{\lambda},\bm{\nu})} \mc{L}_{\bm{\lambda}/\bm{\mu}}(Y_m;t) \mc{L}^P_{\bm{\lambda}/\bm{\nu}}(X_n;t) = \sum_{\bm{\lambda}} t^{d_p(\bm{\mu},\bm{\lambda})}\mc{L}^P_{\bm{\mu}/\bm{\lambda}}(X_n;t) \mc{L}_{\bm{\nu}/\bm{\lambda}}(Y_m;t)
\end{aligned}
\]
\end{prop}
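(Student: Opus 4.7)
The plan is to mirror the proof of Proposition \ref{cauchy2}, replacing the gray rows by white rows and the purple rows by light-purple rows, and using the yellow $R'$ matrix as the intertwiner. Proposition \ref{prop:YBEpurplewhite} gives the Yang-Baxter equation for the white $L$ and purple $L'$ vertices with yellow $R'$ crosses; the change of variables and rescaling that defines the light-purple faces yields a corresponding YBE (with an appropriately scaled cross) that intertwines white rows and light-purple rows, so we may freely swap a white row past a light-purple row.

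Fix $\bm{\mu}$ and $\bm{\nu}$ and choose $l_1, l_2$ large enough to accommodate both tuples (with the shift appropriate for the light-purple boundary). Form the two-row lattice with $m$ white rows on the bottom carrying parameters $Y_m$ and bottom boundary $\bm{\mu}$, and $n$ light-purple rows on the top carrying parameters $\bar{x}_1,\ldots,\bar{x}_n$ and top boundary $\bm{\nu}$. Attach a bank of $nm$ yellow $R'$ crosses on the left, each $\bar{x}_i$-line crossing each $y_j$-line. Slicing at the intermediate boundary $\bm{\lambda}$ expresses the partition function as
\[
W_{\mathrm{left}}\,\sum_{\bm{\lambda}} \mc{L}_{\bm{\lambda}/\bm{\mu}}(Y_m;t)\,(\mc{L}^P)^*_{\bm{\lambda}/\bm{\nu}}(X_n;t),
\]
where $W_{\mathrm{left}}$ is the product of the empty yellow vertex weights on the left. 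Repeated application of Prop \ref{prop:YBEpurplewhite} (the train argument) moves all crosses to the right of the main lattice, swapping the two row groups. The same partition function becomes
\[
W_{\mathrm{right}}\,\sum_{\bm{\lambda}} (\mc{L}^P)^*_{\bm{\mu}/\bm{\lambda}}(X_n;t)\,\mc{L}_{\bm{\nu}/\bm{\lambda}}(Y_m;t),
\]
where $W_{\mathrm{right}}$ is the weight of the cross bank on the right; on this side, paths forced out the right boundary of the main lattice by the shift of the zero-content line in the light-purple rows now traverse the crosses.

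Converting $(\mc{L}^P)^*$ to $\mc{L}^P$ via Proposition \ref{prop:Lpstar} on each side introduces the factors $t^{d_p(\bm{\lambda},\bm{\nu})}$ and $t^{d_p(\bm{\mu},\bm{\lambda})}$ required in the statement, together with $l_2$-dependent prefactors $(x_1\cdots x_n)^{k(l_2-n+1)}(x^{\rho_n})^k$ that match on the two sides and cancel. After this cancellation, equating the two expressions and sending $l_1,l_2 \to \infty$ gives the identity with the overall prefactor $W_{\mathrm{left}}/W_{\mathrm{right}}$.

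The main obstacle is computing $W_{\mathrm{right}}$. Unlike $W_{\mathrm{left}}$, which is simply a product of single-color ``no paths'' yellow weights $\tfrac{1}{1+y/x}$ (together with the color-dependent factor $t^{\epsilon'_i}$ from the $k$-color definition of $R^{'(k)}$), the right-side crosses are evaluated with specific paths traversing them, forced by the open right boundaries of the light-purple rows. A careful case-by-case analysis, analogous to the weight computation in Proposition \ref{cauchy2}, should show that $W_{\mathrm{left}}/W_{\mathrm{right}} = \prod_{i,j}\prod_{l=0}^{k-1}(1+x_iy_jt^l)^{-1}$, completing the proof.
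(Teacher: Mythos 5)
Your overall architecture matches the paper's: form the two-block lattice from white rows and light-purple rows with a bank of $R'$ crosses on one side, use the Yang--Baxter equation of Proposition \ref{prop:YBEpurplewhite} to drag the crosses to the other side, and then convert $(\mc{L}^P)^*$ to $t^{d_p(\cdot,\cdot)}\mc{L}^P$ via Proposition \ref{prop:Lpstar}, with the $l_2$-dependent prefactors cancelling. However, your analysis of the two cross banks is backwards, and the step you flag as the ``main obstacle'' is aimed at an impossible target. The light-purple rows have paths entering through their \emph{left} boundary (the black circles in the definition of $(\mc{L}^P)^*$) and an empty right boundary; nothing is ``forced out the right boundary'' by the shift of the zero-content line --- that rightward shift is exactly what forces paths to enter from the left. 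Consequently, in the configuration with the crosses to the \emph{right} of the lattice (light-purple block on the bottom, white block on top), every edge of every cross is unoccupied and each cross has weight $1$, so that side carries no prefactor. In the configuration with the crosses to the \emph{left} (white block on the bottom, light-purple block on top), each $\bar{x}_i$-line must carry all $k$ colors through its crosses into the light-purple rows, and it is these non-empty crosses that produce $\prod_{i,j}\prod_{l=0}^{k-1}(1+x_iy_jt^l)^{-1}$.

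Two of your specific claims cannot be repaired as stated. First, the empty $R'$ vertex has weight $1$, not $\tfrac{1}{1+y/x}$; the weights $\tfrac{1}{1+y/x}$ and $\tfrac{y/x}{1+y/x}$ belong to configurations in which a path actually passes through the cross. Second, your assignment ($W_{\mathrm{left}}$ trivial, $W_{\mathrm{right}}$ nontrivial, with $W_{\mathrm{left}}/W_{\mathrm{right}} = \prod(1+x_iy_jt^l)^{-1}$) would require $W_{\mathrm{right}} = \prod(1+x_iy_jt^l)$, which no bank of $R'$ crosses can produce: every one-color $R'$ weight is one of $1$, $\tfrac{1}{1+y/x}$, $\tfrac{y/x}{1+y/x}$, so such factors can only appear in the denominator. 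Once the paths are placed where the boundary conditions force them --- all $k$ colors traversing each left-hand cross along the $\bar{x}_i$-line, with the $k$-color correction $t^{\epsilon'}$ supplying the powers $t^l$ --- the prefactor computation is immediate and lands on the correct side of the identity, and the rest of your argument (the YBE swap and the application of Proposition \ref{prop:Lpstar}) goes through as in the paper.
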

\begin{proof}
Given $\bm{\mu}$ and $\bm{\nu}$, choose positive integer $l_1$ and $l_2$ such that $l_1$ is greater than or equal to the number of nonzero parts in $\bm{\mu}$ and $\bm{\nu}$, $l_2$ is greater than or equal to the largest part in $\bm{\mu}$, and $l_2-n$ is greater than or equal to the largest part in $\bm{\nu}$. Consider the following partition function.
\[
\resizebox{0.25\textwidth}{!}{
$
\begin{tikzpicture}[baseline = (current bounding box).center]
\draw[fill=violet!40!white] (0,0) rectangle (5,3);
\draw[step=1.0] (0,0) grid (5,3);
\draw[] (0,3) rectangle (5,6);
\draw[step=1.0] (0,3) grid (5,6);
\node[below] at (2.5,0) {$\bm{\mu}$};
\node[above] at (2.5,6) {$\bm{\nu}$};
\draw[fill=black] (0,0.5) circle (0.1); \node[left] at (0,0.5) {$\bar{x}_1$};
\draw[fill=black] (0,1.5) circle (0.1); \node[left] at (0,1.5) {$\vdots$};
\draw[fill=black] (0,2.5) circle (0.1); \node[left] at (0,2.5) {$\bar{x}_n$};
\draw[fill=white] (0,3.5) circle (0.1); \node[left] at (0,3.5) {$y_1$};
\draw[fill=white] (0,4.5) circle (0.1); \node[left] at (0,4.5) {$\vdots$};
\draw[fill=white] (0,5.5) circle (0.1); \node[left] at (0,5.5) {$y_m$};
\draw[yellow,fill=yellow] (5,2.5)--(5,3.5)--(5.5,3.5)--(5.5,4)--(6,4)--(6,4.5)--(7,4.5)--(7,4)--(7.5,4)--(7.5,3.5)--(8,3.5)--(8,2.5)--(7.5,2.5)--(7.5,2)--(7,2)--(7,1.5)--(6,1.5)--(6,2)--(5.5,2)--(5.5,2.5)--cycle;
\draw (8,3.5)--(5,0.5); \draw[fill=white] (8,3.5) circle (0.1); 
\draw (7.5,4)--(5,1.5); \draw[fill=white] (7.5,4) circle (0.1); 
\draw (7,4.5)--(5,2.5); \draw[fill=white] (7,4.5) circle (0.1); 
\draw (7,1.5)--(5,3.5); \draw[fill=white] (7,1.5) circle (0.1); 
\draw (7.5,2)--(5,4.5); \draw[fill=white] (7.5,2) circle (0.1); 
\draw (8,2.5)--(5,5.5); \draw[fill=white] (8,2.5) circle (0.1); 
\end{tikzpicture}
$
}
\]
This can be split as
\[
\sum_{\bm{\lambda}} \resizebox{3cm}{!}{
\begin{tikzpicture}[baseline=(current bounding box.center)] 
\draw[fill=violet!40!white] (0,0) rectangle (5,3);
\draw (0,0) grid (5,3);
\node[left] at (0,1.5) {$\vdots$};
\node[below] at (2.5,-0.5) {$\bm{\mu}$}; \node[above] at (2.5,3) {$\bm{\lambda}$};
\node[] at (1,0) {x}; \node[] at (2,1) {x}; \node[] at (3,2) {x}; \node[] at (4,3) {x};
\node[left] at (0,0.5) {$\bar x_1$}; \node[left] at (0,2.5) {$\bar x_n$}; 
\draw[fill=white] (5,0.5) circle (0.1); \draw[fill=white] (5,1.5) circle (0.1); \draw[fill=white] (5,2.5) circle (0.1);
\draw[fill=black] (0,0.5) circle (0.1); \draw[fill=black] (0,1.5) circle (0.1); \draw[fill=black] (0,2.5) circle (0.1);
\node[below] at (0.5,0) {$\leftarrow\; l_1\; \rightarrow$};
\node[below] at (3,0) {$\leftarrow\; l_2\; \rightarrow$};
\end{tikzpicture} 
} 
\times
\resizebox{3cm}{!}{
\begin{tikzpicture}[baseline=(current bounding box.center)] 
\draw (0,0) grid (5,3);
\node[left] at (0,1.5) {$\vdots$};
\node[below] at (2.5,-0.5) {$\bm{\lambda}$}; \node[above] at (2.5,3) {$\bm{\nu}$};
\node[] at (3,0) {x}; \node[] at (3,1) {x}; \node[] at (3,2) {x}; \node[] at (3,3) {x};
\node[left] at (0,0.5) {$y_1$}; \node[left] at (0,2.5) {$y_m$}; 
\draw[fill=white] (5,0.5) circle (0.1); \draw[fill=white] (5,1.5) circle (0.1); \draw[fill=white] (5,2.5) circle (0.1);
\draw[fill=white] (0,0.5) circle (0.1); \draw[fill=white] (0,1.5) circle (0.1); \draw[fill=white] (0,2.5) circle (0.1);
\node[below] at (1.5,0) {$\leftarrow\; l_1+n\; \rightarrow$};
\node[below] at (4,0) {$\leftarrow\; l_2-n\; \rightarrow$};
\end{tikzpicture} }
\times
\resizebox{2cm}{!}{
\begin{tikzpicture}[baseline = (current bounding box).center]
\draw[yellow,fill=yellow] (5,2.5)--(5,3.5)--(5.5,3.5)--(5.5,4)--(6,4)--(6,4.5)--(7,4.5)--(7,4)--(7.5,4)--(7.5,3.5)--(8,3.5)--(8,2.5)--(7.5,2.5)--(7.5,2)--(7,2)--(7,1.5)--(6,1.5)--(6,2)--(5.5,2)--(5.5,2.5)--cycle;
\draw (8,3.5)--(5,0.5); \draw[fill=white] (8,3.5) circle (0.1); \draw[fill=white] (5,0.5) circle (0.1); 
\draw (7.5,4)--(5,1.5); \draw[fill=white] (7.5,4) circle (0.1); \draw[fill=white] (5,1.5) circle (0.1);
\draw (7,4.5)--(5,2.5); \draw[fill=white] (7,4.5) circle (0.1); \draw[fill=white] (5,2.5) circle (0.1);
\draw (7,1.5)--(5,3.5); \draw[fill=white] (7,1.5) circle (0.1); \draw[fill=white] (5,3.5) circle (0.1); 
\draw (7.5,2)--(5,4.5); \draw[fill=white] (7.5,2) circle (0.1); \draw[fill=white] (5,4.5) circle (0.1); 
\draw (8,2.5)--(5,5.5); \draw[fill=white] (8,2.5) circle (0.1); \draw[fill=white] (5,5.5) circle (0.1); 
\node[left] at (5,5.5) {$y_m$}; \node[left] at (5,4.5) {$\vdots$}; \node[left] at (5,3.5) {$y_1$}; \node[left] at (5,2.5) {$\bar x_n$}; \node[left] at (5,1.5) {$\vdots$}; \node[left] at (5,0.5) {$\bar x_1$};
\end{tikzpicture}
}
\]
From the previous section we know that none of the pieces depend on $l_1$, so we may extend $l_1\to \infty$. We can use the YBE to move the crosses to the LHS to get
\[
\resizebox{0.25\textwidth}{!}{
$
\begin{tikzpicture}[baseline = (current bounding box).center]
\draw[] (0,0) rectangle (5,3);
\draw[step=1.0] (0,0) grid (5,3);
\draw[fill=violet!40!white] (0,3) rectangle (5,6);
\draw[step=1.0] (0,3) grid (5,6);
\node[below] at (2.5,0) {$\bm{\mu}$};
\node[above] at (2.5,6) {$\bm{\nu}$};
\draw[fill=white] (5,0.5) circle (0.1);
\draw[fill=white] (5,1.5) circle (0.1);
\draw[fill=white] (5,2.5) circle (0.1);
\draw[fill=white] (5,3.5) circle (0.1);
\draw[fill=white] (5,4.5) circle (0.1);
\draw[fill=white] (5,5.5) circle (0.1);
\node[above] at (0.5,6) {$\ldots$};
\draw[yellow,fill=yellow] (-3,2.5)--(-3,3.5)--(-2.5,3.5)--(-2.5,4)--(-2,4)--(-2,4.5)--(-1,4.5)--(-1,4)--(-0.5,4)--(-0.5,3.5)--(0,3.5)--(0,2.5)--(-0.5,2.5)--(-0.5,2)--(-1,2)--(-1,1.5)--(-2,1.5)--(-2,2)--(-2.5,2)--(-2.5,2.5)--cycle;
\draw (-3,3.5)--(0,0.5); \draw[fill=white] (-3,3.5) circle (0.1); \node[above left] at (-3,3.5) {$y_1$};
\draw (-2.5,4)--(0,1.5); \draw[fill=white] (-2.5,4) circle (0.1); \node[above left] at (-2.5,4) {$\iddots$};
\draw (-2,4.5)--(0,2.5); \draw[fill=white] (-2,4.5) circle (0.1); \node[above left] at (-2,4.5) {$y_m$};
\draw (-2,1.5)--(0,3.5); \draw[fill=black] (-2,1.5) circle (0.1); \node[below left] at (-2,1.5) {$\bar{x}_1$};
\draw (-2.5,2)--(0,4.5); \draw[fill=black] (-2.5,2) circle (0.1); \node[below left] at (-2.5,2) {$\ddots$};
\draw (-3,2.5)--(0,5.5); \draw[fill=black] (-3,2.5) circle (0.1); \node[below left] at (-3,2.5) {$\bar{x}_n$};
\end{tikzpicture}
$
}
\]
which we split into
\[
\resizebox{2cm}{!}{
\begin{tikzpicture}[baseline = (current bounding box).center]
\draw[yellow,fill=yellow] (-3,2.5)--(-3,3.5)--(-2.5,3.5)--(-2.5,4)--(-2,4)--(-2,4.5)--(-1,4.5)--(-1,4)--(-0.5,4)--(-0.5,3.5)--(0,3.5)--(0,2.5)--(-0.5,2.5)--(-0.5,2)--(-1,2)--(-1,1.5)--(-2,1.5)--(-2,2)--(-2.5,2)--(-2.5,2.5)--cycle;
\draw (-3,3.5)--(0,0.5); \draw[fill=white] (-3,3.5) circle (0.1); \node[above left] at (-3,3.5) {$y_1$};
\draw (-2.5,4)--(0,1.5); \draw[fill=white] (-2.5,4) circle (0.1); \node[above left] at (-2.5,4) {$\iddots$};
\draw (-2,4.5)--(0,2.5); \draw[fill=white] (-2,4.5) circle (0.1); \node[above left] at (-2,4.5) {$y_m$};
\draw (-2,1.5)--(0,3.5); \draw[fill=black] (-2,1.5) circle (0.1); \node[below left] at (-2,1.5) {$\bar{x}_1$};
\draw (-2.5,2)--(0,4.5); \draw[fill=black] (-2.5,2) circle (0.1); \node[below left] at (-2.5,2) {$\ddots$};
\draw (-3,2.5)--(0,5.5); \draw[fill=black] (-3,2.5) circle (0.1); \node[below left] at (-3,2.5) {$\bar{x}_n$};
\draw[fill=white] (0,0.5) circle (0.1);
\draw[fill=white] (0,1.5) circle (0.1);
\draw[fill=white] (0,2.5) circle (0.1);
\draw[fill=black] (0,3.5) circle (0.1);
\draw[fill=black] (0,4.5) circle (0.1);
\draw[fill=black] (0,5.5) circle (0.1);
\end{tikzpicture}
}
\sum_{\bm{\lambda}} \resizebox{3cm}{!}{
\begin{tikzpicture}[baseline=(current bounding box.center)] 
\draw (0,0) grid (5,3);
\node[left] at (0,1.5) {$\vdots$};
\node[below] at (2.5,-0.5) {$\bm{\mu}$}; \node[above] at (2.5,3) {$\bm{\lambda}$};
\node[] at (3,0) {x}; \node[] at (3,1) {x}; \node[] at (3,2) {x}; \node[] at (3,3) {x};
\node[left] at (0,0.5) {$y_1$}; \node[left] at (0,2.5) {$y_m$}; 
\draw[fill=white] (5,0.5) circle (0.1); \draw[fill=white] (5,1.5) circle (0.1); \draw[fill=white] (5,2.5) circle (0.1);
\draw[fill=white] (0,0.5) circle (0.1); \draw[fill=white] (0,1.5) circle (0.1); \draw[fill=white] (0,2.5) circle (0.1);
\node[below] at (4,0) {$\leftarrow\; l_2\; \rightarrow$};
\node[above] at (0.5,3) {$\ldots$};
\end{tikzpicture} }
\times 
\resizebox{3cm}{!}{
\begin{tikzpicture}[baseline=(current bounding box.center)] 
\draw[fill=violet!40!white] (0,0) rectangle (5,3);
\draw (0,0) grid (5,3);
\node[left] at (0,1.5) {$\vdots$};
\node[below] at (2.5,-0.5) {$\bm{\lambda}$}; \node[above] at (2.5,3) {$\bm{\nu}$};
\node[] at (1,0) {x}; \node[] at (2,1) {x}; \node[] at (3,2) {x}; \node[] at (4,3) {x};
\node[left] at (0,0.5) {$\bar x_1$}; \node[left] at (0,2.5) {$\bar x_n$}; 
\draw[fill=white] (5,0.5) circle (0.1); \draw[fill=white] (5,1.5) circle (0.1); \draw[fill=white] (5,2.5) circle (0.1);
\draw[fill=black] (0,0.5) circle (0.1); \draw[fill=black] (0,1.5) circle (0.1); \draw[fill=black] (0,2.5) circle (0.1);
\node[below] at (3,0) {$\leftarrow\; l_2\; \rightarrow$};
\node[above] at (0.5,3) {$\ldots$};
\end{tikzpicture} 
} 
\]
Note that sufficiently far to the left in the white faces every column is dense with vertical paths, thus the paths from the cross must enter in the light-purple faces. Setting this equal to the other sum
gives
\[
\left(\prod_{i,j}\prod_{l=0}^{k-1}(1+x_iy_jt^{l})^{-1}\right) \sum_{\bm{\lambda}}  \mc{L}_{\bm{\lambda}/\bm{\mu}}(Y_m;t) (\mc{L}^P)^*_{\bm{\lambda}/\bm{\nu}}(X_n;t) = \sum_{\bm{\lambda}} (\mc{L}^P)^*_{\bm{\mu}/\bm{\lambda}}(X_n;t) \mc{L}_{\bm{\nu}/\bm{\lambda}}(Y_m;t)
\]
where the prefactor on the LHS comes from the weight of the crosses. Using Prop. \ref{prop:Lpstar} we have

\[
\begin{aligned}
\left(\prod_{i,j}\prod_{l=0}^{k-1}(1+x_iy_jt^{l})^{-1}\right) \sum_{\bm{\lambda}} (x_1\ldots x_n)^{k(l_2-n+1)}(x^{\rho_n})^k t^{d_p(\bm{\lambda},\bm{\nu})} \mc{L}_{\bm{\lambda}/\bm{\mu}}(Y_m;t) \mc{L}^P_{\bm{\lambda}/\bm{\nu}}(X_n;t) \\
= \sum_{\bm{\lambda}} (x_1\ldots x_n)^{k(l_2-n+1)}(x^{\rho_n})^k t^{d_p(\bm{\mu},\bm{\lambda})}\mc{L}^P_{\bm{\mu}/\bm{\lambda}}(X_n;t) \mc{L}_{\bm{\nu}/\bm{\lambda}}(Y_m;t).
\end{aligned}
\]

\noindent Canceling the terms involving $l_2$ gives the proposition.
\end{proof}

Changing the white faces to purple faces, a similar computation to the above gives
\begin{prop}\label{cauchy4}
Let $\bm{\mu}$ and $\bm{\nu}$ be tuples of partitions each with infinitely many parts only finitely many of which are nonzero. Then
\[
\begin{aligned}
\left(\prod_{i,j}\prod_{l=0}^{k-1}(1-x_iy_jt^l)\right) \sum_{\bm{\lambda}} t^{d_p(\bm{\lambda},\bm{\nu})} \mc{L}^P_{\bm{\lambda}/\bm{\mu}}(Y_m;t) \mc{L}^P_{\bm{\lambda}/\bm{\nu}}(X_n;t) \\= \sum_{\bm{\lambda}} t^{d_p(\bm{\mu},\bm{\lambda})} \mc{L}^P_{\bm{\mu}/\bm{\lambda}}(X_n;t) \mc{L}^P_{\bm{\nu}/\bm{\lambda}}(Y_m;t)
\end{aligned}
\]
\end{prop}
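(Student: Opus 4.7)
The plan is to mimic the proof of Proposition \ref{cauchy3}, replacing the white rows (which contribute $\mc{L}$ factors) with purple rows (which contribute $\mc{L}^P$ factors), and correspondingly using the purple--purple Yang--Baxter equation of Proposition \ref{prop:YBEpurple} with orange $R''$ crosses in place of the purple--white Yang--Baxter equation with yellow $R'$ crosses.

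Given $\bm{\mu}$ and $\bm{\nu}$, choose positive integers $l_1, l_2$ large enough relative to the nonzero parts and largest parts of $\bm{\mu},\bm{\nu}$ (as in the proof of Prop.\ \ref{cauchy3}), and form the partition function associated to the lattice with $n$ light-purple rows carrying rapidities $\bar x_1, \ldots, \bar x_n$ stacked at the bottom, $m$ purple rows carrying $y_1, \ldots, y_m$ on top, bottom boundary $\bm{\mu}$, top boundary $\bm{\nu}$, and an orange $R''$-matrix braid attached on the right. I evaluate this partition function in two ways. First, splitting by the middle boundary $\bm{\lambda}$, the bottom light-purple stack contributes $(\mc{L}^P)^*_{\bm{\mu}/\bm{\lambda}}(X_n;t)$ while the top purple stack contributes $\mc{L}^P_{\bm{\nu}/\bm{\lambda}}(Y_m;t)$, multiplied by the overall weight of the $R''$-braid. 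Second, I use Proposition \ref{prop:YBEpurple} repeatedly to move the $R''$-braid through to the opposite side; since sufficiently far to the left every purple column is dense with vertical paths, any path originating from the braid must enter only the light-purple rows (exactly as in Prop.\ \ref{cauchy3}, where the role of ``dense left-region'' was played by the white rows). After splitting by the new middle boundary $\bm{\lambda}$, this yields a sum over $\bm{\lambda}$ of $\mc{L}^P_{\bm{\lambda}/\bm{\mu}}(Y_m;t)\cdot(\mc{L}^P)^*_{\bm{\nu}/\bm{\lambda}}(X_n;t)$.

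Equating the two expressions and applying Proposition \ref{prop:Lpstar} to each $(\mc{L}^P)^*$ produces matching prefactors $(x_1\cdots x_n)^{k(l_2-n+1)}(x^{\rho_n})^k$ on both sides, which cancel, and the $t^{d_p}$-factors reorganize into the desired $t^{d_p(\bm{\mu},\bm{\lambda})}$ and $t^{d_p(\bm{\lambda},\bm{\nu})}$. Since none of the individual pieces depends on $l_1$, one freely takes $l_1 \to \infty$ to remove the truncation, yielding the claimed identity.

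The main obstacle is the cross-weight computation: checking that the total weight of the orange $R''$-braid on the boundary (with $k$-color $R''$-vertices and the change of variables $\bar x_i = 1/(x_i t^{k-1})$ built into the light-purple convention) equals $\prod_{i,j}\prod_{l=0}^{k-1}(1-x_iy_jt^l)$. This parallels the analogous calculation for Proposition \ref{cauchy1} (which produced the same $(1-x_iy_jt^l)$ factor via the $R$-matrix), but now one must unpack the $k$-color $R''$ weights together with both the light-purple and (plain) purple rapidity conventions; once this is done the remainder of the argument is essentially identical to that of Proposition \ref{cauchy3}.
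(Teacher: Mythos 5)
Your proposal matches the paper's proof, which is itself just the observation that replacing the white faces in the argument for Proposition \ref{cauchy3} by purple faces (and hence the yellow $R'$ crosses by the orange $R''$ crosses of Proposition \ref{prop:YBEpurple}) yields the claim, followed by the same splitting, application of Proposition \ref{prop:Lpstar}, cancellation, and limit. The one point the paper adds, which your sketch folds into the deferred cross-weight computation, is that one must restrict to terms of finite degree in $y$; this forces each path to travel from SW to SE on the crosses and is precisely what produces the factors $\prod_{l=0}^{k-1}(1-x_iy_jt^l)$.
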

\noindent(One must be careful to only consider terms with finite degree in $y$, this forces the paths to only travel from the SW to the SE on the cross.)

Combining these identities, we now come to the main result of this section: a Cauchy identity for the supersymmetric LLT polynomials.
\begin{thm}\label{thm:cauchyss}
Let $\bm{\mu}$ and $\bm{\nu}$ be tuples of partitions each with infinitely many parts only finitely many of which are nonzero. Fix positive integers $n,m,p,q$. Then
\begin{equation}
\resizebox{0.8\textwidth}{!}{
$
\begin{aligned}
 & \sum_{\bm{\lambda}} t^{d(\bm{\mu},\bm{\lambda})} \mathcal{L}^S_{\bm{\nu}/\bm{\lambda}}(X_n,Y_m;t) \mathcal{L}^S_{\bm{\mu}/\bm{\lambda}}(W_p,Z_q;t) \\ & =  \prod_{l=0}^{k-1} \prod_{i,i'=1}^n\prod_{j,j'=1}^m\prod_{\alpha,\alpha'=1}^p\prod_{\beta,\beta'=1}^q \frac{(1-x_iw_\alpha t^l)(1-y_{j'}z_{\beta'}t^l)}{(1+y_{j}w_{\alpha'}t^l)(1+x_{i'}z_{\beta}t^l)} \sum_{\bm{\lambda}} t^{d(\bm{\lambda},\bm{\nu})} \mathcal{L}^S_{\bm{\lambda}/\bm{\mu}}(X_n,Y_m;t)\mathcal{L}^S_{\bm{\lambda}/\bm{\nu}}(W_p,Z_q;t)
 \end{aligned}
$
 }
\end{equation}
\end{thm}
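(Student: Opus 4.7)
The plan is to derive the identity directly at the vertex-model level, extending the template established by the four auxiliary Cauchy identities, Propositions \ref{cauchy1}--\ref{cauchy4}. First I would build a single tall lattice whose rows, from bottom to top, are $p$ gray rows (parameters $\bar W_p$), $q$ light-purple rows (parameters $\bar Z_q$), $n$ white rows (parameters $X_n$), and $m$ purple rows (parameters $Y_m$), with bottom boundary $\bm{\mu}$, top boundary $\bm{\nu}$, and a summed intermediate boundary $\bm{\lambda}$ between the two blocks. Applying the train argument (Lemma \ref{lem:permuterows}) together with the branching decomposition
\[
\mathcal{L}^S_{\bm{\mu}/\bm{\lambda}}(X_n,Y_m;t) = \sum_{\bm{\alpha}}\mathcal{L}_{\bm{\alpha}/\bm{\lambda}}(X_n;t)\,\mathcal{L}^P_{\bm{\mu}/\bm{\alpha}}(Y_m;t),
\]
and then invoking Propositions \ref{prop:Lstar} and \ref{prop:Lpstar} row-by-row across the gray/light-purple block, I would identify this partition function (up to a prefactor depending on the truncation parameters $l_1,l_2$) with the LHS of the theorem.

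Second I would attach a complete bundle of crosses on one side of the lattice---one for each pair of rows drawn from opposite blocks---and drag them through to the other side via the Yang--Baxter equations. The YBE's of Propositions \ref{prop:YBEwhite}, \ref{prop:YBEpurplewhite}, and \ref{prop:YBEpurple} handle the interchanges within a block, and analogous YBE's between crosses of one block and rows of the other (obtained by the same kind of degeneration from \cite{ABW} as in Lemma \ref{lem:degen-abw}) handle the block--block interchanges. After the swap, the white/purple block now sits below the gray/light-purple block. Reapplying the identifications of Propositions \ref{prop:Lstar}/\ref{prop:Lpstar} to the new configuration and collecting the total weight of the bundle of crosses yields the RHS times the scalar $\Omega$. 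The four factors of $\Omega$ correspond to the four cross types: white--gray produces $\prod(1-x_i w_\alpha t^l)$, purple--light-purple produces $\prod(1-y_{j'} z_{\beta'} t^l)$, purple--gray produces $\prod(1+y_j w_{\alpha'} t^l)^{-1}$, and white--light-purple produces $\prod(1+x_{i'} z_\beta t^l)^{-1}$.

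The main technical obstacle will be the accounting of the $t$-statistics and the truncation prefactors. Each row of the gray/light-purple block contributes its own $d$ or $d_P$ statistic (Lemma \ref{lem:dp}) together with $l_1$- and $l_2$-dependent powers of the variables; the entire package of these contributions must assemble into a single statistic matching $t^{d(\bm{\mu},\bm{\lambda})}$ on the LHS and $t^{d(\bm{\lambda},\bm{\nu})}$ on the RHS, with all auxiliary prefactors canceling between the two sides in the limit $l_1,l_2\to\infty$. A key intermediate step will be an additivity relation of the form $d(\bm{\mu},\bm{\lambda}) = d(\bm{\mu},\bm{\alpha}) + d_P(\bm{\alpha},\bm{\lambda})$ (or its analog) compatible with the branching decomposition of $\mathcal{L}^S$. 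Verifying this additivity and the cancellation of the truncation prefactors is the heart of the argument; once in place, identifying $\Omega$ precisely from the cross weights is a finite bookkeeping exercise with the one-color weight tables of Section \ref{sec:VertexModel}.
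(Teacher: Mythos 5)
Your approach is viable, but it reorganizes the argument relative to the paper. The paper's proof of Theorem \ref{thm:cauchyss} is itself purely algebraic: the LHS is expanded as a triple sum over intermediate tuples $\bm{\lambda},\bm{\sigma},\bm{\rho}$ using the branching of $\mathcal{L}^S$ into its white and purple layers together with the additivity $d(\bm{\mu},\bm{\lambda}) = d_P(\bm{\mu},\bm{\sigma}) + d(\bm{\sigma},\bm{\lambda})$, and then the four two-block Cauchy identities (Propositions \ref{cauchy1}--\ref{cauchy4}) are applied in sequence to the four pairs of factors, each application contributing exactly one of the four products in $\Omega$. Each of those propositions is proved by the cross-dragging lattice argument you describe, but with only two blocks of rows at a time, so the truncation prefactors of Propositions \ref{prop:Lstar} and \ref{prop:Lpstar} and the $d$/$d_P$ statistics are handled in four isolated, individually checkable pieces. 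Your monolithic version collapses this into a single lattice with all four row types and one pass of $(p+q)(n+m)$ crossings; it is equivalent in substance --- your identification of the four cross types with the four factors of $\Omega$ matches the four propositions exactly, and the three Yang--Baxter equations of Section \ref{sec:VertexModel} already cover every needed crossing after the changes of variables, so no new degenerations from \cite{ABW} are actually required --- but it forces you to track all prefactors, both intermediate strip decompositions, and the additivity of the statistics simultaneously. Two points deserve particular care if you carry it out: the boundary configuration of the cross bundle after dragging (in the purple/light-purple crossings the paper must restrict to terms of finite degree in the $y$-variables to force the paths to pass straight through the crosses), and the fact that the gray/light-purple block computes $\mathcal{L}^S_{\bm{\mu}/\bm{\lambda}}$ only after invoking row-permutation invariance, since the internal order of horizontal- and vertical-strip removal is opposite to the defining order of $\latS_{p,q}$. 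Neither is an obstruction, but both are where the bookkeeping is most likely to slip.
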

\begin{proof}
We can rewrite the LHS as
\[
LHS=\sum_{\bm{\lambda},\bm{\sigma},\bm{\rho}}  \mathcal{L}^P_{\bm{\nu}/\bm{\rho}}(Y_m;t)\mathcal{L}_{\bm{\rho}/\bm{\lambda}}(X_n;t) t^{d_P(\bm{\mu},\bm{\sigma})}\mathcal{L}^P_{\bm{\mu}/\bm{\sigma}}(Z_q;t)t^{d(\bm{\sigma},\bm{\lambda})}\mathcal{L}_{\bm{\sigma}/\bm{\lambda}}(W_p;t)
\]
where we use the fact that
\[
d(\bm{\mu},\bm{\lambda}) = d_P(\bm{\mu},\bm{\sigma}) + d(\bm{\sigma},\bm{\lambda}).
\]
Now we may use Prop. \ref{cauchy1} on the sum over $\bm{\lambda}$ with the second and fourth LLT polynomials to get
\[
\begin{aligned}
LHS = & \prod_{l=0}^{k-1}\prod_{i=1}^n\prod_{\alpha=1}^p(1-x_iw_\alpha t^l) \\
& \times \sum_{\bm{\lambda},\bm{\sigma},\bm{\rho}}  \mathcal{L}^P_{\bm{\nu}/\bm{\rho}}(Y_m;t)\mathcal{L}_{\bm{\lambda}/\bm{\sigma}}(X_n;t) t^{d_P(\bm{\mu},\bm{\sigma})}\mathcal{L}^P_{\bm{\mu}/\bm{\sigma}}(Z_q;t)t^{d(\bm{\lambda},\bm{\rho})}\mathcal{L}_{\bm{\lambda}/\bm{\rho}}(W_p;t).
\end{aligned}
\]
Using Prop. \ref{cauchy2} on the first and fourth LLT polynomials and summing over $\bm{\rho}$ gives
\[
\begin{aligned}
LHS = & \prod_{l=0}^{k-1}\prod_{i=1}^n\prod_{j=1}^m\prod_{\alpha,\alpha'=1}^p\frac{1-x_iw_\alpha t^l}{1+y_{j}w_{\alpha'}t^l} \\
& \times \sum_{\bm{\lambda},\bm{\sigma},\bm{\rho}}  \mathcal{L}^P_{\bm{\rho}/\bm{\lambda}}(Y_m;t)\mathcal{L}_{\bm{\lambda}/\bm{\sigma}}(X_n;t) t^{d_P(\bm{\mu},\bm{\sigma})}\mathcal{L}^P_{\bm{\mu}/\bm{\sigma}}(Z_q;t)t^{d(\bm{\rho},\bm{\nu})}\mathcal{L}_{\bm{\rho}/\bm{\nu}}(W_p;t).
\end{aligned}
\]
Next, summing over $\bm{\sigma}$ using Prop. \ref{cauchy3} on the second and third LLT polynomials gives
\[
\begin{aligned}
LHS = & \prod_{l=0}^{k-1}\prod_{i,i'=1}^n\prod_{j=1}^m\prod_{\alpha,\alpha'=1}^p\prod_{\beta=1}^q \frac{1-x_iw_\alpha t^l}{(1+y_{j}w_{\alpha'}t^l)(1+x_{i'}z_{\beta}t^l)} \\
& \times\sum_{\bm{\lambda},\bm{\sigma},\bm{\rho}}  \mathcal{L}^P_{\bm{\rho}/\bm{\lambda}}(Y_m;t)\mathcal{L}_{\bm{\sigma}/\bm{\mu}}(X_n;t) t^{d_P(\bm{\sigma},\bm{\lambda})}\mathcal{L}^P_{\bm{\sigma}/\bm{\lambda}}(Z_q;t)t^{d(\bm{\rho},\bm{\nu})}\mathcal{L}_{\bm{\rho}/\bm{\nu}}(W_p;t).
\end{aligned}
\]
Finally, using Prop. \ref{cauchy4} on the sum over $\bm{\lambda}$ with the first and third LLT polynomials results in
\[
\begin{aligned}
LHS = & \prod_{l=0}^{k-1} \prod_{i,i'=1}^n\prod_{j,j'=1}^m\prod_{\alpha,\alpha'=1}^p\prod_{\beta,\beta'=1}^q \frac{(1-x_iw_\alpha t^l)(1-y_{j'}z_{\beta'}t^l)}{(1+y_{j}w_{\alpha'}t^l)(1+x_{i'}z_{\beta}t^l)} \\
& \times \sum_{\bm{\lambda},\bm{\sigma},\bm{\rho}}  \mathcal{L}^P_{\bm{\lambda}/\bm{\sigma}}(Y_m;t)\mathcal{L}_{\bm{\sigma}/\bm{\mu}}(X_n;t) t^{d_P(\bm{\lambda},\bm{\rho})}\mathcal{L}^P_{\bm{\lambda}/\bm{\rho}}(Z_q;t)t^{d(\bm{\rho},\bm{\nu})}\mathcal{L}_{\bm{\rho}/\bm{\nu}}(W_p;t)
\end{aligned}
\]
which can be combined into
\[
 \prod_{l=0}^{k-1} \prod_{i,i'=1}^n\prod_{j,j'=1}^m\prod_{\alpha,\alpha'=1}^p\prod_{\beta,\beta'=1}^q \frac{(1-x_iw_\alpha t^l)(1-y_{j'}z_{\beta'}t^l)}{(1+y_{j}w_{\alpha'}t^l)(1+x_{i'}z_{\beta}t^l)} \sum_{\bm{\lambda}} t^{d(\bm{\lambda},\bm{\nu})} \mathcal{L}^S_{\bm{\lambda}/\bm{\mu}}(X_n,Y_m;t)\mathcal{L}^S_{\bm{\lambda}/\bm{\nu}}(W_p,Z_q;t)
\]
where we again use that
\[
 d(\bm{\lambda},\bm{\nu}) = d_P(\bm{\lambda},\bm{\rho}) + d(\bm{\rho},\bm{\nu}).
\]
This is the desired RHS.
\end{proof}

\pagebreak

\appendix
\addcontentsline{toc}{section}{Appendices}
\section*{Appendices}

\section{Proof of Lemma \ref{lem:lqm-properties} \label{sec:lqm-properties-proof}}

Throughout this section, whenever we consider a skew shape $\alpha/\beta$, we assume $\alpha$ and $\beta$ have the same number of parts $\ell(\alpha/\beta)$.  Moreover, using Remark \ref{rmk:maya-extend}, we can take the Maya diagrams of $\alpha$ and $\beta$ to have the same length.  Also let $f_k(\alpha/\beta)$ denote the $k$-quotient of $\alpha/\beta$.

Let $\lambda/\mu$ be a skew shape and let $\lm = (\lambda^{(0)}/\mu^{(0)},\ldots,\lambda^{(k-1)}/\mu^{(k-1)})$ be its $k$-quotient.  Let 
\[ T \in \SSRT_k(\lambda/\mu) \leftrightarrow \bm{T} = (T^{(0)},\ldots,T^{(k-1)}) \in \SSSYT_k(\lm) \]
via the Littlewood $k$-quotient map.  We want to prove the following two claims.
\begin{enumerate}
    \item A ribbon in $T$ labelled $i$ corresponds to a cell labelled $i$ in $\bm{T}$, so the number of ribbons in $T$ labelled $i$ equals the number of cells labelled $i$ in $\bm{T}$.
    \item Two ribbons $R,R'$ in $T$ whose tails $u,u'$ have the same content modulo $k$ correspond to two cells $v,v'$ in the same shape in $\bm{T}$.  Moreover, in this case, 
    \[ \frac{c(u)-c(u')}{k} = c(v)-c(v'). \]
\end{enumerate}
We begin by discussing Maya diagrams and content lines.  Let $\alpha/\beta$ be a skew shape, and let $(a_0,\ldots,a_{s-1}),(b_0,\ldots,b_{s-1})$ be the Maya diagrams of $\alpha,\beta$ respectively.  Given a cell $u$ in $\alpha/\beta$, we define its \textbf{adjusted content} to be
\[ ac(u) := c(u) + \ell(\alpha/\beta) - 1, \]
where $c(u)$ is its content.  The following facts are straightforward.
\begin{enumerate}[label=(\Alph*)]
\item The skew shape $\alpha/\beta$ consists of a single cell $u$ iff $a_i = b_{i+1} = E$, $a_{i+1} = b_i = S$, and $a_j = b_j$ for $j \neq i,i+1$, for some $i$.  In this case, if $u$ is the single cell in $\alpha/\beta$, we have $ac(u) = i$. 
\item The skew shape $\alpha/\beta$ consists of a single ribbon iff $a_i = b_{i+k} = E$, $a_{i+k} = b_i = S$, and $a_j = b_j$ for $j \neq i,i+k$, for some $i$.  In this case, if $u$ is the tail of the single ribbon in $\alpha/\beta$, we have $ac(u) = i$. 
\end{enumerate}

The claims will follow from the following lemma.
\begin{lem} \label{lem:a1}
If $\lambda/\mu$ is a $k$-ribbon, then $\lm$ consists of a single cell i.e. $|\lm| = 1$.  Let $u$ be the tail of the ribbon in $\lambda/\mu$ and let $v$ be the cell in $\lm$.  Write $ac(u) = qk+r$ where $0 \leq r < k$.  Then $v$ appears in $\lambda^{(r)}/\mu^{(r)}$ and has adjusted content $ac(v) = q$.
\end{lem}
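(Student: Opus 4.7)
The plan is to compare Maya diagrams directly, using facts (A) and (B) from the preamble of the appendix. Let $(a_0,\ldots,a_{s-1})$ and $(b_0,\ldots,b_{s-1})$ be the Maya diagrams of $\lambda$ and $\mu$ respectively, where we may assume $s$ is a multiple of $k$ by Remark \ref{rmk:maya-extend}. Since $\lambda/\mu$ is a single $k$-ribbon with tail $u$, fact (B) tells us that there is some index $i$ with $ac(u) = i$ such that the two Maya diagrams agree everywhere except at positions $i$ and $i+k$, where $(a_i,a_{i+k}) = (E,S)$ and $(b_i,b_{i+k}) = (S,E)$.

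Next, write $i = qk+r$ with $0 \le r < k$. By definition of the $k$-quotient, $\lambda^{(i')}$ has Maya diagram $(a_{i'}, a_{k+i'}, \ldots, a_{(s/k-1)k+i'})$ and similarly for $\mu^{(i')}$. For any $i' \neq r$ the two subsequences never include the altered positions $i$ or $i+k$, so they coincide termwise, forcing $\lambda^{(i')} = \mu^{(i')}$ and hence $\lambda^{(i')}/\mu^{(i')} = \emptyset$. For $i' = r$ the altered positions $i = qk+r$ and $i+k = (q+1)k+r$ correspond to local indices $q$ and $q+1$ in the $r$-th subsequence, so the Maya diagram of $\lambda^{(r)}$ has $(E,S)$ at local positions $(q,q+1)$ while that of $\mu^{(r)}$ has $(S,E)$ there, and they agree elsewhere. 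Applying fact (A) to $\lambda^{(r)}/\mu^{(r)}$ yields that this skew shape consists of a single cell $v$ with $ac(v) = q$. Combining over all $i'$, $\lm$ has size one, $v$ lies in the $r$-th component, and its adjusted content is $q$, which is exactly the claim.

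The main technical subtlety will be keeping the length conventions for $ac$ consistent: facts (A) and (B) each use adjusted content computed from the length of the respective skew shape, and since $\ell(\lambda^{(r)}/\mu^{(r)})$ need not equal $\ell(\lambda/\mu)/k$, I will need to check that extending a Maya diagram by postpending $E$'s (which affects $\ell$ but not the partition) shifts $ac$ and the Maya index by exactly the same amount on each side of the equation $ac(u) = i$. This is the only place where I expect any care to be needed; everything else is a routine index split into residue classes modulo $k$.

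Once Lemma \ref{lem:a1} is in hand, the two properties of Lemma \ref{lem:lqm-properties} follow immediately by applying the lemma ribbon-by-ribbon: a ribbon labelled $i$ becomes a single cell in $\bm{T}$, which is then labelled $i$ by the Littlewood map, proving (1); and two ribbons $R,R'$ whose tail contents satisfy $c(u) \equiv c(u') \pmod k$ have the same residue $r$, so they contribute cells to the same shape $\lambda^{(r)}$, and the relation $ac(v) - ac(v') = (ac(u)-ac(u'))/k$ gives $c(v) - c(v') = (c(u)-c(u'))/k$, proving (2).
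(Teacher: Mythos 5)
Your proposal is correct and follows essentially the same route as the paper: invoke Fact (B) to locate the single pair of swapped positions $i$ and $i+k$ in the Maya diagrams, split the indices into residue classes modulo $k$, and apply Fact (A) to the $r$-th component to identify the single cell with adjusted content $q$. The subtlety you flag about length conventions for $ac$ is handled in the paper simply by fixing the lengths of the Maya diagrams of $\lambda$ and $\mu$ to be equal (and a common multiple of $k$) from the outset, so no further adjustment is needed.
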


\begin{proof}[Proof of Lemma \ref{lem:a1}]

Let $u$ be the tail of the ribbon $\lambda/\mu$.  Let $(a_0,\ldots,a_{s-1}),(b_0,\ldots,b_{s-1})$ be the Maya diagrams of $\lambda,\mu$ respectively.  By Remark \ref{rmk:maya-extend}, we may assume $t = s/k$ is an integer.  By Fact B, for some $i$, we have
\[ ac(u) = i; a_i = b_{i+k} = E; a_{i+k} = b_i = S; a_j = b_j \text{ for } j \neq i,i+k. \]
Let $(a^{(j)}_0,\ldots,a^{(j)}_{t-1}),(b^{(j)}_0,\ldots,b^{(j)}_{t-1})$ be the Maya diagrams of $\lambda^{(j)},\mu^{(j)}$ respectively for each $j$.  By the definition of the $k$-quotient map, we have $a^{(j)}_l = a_{lk+j}$ and $b^{(j)}_l = b_{lk+j}$ for each $j$ and $l$.  Since $a_j = b_j$ for $j \neq i,i+k$,
\[ (a^{(j)}_0,\ldots,a^{(j)}_{t-1}) = (b^{(j)}_0,\ldots,b^{(j)}_{t-1}) \text{ for } j \neq r \]
and thus $\lambda^{(j)} = \mu^{(j)}$ for $j \neq r$.  Since $a_i = b_{i+k} = E$, $a_{i+k} = b_i = S$, and $a_j = b_j$ for $j \neq i,i+k$,
\[ a^{(r)}_q = b^{(r)}_{q+1} = E, a^{(r)}_{q+1} = b^{(r)}_q = S, \text{ and } a^{(r)}_j = b^{(r)}_j \text{ for } j \neq q,q+1. \]
Thus, by Fact A, $\lambda^{(r)} / \mu^{(r)}$ consists of a single cell $v$ with adjusted content $ac(v) = q$.
\end{proof}

We now prove Claim 1, using Lemma \ref{lem:a1}.  Suppose there are $m$ ribbons $R_1,\ldots,R_m$ labelled $i$ in $T$.  Then we can construct a series of partitions 
\[ \alpha^{(0)} = \lambda_{\leq i-1}, \ldots, \alpha^{(m)} = \lambda_{\leq i} \]
such that $\alpha^{(j)} / \alpha^{(j-1)} = R_j$ for each $j \in [m]$.  Using the lemma,
\[ |f_k(\lambda_{\leq i}/\lambda_{\leq i-1})| = |f_k(\alpha^{(m)}/\alpha^{(0)})| = \sum_{j=1}^m |f_k(\alpha^{(j)}/\alpha^{(j-1)})| = \sum_{j=1}^m |f_k(R_j)| = \sum_{j=1}^m 1 = m. \]
However, by the definition of the Littlewood $k$-quotient map, $f_k(\lambda_{\leq i}/\lambda_{\leq i-1})$ (lying inside $\lm$) consists of exactly the cells labelled $i$ in $\bm{T}$, so there are $m$ cells labelled $i$ in $\bm{T}$.

We now prove Claim 2, again using Lemma \ref{lem:a1}.  Write $ac(u) = qk+r$ and $ac(u') = q'k+r'$, where $0 \leq r,r' < k$.  Since $u$ and $u'$ have the same content modulo $k$, they have the same adjusted content modulo $k$, hence $r = r'$.  Thus, by the lemma, both $v$ and $v'$ appear in the same shape in $\bm{T}$, namely $T^{(r)} = T^{(r')}$.  Moreover, again using the lemma, 
\[ \frac{c(u)-c(u')}{k} = \frac{ac(u)-ac(u')}{k} = q-q' = ac(v)-ac(v') = c(v)-c(v'). \]

\section{Other Formulations of LLT Polynomials \label{sec:other-LLT}}

We describe the relationship between coinversion LLT polynomials (Definition \ref{def:coinv-LLT}) and other formulations of LLT polynomials that appear in the literature.

\begin{remark} \label{rmk:hhl-llt}
The inversion LLT polynomial, as in \cite{HHL}, is given by
\[ \mc{L}^{HHL}_{\lm}(X;t) = \sum_{T \in \SSYT(\lm)} t^{\inv(T)}x^T. \] 
We have the relationship
\[ \mc{L}_\lm(X;t) = t^m \mc{L}^{HHL}_\lm(X;t^{-1}) \]
where
\[ \begin{aligned}
m &= \# \text{ triples in } \lm \\
&= \max_{U \in \SSYT(\lm)} \coinv(U) + \min_{U \in \SSYT(\lm)} \inv(U) \\
&= \min_{U \in \SSYT(\lm)} \coinv(U) + \max_{T \in \SSYT(\lm)} \inv(U). 
\end{aligned} \]
More explicit formulae for $m$ are given in \cite[Section 5]{CGKM}.  
\end{remark}

\begin{remark} The sp LLT polynomial is given by
\[ \mc{L}^{sp}_{\lambda/\mu}(X;t) = \sum_{T \in \SSRT_k(\lambda/\mu)} t^{\spp(T)} x^T \]
where 
\[ \spp(T) = \frac{\spin(T)}{2} - \min_{U \in \SSRT_k(\lambda/\mu)} \frac{\spin(U)}{2}. \]
If $\lm$ is the $k$-quotient of $\lambda/\mu$, then
\[ \mc{L}_\lm(X;t) = t^{m-e} \mc{L}^{sp}_{\lambda/\mu}(X;t) \]
where 
\[ e = \max_{U \in \SSYT(\lm)} \inv(U), \hspace{1cm} m-e = \min_{U \in \SSYT(\lm)} \coinv(U). \]
\end{remark}

\begin{remark} \label{rmk:lam-llt}
The spin LLT polynomial, as in \cite{Lam, CYZZ, BBBG}, is given by
\[ \mc{L}^{Lam}_{\lambda/\mu}(X;t) = \sum_{T \in \SSRT_k(\lambda/\mu)} t^{\spin(T)} x^T. \]
If $\lm$ is the $k$-quotient of $\lambda/\mu$, then 
\[ \mc{L}_\lm(X;t) = t^{m-e-*} \mc{L}^{Lam}_{\lambda/\mu}(X;t^{1/2}) \]
where 
\[ * = \min_{U \in \SSRT_k(\lambda/\mu)} \frac{\spin(U)}{2}. \]
It is clear from the definitions that
\[ \mc{L}^{Lam}_{\lambda/\mu}(X;t) = \mc{G}^{(k)}_{\lambda/\mu}(X;;t). \]
\end{remark}

\begin{remark}
The cosp LLT polynomial is given by
\[ \mc{L}^{LLT}_{\lambda/\mu}(X;t) = \sum_{T \in \SSRT_k(\lambda/\mu)} t^{\cosp(T)} x^T \]
where 
\[ \cosp(T) = \max_{U \in \SSRT_k(\lambda/\mu)} \frac{\spin(U)}{2} - \frac{\spin(T)}{2}. \]
Historically this was the original formulation of LLT polynomials \cite{LLT}.  If $\lm$ is the $k$-quotient of $\lambda/\mu$, then
\[ \mc{L}_\lm(X;t) = t^{m-e+\dagger-*} \mc{L}^{LLT}_{\lambda/\mu}(X;t^{-1}) \]
where 
\[ \dagger = \max_{U \in \SSRT_k(\lambda/\mu)} \frac{\spin(U)}{2}. \]
\end{remark}

\section{Proofs of Propositions \ref{prop:YBEpurplewhite} and \ref{prop:YBEpurple} \label{sec:abw-ybe-limits}}

We adopt the notation of \cite{ABW} throughout this section, except we use $t$ in place of $q$.  In particular, we let $W_z(\A,\B;\C,\D | r,s)$ be the vertex weights from \cite[Definition 5.1.1]{ABW} with $t$ in place of $q$.  

The proofs of Propositions \ref{prop:YBEpurplewhite} and \ref{prop:YBEpurple} utilize \cite[Prop. 5.1.4]{ABW}, which states
\begin{equation} \label{eq:abw-ybe}
\begin{aligned}
&\sum_{\I_2,\J_2,\K_2} W_{\chi/\gamma}(\I_1,\J_1;\I_2,\J_2 | r,s) W_{\chi/z}(\K_1,\J_2;\K_2,\J_3 | r,t) W_{\gamma/z}(\K_2,\I_2;\K_3,\I_3 | s,t) \\ 
&= \sum_{\I_2,\J_2,\K_2} W_{\gamma/z}(\K_1,\I_1;\K_2,\I_2 | s,t) W_{\chi/z}(\K_2,\J_1;\K_3,\J_2 | r,t) W_{\chi/\gamma}(\I_2,\J_2;\I_3,\J_3 | r,s). 
\end{aligned}
\end{equation}
for all $\chi,\gamma,z,r,s,t \in \CC$ for any choice of boundary condition $\I_1,\J_1,\K_1,\I_3,\J_3,\K_3 \in \{0,1\}^k$.

\begin{proof}[Proof of Proposition \ref{prop:YBEpurplewhite}]
 Fix $S,Y,x,y \in \CC$ and let $\alpha = -S^2$ and $\beta = SY^{1/2}$.  Substituting $\chi = x$, $\gamma = \alpha$, $z = \alpha$, $r = (x/\alpha)^{1/2}$, $s = Sy^{-1/2}$, and $t = SY^{1/2}$ into \eqref{eq:abw-ybe} gives 
\[
\resizebox{0.85\textwidth}{!}{
$ \begin{aligned}
&\sum_{\I_2,\J_2,\K_2} W_{x/\alpha}(\I_1,\J_1;\I_2,\J_2 | (x/\alpha)^{1/2},Sy^{-1/2}) W_{x/\alpha}(\K_1,\J_2;\K_2,\J_3 | (x/\alpha)^{1/2},SY^{-1/2}) W_1(\K_2,\I_2;\K_3,\I_3 | Sy^{-1/2},SY^{1/2}) \\ 
&= \sum_{\I_2,\J_2,\K_2} W_1(\K_1,\I_1;\K_2,\I_2 | Sy^{-1/2},SY^{1/2}) W_{x/\alpha}(\K_2,\J_1;\K_3,\J_2 | (x/\alpha)^{1/2},SY^{1/2}) W_{x/\alpha}(\I_2,\J_2;\I_3,\J_3 | (x/\alpha)^{1/2},Sy^{-1/2}). 
\end{aligned} $
}
\]
Multiplying both sides by $(-\alpha)^{|\J_3|}(SY^{1/2})^{-2|\J_3|}Y^{-|\I_3|}$ gives
\[
\resizebox{0.85\textwidth}{!}{
$
\begin{aligned}
&\sum_{\I_2,\J_2,\K_2} W_{x/\alpha}(\I_1,\J_1;\I_2,\J_2 | (x/\alpha)^{1/2},Sy^{-1/2}) (-\alpha)^{|\J_3|}(SY^{1/2})^{-2|\J_3|}W_{x/\alpha}(\K_1,\J_2;\K_2,\J_3 | (x/\alpha)^{1/2},SY^{-1/2}) \\
&\hspace{0.4\textwidth} \cdot Y^{-|\I_3|}W_1(\K_2,\I_2;\K_3,\I_3 | Sy^{-1/2},SY^{1/2}) \\ 
&= \sum_{\I_2,\J_2,\K_2} Y^{-|\I_2|}W_1(\K_1,\I_1;\K_2,\I_2 | Sy^{-1/2},SY^{1/2}) (-\alpha)^{|\J_2|}(SY^{1/2})^{-2|\J_2|}W_{x/\alpha}(\K_2,\J_1;\K_3,\J_2 | (x/\alpha)^{1/2},SY^{1/2}) \\
&\hspace{0.4\textwidth} \cdot (-\alpha)^{|\J_3|-|\J_2|}S^{-2|\J_3|+2|\J_2|}W_{x/\alpha}(\I_2,\J_2;\I_3,\J_3 | (x/\alpha)^{1/2},Sy^{-1/2}). 
\end{aligned} 
$
}
\]
where we have used $|\I_2| + |\J_2| = |\I_3| + |\J_3|$ by path conservation on the right-hand side.  Substituting $\alpha = -S^2$ and $\beta = SY^{1/2}$ gives
\[ 
\resizebox{0.85\textwidth}{!}{
$
\begin{aligned}
&\sum_{\I_2,\J_2,\K_2} W_{x/\alpha}(\I_1,\J_1;\I_2,\J_2 | (x/\alpha)^{1/2},(-y/\alpha)^{-1/2}) (-\alpha)^{|\J_3|}\beta^{-2|\J_3|}W_{x/\alpha}(\K_1,\J_2;\K_2,\J_3 | (x/\alpha)^{1/2},\beta) \\
&\hspace{0.4\textwidth} \cdot Y^{-|\I_3|}W_1(\K_2,\I_2;\K_3,\I_3 | Sy^{-1/2},SY^{1/2}) \\ 
&= \sum_{\I_2,\J_2,\K_2} Y^{-|\I_2|}W_1(\K_1,\I_1;\K_2,\I_2 | Sy^{-1/2},SY^{1/2})  (-\alpha)^{|\J_2|}\beta^{-2|\J_2|}W_{x/\alpha}(\K_2,\J_1;\K_3,\J_2 | (x/\alpha)^{1/2},\beta) \\
&\hspace{0.4\textwidth} \cdot W_{x/\alpha}(\I_2,\J_2;\I_3,\J_3 | (x/\alpha)^{1/2},(-y/\alpha)^{-1/2}). 
\end{aligned}
$
}
\]
Taking $S \rightarrow 0$ and $Y \rightarrow 0$ (hence also $\alpha = -S^2 \rightarrow 0$ and $\beta = SY^{1/2} \rightarrow 0$) and then applying Lemma \ref{lem:degen-abw} gives the desired YBE
\[ \begin{aligned}
&\sum_{\I_2,\J_2,\K_2} R'_{y/x}(\I_1,\J_1;\I_2,\J_2) \tilde{L}_x(\K_1,\J_2;\K_2,\J_3) \tilde{L}'_y(\K_2,\I_2;\K_3,\I_3) \\ 
&= \sum_{\I_2,\J_2,\K_2} L'_y(\K_1,\I_1;\K_2,\I_2) L_x(\K_2,\J_1;\K_3,\J_2) R'_{y/x}(\I_2,\J_2;\I_3,\J_3). 
\end{aligned} \]
\end{proof}

\begin{proof}[Proof of Proposition \ref{prop:YBEpurple}]
Fix $S,Y,x,y \in \CC$ and let $\alpha = -S^2$ and $\beta = SY^{1/2}$.  Substituting $\chi = 1$, $\gamma = 1$, $z = 1$, $r = Sx^{-1/2}$, $s = Sy^{-1/2}$, and $t = SY^{-1/2}$ into \eqref{eq:abw-ybe} gives 
\[ 
\resizebox{0.85\textwidth}{!}{
$
\begin{aligned}
&\sum_{\I_2,\J_2,\K_2} W_1(\I_1,\J_1;\I_2,\J_2 | Sx^{-1/2},Sy^{-1/2}) W_1(\K_1,\J_2;\K_2,\J_3 | Sx^{-1/2},SY^{-1/2}) W_1(\K_2,\I_2;\K_3,\I_3 | Sy^{-1/2},SY^{1/2}) \\ 
&= \sum_{\I_2,\J_2,\K_2} W_1(\K_1,\I_1;\K_2,\I_2 | Sy^{-1/2},SY^{1/2}) W_1(\K_2,\J_1;\K_3,\J_2 | Sx^{-1/2},SY^{1/2}) W_1(\I_2,\J_2;\I_3,\J_3 | Sx^{-1/2},Sy^{-1/2}). 
\end{aligned}$
}
\]
Multiplying both sides by $Y^{-|\I_3|-|\J_3|}$ gives
\[ 
\resizebox{0.85\textwidth}{!}{
$
\begin{aligned}
&\sum_{\I_2,\J_2,\K_2} W_1(\I_1,\J_1;\I_2,\J_2 | Sx^{-1/2},Sy^{-1/2})  Y^{-|\J_3|}W_1(\K_1,\J_2;\K_2,\J_3 | Sx^{-1/2},SY^{-1/2}) \\
&\hspace{0.4\textwidth} \cdot Y^{-|\I_3|}W_1(\K_2,\I_2;\K_3,\I_3 | Sy^{-1/2},SY^{1/2}) \\ 
&= \sum_{\I_2,\J_2,\K_2} Y^{-|\I_2|}W_1(\K_1,\I_1;\K_2,\I_2 | Sy^{-1/2},SY^{1/2})  Y^{-|\J_2|}W_1(\K_2,\J_1;\K_3,\J_2 | Sx^{-1/2},SY^{1/2}) \\
&\hspace{0.4\textwidth} \cdot W_1(\I_2,\J_2;\I_3,\J_3 | Sx^{-1/2},Sy^{-1/2}).  
\end{aligned}
$
}
\]
where we have used $|\I_2| + |\J_2| = |\I_3| + |\J_3|$ by path conservation on the right-hand side.  Taking $S \rightarrow 0$ and $Y \rightarrow 0$ and then applying Lemma \ref{lem:degen-abw} gives the desired YBE
\[ \begin{aligned}
&\sum_{\I_2,\J_2,\K_2} R''_{x/y}(\I_1,\J_1;\I_2,\J_2) L'_x(\K_1,\J_2;\K_2,\J_3) L'_y(\K_2,\I_2;\K_3,\I_3) \\ 
&= \sum_{\I_2,\J_2,\K_2} L'_y(\K_1,\I_1;\K_2,\I_2) L'_x(\K_2,\J_1;\K_3,\J_2) R''_{x/y}(\I_2,\J_2;\I_3,\J_3). 
\end{aligned} \]
\end{proof}

\section{Example Computations of $\mc{L}^S$ and $\mc{G}$}

\subsection{Example 1}
\indent Let $\lambda = (4,2)$ and $\bm{\lambda} = ((1),(2))$, noting that $\bm{\lambda}$ is the 2-quotient of $\lambda$.
\[ \resizebox{2cm}{!}{
\begin{tikzpicture}[baseline=(current bounding box.center)]
\draw (0,0) grid (2,2); \draw (2,0) grid (4,1);
\node[scale=2,blue] at (0.5,2) {E}; \node[scale=2,red] at (1.5,2) {E}; \node[scale=2,blue] at (2,1.5) {S}; \node[scale=2,red] at (2.5,1) {E}; \node[scale=2,blue] at (3.5,1) {E}; \node[scale=2,red] at (4,0.5) {S};
\end{tikzpicture} }
\hspace{2cm} 
\resizebox{3cm}{!}{
\begin{tikzpicture}[baseline=(current bounding box.center)]
\draw (0,0) grid (1,1); \draw (1,0) grid (2,0);
\node[blue,scale=2] at (0.5,1) {E}; \node[blue,scale=2] at (1,0.5) {S}; \node[blue,scale=2] at (1.5,0) {E};
\draw (3+0,0) grid (3+2,1);
\node[red,scale=2] at (3+0.5,1) {E}; \node[red,scale=2] at (3+1.5,1) {E}; \node[red,scale=2] at (3+2,0.5) {S};
\end{tikzpicture} }
\]
Therefore, by Proposition \ref{prop-L-equals-G}, we must have 
\[ \mc{L}^S_{\bm{\lambda}}(x_1;y_1;t) = t^\square \mc{G}^{(2)}_{\lambda}(x_1;y_1;t^{1/2}) \]
for some half-integer $\square \in \frac{1}{2}\mathbb{Z}$. To compute $\mc{L}^S_{\bm{\lambda}}(x_1;y_1;t)$, we note that there are 4 configurations of the lattice $S_{1,1}(\bm{\lambda})$.
\begin{center}
\resizebox{0.8\textwidth}{!}{
\begin{tabular}{cccc}
	\resizebox{3cm}{!}{
	\begin{tikzpicture}[baseline=(current bounding box.center)]
	\draw[fill=violet] (0,1) rectangle (3,2);
	\draw (0,0) grid (3,2);
	\node [left] at (0,0.5) {$x_1$}; \node [left] at (0,1.5) {$y_1$}; 
	\draw[blue] (0.4,0)--(0.4,1.6)--(1.4,1.6)--(1.4,2);
	\draw[red] (0.6,0)--(0.6,0.4)--(2.6,0.4)--(2.6,2);
	\end{tikzpicture} }
&
	\resizebox{3cm}{!}{
	\begin{tikzpicture}[baseline=(current bounding box.center)]
	\draw[fill=violet] (0,1) rectangle (3,2);
	\draw (0,0) grid (3,2);
	\node [left] at (0,0.5) {$x_1$}; \node [left] at (0,1.5) {$y_1$}; 
	\draw[blue] (0.4,0)--(0.4,1.6)--(1.4,1.6)--(1.4,2);
	\draw[red] (0.6,0)--(0.6,0.4)--(1.6,0.4)--(1.6,1.4)--(2.6,1.4)--(2.6,2);
	\end{tikzpicture} }
&
	\resizebox{3cm}{!}{
	\begin{tikzpicture}[baseline=(current bounding box.center)]
	\draw[fill=violet] (0,1) rectangle (3,2);
	\draw (0,0) grid (3,2);
	\node [left] at (0,0.5) {$x_1$}; \node [left] at (0,1.5) {$y_1$}; 
	\draw[blue] (0.4,0)--(0.4,0.6)--(1.4,0.6)--(1.4,2);
	\draw[red] (0.6,0)--(0.6,0.4)--(2.6,0.4)--(2.6,2);
	\end{tikzpicture} }
&
	\resizebox{3cm}{!}{
	\begin{tikzpicture}[baseline=(current bounding box.center)]
	\draw[fill=violet] (0,1) rectangle (3,2);
	\draw (0,0) grid (3,2);
	\node [left] at (0,0.5) {$x_1$}; \node [left] at (0,1.5) {$y_1$}; 
	\draw[blue] (0.4,0)--(0.4,0.6)--(1.4,0.6)--(1.4,2);
	\draw[red] (0.6,0)--(0.6,0.4)--(1.6,0.4)--(1.6,1.4)--(2.6,1.4)--(2.6,2);
	\end{tikzpicture} }
\\ \\
$x_1^2y_1$ & $x_1y_1^2$ & $tx_1^3$ & $tx_1^2y_1$
\end{tabular}
}
\end{center}
Therefore $\mc{L}^S_{\bm{\lambda}}(x_1;y_1;t) = x_1^2y_1 + x_1y_1^2 + tx_1^3 + tx_1^2y_1$.  To compute $\mc{G}^{(2)}_{\lambda}(x_1;y_1;t^{1/2})$, we note that there are 4 super 2-ribbon tableaux of shape $\lambda$ in the alphabet $\{ 1 < 1' \}$.
\begin{center}
\resizebox{0.8\textwidth}{!}{
\begin{tabular}{cccc}
	\resizebox{3cm}{!}{
	\begin{tikzpicture}[baseline=(current bounding box.center)]
	\draw (2,0)--(4,0)--(4,1)--(2,1)--(2,0);
	\draw (0,0)--(2,0)--(2,1)--(0,1)--(0,0);
	\draw (0,1)--(2,1)--(2,2)--(0,2)--(0,1);
	\node[scale=2] at (0.5,0.5) {$1$};
	\node[scale=2] at (0.5,1.5) {$1'$};
	\node[scale=2] at (2.5,0.5) {$1$};
	\end{tikzpicture} }
&
	\resizebox{3cm}{!}{
	\begin{tikzpicture}[baseline=(current bounding box.center)]
	\draw (2,0)--(4,0)--(4,1)--(2,1)--(2,0);
	\draw (0,0)--(2,0)--(2,1)--(0,1)--(0,0);
	\draw (0,1)--(2,1)--(2,2)--(0,2)--(0,1);
	\node[scale=2] at (0.5,0.5) {$1$};
	\node[scale=2] at (0.5,1.5) {$1'$};
	\node[scale=2] at (2.5,0.5) {$1'$};
	\end{tikzpicture} }
&
	\resizebox{3cm}{!}{
	\begin{tikzpicture}[baseline=(current bounding box.center)]
	\draw (2,0)--(4,0)--(4,1)--(2,1)--(2,0);
	\draw (0,0)--(0,2)--(1,2)--(1,0)--(0,0);
	\draw (1,0)--(1,2)--(2,2)--(2,0)--(1,0);
	\node[scale=2] at (0.5,1.5) {$1$};
	\node[scale=2] at (1.5,1.5) {$1$};
	\node[scale=2] at (2.5,0.5) {$1$};
	\end{tikzpicture} }
&
	\resizebox{3cm}{!}{
	\begin{tikzpicture}[baseline=(current bounding box.center)]
	\draw (2,0)--(4,0)--(4,1)--(2,1)--(2,0);
	\draw (0,0)--(0,2)--(1,2)--(1,0)--(0,0);
	\draw (1,0)--(1,2)--(2,2)--(2,0)--(1,0);
	\node[scale=2] at (0.5,1.5) {$1$};
	\node[scale=2] at (1.5,1.5) {$1$};
	\node[scale=2] at (2.5,0.5) {$1'$};
	\end{tikzpicture} }
\\ \\
$x_1^2y_1$ & $x_1y_1^2$ & $t^2x_1^3$ & $t^2x_1^2y_1$
\end{tabular}
}
\end{center}
Therefore $\mc{G}^{(2)}_{\lambda}(x_1;y_1;t^{1/2})= x_1^2y_1 + x_1y_1^2 + t^2x_1^3 + t^2x_1^2y_1$.  (The way we've ordered the lattice configurations and the super ribbon tableaux agrees with the bijection $\theta$ from Section \ref{sec:relate-ls-to-g}, so that the $i$-th lattice configuration corresponds to the $i$-th super ribbon tableaux via $\theta$.)

We see that $\mc{L}^S_{\bm{\lambda}}(x_1;y_1;t) = \mc{G}^{(2)}_{\lambda}(x_1;y_1;t^{1/2})$, so in fact $\square = 0$ in this case, which agrees with computing $\square$ using its definition.

\subsection{Example 2}
Let $\lambda = (8,7,3)$ and $\bm{\lambda} = ((1),(3),(2))$, noting $\bm{\lambda}$ is the 3-quotient of $\lambda$.
\[ \resizebox{3cm}{!}{
\begin{tikzpicture}[baseline=(current bounding box.center)]
\draw (0,0) grid (8,1); \draw (0,1) grid (7,2); \draw (0,2) grid (3,3); \draw (8,0) grid (9,0);
\node[scale=2,blue] at (0.5,3) {E}; \node[scale=2,green] at (1.5,3) {E};
\node[scale=2,red] at (2.5,3) {E};
\node[scale=2,blue] at (3,2.5) {S};
\node[scale=2,green] at (3.5,2) {E};
\node[scale=2,red] at (4.5,2) {E};
\node[scale=2,blue] at (5.5,2) {E}; \node[scale=2,green] at (6.5,2) {E};
\node[scale=2,red] at (7,1.5) {S};
\node[scale=2,blue] at (7.5,1) {E};
\node[scale=2,green] at (8,0.5) {S};
\node[scale=2,red] at (8.5,0) {E};
\end{tikzpicture} }
\hspace{2cm} 
\resizebox{5cm}{!}{
\begin{tikzpicture}[baseline=(current bounding box.center)]
\draw (0,0) grid (1,1); \draw (1,0) grid (3,0);
\node[blue,scale=2] at (0.5,1) {E}; \node[blue,scale=2] at (1,0.5) {S}; \node[blue,scale=2] at (1.5,0) {E};
\node[blue,scale=2] at (2.5,0) {E};
\draw (5+0,0) grid (5+3,1);
\node[green,scale=2] at (5+0.5,1) {E}; \node[green,scale=2] at (5+1.5,1) {E};
\node[green,scale=2] at (5+2.5,1) {E};
\node[green,scale=2] at (5+3,0.5) {S};
\draw (10+0,0) grid (10+2,1); \draw (10+2,0) grid (10+3,0);
\node[red,scale=2] at (10+0.5,1) {E}; \node[red,scale=2] at (10+1.5,1) {E}; \node[red,scale=2] at (10+2,0.5) {S};
\node[red,scale=2] at (10+2.5,0) {E};
\end{tikzpicture} }
\]
Therefore, by Proposition \ref{prop-L-equals-G}, we must have 
\[ \mc{L}^S_{\bm{\lambda}}(x_1;y_1;t) = t^\square \mc{G}^{(3)}_{\lambda}(x_1;y_1;t^{1/2}) \]
for some half-integer $\square \in \frac{1}{2}\mathbb{Z}$.  We can thus compute $\mc{G}^{(3)}_{\lambda}(x_1;y_1;t^{1/2})$ by computing $\mc{L}^S_{\bm{\lambda}}(x_1;y_1;t)$ and $\square$.

To compute $\mc{L}^S_{\bm{\lambda}}(x_1;y_1;t)$, we note that there are 8 configurations of the lattice $S_{1,1}(\bm{\lambda})$.
\begin{center}
\resizebox{0.8\textwidth}{!}{
\begin{tabular}{cccc}
	\resizebox{3cm}{!}{
	\begin{tikzpicture}[baseline=(current bounding box.center)]
	\draw[fill=violet] (0,1) rectangle (4,2);
	\draw (0,0) grid (4,2);
	\node [left] at (0,0.5) {$x_1$}; \node [left] at (0,1.5) {$y_1$}; 
	\draw[blue] (0.3,0)--(0.3,1.7)--(1.3,1.7)--(1.3,2);
	\draw[green] (0.5,0)--(0.5,0.5)--(3.5,0.5)--(3.5,2);
	\draw[red] (0.7,0)--(0.7,0.3)--(2.7,0.3)--(2.7,2);
	\end{tikzpicture} }
&
	\resizebox{3cm}{!}{
	\begin{tikzpicture}[baseline=(current bounding box.center)]
	\draw[fill=violet] (0,1) rectangle (4,2);
	\draw (0,0) grid (4,2);
	\node [left] at (0,0.5) {$x_1$}; \node [left] at (0,1.5) {$y_1$}; 
	\draw[blue] (0.3,0)--(0.3,1.7)--(1.3,1.7)--(1.3,2);
	\draw[green] (0.5,0)--(0.5,0.5)--(3.5,0.5)--(3.5,2);
	\draw[red] (0.7,0)--(0.7,0.3)--(1.7,0.3)--(1.7,1.3)--(2.7,1.3)--(2.7,2);
	\end{tikzpicture} }
&
	\resizebox{3cm}{!}{
	\begin{tikzpicture}[baseline=(current bounding box.center)]
	\draw[fill=violet] (0,1) rectangle (4,2);
	\draw (0,0) grid (4,2);
	\node [left] at (0,0.5) {$x_1$}; \node [left] at (0,1.5) {$y_1$}; 
	\draw[blue] (0.3,0)--(0.3,0.7)--(1.3,0.7)--(1.3,2);
	\draw[green] (0.5,0)--(0.5,0.5)--(3.5,0.5)--(3.5,2);
	\draw[red] (0.7,0)--(0.7,0.3)--(2.7,0.3)--(2.7,2);
	\end{tikzpicture} }
&
	\resizebox{3cm}{!}{
	\begin{tikzpicture}[baseline=(current bounding box.center)]
	\draw[fill=violet] (0,1) rectangle (4,2);
	\draw (0,0) grid (4,2);
	\node [left] at (0,0.5) {$x_1$}; \node [left] at (0,1.5) {$y_1$}; 
	\draw[blue] (0.3,0)--(0.3,0.7)--(1.3,0.7)--(1.3,2);
	\draw[green] (0.5,0)--(0.5,0.5)--(3.5,0.5)--(3.5,2);
	\draw[red] (0.7,0)--(0.7,0.3)--(1.7,0.3)--(1.7,1.3)--(2.7,1.3)--(2.7,2);
	\end{tikzpicture} }
\\ \\
$t^3x_1^5y_1$ & $t^2x_1^4y_1^2$ & $t^5x_1^6$ & $t^4x_1^5y_1$
\\ \\
	\resizebox{3cm}{!}{
	\begin{tikzpicture}[baseline=(current bounding box.center)]
	\draw[fill=violet] (0,1) rectangle (4,2);
	\draw (0,0) grid (4,2);
	\node [left] at (0,0.5) {$x_1$}; \node [left] at (0,1.5) {$y_1$}; 
	\draw[blue] (0.3,0)--(0.3,1.7)--(1.3,1.7)--(1.3,2);
	\draw[green] (0.5,0)--(0.5,0.5)--(2.5,0.5)--(2.5,1.5)--(3.5,1.5)--(3.5,2);
	\draw[red] (0.7,0)--(0.7,0.3)--(2.7,0.3)--(2.7,2);
	\end{tikzpicture} }
&
	\resizebox{3cm}{!}{
	\begin{tikzpicture}[baseline=(current bounding box.center)]
	\draw[fill=violet] (0,1) rectangle (4,2);
	\draw (0,0) grid (4,2);
	\node [left] at (0,0.5) {$x_1$}; \node [left] at (0,1.5) {$y_1$}; 
	\draw[blue] (0.3,0)--(0.3,1.7)--(1.3,1.7)--(1.3,2);
	\draw[green] (0.5,0)--(0.5,0.5)--(2.5,0.5)--(2.5,1.5)--(3.5,1.5)--(3.5,2);
	\draw[red] (0.7,0)--(0.7,0.3)--(1.7,0.3)--(1.7,1.3)--(2.7,1.3)--(2.7,2);
	\end{tikzpicture} }
&
	\resizebox{3cm}{!}{
	\begin{tikzpicture}[baseline=(current bounding box.center)]
	\draw[fill=violet] (0,1) rectangle (4,2);
	\draw (0,0) grid (4,2);
	\node [left] at (0,0.5) {$x_1$}; \node [left] at (0,1.5) {$y_1$}; 
	\draw[blue] (0.3,0)--(0.3,0.7)--(1.3,0.7)--(1.3,2);
	\draw[green] (0.5,0)--(0.5,0.5)--(2.5,0.5)--(2.5,1.5)--(3.5,1.5)--(3.5,2);
	\draw[red] (0.7,0)--(0.7,0.3)--(2.7,0.3)--(2.7,2);
	\end{tikzpicture} }
&
	\resizebox{3cm}{!}{
	\begin{tikzpicture}[baseline=(current bounding box.center)]
	\draw[fill=violet] (0,1) rectangle (4,2);
	\draw (0,0) grid (4,2);
	\node [left] at (0,0.5) {$x_1$}; \node [left] at (0,1.5) {$y_1$}; 
	\draw[blue] (0.3,0)--(0.3,0.7)--(1.3,0.7)--(1.3,2);
	\draw[green] (0.5,0)--(0.5,0.5)--(2.5,0.5)--(2.5,1.5)--(3.5,1.5)--(3.5,2);
	\draw[red] (0.7,0)--(0.7,0.3)--(1.7,0.3)--(1.7,1.3)--(2.7,1.3)--(2.7,2);
	\end{tikzpicture} }
\\ \\
$t^3x_1^4y_1^2$ & $t^2x_1^3y_1^3$ & $t^5x_1^5y_1$ & $t^4x_1^4y_1^2$
\end{tabular}
}
\end{center}
Therefore 
\[ \mc{L}^S_{\bm{\lambda}}(x_1;y_1;t) = 
t^3x_1^5y_1 + t^2x_1^4y_1^2 + t^5x_1^6 + t^4x_1^5y_1 +
t^3x_1^4y_1^2 + t^2x_1^3y_1^3 + t^5x_1^5y_1 + t^4x_1^4y_1^2. \]
We can also observe that the quantity
\[
\square = \frac{1}{2}\sum_{\alb} \left(
\# \begin{tikzpicture}[baseline=(current bounding box.center)]
\draw[fill=violet] (0,0) rectangle (1,1);
\draw[blue] (0.4,0)--(0.4,0.6)--(1,0.6);
\draw[red] (0.6,0)--(0.6,1);
\end{tikzpicture}
+
\# \begin{tikzpicture}[baseline=(current bounding box.center)]
\draw[fill=violet] (0,0) rectangle (1,1);
\draw[blue] (0,0.6)--(1,0.6);
\draw[blue] (0.4,0)--(0.4,1);
\draw[red] (0.6,0)--(0.6,1);
\end{tikzpicture}
-
\# \begin{tikzpicture}[baseline=(current bounding box.center)]
\draw[fill=violet] (0,0) rectangle (1,1);
\draw[blue] (0.4,0)--(0.4,1);
\draw[red] (0,0.4)--(0.6,0.4)--(0.6,1);
\end{tikzpicture}
-
\# \begin{tikzpicture}[baseline=(current bounding box.center)]
\draw[fill=violet] (0,0) rectangle (1,1);
\draw[blue] (0.4,0)--(0.4,1);
\draw[red] (0.6,0)--(0.6,1);
\draw[red] (0,0.4)--(1,0.4);
\end{tikzpicture}
\right) + 
\frac{1}{2} \sum_{\alb} \left(
\# \begin{tikzpicture}[baseline=(current bounding box.center)]
\draw (0,0)--(1,0)--(1,1)--(0,1)--(0,0);
\draw[blue] (1,0.6)--(0.5,0.6);
\draw[red] (0.6,0.5)--(0.6,1);
\end{tikzpicture}
-
\# \begin{tikzpicture}[baseline=(current bounding box.center)]
\draw (0,0)--(1,0)--(1,1)--(0,1)--(0,0);
\draw[blue] (0.4,0)--(0.4,0.5);
\draw[red] (0,0.4)--(0.5,0.4);
\end{tikzpicture}
\right) 
\] 
is equal to $\frac{1}{2}$ in each of these 8 configurations, so $\square = \frac{1}{2}$.

We can now compute
\[ \begin{aligned} 
\mc{G}^{(3)}_{\lambda}(x_1;y_1;t) =& t^{-2\square} \mc{L}^S_{\bm{\lambda}}(x_1;y_1;t^2) \\
=& t^5x_1^5y_1 + t^3x_1^4y_1^2 + t^9x_1^6 + t^7x_1^5y_1 +
t^5x_1^4y_1^2 + t^3x_1^3y_1^3 + t^9x_1^5y_1 + t^7x_1^4y_1^2.
\end{aligned} \]
Explicitly computing all the super 3-ribbon tableaux 
\begin{center}
\resizebox{0.8\textwidth}{!}{
\begin{tabular}{cccc}
	\resizebox{3cm}{!}{
	\begin{tikzpicture}[baseline=(current bounding box.center)]
	\draw (0,0)--(8,0)--(8,1)--(7,1)--(7,2)--(3,2)--(3,3)--(0,3)--(0,0);
	\draw (0,0)--(0,2)--(1,2)--(1,1)--(2,1)--(2,0)--(0,0);
	\draw (1,1)--(1,2)--(3,2)--(3,0)--(2,0)--(2,1)--(1,1);
	\draw (3,0)--(3,2)--(4,2)--(4,1)--(5,1)--(5,0)--(3,0);
	\draw (4,1)--(4,2)--(6,2)--(6,0)--(5,0)--(5,1)--(4,1);
	\draw (6,0)--(6,2)--(7,2)--(7,1)--(8,1)--(8,0)--(6,0);
	\draw (0,2)--(3,2)--(3,3)--(0,3)--(0,2);
	\node[scale=2] at (0.5,1.5) {$1$};
	\node[scale=2] at (1.5,1.5) {$1$};
	\node[scale=2] at (3.5,1.5) {$1$};
	\node[scale=2] at (4.5,1.5) {$1$};
	\node[scale=2] at (6.5,1.5) {$1$};
	\node[scale=2] at (0.5,2.5) {$1'$};
	\end{tikzpicture} }
&
	\resizebox{3cm}{!}{
	\begin{tikzpicture}[baseline=(current bounding box.center)]
	\draw (0,0)--(8,0)--(8,1)--(7,1)--(7,2)--(3,2)--(3,3)--(0,3)--(0,0);
	\draw (0,0)--(0,2)--(1,2)--(1,1)--(2,1)--(2,0)--(0,0);
	\draw (1,1)--(1,2)--(3,2)--(3,0)--(2,0)--(2,1)--(1,1);
	\draw (3,0)--(3,2)--(4,2)--(4,1)--(5,1)--(5,0)--(3,0);
	\draw (0,2)--(3,2)--(3,3)--(0,3)--(0,2);
	\draw (4,1)--(4,2)--(7,2)--(7,1)--(4,1);
	\draw (5,0)--(5,1)--(8,1)--(8,0)--(5,0);
	\node[scale=2] at (0.5,1.5) {$1$};
	\node[scale=2] at (1.5,1.5) {$1$};
	\node[scale=2] at (3.5,1.5) {$1$};
	\node[scale=2] at (4.5,1.5) {$1'$};
	\node[scale=2] at (5.5,0.5) {$1$};
	\node[scale=2] at (0.5,2.5) {$1'$};
	\end{tikzpicture} }
&
	\resizebox{3cm}{!}{
	\begin{tikzpicture}[baseline=(current bounding box.center)]
	\draw (0,0)--(8,0)--(8,1)--(7,1)--(7,2)--(3,2)--(3,3)--(0,3)--(0,0);
	\draw (0,0)--(1,0)--(1,3)--(0,3)--(0,0);
	\draw (1,0)--(2,0)--(2,3)--(1,3)--(1,0);
	\draw (2,0)--(3,0)--(3,3)--(2,3)--(2,0);
	\draw (3,0)--(3,2)--(4,2)--(4,1)--(5,1)--(5,0)--(3,0);
	\draw (4,1)--(4,2)--(6,2)--(6,0)--(5,0)--(5,1)--(4,1);
	\draw (6,0)--(6,2)--(7,2)--(7,1)--(8,1)--(8,0)--(6,0);
	\node[scale=2] at (0.5,2.5) {$1$};
	\node[scale=2] at (1.5,2.5) {$1$};
	\node[scale=2] at (2.5,2.5) {$1$};
	\node[scale=2] at (3.5,1.5) {$1$};
	\node[scale=2] at (4.5,1.5) {$1$};
	\node[scale=2] at (6.5,1.5) {$1$};
	\end{tikzpicture} }
&
	\resizebox{3cm}{!}{
	\begin{tikzpicture}[baseline=(current bounding box.center)]
	\draw (0,0)--(8,0)--(8,1)--(7,1)--(7,2)--(3,2)--(3,3)--(0,3)--(0,0);
	\draw (0,0)--(1,0)--(1,3)--(0,3)--(0,0);
	\draw (1,0)--(2,0)--(2,3)--(1,3)--(1,0);
	\draw (2,0)--(3,0)--(3,3)--(2,3)--(2,0);
	\draw (3,0)--(3,2)--(4,2)--(4,1)--(5,1)--(5,0)--(3,0);
	\draw (4,1)--(4,2)--(7,2)--(7,1)--(4,1);
	\draw (5,0)--(5,1)--(8,1)--(8,0)--(5,0);
	\node[scale=2] at (0.5,2.5) {$1$};
	\node[scale=2] at (1.5,2.5) {$1$};
	\node[scale=2] at (2.5,2.5) {$1$};
	\node[scale=2] at (3.5,1.5) {$1$};
	\node[scale=2] at (4.5,1.5) {$1'$};
	\node[scale=2] at (5.5,0.5) {$1$};
	\end{tikzpicture} }
\\ \\
$t^5x_1^5y_1$ & $t^3x_1^4y_1^2$ & $t^9x_1^6$ & $t^7x_1^5y_1$
\\ \\
	\resizebox{3cm}{!}{
	\begin{tikzpicture}[baseline=(current bounding box.center)]
	\draw (0,0)--(8,0)--(8,1)--(7,1)--(7,2)--(3,2)--(3,3)--(0,3)--(0,0);
	\draw (0,0)--(0,2)--(1,2)--(1,1)--(2,1)--(2,0)--(0,0);
	\draw (1,1)--(1,2)--(3,2)--(3,0)--(2,0)--(2,1)--(1,1);
	\draw (3,0)--(3,2)--(4,2)--(4,1)--(5,1)--(5,0)--(3,0);
	\draw (0,2)--(3,2)--(3,3)--(0,3)--(0,2);
	\draw (4,1)--(4,2)--(6,2)--(6,0)--(5,0)--(5,1)--(4,1);
	\draw (6,0)--(6,2)--(7,2)--(7,1)--(8,1)--(8,0)--(6,0);
	\node[scale=2] at (0.5,1.5) {$1$};
	\node[scale=2] at (1.5,1.5) {$1$};
	\node[scale=2] at (3.5,1.5) {$1$};
	\node[scale=2] at (4.5,1.5) {$1$};
	\node[scale=2] at (6.5,1.5) {$1'$};
	\node[scale=2] at (0.5,2.5) {$1'$};
	\end{tikzpicture} }
&
	\resizebox{3cm}{!}{
	\begin{tikzpicture}[baseline=(current bounding box.center)]
	\draw (0,0)--(8,0)--(8,1)--(7,1)--(7,2)--(3,2)--(3,3)--(0,3)--(0,0);
	\draw (0,0)--(0,2)--(1,2)--(1,1)--(2,1)--(2,0)--(0,0);
	\draw (1,1)--(1,2)--(3,2)--(3,0)--(2,0)--(2,1)--(1,1);
	\draw (3,0)--(3,2)--(4,2)--(4,1)--(5,1)--(5,0)--(3,0);
	\draw (0,2)--(3,2)--(3,3)--(0,3)--(0,2);
	\draw (4,1)--(4,2)--(7,2)--(7,1)--(4,1);
	\draw (5,0)--(5,1)--(8,1)--(8,0)--(5,0);
	\node[scale=2] at (0.5,1.5) {$1$};
	\node[scale=2] at (1.5,1.5) {$1$};
	\node[scale=2] at (3.5,1.5) {$1$};
	\node[scale=2] at (4.5,1.5) {$1'$};
	\node[scale=2] at (5.5,0.5) {$1'$};
	\node[scale=2] at (0.5,2.5) {$1'$};
	\end{tikzpicture} }
&
	\resizebox{3cm}{!}{
	\begin{tikzpicture}[baseline=(current bounding box.center)]
	\draw (0,0)--(8,0)--(8,1)--(7,1)--(7,2)--(3,2)--(3,3)--(0,3)--(0,0);
	\draw (0,0)--(1,0)--(1,3)--(0,3)--(0,0);
	\draw (1,0)--(2,0)--(2,3)--(1,3)--(1,0);
	\draw (2,0)--(3,0)--(3,3)--(2,3)--(2,0);
	\draw (3,0)--(3,2)--(4,2)--(4,1)--(5,1)--(5,0)--(3,0);
	\draw (4,1)--(4,2)--(6,2)--(6,0)--(5,0)--(5,1)--(4,1);
	\draw (6,0)--(6,2)--(7,2)--(7,1)--(8,1)--(8,0)--(6,0);
	\node[scale=2] at (0.5,2.5) {$1$};
	\node[scale=2] at (1.5,2.5) {$1$};
	\node[scale=2] at (2.5,2.5) {$1$};
	\node[scale=2] at (3.5,1.5) {$1$};
	\node[scale=2] at (4.5,1.5) {$1$};
	\node[scale=2] at (6.5,1.5) {$1'$};
	\end{tikzpicture} }
&
	\resizebox{3cm}{!}{
	\begin{tikzpicture}[baseline=(current bounding box.center)]
	\draw (0,0)--(8,0)--(8,1)--(7,1)--(7,2)--(3,2)--(3,3)--(0,3)--(0,0);
	\draw (0,0)--(1,0)--(1,3)--(0,3)--(0,0);
	\draw (1,0)--(2,0)--(2,3)--(1,3)--(1,0);
	\draw (2,0)--(3,0)--(3,3)--(2,3)--(2,0);
	\draw (3,0)--(3,2)--(4,2)--(4,1)--(5,1)--(5,0)--(3,0);
	\draw (4,1)--(4,2)--(7,2)--(7,1)--(4,1);
	\draw (5,0)--(5,1)--(8,1)--(8,0)--(5,0);
	\node[scale=2] at (0.5,2.5) {$1$};
	\node[scale=2] at (1.5,2.5) {$1$};
	\node[scale=2] at (2.5,2.5) {$1$};
	\node[scale=2] at (3.5,1.5) {$1$};
	\node[scale=2] at (4.5,1.5) {$1'$};
	\node[scale=2] at (5.5,0.5) {$1'$};
	\end{tikzpicture} }
\\ \\
$t^5x_1^4y_1^2$ & $t^3x_1^3y_1^3$ & $t^9x_1^5y_1$ & $t^7x_1^4y_1^2$
\end{tabular}
}
\end{center}
we see that this agrees.

\pagebreak

\bibliographystyle{abbrv}
\bibliography{SSLLT.bib,LLT.bib}

\end{document}